\newcommand{\cH}{\mathcal{H}}
\newcommand{\cD}{\mathcal{D}}
\newcommand{\cE}{\mathcal{E}}
\newcommand{\cI}{\mathcal{I}}
\newcommand{\htheta}{\widehat{\theta}}
\newcommand{\bbP}{\mathbb{P}}
\newcommand{\bbR}{\mathbb{R}}
\renewcommand{\exp}{{\rm{exp}}}
\newcommand{\TV}{{\sf TV}}
\renewcommand{\H}{{\rm{H}}}
\newcommand{\argmin}{\mathop{\rm arg\min}}
\newcommand{\argmax}{\mathop{\rm arg\max}}
\newcommand{\indi}{{\mathds{1}}}
\newcommand{\CI}{{\textnormal{CI}}}
\newcommand{\CS}{{\textnormal{CS}}}
\newcommand{\wh}{\widehat}
\newtheorem{Theorem}{Theorem}
\newtheorem{Lemma}{Lemma}
\newtheorem{Remark}{Remark}
\newtheorem{Corollary}{Corollary}
\newtheorem{Proposition}{Proposition}
\DeclareMathAlphabet\mathbfcal{OMS}{cmsy}{b}{n}
\newcommand*{\rom}[1]{\expandafter\@slowromancap\romannumeral #1@}
\begin{document}
	\title{Adaptive Robust Confidence Intervals}
	
	\author{Yuetian Luo$^1$ ~ and ~ Chao Gao$^2$ }
	\affil{University of Chicago}
	\date{}
	\maketitle

	\footnotetext[1]{Email: \texttt{yuetian@uchicago.edu}.}
	\footnotetext[2]{Email: \texttt{chaogao@uchicago.edu}. The research of CG is supported in part by NSF
Grants ECCS-2216912 and DMS-2310769, NSF Career Award DMS-1847590, and an Alfred Sloan fellowship. }

\begin{abstract}
This paper studies the construction of adaptive confidence intervals under Huber's contamination model when the contamination proportion is unknown. For the robust confidence interval of a Gaussian mean, we show that the optimal length of an adaptive interval must be exponentially wider than that of a non-adaptive one. An optimal construction is achieved through simultaneous uncertainty quantification of quantiles at all levels. The results are further extended beyond the Gaussian location model by addressing a general family of robust hypothesis testing. In contrast to adaptive robust estimation, our findings reveal that the optimal length of an adaptive robust confidence interval critically depends on the distribution's shape. 
\end{abstract}


\begin{sloppypar}
\section{Introduction} \label{sec: introduction}

Given $X_1, \ldots, X_n \overset{iid}\sim N(\theta,1)$, one can compute the classical confidence interval
\begin{equation}
\left[\bar{X}-\frac{1.96}{\sqrt{n}},\bar{X}+\frac{1.96}{\sqrt{n}}\right], \label{eq:101}
\end{equation} where $\bar{X} = (\sum_{i=1}^n X_i) /n$ and it has coverage probability at least $95\%$. Moreover, the interval (\ref{eq:101}) has the smallest length among all that have the coverage property \citep{neyman1937outline}. This paper will tackle the question of extending the classical solution (\ref{eq:101}) to modern settings where data may have contamination.

We will consider a robust estimation setting proposed by \cite{huber1964robust}, where
\begin{equation}
X_1, \ldots, X_n \overset{iid}\sim P_{\epsilon,\theta,Q}= (1-\epsilon)N(\theta,1)+\epsilon Q. \label{eq:additive-contamination}
\end{equation}
This is known as Huber's contamination model. In this setting, there is an $\epsilon$ fraction of data points that are generated by an arbitrary process $Q$. It is important to note that in Huber's setting, the contamination distribution $Q$ does not have any restriction. It may or may not be close to $N(\theta,1)$. Our goal in this paper is to construct a confidence interval for $\theta$ with coverage property holding uniformly over all $Q$ under the data generating process (\ref{eq:additive-contamination}).

A standard strategy of confidence interval construction is to rearrange a high-probability error bound of an optimal estimator. For example, the classical confidence interval (\ref{eq:101}) can be obtained by the high-probability error bound $\inf_{\theta}P_{\theta}\left(|\bar{X}-\theta|\leq \frac{1.96}{\sqrt{n}}\right)\geq 0.95$. Similarly, when data is generated from Huber's contamination model, the sample median satisfies
\begin{equation}
\inf_{\theta,Q}P_{\epsilon,\theta,Q}\left(|\wh{\theta}_{\rm median}-\theta|\leq \sqrt{2.1\pi}\left(\frac{1.36}{\sqrt{n}}+\epsilon\right)\right)\geq 0.95. \label{eq:median-error-bd}
\end{equation}
See \cite{chen2018robust} or Chapter 1 of \cite{diakonikolas2023algorithmic}. The explicit constants above can be computed when $\frac{1}{\sqrt{n}}+\epsilon$ is sufficiently small. 
Moreover, it is also known that $\frac{1}{\sqrt{n}}+\epsilon$ is the minimax rate \citep{chen2018robust}, which means that the additional additive error term $O(\epsilon)$ in Huber's setting is a necessary consequence of contamination. The high-probability error bound (\ref{eq:median-error-bd}) immediately suggests a confidence interval centering at the sample median,
\begin{equation}
\left[\wh{\theta}_{\rm median}-\sqrt{2.1\pi}\left(\frac{1.36}{\sqrt{n}}+\epsilon\right),\wh{\theta}_{\rm median}+\sqrt{2.1\pi}\left(\frac{1.36}{\sqrt{n}}+\epsilon\right)\right]. \label{eq:median-RCI}
\end{equation}
The coverage of (\ref{eq:median-RCI}) is guaranteed by (\ref{eq:median-error-bd}), and the length of (\ref{eq:median-RCI}), which is of order $\frac{1}{\sqrt{n}}+\epsilon$, is rate optimal given the estimation error lower bound \citep{chen2018robust}.

Despite its length optimality, a critical issue of the median confidence interval (\ref{eq:median-RCI}) is its dependence on the contamination proportion $\epsilon$. In other words, since the error of the sample median now depends on $\epsilon$ under Huber's model, the uncertainty quantification requires the knowledge of $\epsilon$. This requirement is certainly unrealistic in practice. In addition, since the contamination distribution $Q$ does not have any restriction, the parameter $\epsilon$ is not identifiable, which rules out any attempt that tries to estimate the error of the median. One naive solution that does not require the knowledge of $\epsilon$ is a conservative interval,
\begin{equation}
\left[\wh{\theta}_{\rm median}-R,\wh{\theta}_{\rm median}+R\right], \label{eq:conservative}
\end{equation}
for some constant $R$ so that (\ref{eq:median-RCI}) is a subset of (\ref{eq:conservative}) for all $\epsilon$. Obviously, the naive strategy (\ref{eq:conservative}) is not appealing, since the length of the interval stays the same when the sample size increases or the contamination proportion decreases.

Confidence interval construction under Huber's contamination model (\ref{eq:additive-contamination}) with unknown $\epsilon$ is a well-recognized open problem in the literature \citep{bateni2020confidence,wang2023huber}. This is in contrast to the estimation problem, where rate optimal estimators can usually be constructed without the knowledge of $\epsilon$; the sample median (\ref{eq:median-error-bd}) is a leading example. In general, adaptive robust estimation can be achieved in most settings by applying Lepski's method \citep{lepskii1991problem,lepskii1992asymptotically,lepskii1993asymptotically,jain2022robust}, but nearly nothing is known about adaptive robust confidence intervals.

Our main result of the paper is a construction of an adaptive confidence interval when the contamination proportion $\epsilon$ is unknown. Moreover, we prove a matching length lower bound, which precisely characterizes the adaptation cost of not knowing $\epsilon$. It turns out that the ignorance of $\epsilon$ results in a severe adaptation cost. In particular, the optimal length of an adaptive confidence interval is of much greater order than the rate $\frac{1}{\sqrt{n}}+\epsilon$ achieved by the median interval (\ref{eq:median-RCI}), though it is still strictly better than the conservative strategy (\ref{eq:conservative}).

A key step leading to the optimal construction is the understanding of a family of robust hypothesis testing. We establish an equivalence between robust testing and the construction of the adaptive robust confidence interval, in the sense that a solution to one problem can be turned into that for the other. This equivalence also allows us to extend the results to general location models beyond Gaussianity. An intriguing phenomenon is thus revealed: while the minimax rate of location estimation usually does not depend on the distribution shape, the optimal adaptive confidence interval length is case by case; the rate is faster for a distribution with a lighter tail.

\subsection{Related Work} \label{sec:literature review}


{\it Robust Estimation. } Early works on robust estimation of a location parameter date back to \cite{huber1964robust}. Since then, various extensions have also been considered in   \cite{huber1967behavior,huber1968robust,bickel1965some,collins1976robust,crow1967robust,tukey1975mathematics,beran1977robust}. 
More recently, statistically optimal estimators have been thoroughly studied under various settings including distributions with heavy tail and adversarial contamination \citep{catoni2012challenging,lugosi2021robust,minsker2021robust,lee2022optimal,minasyan2023statistically,gupta2024beyond,bhatt2022minimax}. Computationally efficient algorithms in high-dimensional settings have been considered by \cite{diakonikolas2016robust,lai2016agnostic,cheng2019high,hopkins2020robust,prasad2020robust,zhu2022robust,dalalyan2022all,diakonikolas2024near,hopkins2020mean,cherapanamjeri2019fast,depersin2022robust}.
Substantial progress has also been made for covariance matrix estimation \citep{chen2018robust,cheng2019faster,minsker2018sub}, regression \citep{huber1973robust,gao2020robust,loh2017statistical,diakonikolas2019efficient,sun2020adaptive}, sparse estimation \citep{balakrishnan2017computationally}, clustering \citep{hopkins2018mixture,diakonikolas2020robustly,cherapanamjeri2020algorithms} and many other learning tasks. We refer readers to the textbooks \citep{huber2011robust,hampel2011robust,diakonikolas2023algorithmic} for a comprehensive review.


{\it Uncertainty Quantification. } The most classic and common approach to perform uncertainty quantification is through asymptotic theory. In the context of robust statistics, asymptotic normality was proved for various M-estimators \citep{huber1964robust,huber1973robust,yohai1979asymptotic,portnoy1985asymptotic,el2013robust}. Subsampling methods such as bootstrap \citep{efron1994introduction} can also be used for uncertainty quantification, though the validity is usually established by asymptotic analysis as well \citep{politis1994large,hall2013bootstrap}. However, asymptotic analysis is not applicable to Huber's contamination model without additional assumptions on the contamination process.

In the setting of the contamination model, non-asymptotic constructions of confidence sets have been considered by \cite{bateni2020confidence,wang2023huber} when the knowledge of the contamination proportion is available. To the best of our knowledge, there is no method available in the literature that can deal with an unknown contamination proportion. Certain general frameworks of constructing confidence sets such as universal inference \citep{wasserman2020universal,park2023robust} and a convex hull method \citep{kuchibhotla2023hulc} offer finite sample guarantees with very little assumption on the data-generating process. However, when applied to Huber's contamination model, these methods either require knowledge of the contamination proportion or are designed to cover a different functional other than the parameter of interest. We refer the readers to Appendix A for a more detailed discussion.


 {\it Adaptive Confidence Set. } The construction of adaptive confidence sets is known to be a hard topic in nonparametric and high-dimensional inference. The difficulty is usually due to the dependence of uncertainty quantification on some unknown regularity parameter such as function smoothness or signal sparsity. In most settings where the models indexed by different regularity parameters are nested, impossibility of adaptation can be proved \citep{low1997nonparametric,cai2004adaptation}. Such a hardness result can often be explained by the connection of adaptive confidence sets with certain hypothesis testing problems. This intuition was first developed in nonparametric estimation problems \citep{cai2006adaptive,robins2006adaptive,juditsky2003nonparametric,baraud2004confidence,hoffmann2011adaptive,bull2013adaptive,carpentier2013honest}, and it has also been extended to high-dimensional sparse and low-rank regression problems \citep{nickl2013confidence,cai2017confidence,carpentier2015signal,carpentier2018adaptive}. Despite the impossibility results in various settings, explicit constructions of adaptive confidence sets can be done by weakening the coverage property \citep{wahba1983bayesian,genovese2008adaptive,cai2014adaptive} or by adding additional assumptions on signals such as self-similarity, polished tail, or shape constraints \citep{gine2010confidence,nickl2016sharp,chernozhukov2014anti,szabo2015frequentist,dumbgen2003optimal,cai2013adaptive,van2017adaptive}. We refer the readers to Chapter 8 of \cite{gine2015mathematical} for a comprehensive study on this topic.

\subsection{Paper Organization} \label{sec:organization}
The mathematical formulation of the adaptive robust confidence interval is set up in Section \ref{sec:formulation}. The solution to the Gaussian location model will be given in Section \ref{sec:adaptiveRCI}. In Section \ref{sec:t}, we introduce a robust testing problem and establish its equivalence to the construction of an adaptive robust confidence interval, which then leads to the results of general location models covered in Section \ref{sec:ARCI-general-distribution}. Finally, we will discuss several extensions of the problem in Section \ref{sec:extension}. All technical proofs will be deferred to the appendices. 

\subsection{Notation}\label{sec:notations}
Define $[n] = \{1,\ldots,n\}$ for any positive integer $n$. For any two sequences $\{a_n\}$ and $\{b_n\}$, we write $a_n \asymp b_n$ if there exist constants $c, C>0$ such that $ca_n \leq b_n\leq Ca_n$ for all $n$; $a_n \lesssim b_n$ means that $a_n \leq C b_n$ holds for some constant $C > 0$ independent of $n$. For any interval $[L,U]$, we implicitly assume that it is an empty set if $U < L$. Given any probability distribution with cumulative distribution function (CDF) $F(t)$, its $q$-quantile is denoted as $F^{-1}(q) = \inf \{ t \in \bbR: F(t) \geq q \}$. In particular, given $n$ data points $X_1, \ldots, X_n \in \bbR$,  we denote $X_{(1)} \leq \cdots \leq X_{(n)}$ as the order statistics. Then $F_n^{-1}(q) = X_{(\lceil n q \rceil)}$, where $F_n(t) = \frac{1}{n} \sum_{i \in [n]} \indi\{ X_i \leq t \} $ is the empirical CDF. The notation $P^{\otimes n}$ means the product distribution of $P$ with $n$ i.i.d. copies. The total variation distance between two distributions $P$ and $Q$ are defined by $\TV(P,Q)=\sup_B|P(B)-Q(B)|$. We use $\mathbb{E}$ and $\mathbb{P}$ for generic expectation and probability operators whenever the distribution is clear from the context.

\section{Problem Setting} \label{sec:formulation}

Given i.i.d. observations 
\begin{equation} \label{def:model}
	X_1, \ldots, X_n \overset{iid}\sim P_{\epsilon,\theta, Q}= (1- \epsilon ) P_{\theta} + \epsilon Q, 
\end{equation}
 with some parametric model $P_\theta$ and a pre-specified error probability $\alpha\in(0,1)$, our goal is to construct a confidence interval $\widehat{\CI}=\widehat{\CI}(\{X_i \}_{i=1}^n)$ that covers the parameter $\theta$ with probability at least $1-\alpha$, while optimizing its length quantified by Lebesgue measure. To formulate the setting where the contamination proportion $\epsilon$ is unknown, we will require the coverage probability lower bound to hold uniformly over $\epsilon\in\cE$, where $\cE$ is some subset of $[0,1]$. In particular, we call $\widehat{\CI}$ a $(1-\alpha)$-level adaptive robust confidence interval (ARCI) over $\cE$, if 
\begin{equation}
\inf_{\epsilon \in \cE} \inf_{\theta, Q} P_{\epsilon, \theta, Q} \left( \theta \in \widehat{\CI} \right) \geq 1- \alpha. \label{eq:uni-robust-coverage}
\end{equation}
The set of all $(1-\alpha)$-level ARCIs is defined by
$$\cI_{\alpha}(\cE) = \left\{ \widehat{\CI} = [l(\{X_i \}_{i=1}^n), u(\{X_i \}_{i=1}^n) ]: \inf_{\epsilon \in \cE} \inf_{\theta, Q} P_{\epsilon, \theta, Q} \left( \theta \in \widehat{\CI} \right) \geq 1- \alpha  \right\}.$$
We define the optimal length at some $\epsilon\in\cE$ over all $(1-\alpha)$-level ARCIs by
\begin{equation} 
	r_{\alpha}( \epsilon, \cE) = \inf \left\{r \geq 0: \inf_{\widehat{\CI} \in  \cI_{\alpha}(\cE) }\sup_{\theta, Q} P_{\epsilon, \theta, Q} \left( |\widehat{\CI}| \geq r \right) \leq \alpha \right\}, \label{eq:minimax-length-ARCI}
\end{equation}
where $|\widehat{\CI}|$ denotes the length of the confidence interval.
Note that the error probability of the length guarantee (\ref{eq:minimax-length-ARCI}) can be set as a more general value. We use the same $\alpha$ in $\cI_{\alpha}(\cE)$ for the convenience of presentation.

In this paper, we will mainly focus on the setting $\cE=[0, \epsilon_{\max}]$, where $\epsilon_{\max}$ serves as a conservative upper bound for $\epsilon$. Since Huber's model is nested in the sense that $\{P_{\epsilon,\theta, Q}: \theta, Q\}\subset \{P_{\epsilon_{\max},\theta, Q}: \theta,Q\}$ for any $\epsilon\leq \epsilon_{\max}$, the uniform coverage guarantee (\ref{eq:uni-robust-coverage}) is equivalent to
$\inf_{\theta, Q} P_{\epsilon_{\max}, \theta, Q} \left( \theta \in \widehat{\CI} \right) \geq 1- \alpha$,
which immediately implies
$$r_{\alpha}( \epsilon, [0,\epsilon_{\max}])=r_{\alpha}( \epsilon, \{\epsilon_{\max}\}).$$
To get an intuitive understanding of the above quantity, we note the following inequalities,
\begin{equation}
r_{\alpha}( \epsilon, \{\epsilon\}) \leq r_{\alpha}( \epsilon, \{\epsilon_{\max}\}) \leq r_{\alpha}( \epsilon_{\max}, \{\epsilon_{\max}\}), \label{eq:interpretation}
\end{equation}
which holds for all $\epsilon\in[0,\epsilon_{\max}]$.
\begin{enumerate}
\item The first inequality of (\ref{eq:interpretation}) is a lower bound for $r_{\alpha}( \epsilon, \{\epsilon_{\max}\})$. Note that $r_{\alpha}( \epsilon, \{\epsilon\})$ is the optimal length of a confidence interval in the setting with known $\epsilon$. For example, consider Gaussian location model with Huber contamination $P_{\epsilon,\theta, Q}= (1- \epsilon ) N(\theta,1) + \epsilon Q$. It can be shown that
\begin{equation}
r_{\alpha}( \epsilon, \{\epsilon\})\asymp \frac{1}{\sqrt{n}}+\epsilon,\label{eq:median-rate}
\end{equation}
and the confidence interval based on sample median (\ref{eq:median-error-bd}) achieves this optimal length. The situation $r_{\alpha}( \epsilon, \{\epsilon\}) \asymp r_{\alpha}( \epsilon, \{\epsilon_{\max}\})$ then means the same optimal length can be achieved without the knowledge of $\epsilon$.
\item The second inequality  of (\ref{eq:interpretation}) provides an upper bound for $r_{\alpha}( \epsilon, \{\epsilon_{\max}\})$. When $\epsilon_{\max}$ is of some constant order, $r_{\alpha}( \epsilon_{\max}, \{\epsilon_{\max}\})\asymp 1$ and can be achieved by the conservative strategy (\ref{eq:conservative}). The worst-case scenario $r_{\alpha}( \epsilon, \{\epsilon_{\max}\}) \asymp r_{\alpha}( \epsilon_{\max}, \{\epsilon_{\max}\})$ means that one cannot do better than the conservative interval when $\epsilon$ is unknown.
\end{enumerate}
This paper will systematically characterize the rate $r_{\alpha}( \epsilon, \{\epsilon_{\max}\})$ together with its comparisons with $r_{\alpha}( \epsilon, \{\epsilon\})$ and $r_{\alpha}( \epsilon_{\max}, \{\epsilon_{\max}\})$.


\section{Adaptation Cost in Gaussian Location Model} \label{sec:adaptiveRCI}
In this section, we consider the Gaussian location model $P_{\theta}=N(\theta,1)$, and the goal is to construct an optimal ARCI over $\cE=[0,\epsilon_{\max}]$. We will set $\epsilon_{\max}= 0.05$ for the convenience of presentation. An extension to a smaller or larger value of $\epsilon_{\max}$ will be discussed in Sections \ref{sec:tozero} and \ref{sec:49}.

\subsection{A Lower Bound of Optimal Length} \label{sec:optimality-of-ARCI}

For any $\widehat{\CI} \in  \cI_{\alpha}(\cE)$, we first present a lower bound for its length.
\begin{Theorem} \label{th:lower-bound-Gaussian} Suppose $n \geq 100$. For any $\alpha \in (0,1/4)$, there exists some constant $c$ only depending on $\alpha$, such that
\begin{equation*}
	r_{\alpha}(\epsilon, [0,0.05]) \geq c \left( \frac{1}{\sqrt{\log n}} + \frac{1}{\sqrt{\log(1/\epsilon)}} \right),
\end{equation*}
for any $\epsilon\in[0,0.05]$.
\end{Theorem}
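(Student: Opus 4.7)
The plan is to apply a two-point Le Cam reduction. Fix $\epsilon \in [0, 0.05]$ and write $\epsilon_{\max} = 0.05$. Set $\Delta = c_\alpha(1/\sqrt{\log n} + 1/\sqrt{\log(1/\epsilon)})$ for a sufficiently small constant $c_\alpha > 0$ depending on $\alpha$, and consider the two Huber distributions
\[
P_0 = (1-\epsilon) N(0, 1) + \epsilon Q_0, \qquad P_1 = (1-\epsilon_{\max}) N(\Delta, 1) + \epsilon_{\max} Q_1,
\]
with location parameters $0$ and $\Delta$. Any $\widehat{\CI} \in \cI_\alpha([0, 0.05])$ must cover $0$ under $P_0$ and $\Delta$ under $P_1$ with probability at least $1-\alpha$; however $|\widehat{\CI}| < \Delta$ makes simultaneous coverage impossible, so the coverage constraint together with the data-processing inequality yields
\[
P_0(|\widehat{\CI}| \geq \Delta) \geq 1 - 2\alpha - \TV(P_0^{\otimes n}, P_1^{\otimes n}).
\]
It therefore suffices to choose $Q_0, Q_1$ so that $\TV(P_0^{\otimes n}, P_1^{\otimes n}) \leq 1 - 3\alpha$; the definition of $r_\alpha$ then forces $r_\alpha(\epsilon, [0, 0.05]) \geq \Delta$.

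The heart of the argument is the optimal choice of $Q_0, Q_1$. Let $g(x) := (1-\epsilon)\phi(x) - (1-\epsilon_{\max})\phi(x-\Delta)$, which changes sign at $T^* := \Delta/2 + \log((1-\epsilon)/(1-\epsilon_{\max}))/\Delta \asymp (\epsilon_{\max}-\epsilon)/\Delta$. A direct decomposition $\epsilon_{\max} q_1 = g_+ + s$ and $\epsilon q_0 = g_- + s$ with common $s \geq 0$ shows that $\TV(P_0, P_1) = 0$ is achievable if and only if the two budget conditions $\int g_+ \leq \epsilon_{\max}$ and $\int g_- \leq \epsilon$ both hold. A Taylor expansion combined with Mills' ratio $\bar{\Phi}(t)\sim\phi(t)/t$ shows the first is slack; the binding condition reduces to $(1-\epsilon_{\max})\Delta\,\phi(T^*) \lesssim \epsilon$, which after substituting $T^* \approx \epsilon_{\max}/\Delta$ and inverting gives $\Delta \lesssim \epsilon_{\max}/\sqrt{2\log(1/\epsilon)} \asymp 1/\sqrt{\log(1/\epsilon)}$. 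This furnishes the $1/\sqrt{\log(1/\epsilon)}$ term with $\TV(P_0, P_1) = 0$.

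The $1/\sqrt{\log n}$ term dominates when $\epsilon$ is very small (in particular when $\epsilon = 0$), where zero-TV becomes infeasible. I then bound the minimized TV by the uncancelled mass, $\TV(P_0, P_1) \lesssim (\int g_- - \epsilon)_+ \lesssim \Delta\,\phi(T^*)$, and choose $\Delta \asymp 1/\sqrt{\log n}$ so that $T^* \approx \sqrt{2\log n}$. Mills' ratio then gives $\TV(P_0, P_1) \lesssim 1/(n\sqrt{\log n})$, and the tensorization inequality $\TV(P_0^{\otimes n}, P_1^{\otimes n}) \leq n\TV(P_0, P_1) = o(1)$ finishes this case. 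The hard part I expect is the bookkeeping when merging the two regimes into a single $c_\alpha$: the threshold $T^*$ must be simultaneously compatible with both $\sqrt{\log n}$ and $\sqrt{\log(1/\epsilon)}$, and Mills' ratio corrections near the boundary $\epsilon \to 0$ must be tracked carefully so that the two $1/\sqrt{\log\cdot}$ contributions appear additively in the final bound rather than as a max with a degraded constant.
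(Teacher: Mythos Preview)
Your Le Cam two-point reduction is correct and is the same strategy the paper uses: convert the coverage guarantee at contamination levels $\epsilon$ and $\epsilon_{\max}$ into a testing obstruction, then bound $\TV(P_0^{\otimes n},P_1^{\otimes n})$ by constructing suitable $Q_0,Q_1$. The paper packages the first step as a separate ``CI $\Rightarrow$ test'' proposition (Proposition~\ref{prop:CI-imply-test}) and the second as Lemma~\ref{prop:test-lower-bound-gaussian}; your inequality $P_0(|\widehat{\CI}|\ge\Delta)\ge 1-2\alpha-\TV$ is exactly that argument unwound.

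Where you differ is in the construction of $Q_0,Q_1$. The paper handles the two terms with two separate explicit choices: for $1/\sqrt{\log n}$ it takes $Q_1=N(0,1)$ and builds $Q_0$ by truncating the density difference at level $t=\Phi^{-1}(1-\alpha/n)$; for $1/\sqrt{\log(1/\epsilon)}$ it takes $Q_1=N(\sqrt{\log(1/\epsilon)},1)$ and solves for $Q_0$ to match exactly. You instead solve the variational problem directly via the decomposition $\epsilon_{\max}q_1=g_++s$, $\epsilon q_0=g_-+s$, which yields the clean identity $\inf_{Q_0,Q_1}\TV(P_0,P_1)=(\int g_--\epsilon)_+$ (an equality, not just the upper bound you state). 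This unified construction is more elegant and is closer in spirit to the paper's general-location lower bound (Theorem~\ref{th:test-lower-bound}, Case~II). One small correction: the two budget constraints $\int g_+\le\epsilon_{\max}$ and $\int g_-\le\epsilon$ are equivalent (their difference is identically $\epsilon_{\max}-\epsilon$), so neither is ``slack'' relative to the other; what matters is that the $\epsilon$-side is the one that fails as $\epsilon\to0$, forcing the nonzero-TV regime you correctly handle via tensorization.
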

The lower bound should be compared with the rate (\ref{eq:median-rate}), which is the optimal interval length when $\epsilon$ is known. Both terms in (\ref{eq:median-rate}) become logarithmic, which indicates a severe cost of not knowing the value of $\epsilon$. In particular, a special case of the lower bound reads,
\begin{equation}
r_{\alpha}(0, [0,0.05]) \gtrsim \frac{1}{\sqrt{\log n}}.
\end{equation}
In other words, one cannot construct a confidence interval with length of order $\frac{1}{\sqrt{n}}$ even when $\epsilon=0$, meaning there is no outlier in the data. A statistician may have a data set without any contamination. However, as long as the statistician still suspects the possibility of contamination, exponentially more samples must be collected to achieve the same accuracy of uncertainty quantification as before.

On the other hand, the lower bound rate is faster than $r_{\alpha}(0.05, [0,0.05])\asymp 1$, which implies that it is still possible to construct an ARCI that is strictly better than the conservative strategy.

\subsection{An Optimal Adaptive Confidence Interval} \label{sec:optimal-ARCI-Gaussian}

Recall the definitions of the empirical CDF $F_n(\cdot)$ and the empirical quantile function $F_n^{-1}(\cdot)$. Define the standard Gaussian CDF by $\Phi(t)=\mathbb{P}(N(0,1)\leq t)$. Our construction of the ARCI is given by,
\begin{equation}\label{eq:adaptive-CI}
\begin{split}
	\widehat{\CI} = \Big[ &\max_{1.6 \leq t \leq  \Phi^{-1}\left(1 - \sqrt{ \log (2/\alpha)/(2n) } \right)  } \left(F_n^{-1}( 2(1 - \Phi(t) ) )  + t -  \frac{2}{t} \right), \\
	& \min_{ 1.6 \leq t \leq  \Phi^{-1}\left(1 - \sqrt{ \log (2/\alpha)/(2n) } \right) } \left( F_n^{-1}( 1- 2(1 - \Phi(t) ) )  - t  + \frac{2}{t} \right) \Big].
\end{split}
\end{equation}
The formula (\ref{eq:adaptive-CI}) does not depend on the value of the contamination proportion $\epsilon$, and can be computed efficiently using order statistics. The coverage and length guarantees of $\widehat{\CI}$ are given by the following theorem.

\begin{Theorem} \label{thm:upper-bound-general-Gaussian}
There exists some universal constant $C>0$, such that for any $\alpha\in(0,1)$ and $n\geq C\log(1/\alpha)$, the interval $\widehat{\CI}$ defined in (\ref{eq:adaptive-CI}) satisfies
	\begin{equation*}
		\begin{split}
			\textnormal{(coverage)} &\quad \inf_{ \epsilon \in [0, 0.05], \theta, Q} P_{\epsilon, \theta, Q}\left( \theta \in\widehat{\CI} \right) \geq 1-\alpha,\\
			\textnormal{(length)} & \quad\inf_{\epsilon \in [0, 0.05], \theta, Q } P_{\epsilon, \theta, Q}\left( |\widehat{\CI}| \leq C' \left( \frac{1}{\sqrt{\log n } } + \frac{1}{\sqrt{ \log(1/\epsilon) }}\right) \right) \geq 1-\alpha,
		\end{split}
	\end{equation*}
where $C'>0$ is some constant only depending on $\alpha$.
\end{Theorem}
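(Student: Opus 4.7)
The plan is to deduce both conclusions from a single uniform bound on the empirical CDF provided by the Dvoretzky--Kiefer--Wolfowitz (DKW) inequality. Write $F(x)=(1-\epsilon)\Phi(x-\theta)+\epsilon G(x)$ for the true CDF under $P_{\epsilon,\theta,Q}$ and set $\delta=\sqrt{\log(2/\alpha)/(2n)}$. DKW gives $\sup_x|F_n(x)-F(x)|\leq \delta$ with probability at least $1-\alpha$, and combined with $G(x)\in[0,1]$ this yields the two-sided sandwich
$$(1-\epsilon)\Phi(x-\theta)-\delta\;\leq\;F_n(x)\;\leq\;(1-\epsilon)\Phi(x-\theta)+\epsilon+\delta\qquad\text{for every }x.$$
The rest of the argument extracts both the coverage and the length from this sandwich, using the standard inversion $F_n^{-1}(q)\leq y \Leftrightarrow F_n(y)\geq q$.

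\emph{Coverage.} The max and min parts of the construction are mirror images under the reflection $X_i\mapsto 2\theta-X_i$, so it suffices to show $F_n^{-1}(2(1-\Phi(t)))+t-2/t\leq \theta$ for every admissible $t$. This is equivalent to $F_n(\theta-t+2/t)\geq 2(1-\Phi(t))$, which by the lower half of the sandwich reduces to the deterministic inequality
$$(1-\epsilon)\bigl(1-\Phi(t-2/t)\bigr)\;\geq\;2(1-\Phi(t))+\delta.$$
The upper cutoff $t\leq \Phi^{-1}(1-\delta)$ forces $1-\Phi(t)\geq \delta$, so it is enough to produce a definite multiplicative margin between $1-\Phi(t-2/t)$ and $1-\Phi(t)$. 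As $t\to\infty$ this ratio tends to $e^{2}\approx 7.39$; a Mills-ratio bound together with direct verification at the lower endpoint $t=1.6$ (where the ratio already exceeds $6$) shows the margin comfortably dominates $2/(1-\epsilon)\leq 2/0.95$ throughout $t\geq 1.6$, with enough slack to absorb the additive $\delta$.

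\emph{Length.} For any fixed $t^\ast\in[1.6,\Phi^{-1}(1-\delta)]$, plugging $t^\ast$ into both the max and the min yields the deterministic upper bound
$$|\widehat{\CI}|\;\leq\;\bigl[F_n^{-1}(1-2(1-\Phi(t^\ast)))-F_n^{-1}(2(1-\Phi(t^\ast)))\bigr]-2t^\ast+\tfrac{4}{t^\ast}.$$
Inverting the sandwich gives $F_n^{-1}(1-q)\leq \theta+\Phi^{-1}((1-q+\delta)/(1-\epsilon))$ and $F_n^{-1}(q)\geq \theta+\Phi^{-1}((q-\epsilon-\delta)/(1-\epsilon))$. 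I choose $t^\ast$ with $2(1-\Phi(t^\ast))=c\max(\epsilon,\delta)$ for a sufficiently large constant $c$, which keeps the arguments of $\Phi^{-1}$ within constant factors of the nominal levels $1-2(1-\Phi(t^\ast))$ and $2(1-\Phi(t^\ast))$. Gaussian quantile asymptotics then place the two $\Phi^{-1}$ values within $O(1/t^\ast)$ of $\pm t^\ast$; the leading $\pm t^\ast$ cancel the $\mp t^\ast$ in the display, leaving $|\widehat{\CI}|=O(1/t^\ast)$. Since $t^\ast\asymp \sqrt{\log(1/\max(\epsilon,\delta))}$, one has $1/t^\ast\lesssim 1/\sqrt{\log n}+1/\sqrt{\log(1/\epsilon)}$, as claimed.

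The main technical obstacle is keeping the Gaussian tail estimates uniformly sharp across the entire range $t\in[1.6,\Phi^{-1}(1-\delta)]$ with explicit constants. The asymptotic Mills-ratio blow-up of $e^{2}$ is only approximate at the lower endpoint $t=1.6$; it is precisely the cutoff $1.6$ and the choice of shift $2/t$ that are calibrated so that the coverage inequality holds with enough slack to absorb $\delta$ for every $\epsilon\leq 0.05$, and these constants have to be tracked carefully rather than absorbed into an $O(\cdot)$. A parallel bookkeeping governs the length step, where the constant $c$ must be large enough to keep $(q-\epsilon-\delta)/(1-\epsilon)$ strictly bounded away from zero while remaining within a constant multiple of $q$.
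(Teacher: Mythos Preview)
Your architecture matches the paper's: both coverage and length follow from the single DKW event, and your coverage reduction to $(1-\epsilon)(1-\Phi(t-2/t))\geq 2(1-\Phi(t))+\delta$ is exactly the content of Lemma~\ref{th:estimation-guarantee}. Where you gesture at Mills ratios and endpoint verification for uniformity over $t$, the paper integrates the pointwise density inequality $\phi(x-2/t)/\phi(x)=\exp(2x/t-2/t^{2})\geq \exp(2-2/t^{2})$ (valid for all $x\geq t$) to obtain the monotone-in-$t$ lower bound $(1-\Phi(t-2/t))/(1-\Phi(t))\geq\exp(2-2/t^{2})$; the whole range then collapses to the single check $\exp(2-2/1.6^{2})>3/0.95$. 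That one line dissolves what you flag as the main technical obstacle.

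The length step is where the paper is substantively simpler. Instead of your quantile-perturbation route, Lemma~\ref{th:estimation-length-guarantee} takes $t_{\epsilon}=\Phi^{-1}(1-\epsilon-\delta)$ and shows, on the same DKW event, that $\widehat\theta_{R}(t_{\epsilon})\leq\theta\leq\widehat\theta_{L}(t_{\epsilon})$: for instance $\widehat\theta_{R}(t_{\epsilon})\leq\theta$ reduces to $1-\Phi(t_{\epsilon})\geq\epsilon+\delta$, which is the definition of $t_{\epsilon}$. This gives $|\widehat{\CI}|\leq 4/t_{\epsilon}$ outright, with no analysis of $\Phi^{-1}$ at perturbed levels. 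Your approach does work, but it runs into a tight window for your constant $c$: you need $c>2$ to keep $(q-\epsilon-\delta)/(1-\epsilon)$ bounded away from zero as a fraction of $q$, yet $t^{\ast}\geq 1.6$ forces $(c/2)\max(\epsilon,\delta)\leq 1-\Phi(1.6)\approx 0.055$, capping $c$ near $2.2$ when $\epsilon=0.05$. The paper's choice of $t_{\epsilon}$ sidesteps this tension entirely and delivers the clean constant $4$.
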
 
By the definition of $r_{\alpha}(\epsilon, [0,0.05])$ given in (\ref{eq:minimax-length-ARCI}), Theorem \ref{thm:upper-bound-general-Gaussian} implies that for any $\epsilon \in [0, 0.05]$,
$$r_{\alpha}(\epsilon, [0,0.05])  \lesssim \frac{1}{\sqrt{\log n}} + \frac{1}{\sqrt{\log(1/\epsilon)}},$$
which matches the lower bound result in Theorem \ref{th:lower-bound-Gaussian}.

\subsection{Uncertainty Quantification of Quantiles}

The construction of the optimal ARCI in (\ref{eq:adaptive-CI}) depends on the empirical quantile function with various levels. To understand how to arrive at the specific construction (\ref{eq:adaptive-CI}) from uncertainty quantification of quantiles, let us write (\ref{eq:adaptive-CI}) equivalently as
\begin{equation}
\widehat{\CI} = \bigcap_{1.6 \leq t \leq  \Phi^{-1}\left(1 - \sqrt{ \log (2/\alpha)/(2n) } \right) }\left[\wh{\theta}_L(t)-\frac{2}{t},\wh{\theta}_R(t)+\frac{2}{t}\right], \label{eq:intersect-quan}
\end{equation}
where
\begin{eqnarray}
\label{eq:theta-L} \wh{\theta}_L(t) &=& F_n^{-1}( 2(1 - \Phi(t) ) )  + t, \\
\label{eq:theta-R} \wh{\theta}_R(t) &=& F_n^{-1}( 1- 2(1 - \Phi(t) ) )  - t.
\end{eqnarray}
We note that both $\wh{\theta}_L(t)$ and $\wh{\theta}_R(t)$ can be viewed as estimators of the location parameter $\theta$ for a sufficiently large $t$. To see why this is the case, let us suppose there is no contamination, so that $F_n(\cdot)\approx \Phi(\cdot -\theta)$. Then,
$$\wh{\theta}_L(t) \approx \theta + \Phi^{-1}(2(1-\Phi(t)))+t\approx \theta,$$
where the property $\Phi^{-1}(2(1-\Phi(t)))+t\approx 0$ holds for a large $t$ because of the Gaussian tail decay of $1-\Phi(t)$. A similar argument also shows that $\wh{\theta}_R(t)\approx \theta$ when $t$ is sufficiently large.

A remarkable property of the two estimators $\wh{\theta}_L(t)$ and $\wh{\theta}_R(t)$ is that each one enjoys a one-sided uncertainty quantification even when the data has contamination and when $t$ is not necessarily large.
\begin{Lemma}\label{th:estimation-guarantee}
Simultaneously for all $t\in\left[1.6, \Phi^{-1}\left(1 - \sqrt{ \log (2/\alpha)/(2n) } \right)\right]$, we have
$$\wh{\theta}_L(t)-\theta\leq \frac{2}{t}\quad\text{and}\quad\wh{\theta}_R(t)-\theta\geq-\frac{2}{t},$$
with $P_{\epsilon, \theta, Q}$-probability at least $1-\alpha$, uniformly over all $\epsilon\in[0,0.05]$, all $\theta\in\mathbb{R}$, and all $Q$.
\end{Lemma}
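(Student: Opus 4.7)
The plan is to put all the probabilistic content into a single Dvoretzky--Kiefer--Wolfowitz bound and then reduce both desired inequalities to a deterministic Gaussian-tail inequality that holds uniformly on the given $t$-range. Let $F(x)=(1-\epsilon)\Phi(x-\theta)+\epsilon Q((-\infty,x])$ denote the CDF of the mixture, and set $\delta=\sqrt{\log(2/\alpha)/(2n)}$. By DKW, on an event of probability at least $1-\alpha$ (uniform in $\theta$, $Q$, and $\epsilon$) one has $\sup_{x}|F_n(x)-F(x)|\leq\delta$; I work deterministically on this event throughout. A consequence of the upper constraint $t\leq\Phi^{-1}(1-\delta)$ that will be invoked below is $1-\Phi(t)\geq\delta$.

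Using the identity $F_n^{-1}(q)\leq x\iff F_n(x)\geq q$, the left-side bound $\wh\theta_L(t)-\theta\leq 2/t$ is equivalent to $F_n(\theta-t+2/t)\geq 2(1-\Phi(t))$. DKW and non-negativity of the contamination contribution give
\[
F_n(\theta-t+2/t)\geq F(\theta-t+2/t)-\delta\geq (1-\epsilon)\bigl(1-\Phi(t-2/t)\bigr)-\delta,
\]
so the left bound reduces to the purely deterministic Gaussian inequality
\[
(1-\epsilon)\bigl(1-\Phi(t-2/t)\bigr)\geq 2\bigl(1-\Phi(t)\bigr)+\delta,\qquad t\in\bigl[1.6,\Phi^{-1}(1-\delta)\bigr].\qquad(\star)
\]
For the right bound $\wh\theta_R(t)-\theta\geq -2/t$, a symmetric argument (worst-casing the contamination mass by $\epsilon$) yields $F_n(\theta+t-2/t)\leq 1-(1-\epsilon)\bigl(1-\Phi(t-2/t)\bigr)+\delta$, which is strictly below $1-2(1-\Phi(t))$ exactly when $(\star)$ holds; via $F_n^{-1}(q)>x\iff F_n(x)<q$, this gives the desired bound on $\wh\theta_R(t)$. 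The uniform-in-$x$ nature of DKW is what makes the resulting guarantee simultaneous in $t$.

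What remains is to verify $(\star)$. Using $\delta\leq 1-\Phi(t)$, it is enough to show the cleaner inequality
\[
\frac{1-\Phi(t-2/t)}{1-\Phi(t)}\geq\frac{3}{1-\epsilon}\qquad\text{for all } t\geq 1.6 \text{ and } \epsilon\leq 0.05,
\]
whose right-hand side is at most $3/0.95<3.17$. The left-hand ratio can be lower-bounded using Mills-ratio estimates: for large $t$ it tends to $e^{2}\approx 7.39$, and at the endpoint $t=1.6$ one checks directly that it exceeds $3.17$ with substantial slack. This calibration explains the choices of slack $2/t$, cutoff $t\geq 1.6$, and ceiling $\epsilon_{\max}=0.05$ baked into the construction. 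The main obstacle is precisely this uniform-in-$t$ verification: the probabilistic step is immediate from DKW and the quantile-to-CDF translation is routine bookkeeping, but the Gaussian tail ratio must dominate $3/(1-\epsilon)$ across the full range, with the binding case occurring at the lower endpoint $t=1.6$.
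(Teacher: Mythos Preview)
Your approach is essentially the paper's: invoke DKW once, translate the quantile bounds on $\wh\theta_L,\wh\theta_R$ into CDF inequalities, and reduce both to the deterministic Gaussian-tail condition $(1-\epsilon)\bigl(1-\Phi(t-2/t)\bigr)\geq 2(1-\Phi(t))+\delta$, which after using $\delta\leq 1-\Phi(t)$ becomes a ratio bound $\frac{1-\Phi(t-2/t)}{1-\Phi(t)}\geq \frac{3}{1-\epsilon}$.

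The one gap is in the last step: checking that the ratio exceeds $3/0.95$ at $t=1.6$ and noting its limit $e^2$ as $t\to\infty$ does not by itself cover intermediate $t$; you implicitly assume monotonicity without proving it. The paper closes this cleanly by passing to densities: for every $x\geq t$ one has $\phi(x-2/t)/\phi(x)=\exp(2x/t-2/t^2)\geq \exp(2-2/t^2)$, and integrating over $[t,\infty)$ gives $\frac{1-\Phi(t-2/t)}{1-\Phi(t)}\geq \exp(2-2/t^2)$, a bound that is manifestly increasing in $t$ and equals $\exp(2-2/1.6^2)\approx 3.38>3/0.95$ at the endpoint. Plugging this in for your ``Mills-ratio estimates'' sentence makes the argument complete.
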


According to the one-sided uncertainty quantification of $\wh{\theta}_L(t)$ and $\wh{\theta}_R(t)$, one can immediately use $\left[\wh{\theta}_L(t)-\frac{2}{t},\wh{\theta}_R(t)+\frac{2}{t}\right]$ as a robust confidence interval for $\theta$. Moreover, since the result of Lemma \ref{th:estimation-guarantee} holds simultaneously for all $t$ in a given range, it is natural to intersect all such sets to achieve the smallest possible length. This leads to the formula (\ref{eq:intersect-quan}), and Lemma \ref{th:estimation-guarantee} directly implies the coverage property in Theorem \ref{thm:upper-bound-general-Gaussian} for $\widehat{\CI}$.

With a particular choice of $t$, the one-sided uncertainty quantification of $\wh{\theta}_L(t)$ and $\wh{\theta}_R(t)$ can be strengthened to two-sided.
\begin{Lemma}\label{th:estimation-length-guarantee}
For any $\epsilon\in[0,0.05]$, define
\begin{equation} 
		t_{\epsilon} = \Phi^{-1}\left( 1- \epsilon - \sqrt{ \log (2/\alpha)/(2n) }  \right). \label{def:t-constant}
\end{equation}
Then, we have
\begin{equation}
\wh{\theta}_L(t_{\epsilon})-\theta\geq 0\quad\text{and}\quad\wh{\theta}_R(t_{\epsilon})-\theta\leq 0, \label{eq:the-other-side}
\end{equation}
with $P_{\epsilon, \theta, Q}$-probability at least $1-\alpha$, uniformly over all $\theta\in\mathbb{R}$ and all $Q$.
\end{Lemma}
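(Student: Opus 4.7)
The plan is to reduce Lemma~\ref{th:estimation-length-guarantee} to a direct application of the Dvoretzky--Kiefer--Wolfowitz (DKW) inequality, exploiting the fact that $t_{\epsilon}$ is calibrated precisely so that $1 - \Phi(t_\epsilon) = \epsilon + \delta$, where $\delta := \sqrt{\log(2/\alpha)/(2n)}$. Letting $F$ denote the CDF of $P_{\epsilon,\theta,Q}$, the mixture definition together with $0 \leq Q((-\infty,x]) \leq 1$ yields the envelope
\begin{equation*}
(1-\epsilon)\Phi(x-\theta) \leq F(x) \leq (1-\epsilon)\Phi(x-\theta) + \epsilon, \qquad x \in \bbR,
\end{equation*}
and DKW guarantees that the event $\cA := \{\sup_{x \in \bbR} |F_n(x) - F(x)| \leq \delta\}$ holds with $P_{\epsilon,\theta,Q}$-probability at least $1-\alpha$. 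All remaining steps take place on $\cA$.

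To prove $\wh{\theta}_L(t_\epsilon) \geq \theta$, set $q := 2(1-\Phi(t_\epsilon)) = 2(\epsilon+\delta)$; it suffices to show $F_n(x) < q$ for every $x < \theta - t_\epsilon$, since this forces $F_n^{-1}(q) \geq \theta - t_\epsilon$ by the infimum definition of the empirical quantile. Combining DKW with the upper envelope and using $\Phi(x-\theta) < \Phi(-t_\epsilon) = \epsilon + \delta$, I would compute
\begin{equation*}
F_n(x) \leq F(x) + \delta \leq (1-\epsilon)\Phi(x-\theta) + \epsilon + \delta < (1-\epsilon)(\epsilon+\delta) + \epsilon + \delta = (2-\epsilon)(\epsilon+\delta) \leq q,
\end{equation*}
which gives the first inequality.

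Symmetrically, for $\wh{\theta}_R(t_\epsilon) \leq \theta$, it is enough to verify $F_n(\theta + t_\epsilon) \geq 1-q$, since this places $\theta + t_\epsilon$ in the set whose infimum defines $F_n^{-1}(1-q)$. The lower envelope on $F$ gives
\begin{equation*}
F_n(\theta + t_\epsilon) \geq (1-\epsilon)\Phi(t_\epsilon) - \delta = (1-\epsilon)(1-\epsilon-\delta) - \delta \geq 1 - 2(\epsilon+\delta) = 1 - q,
\end{equation*}
where the final bound discards the nonnegative cross term $\epsilon(\epsilon+\delta)$. Since both inequalities hold on $\cA$, they hold simultaneously with probability at least $1-\alpha$. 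I expect no serious obstacle; the only point of care is the strict-versus-weak bookkeeping when passing from $F_n$ to $F_n^{-1}$, which is handled by strict monotonicity of $\Phi$ at $x < \theta - t_\epsilon$. The argument never uses $\epsilon \leq 0.05$ beyond ensuring $\epsilon + \delta < 1$ so that $t_\epsilon$ is well-defined.
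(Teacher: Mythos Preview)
Your proposal is correct and follows essentially the same route as the paper's own proof: invoke DKW to obtain the event $\cA$, then combine it with the mixture envelope on $F$ and the calibration $1-\Phi(t_\epsilon)=\epsilon+\delta$ to pin down both one-sided inequalities. The only cosmetic differences are that you use the slightly tighter bound $F(x)\le(1-\epsilon)\Phi(x-\theta)+\epsilon$ where the paper uses $F(x)\le\Phi(x-\theta)+\epsilon$, and you are more explicit about the strict-versus-weak inequality bookkeeping in passing from $F_n$ to $F_n^{-1}$ (via strict monotonicity of $\Phi$), a point the paper glosses over at the boundary.
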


Lemma \ref{th:estimation-length-guarantee} directly leads to the length property in Theorem \ref{thm:upper-bound-general-Gaussian}. The condition of Theorem \ref{thm:upper-bound-general-Gaussian} implies that $t_{\epsilon}\in\left[1.6, \Phi^{-1}\left(1 - \sqrt{ \log (2/\alpha)/(2n) } \right)\right]$ for all $\epsilon\in[0,0.05]$. By the formula (\ref{eq:intersect-quan}), we can bound the length by
\begin{equation}\label{ineq:length-guarantee}
	\begin{split}
		|\widehat{\CI}| &\leq \min_{1.6 \leq t \leq  \Phi^{-1}\left(1 - \sqrt{ \log (2/\alpha)/(2n) } \right)}\left(\wh{\theta}_R(t)+\frac{2}{t}-\left(\wh{\theta}_L(t)-\frac{2}{t}\right)\right) \\
&\leq \wh{\theta}_R(t_{\epsilon})+\frac{2}{t_{\epsilon}}-\left(\wh{\theta}_L(t_{\epsilon})-\frac{2}{t_{\epsilon}}\right) \\
&\leq \frac{4}{t_{\epsilon}},
	\end{split}
\end{equation}
where the last inequality above is by $\wh{\theta}_R(t_{\epsilon})\leq \theta$ and $-\wh{\theta}_L(t_{\epsilon})\leq -\theta$, a rearrangement of (\ref{eq:the-other-side}). We thus achieve the optimal length $\frac{4}{t_{\epsilon}}\asymp \frac{1}{\sqrt{\log n}} + \frac{1}{\sqrt{\log (1/\epsilon)}}$ (see Lemma 12 in Appendix G.)

The above argument explains the adaptivity property of the confidence interval $\widehat{\CI}$. Given some $\epsilon\in[0,0.05]$, the effective level of $t$ in (\ref{eq:intersect-quan}) that governs the length control of $\widehat{\CI}$ is the $t_{\epsilon}$ in (\ref{def:t-constant}). A smaller contamination proportion $\epsilon$ results in a larger value of $t_{\epsilon}$, which then leads to a shorter confidence interval.

\section{A Robust Testing Problem}\label{sec:t}

We will introduce a robust testing problem in this section, which serves as the key mathematical ingredient in the construction of ARCI. We will show that the solution to robust testing can be inverted to recover the formula (\ref{eq:adaptive-CI}), thus providing an alternative construction of ARCI in addition to the uncertainty quantification of quantiles. Moreover, the information-theoretic lower bound of robust testing also leads to the confidence interval length lower bound in Theorem \ref{th:lower-bound-Gaussian}.

\subsection{Equivalence between Robust Testing and Robust Confidence Interval}\label{sec:test-equi}

Let us start with the general contamination distribution $P_{\epsilon,\theta,Q}=(1-\epsilon)P_{\theta}+\epsilon Q$ with some parametric model $P_{\theta}$. With $n$ i.i.d. observations $X_1, \ldots,X_n$ drawn from some distribution $P$, we consider the following two pairs of robust hypothesis testing:
\begin{equation}\label{eq:test-def}
	\begin{split} &\cH(\theta, \theta- r, \epsilon): \begin{array}{l}
		H_{0}: P \in \left\{ P_{\epsilon_{\max}, \theta, Q}: Q \right\} \quad  \textnormal { v.s. } \quad 
		H_{1}: P \in \left\{ P_{\epsilon, \theta - r, Q}: Q \right\},
	\end{array}\\
	&\cH(\theta, \theta + r, \epsilon): \begin{array}{l}
		H_{0}: P \in \left\{ P_{\epsilon_{\max}, \theta, Q}: Q \right\}\quad  \textnormal { v.s. } \quad
		H_{1}: P \in \left\{ P_{\epsilon, \theta + r, Q}: Q\right\},
	\end{array}
	\end{split}
\end{equation}
where $\theta\in\mathbb{R}$, $r>0$ and $\epsilon\in[0,\epsilon_{\max}]$. Take $\cH(\theta, \theta- r, \epsilon)$ for example, a statistician not only needs to distinguish between the parameters $\theta$ and $\theta-r$, but also needs to tell the difference between two contamination levels $\epsilon_{\max}$ and $\epsilon$. Though various forms of robust testing have been considered in the literature \citep{huber1965robust,lecam1973convergence,birge1979theoreme,chen2016general,diakonikolas2017statistical,diakonikolas2021sample,canonne2023full}, the specific problem (\ref{eq:test-def}) is new to the best of our knowledge. We will show the equivalence between an optimal solution to (\ref{eq:test-def}) and an optimal construction of ARCI, which extends the classical duality between hypothesis testing and confidence interval \citep{neyman1937outline}.

\subsubsection{ARCI via Robust Testing}

For each $\epsilon\in[0,\epsilon_{\max}]$, let $\phi_{\theta,\theta-r_{\epsilon},\epsilon}$ (resp. $\phi_{\theta,\theta+r_{\epsilon},\epsilon}$) be a testing function that solves $\cH(\theta, \theta- r_{\epsilon}, \epsilon)$ (resp. $\cH(\theta, \theta+ r_{\epsilon}, \epsilon)$). In other words, $\phi_{\theta,\theta-r_{\epsilon},\epsilon}$ is a binary measurable function of data, and the null hypothesis is rejected whenever $\phi_{\theta,\theta-r_{\epsilon},\epsilon}=1$. The subscript of $r_{\epsilon}$ emphasizes the possible dependence of the separation parameter on $\epsilon$. For different values of $\epsilon\in[0,\epsilon_{\max}]$, the difficulties of the corresponding testing problems vary, which is reflected in the separation parameter $r_{\epsilon}$.

A confidence set inverting the testing procedures $\{\phi_{\theta, \theta \pm r_{ \epsilon }, \epsilon}: \epsilon \in [0,\epsilon_{\max}] \}$ is given by
\begin{equation}
\widehat{\CI} = \left\{\theta: \phi_{\theta,\theta-r_{\epsilon},\epsilon}=\phi_{\theta,\theta+r_{\epsilon},\epsilon}=0\text{ for all }\epsilon \in [0,\epsilon_{\max}]\right\}. \label{def:CI-based-on-test}
\end{equation}
Intuitively speaking, the set $\widehat{\CI}$ collects all $\theta$ such that the null hypotheses of $\{ \cH(\theta, \theta \pm r_{ \epsilon }, \epsilon): \epsilon \in [0,\epsilon_{\max}] \}$ are not rejected against robust local alternatives. We note that the formula (\ref{def:CI-based-on-test}) does not necessarily result in an interval, even though we still use the notation $\widehat{\CI}$. The exact form of $\widehat{\CI}$ will depend on the parametric model $P_{\theta}$ and the choice of the testing functions. Indeed, specific solutions that we consider in the paper for Gaussian and general location models will be intervals.

\begin{Proposition}\label{prop:test-to-CI}
Suppose for some $\alpha\in(0,1/4)$, the testing functions $\{\phi_{\theta, \theta \pm r_{ \epsilon }, \epsilon}: \epsilon \in [0,\epsilon_{\max}],\theta\in\mathbb{R} \}$ satisfy 
	\begin{equation*}
		\begin{split}
			\textnormal{(simultaneous Type-1 error)} &\quad \sup_{Q} P_{\epsilon_{\max},\theta,Q}\left(\sup_{\epsilon\in[0,\epsilon_{\max}]}\phi_{\theta,\theta\pm r_{\epsilon},\epsilon}=1\right)\leq \alpha,\\
			\textnormal{(Type-2 error)} &\quad \sup_{Q} P_{\epsilon,\theta\pm r_{\epsilon},Q}\left(\phi_{\theta,\theta\pm r_{\epsilon},\epsilon}=0\right)\leq \alpha,
		\end{split}
	\end{equation*}
for all $\theta\in\mathbb{R}$ and $\epsilon\in[0,\epsilon_{\max}]$. Then, the confidence set $\widehat{\CI}$ defined in (\ref{def:CI-based-on-test}) satisfies the coverage property,
$$\inf_{ \epsilon \in [0,\epsilon_{\max}], \theta, Q} P_{\epsilon, \theta, Q}\left( \theta \in\widehat{\CI} \right) \geq 1-2\alpha.$$
If, in addition, $\widehat{\CI}$ is an interval, then its length can also be controlled,
$$\inf_{ \epsilon \in [0,\epsilon_{\max}], \theta, Q} P_{\epsilon,\theta, Q}\left( |\widehat{\CI}| \leq 2 r_{ \epsilon } \right) \geq 1-4\alpha.$$
\end{Proposition}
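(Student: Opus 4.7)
The plan is to treat the two claims separately, reading off coverage from the simultaneous Type-1 error bound (combined with a nesting argument that bridges data drawn at a general $\epsilon\in[0,\epsilon_{\max}]$ with the Type-1 statement, which is phrased under $P_{\epsilon_{\max},\theta,Q}$) and reading off the length control from the Type-2 error bound applied at a judiciously chosen test.

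For coverage, I would unwrap the definition (\ref{def:CI-based-on-test}): the event $\theta\in\widehat{\CI}$ is precisely the event that $\phi_{\theta,\theta-r_{\epsilon'},\epsilon'}=0$ and $\phi_{\theta,\theta+r_{\epsilon'},\epsilon'}=0$ hold simultaneously for every $\epsilon'\in[0,\epsilon_{\max}]$. The two simultaneous Type-1 error assumptions (one for each sign) together with a union bound control the failure probability by $2\alpha$ under $P_{\epsilon_{\max},\theta,Q}$. To transfer this to a general $P_{\epsilon,\theta,Q}$ with $\epsilon\leq\epsilon_{\max}$, I would invoke the nesting identity $P_{\epsilon,\theta,Q}=P_{\epsilon_{\max},\theta,Q'}$ with $Q'=\frac{(\epsilon_{\max}-\epsilon)P_{\theta}+\epsilon Q}{\epsilon_{\max}}$, already exploited in Section \ref{sec:formulation}; since the Type-1 guarantee is taken as a supremum over $Q$, it passes straight through.

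For the length bound, conditional on the coverage event I would additionally show that both boundary points $\theta\pm r_{\epsilon}$ lie outside $\widehat{\CI}$. The key observation is that the test $\phi_{\theta+r_{\epsilon},\theta,\epsilon}$ has null $\tilde\theta=\theta+r_{\epsilon}$ paired with alternative $\tilde\theta-r_{\epsilon}=\theta$, which coincides with the true data-generating parameter; the Type-2 bound therefore yields $P_{\epsilon,\theta,Q}(\phi_{\theta+r_{\epsilon},\theta,\epsilon}=1)\geq 1-\alpha$, and by (\ref{def:CI-based-on-test}) this means $\theta+r_{\epsilon}\notin\widehat{\CI}$ with the same probability. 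A symmetric argument with $\phi_{\theta-r_{\epsilon},\theta,\epsilon}$ gives $\theta-r_{\epsilon}\notin\widehat{\CI}$. Union-bounding the three events gives a total failure probability of $2\alpha+\alpha+\alpha=4\alpha$, and when $\widehat{\CI}$ is assumed to be an interval, the simultaneous presence of $\theta$ and absence of $\theta\pm r_{\epsilon}$ forces $\widehat{\CI}\subset(\theta-r_{\epsilon},\theta+r_{\epsilon})$, hence $|\widehat{\CI}|\leq 2r_{\epsilon}$.

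No step is really hard, but the bookkeeping deserves some care: it is critical to distinguish between the dummy index $\epsilon'\in[0,\epsilon_{\max}]$ ranging over all tests appearing in (\ref{def:CI-based-on-test}) and the particular true contamination level $\epsilon$ governing the data. The length argument exploits the freedom to pick $\epsilon'=\epsilon$ inside the intersection defining $\widehat{\CI}$, which is exactly what allows the final bound to scale with $r_{\epsilon}$ rather than with $r_{\epsilon_{\max}}$---the mechanism responsible for the adaptivity of the construction.
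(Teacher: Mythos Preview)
Your proposal is correct and follows essentially the same route as the paper's proof: for coverage you use the union bound over the two signs and the nesting $\{P_{\epsilon,\theta,Q}:Q\}\subset\{P_{\epsilon_{\max},\theta,Q}:Q\}$, and for length you show that with probability at least $1-4\alpha$ one simultaneously has $\theta\in\widehat{\CI}$ and $\theta\pm r_{\epsilon}\notin\widehat{\CI}$ by invoking the Type-2 bound on the tests $\phi_{\theta\pm r_{\epsilon},\theta,\epsilon}$, then uses the interval assumption to conclude $|\widehat{\CI}|\leq 2r_{\epsilon}$. The paper phrases the length step contrapositively (bounding $P(|\widehat{\CI}|\geq 2r_{\epsilon})$ by splitting on whether $\theta\in\widehat{\CI}$ and noting that an interval of length $\geq 2r_{\epsilon}$ containing $\theta$ must also contain $\theta+r_{\epsilon}$ or $\theta-r_{\epsilon}$), but this is the same argument in a different order.
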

We remark that the notation $\phi_{\theta, \theta \pm r_{ \epsilon }, \epsilon}$ above means that the two testing functions $\phi_{\theta, \theta - r_{ \epsilon }, \epsilon}$ and $\phi_{\theta, \theta + r_{ \epsilon }, \epsilon}$ individually satisfy the simultaneous Type-1 error control and the Type-2 error control. The same rule applies to similar situations throughout the paper.

\subsubsection{Robust Testing via ARCI}

In the reverse direction, we can construct testing functions from an ARCI,
\begin{equation}
\phi_{\theta,\theta-r_{\epsilon},\epsilon}=\phi_{\theta,\theta+r_{\epsilon},\epsilon}=\indi \left(  \theta  \notin  \widehat{\CI} \right),\label{eq:reverse-t}
\end{equation}
which rejects the null whenever the null parameter is not covered by the ARCI.

\begin{Proposition}\label{prop:CI-imply-test}
Suppose $\widehat{\CI}$ is an interval that satisfies
\begin{equation*}
		\begin{split}
			\textnormal{(coverage)} &\quad \inf_{ \epsilon \in [0,\epsilon_{\max}], \theta, Q} P_{\epsilon, \theta, Q}\left( \theta \in\widehat{\CI} \right) \geq 1-\alpha.
		\end{split}
	\end{equation*}
Then, the testing functions $\{\phi_{\theta, \theta \pm r_{ \epsilon }, \epsilon}: \epsilon \in [0,\epsilon_{\max}] \}$ defined in (\ref{eq:reverse-t}) satisfy
\begin{equation*}
		\begin{split}
			 & \sup_QP_{\epsilon_{\max},\theta,Q}\left(\sup_{\epsilon\in[0,\epsilon_{\max}]}\phi_{\theta,\theta\pm r_{\epsilon},\epsilon}=1\right)\leq \alpha,
		\end{split}
	\end{equation*}
for all $\theta\in\mathbb{R}$. Suppose in addition to the coverage property, $\widehat{\CI}$ also satisfies
\begin{equation*}
	\begin{split}
		\textnormal{(length)} & \quad \inf_{\theta, Q } P_{\epsilon, \theta, Q}\left( |\widehat{\CI}| \leq r_{\epsilon}   \right) \geq 1-\alpha,
	\end{split}
\end{equation*}
for some $\epsilon\in[0,\epsilon_{\max}]$. Then, for the same $\epsilon$,
\begin{equation*}
 \sup_QP_{\epsilon,\theta\pm r_{\epsilon},Q}\left(\phi_{\theta,\theta\pm r_{\epsilon},\epsilon}=0\right)\leq 2\alpha,
\end{equation*} for all $\theta\in\mathbb{R}$.
\end{Proposition}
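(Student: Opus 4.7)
The plan is to prove the two bounds separately. The simultaneous Type-1 bound follows almost tautologically from the coverage assumption because the constructed tests in \eqref{eq:reverse-t} are in fact independent of $\epsilon$ and of the sign, while the Type-2 bound is a union bound paired with a simple geometric squeeze on the interval.

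For the simultaneous Type-1 error, the first step is to note that by definition \eqref{eq:reverse-t}, $\phi_{\theta, \theta - r_\epsilon, \epsilon} = \phi_{\theta, \theta + r_\epsilon, \epsilon} = \indi(\theta \notin \widehat{\CI})$ for every $\epsilon \in [0, \epsilon_{\max}]$; the right-hand side depends on neither $\epsilon$ nor the sign. Hence $\sup_{\epsilon \in [0, \epsilon_{\max}]} \phi_{\theta, \theta \pm r_\epsilon, \epsilon} = \indi(\theta \notin \widehat{\CI})$, and the coverage hypothesis, applied in particular at $\epsilon = \epsilon_{\max}$, yields $\sup_Q P_{\epsilon_{\max}, \theta, Q}(\theta \notin \widehat{\CI}) \leq \alpha$, which is the claim.

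For the Type-2 error, I treat the ``$-$'' case; the ``$+$'' case is symmetric. Write $\widehat{\CI} = [L, U]$ and introduce the events $E_1 = \{\theta - r_\epsilon \in \widehat{\CI}\}$ and $E_2 = \{|\widehat{\CI}| \leq r_\epsilon\}$. The coverage and length hypotheses, applied at the true parameter $\theta - r_\epsilon$, give $P_{\epsilon, \theta - r_\epsilon, Q}(E_1) \geq 1 - \alpha$ and $P_{\epsilon, \theta - r_\epsilon, Q}(E_2) \geq 1 - \alpha$, so by a union bound $P_{\epsilon, \theta - r_\epsilon, Q}(E_1 \cap E_2) \geq 1 - 2\alpha$. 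On $E_1 \cap E_2$ the inequalities $L \leq \theta - r_\epsilon$ and $U - L \leq r_\epsilon$ combine into $U \leq L + r_\epsilon \leq \theta$, which forces $\theta \notin \widehat{\CI}$; therefore $P_{\epsilon, \theta - r_\epsilon, Q}(\phi_{\theta, \theta - r_\epsilon, \epsilon} = 0) = P_{\epsilon, \theta - r_\epsilon, Q}(\theta \in \widehat{\CI}) \leq 2\alpha$, as required.

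The main (and genuinely minor) obstacle is a boundary subtlety: strictly, $U \leq \theta$ does not exclude $U = \theta$, in which case $\theta$ still lies in the closed interval $[L,U]$. However, simultaneously having $\theta \in \widehat{\CI}$ and $\theta - r_\epsilon \in \widehat{\CI}$ forces $|\widehat{\CI}| \geq r_\epsilon$, so on $E_2$ this pins down $|\widehat{\CI}| = r_\epsilon$ and $L = \theta - r_\epsilon$, $U = \theta$ exactly. One resolves this either by inflating $r_\epsilon$ by an arbitrarily small $\delta > 0$ (which does not affect the stated $2\alpha$ bound), or by appealing to the continuity of the distribution of the endpoints of $\widehat{\CI}$, which is the case for the concrete constructions used in the paper. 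Beyond this boundary accounting, the argument is simply a union bound together with the one-line geometric observation above.
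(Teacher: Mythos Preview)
Your argument is essentially the same as the paper's: both use the coverage hypothesis directly for Type-1, and for Type-2 both combine the coverage at $\theta-r_\epsilon$ with the length bound via a union bound and the observation that an interval of length at most $r_\epsilon$ containing $\theta-r_\epsilon$ cannot also contain $\theta$ (equivalently, containing both forces length $\geq r_\epsilon$). The paper phrases it as $P(\theta\in\widehat{\CI})\leq P(\theta\in\widehat{\CI},\,\theta-r_\epsilon\in\widehat{\CI})+P(\theta-r_\epsilon\notin\widehat{\CI})\leq P(|\widehat{\CI}|\geq r_\epsilon)+\alpha$, which is the contrapositive of your $E_1\cap E_2$ argument; you are actually more explicit than the paper about the boundary case $|\widehat{\CI}|=r_\epsilon$, which the paper silently absorbs into the non-strict inequality $P(|\widehat{\CI}|\geq r_\epsilon)\leq\alpha$.
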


We will apply Proposition \ref{prop:CI-imply-test} to prove the optimal length lower bound in Theorem \ref{th:lower-bound-Gaussian}. If we can show that the robust testing problem $\cH(\theta, \theta- r_{\epsilon}, \epsilon)$ or $\cH(\theta, \theta+ r_{\epsilon}, \epsilon)$ cannot be solved with small Type-1 and Type-2 errors, this will exclude the possibility of ARCI with length smaller than $r_{\epsilon}$.

\subsection{Gaussian Location Model: Upper Bound}\label{sec:Gup}

In the special case where $P_{\theta}=N(\theta,1)$ with $\epsilon_{\max}=0.05$, we will show that the formula (\ref{def:CI-based-on-test}) recovers the optimal ARCI (\ref{eq:adaptive-CI}) with appropriate testing functions.  Let us start with the testing problem $\cH(\theta, \theta - r_{ \epsilon }, \epsilon)$. We define the testing function
\begin{equation}
\phi_{\theta, \theta- r_{\epsilon}, \epsilon} = \indi \left\{ \frac{1}{n}\sum_{i=1}^n \indi \left \{ X_i - (\theta - r_{\epsilon} ) \geq t_{\epsilon} \right\} \leq 2(1-\Phi(t_{\epsilon})) \right\}. \label{eq:test-gaussian}
\end{equation}
It rejects the null hypothesis whenever the testing statistic $\frac{1}{n}\sum_{i=1}^n \indi \left \{ X_i - (\theta - r_{\epsilon} ) \geq t_{\epsilon} \right\}$ is small. To understand the property of this testing statistic, we can compute its expectation under both the null and alternative hypotheses. Given a null distribution $P_{\epsilon_{\max}, \theta, Q}$, we have
\begin{equation}
P_{\epsilon_{\max}, \theta, Q}\left(X-(\theta-r_{\epsilon})\geq t_{\epsilon}\right)\geq (1-\epsilon_{\max})\mathbb{P}\left(N(0,1)\geq t_{\epsilon}-r_{\epsilon}\right). \label{eq:exp-l-n}
\end{equation}
Under an alternative distribution $P_{\epsilon,\theta-r,Q}$, we have
\begin{equation}
P_{\epsilon,\theta-r_{\epsilon},Q}\left(X-(\theta-r_{\epsilon})\geq t_{\epsilon}\right)\leq \mathbb{P}\left(N(0,1)\geq t_{\epsilon}\right)+\epsilon. \label{eq:exp-u-a}
\end{equation}
Therefore, in order that the testing statistic $\frac{1}{n}\sum_{i=1}^n \indi \left \{ X_i - (\theta - r_{\epsilon} ) \geq t_{\epsilon} \right\}$ can separate the two hypotheses, it is required that the empirical version concentrates around the expectation under both null and alternative. Moreover, the expectation lower bound under the null (\ref{eq:exp-l-n}) needs to be greater than the expectation upper bound under the alternative (\ref{eq:exp-u-a}). It turns out that these requirements are satisfied with
\begin{equation}
r_{\epsilon}=\frac{2}{t_{\epsilon}}, \label{eq:r-gaussian}
\end{equation}
and $t_{\epsilon}$ given by the formula (\ref{def:t-constant}).

With a symmetric argument, the other testing problem $\cH(\theta, \theta + r_{ \epsilon }, \epsilon)$ can be solved by
\begin{equation}
\phi_{\theta, \theta+ r_{\epsilon}, \epsilon} = \indi \left\{ \frac{1}{n}\sum_{i=1}^n \indi \left \{ X_i - (\theta + r_{\epsilon} ) \leq -t_{\epsilon} \right\} < 2(1-\Phi(t_{\epsilon})) \right\}, \label{eq:test+gaussian}
\end{equation}
where the parameters $r_{\epsilon}$ and $t_{\epsilon}$ are also set by (\ref{eq:r-gaussian}) and (\ref{def:t-constant}).

\begin{Lemma}\label{prop:upper-bound-Gaussian}
There exists some universal constant $C>0$, such that for any $\alpha\in(0,1)$ and $n\geq C\log(1/\alpha)$, the testing functions $\{\phi_{\theta, \theta \pm r_{ \epsilon }, \epsilon}: \epsilon \in [0,0.05] ,\theta\in\mathbb{R}\}$ defined by (\ref{eq:test-gaussian}) and (\ref{eq:test+gaussian}) with parameters $r_{\epsilon}$ and $t_{\epsilon}$ set by (\ref{eq:r-gaussian}) and (\ref{def:t-constant}) satisfy
\begin{equation*}
		\begin{split}
			\textnormal{(simultaneous Type-1 error)} &\quad \sup_QP_{0.05,\theta,Q}\left(\sup_{\epsilon\in[0,0.05]}\phi_{\theta,\theta\pm r_{\epsilon},\epsilon}=1\right)\leq \alpha,\\
			\textnormal{(Type-2 error)} &\quad \sup_QP_{\epsilon,\theta\pm r_{\epsilon},Q}\left(\phi_{\theta,\theta\pm r_{\epsilon},\epsilon}=0\right)\leq \alpha,
		\end{split}
	\end{equation*}
for all $\theta\in\mathbb{R}$ and $\epsilon\in[0,0.05]$.
\end{Lemma}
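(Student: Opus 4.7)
My plan is to focus on the one-sided test $\phi_{\theta, \theta - r_\epsilon, \epsilon}$; the symmetric counterpart $\phi_{\theta, \theta + r_\epsilon, \epsilon}$ follows from an entirely analogous argument after the reflection $X_i \mapsto 2\theta - X_i$. The key observation is that although the test is indexed by $\epsilon$, the statistic $\frac{1}{n}\sum_i \indi\{X_i - (\theta - r_\epsilon) \geq t_\epsilon\}$ is simply the empirical survival function of the centered data $\{X_i - \theta\}$ evaluated at the single threshold $t_\epsilon - 2/t_\epsilon$. Simultaneity over $\epsilon \in [0, 0.05]$ therefore reduces to simultaneity over a continuum of thresholds, which is delivered in one shot by the Dvoretzky--Kiefer--Wolfowitz (DKW) inequality: with $s := \sqrt{\log(2/\alpha)/(2n)}$, under any product distribution one has $\sup_u |F_n(u) - F(u)| \leq s$ with probability at least $1 - \alpha$. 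I would use this single event to drive both error bounds.

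For the simultaneous Type-1 bound, I would restrict to the DKW event under the null $P_{0.05, \theta, Q}^{\otimes n}$. The mixture structure yields $P_{0.05, \theta, Q}(X - \theta \geq u) \geq 0.95(1 - \Phi(u))$ for every $u$, so the statistic is at least $0.95(1 - \Phi(t_\epsilon - 2/t_\epsilon)) - s$ uniformly in $\epsilon \in [0, 0.05]$. Comparing this against the threshold $2(1 - \Phi(t_\epsilon)) = 2(\epsilon + s)$ reduces to the deterministic Gaussian tail inequality $0.95(1 - \Phi(t - 2/t)) > 2(1 - \Phi(t)) + s$ for all $t \in [t_{0.05}, t_0]$. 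The standard Mills ratio estimate $(1 - \Phi(t - a))/(1 - \Phi(t)) \geq e^{at - a^2/2}$, specialized at $a = 2/t$, gives an amplification factor bounded below by $e^2$ up to a $(1 + o_t(1))$ correction; a direct numerical check gives a ratio of at least $6$ already at $t = 1.6$, and the assumption $n \geq C \log(1/\alpha)$ ensures $t_{0.05} \geq 1.6$. Since $(0.95 \cdot 6 - 2)(\epsilon + s) > s$ for all $\epsilon \geq 0$, the required inequality holds with room to spare.

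For the Type-2 bound at a fixed $\epsilon$, I would apply the DKW event under $P_{\epsilon, \theta - r_\epsilon, Q}^{\otimes n}$. The survival function of $X - (\theta - r_\epsilon)$ at $t_\epsilon$ is at most $(1 - \epsilon)(1 - \Phi(t_\epsilon)) + \epsilon$, and the defining identity $1 - \Phi(t_\epsilon) = \epsilon + s$ rewrites this as $2\epsilon + s - \epsilon^2 - \epsilon s$; adding the DKW slack $s$ gives the upper bound $2(\epsilon + s) - \epsilon^2 - \epsilon s \leq 2(1 - \Phi(t_\epsilon))$, so $\phi_{\theta, \theta - r_\epsilon, \epsilon} = 1$ on the event, as required. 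The anticipated main obstacle is the sharp Mills-ratio calculation underlying the Type-1 argument: the binding regime is $\epsilon$ near zero, where $1 - \Phi(t_0) = s$ is small and one needs the amplification factor to exceed roughly $3/0.95$. This is precisely the reason for the choice $r_\epsilon = 2/t_\epsilon$ together with $\epsilon_{\max} = 0.05$; the Gaussian constant $e^2 \approx 7.39$ leaves comfortable room, and verifying the uniform inequality across $\epsilon \in [0, 0.05]$ amounts to a one-variable tail-ratio computation.
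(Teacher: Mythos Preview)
Your proposal is correct and follows essentially the same approach as the paper: both arguments drive the simultaneous Type-1 bound by a single DKW event under the null and then reduce to the Gaussian tail-ratio inequality $(1-\epsilon_{\max})(1-\Phi(t-2/t))>3(1-\Phi(t))$, established via the density ratio $\phi(x-2/t)/\phi(x)=e^{2x/t-2/t^2}$ (your Mills-ratio step) and the numerical check at $t=t_{0.05}\approx 1.6$. The only minor difference is the Type-2 bound: the paper applies Hoeffding's inequality directly to the Bernoulli sum, whereas you reuse DKW under the alternative; both give the same $\alpha$ bound once one plugs in $1-\Phi(t_\epsilon)=\epsilon+s$, so this is a stylistic rather than substantive distinction.
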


The result shows that the robust testing problem (\ref{eq:test-def}) can be solved for the Gaussian location model under the separation $r_{\epsilon}=\frac{2}{t_{\epsilon}}\asymp \frac{1}{\sqrt{\log n}}+\frac{1}{\sqrt{\log(1/\epsilon)}}$.

Now, let us use the formula (\ref{def:CI-based-on-test}) to turn the testing functions into an ARCI. It is clear that $\widehat{\CI}=\bigcap_{\epsilon\in[0,0.05]}\left(\left\{\theta: \phi_{\theta,\theta- r_{\epsilon},\epsilon}=0\right\}\bigcap \left\{\theta: \phi_{\theta,\theta+ r_{\epsilon},\epsilon}=0\right\}\right)$. Note that
\begin{eqnarray}
 \label{eq:interval-step1} \left\{\theta: \phi_{\theta,\theta- r_{\epsilon},\epsilon}=0\right\} &=& \left\{\theta:  \frac{1}{n}\sum_{i=1}^n \indi \left \{ X_i - (\theta - r_{\epsilon} ) \geq t_{\epsilon} \right\} > 2(1-\Phi(t_{\epsilon}))\right\} \\
&=& \left\{\theta: \frac{1}{n}\sum_{i=1}^n \indi \left \{ X_i - (\theta - r_{\epsilon} ) < t_{\epsilon} \right\} < 1-2(1-\Phi(t_{\epsilon}))\right\} \\
&=& \left\{\theta: \theta\leq F_n^{-1}\left(1-2(1-\Phi(t_{\epsilon}))\right)-t_{\epsilon}+r_{\epsilon}\right\} \\
\label{eq:interval-step2} &=& \left(-\infty, F_n^{-1}\left(1-2(1-\Phi(t_{\epsilon}))\right)-t_{\epsilon}+\frac{2}{t_{\epsilon}}\right],
\end{eqnarray}
where the last equality replaces $r_{\epsilon}$ by $2/t_{\epsilon}$ according to (\ref{eq:r-gaussian}). A symmetric argument gives
$$\left\{\theta: \phi_{\theta,\theta + r_{\epsilon},\epsilon}=0\right\}=\left[F_n^{-1}( 2(1 - \Phi(t_{\epsilon}) ) )  + t_{\epsilon} -  \frac{2}{t_{\epsilon}},\infty\right).$$
Thus, the formula (\ref{def:CI-based-on-test}) is simplified to
$$\widehat{\CI}=\bigcap_{\epsilon\in[0,0.05]}\left[F_n^{-1}( 2(1 - \Phi(t_{\epsilon}) ) )  + t_{\epsilon} -  \frac{2}{t_{\epsilon}},F_n^{-1}\left(1-2(1-\Phi(t_{\epsilon}))\right)-t_{\epsilon}+\frac{2}{t_{\epsilon}}\right],$$
which is essentially the same as (\ref{eq:adaptive-CI}). Therefore, the combination of Proposition \ref{prop:test-to-CI} and Lemma \ref{prop:upper-bound-Gaussian} leads to an alternative proof of the coverage and length properties of the optimal ARCI from a robust testing perspective.

\subsection{Gaussian Location Model: Lower Bound}\label{sec:Glow}

The following result shows that $r_{\epsilon}\asymp \frac{1}{\sqrt{\log n}}+\frac{1}{\sqrt{\log(1/\epsilon)}}$ is the smallest order of separation for the robust testing problems $\{ \cH(\theta, \theta \pm r_{ \epsilon }, \epsilon): \epsilon \in [0,0.05] \}$ to have solutions with small Type-1 and Type-2 errors. Below this rate, the null and alternative can no longer be separated with $n$ samples.
\begin{Lemma} \label{prop:test-lower-bound-gaussian}
	For any $n \geq 100$, $\alpha \in (0,1)$ and $\epsilon \in [0, 0.05]$, there exists some constant $c > 0$ only depending on $\alpha$, such that as long as
	$$r \leq c \left(\frac{1}{\sqrt{\log n}}  + \frac{1}{\sqrt{\log (1/\epsilon)}} \right),$$
	we have
		\begin{equation}
			\inf_{Q_0, Q_1} \TV(P^{\otimes n}_{0.05, \theta, Q_0}, P^{\otimes n}_{\epsilon, \theta \pm r, Q_1}) \leq \alpha, \label{eq:gau-tv-match}
		\end{equation}
for all $\theta\in\mathbb{R}$.
\end{Lemma}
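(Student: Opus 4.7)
The plan is to construct contamination distributions $Q_0, Q_1$ explicitly so that the single-sample measures $P_0 := P_{0.05, 0, Q_0}$ and $P_1 := P_{\epsilon, r, Q_1}$ either coincide or have $\TV(P_0, P_1) \leq \alpha/n$, and then tensorize via $\TV(P_0^{\otimes n}, P_1^{\otimes n}) \leq n\, \TV(P_0, P_1)$. By translation invariance we set $\theta = 0$, and by symmetry only the $+r$ alternative needs treatment.

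Writing $f_0, f_r$ for the densities of $N(0,1)$ and $N(r,1)$, decompose $a := 0.95 f_0 - (1-\epsilon) f_r$ into its Jordan parts $a = a^+ - a^-$. A direct signed-measure argument shows that probability densities $q_0, q_1$ giving $P_0 = P_1$ exist if and only if $\int a^+ \leq \epsilon$: one takes $0.05\, q_0 = a^- + b$ and $\epsilon\, q_1 = a^+ + b$ for any nonnegative $b$ of total mass $\epsilon - \int a^+$, which equals $0.05 - \int a^-$ since $\int a = \epsilon - 0.05$. When this condition fails, an extremal-set argument (tested on $\{a > 0\}$) shows that the minimum single-sample TV over $(Q_0, Q_1)$ equals $\int a^+ - \epsilon$; thus in either regime $\min_{Q_0, Q_1} \TV(P_0, P_1) \leq \int a^+$.

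The core estimate is a Gaussian-tail bound on $\int a^+$. The region $\{a > 0\}$ equals $\{x < u\}$ with $u = r/2 + r^{-1} \log(0.95/(1-\epsilon))$. For $\epsilon$ bounded away from $0.05$, there is an absolute constant $c_0 > 0$ with $\log(0.95/(1-\epsilon)) \leq -c_0$, so $u \leq -c_0/(2r)$ for small $r$, and Mills' ratio gives $\int a^+ \leq 0.95\, \Phi(u) \lesssim r \exp(-c_0^2/(8 r^2))$. For $\epsilon$ near $0.05$ the positive region shifts into the bulk, but a linear expansion yields $\int a^+ \lesssim r$, while the required bound $\int a^+ \leq \epsilon$ reduces to $r \lesssim 1$, trivially holding for small $c$.

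Given $r \leq c(1/\sqrt{\log n} + 1/\sqrt{\log(1/\epsilon)})$, either $r \leq 2c/\sqrt{\log n}$ or $r \leq 2c/\sqrt{\log(1/\epsilon)}$. In the second case (with $\epsilon > 0$), the Mills bound with $c$ small enough forces $\int a^+ \leq \epsilon$, giving exact match $P_0 = P_1$ and hence product TV equal to zero. In the first case, which also covers $\epsilon = 0$, the Mills bound gives $\int a^+ \lesssim r\, n^{-K}$ where the exponent $K = c_0^2/(32 c^2)$ grows as $c$ shrinks; choosing $c$ small enough depending only on $\alpha$ makes $n \int a^+ \leq \alpha$, and tensorization completes the proof. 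The main technical point is calibrating a single universal $c$ so that all these inequalities hold uniformly in $n \geq 100$ and $\epsilon \in [0, 0.05]$, especially merging the Mills-ratio regime (small $\epsilon$ or large $n$) with the linear-expansion regime near $\epsilon = 0.05$.
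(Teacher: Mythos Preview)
Your proposal is correct and reaches the same conclusion, but the route is genuinely different from the paper's. The paper gives two explicit constructions: for the $1/\sqrt{\log n}$ term it sets $Q_1=N(0,1)$ and defines $q_0$ as a truncated density difference, where the truncation level $t=\Phi^{-1}(1-\alpha/n)$ is chosen so that the residual TV is exactly $(1-\epsilon_{\max})\,\mathbb{P}(N(0,1)\geq t)\leq \alpha/n$, and the validity condition $q_0\geq 0$ becomes the inequality $\exp(tr+r^2/2)\leq \mathbb{P}(N\leq t)/(1-\epsilon_{\max})$ that forces $r\lesssim 1/t$. For the $1/\sqrt{\log(1/\epsilon)}$ term it instead picks $Q_1=N(\sqrt{\log(1/\epsilon)},1)$ and solves for $Q_0$ to match the two mixtures exactly, verifying $q_0\geq 0$ separately on $\{x\leq 2\sqrt{\log(1/\epsilon)}\}$ and its complement. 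Your argument replaces both constructions by a single variational step: you compute the minimum single-sample TV over all pairs $(Q_0,Q_1)$ as $(\int a^+ - \epsilon)_+$ via the Jordan decomposition of $a=0.95f_0-(1-\epsilon)f_r$, and then bound $\int a^+\leq 0.95\,\Phi(u)$ at the density crossing point $u$. This is cleaner and more principled (it is sharp at the single-sample level), and the two rate terms fall out of the same Mills-ratio estimate rather than requiring separate constructions. The paper's approach, on the other hand, makes the contaminating distributions completely explicit, which is useful conceptually and also feeds directly into the general location-family arguments in Section~\ref{sec:ARCI-general-distribution}. One small point to tighten in your write-up: the ``first case'' Mills bound with fixed $c_0$ only applies once $\epsilon$ is bounded away from $0.05$; you should state explicitly that for $\epsilon$ above any fixed threshold (say $0.025$) the linear bound $\int a^+\lesssim r$ already gives exact matching, so the Mills argument in the $r\leq 2c/\sqrt{\log n}$ branch is only invoked when $c_0$ is uniformly positive.
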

Since (\ref{eq:gau-tv-match}) implies that the sum of Type-1 and Type-2 errors must be greater than $1-\alpha$ for any test, Lemma \ref{prop:test-lower-bound-gaussian} leads to the rate optimality of the testing procedures (\ref{eq:test-gaussian}) and (\ref{eq:test+gaussian}). In fact, the lower bound of Lemma \ref{prop:test-lower-bound-gaussian} is for each individual level of $\epsilon$, let alone the simultaneous Type-1 error control established by Lemma \ref{prop:upper-bound-Gaussian}. Combined with Proposition \ref{prop:CI-imply-test}, Lemma \ref{prop:test-lower-bound-gaussian} immediately implies the optimal length lower bound of an ARCI by Theorem \ref{th:lower-bound-Gaussian}.

The proof of (\ref{eq:gau-tv-match}) is constructive. Namely, we construct explicit distributions $Q_0$ and $Q_1$ so that $0.95N(\theta,1)+0.05Q_0$ and $(1-\epsilon)N(\theta-r,1)+\epsilon Q_1$ are close. Without loss of generality, we can just consider $\theta=r$. Here, we illustrate a simple idea by first setting $Q_1=N(0,1)$. Then, we need to choose some $Q_0$ such that $0.95N(r,1)+0.05Q_0$ is close to $N(0,1)$. Suppose we define the density function of $Q_0$ by
\begin{equation}
q_0(x)=20\phi(x)-19\phi(x-r), \label{eq:attempt-con}
\end{equation} where $\phi(\cdot)$ is the density of the $N(0,1)$. 
Then, the null density would exactly match the alternative. However, the formula (\ref{eq:attempt-con}) is not a valid density function, since for any $r$, there exists some $x$ such that the function value is negative. To overcome this issue, we apply a truncation and consider
\begin{equation}
q_0(x) = 20\phi(x)-19 \frac{\indi\left\{x - r\leq t\right\}}{\bbP(N(0,1) \leq t)} \phi(x-r). \label{eq:actual-con}
\end{equation}
Though the null and alternative do not exactly match due to the truncation, it can be shown that
$$\TV\left(0.95N(r,1)+0.05Q_0, N(0,1)\right)\leq \frac{\alpha}{n},$$
as long as we take $t\asymp\sqrt{\log n}$, which then implies (\ref{eq:gau-tv-match}) by some basic property of total variation. The benefit of the truncation is that now we only require
\begin{equation}
\phi(x)\geq \frac{0.95\phi(x-r)}{\bbP(N(0,1) \leq t)}\quad\text{for all}\quad x\leq t + r,\label{eq:compare-tail-example}
\end{equation}
in order that (\ref{eq:actual-con}) is a valid density function. With $t\asymp\sqrt{\log n}$, one can directly verify that (\ref{eq:compare-tail-example}) holds whenever $r\leq \frac{c}{\sqrt{\log n}}$ for some small constant $c>0$. To obtain the other term $\frac{1}{\sqrt{\log (1/\epsilon)}}$ in the lower bound rate, a slightly more complicated construction of $Q_0$ and $Q_1$ is required. We refer the readers to the proof of Lemma \ref{prop:test-lower-bound-gaussian} in Appendix C.

\section{General Location Families} \label{sec:ARCI-general-distribution}

A careful reader may notice that both the upper and lower bound arguments developed in Sections \ref{sec:Gup} and \ref{sec:Glow} depend crucially on the Gaussian tail property. One may naturally wonder whether adaptation costs are different for other distributions. The goal of this section is to provide a systematic answer to this question.

\subsection{A Laplace Example} \label{sec:laplace-example}

We first highlight the Laplace location model, where the density function of the distribution $P_{\theta}=\text{Laplace}(\theta,1)$ is given by
$$p_{\theta}(x)=\frac{1}{2}\exp\left(-|x-\theta|\right).$$
Recall the definition of optimal length of ARCI $r_{\alpha}(\epsilon,\cE)$ given by (\ref{eq:minimax-length-ARCI}).
\begin{Proposition}\label{prop:laplace}
There exists some constant $c>0$, such that
$$r_{\alpha}(\epsilon,[0,0.05])\geq c,$$
for any $\epsilon\in[0,0.05]$ and any $\alpha\in(0,1/4)$.
\end{Proposition}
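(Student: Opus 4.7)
The strategy is to invoke the CI-to-test reduction (Proposition \ref{prop:CI-imply-test}) in the contrapositive direction: if the optimal ARCI length were below a small constant, one could solve a robust testing problem with small total error, but I will exhibit a pair of contamination distributions that makes the null and alternative mixtures \emph{literally equal}, so no test can have total error less than $1$. The whole argument rests on a single density-ratio computation that exploits the fact that Laplace tails decay only exponentially, in sharp contrast to the Gaussian case.

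The key construction is as follows. Fix any $\epsilon \in [0, 0.05]$, any $\theta \in \bbR$, and any $r \in (0, \log(20/19))$. Set $Q_1 = P_{\theta + r}$ and define
\[
q_0(x) = 20\, p_{\theta+r}(x) - 19\, p_{\theta}(x).
\]
The Laplace ratio $p_\theta(x)/p_{\theta+r}(x) = \exp(|x-\theta-r| - |x-\theta|)$ is bounded on all of $\bbR$ by $e^r < 20/19$, so $q_0 \geq 0$ pointwise; since it manifestly integrates to one, it is a valid density. By construction,
\[
0.95\, P_{\theta} + 0.05\, Q_0 \;=\; P_{\theta + r} \;=\; (1 - \epsilon)\, P_{\theta + r} + \epsilon\, Q_1,
\]
so $P_{0.05,\theta,Q_0} = P_{\epsilon,\theta+r,Q_1}$ and their $n$-fold products agree; a symmetric choice handles the $\theta - r$ side. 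Now suppose for contradiction that $r_\alpha(\epsilon,[0,0.05]) < c$ for some constant $c < \log(20/19)$. Then there exist $r < c$ and $\widehat{\CI} \in \cI_\alpha([0,0.05])$ with $\sup_{\theta,Q} P_{\epsilon,\theta,Q}(|\widehat{\CI}| \geq r) \leq \alpha$. Applying Proposition \ref{prop:CI-imply-test} to $\phi = \indi(\theta \notin \widehat{\CI})$ gives, for the hypothesis $\cH(\theta,\theta+r,\epsilon)$,
\[
\sup_Q P_{0.05,\theta,Q}(\phi = 1) \leq \alpha, \qquad \sup_Q P_{\epsilon,\theta+r,Q}(\phi = 0) \leq 2\alpha.
\]
Evaluating each supremum at our $(Q_0,Q_1)$ and using that the two distributions coincide yields $1 = P^*(\phi=1) + P^*(\phi=0) \leq 3\alpha$, contradicting $\alpha < 1/4$.

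The main conceptual obstacle, and what distinguishes this case from the Gaussian argument in Section \ref{sec:Glow}, is recognizing that the tail-truncation trick that produced the $1/\sqrt{\log n}$ rate becomes unnecessary \emph{and unavailable for improvement} under Laplace tails. For the Gaussian, the density ratio $p_\theta/p_{\theta+r}$ blows up in the tails, forcing one to truncate the support of $Q_0$ at a level $t \asymp \sqrt{\log n}$ and thereby tying the separation $r$ to $n$. Under Laplace, the ratio is globally bounded by $e^r$, so an untruncated $Q_0$ achieves an exact match with no dependence on $n$ or on $\epsilon$, explaining why the adaptation cost is lower-bounded by a pure constant. The minor bookkeeping of choosing an explicit constant $c$ (e.g.\ $c = \tfrac{1}{2}\log(20/19)$) and verifying the symmetric construction for the $\theta - r$ side is routine.
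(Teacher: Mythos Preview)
Your proof is correct and matches precisely the construction the paper sketches in Section~\ref{sec:laplace-example} (equation~\eqref{eq:laplace-Q} there is your $q_0$ after relabeling). The paper's formal one-line proof instead defers to the generalized-Gaussian case $\beta=1$ of Theorem~\ref{thm:example}, which routes through the general lower-bound machinery of Theorem~\ref{th:test-lower-bound} and Corollary~\ref{coro:length-lower-bound}; your direct argument is more elementary and self-contained, trading generality for transparency, while the paper's route shows that the Laplace result is a special case of a broader phenomenon indexed by tail shape.
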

In contrast to the Gaussian location model, for Laplace data, it is impossible to construct a confidence interval whose length shrinks as sample size increases when $\epsilon$ is unknown. Note that the result holds for $\epsilon=0$, which reads $r_{\alpha}(0,[0,0.05])\geq c$. This means a statistician may have an infinite supply of clean samples without contamination, but as long as they do not know whether the data is clean or not, accurate uncertainty quantification will still be impossible.

It is worth mentioning that when $\epsilon$ is known, $r_{\alpha}(\epsilon,\{\epsilon\})\asymp \frac{1}{\sqrt{n}}+\epsilon$, which is the same as the Gaussian case, since the minimax rate of location estimation is the same for the two distributions \citep{chen2018robust}. On the other hand, when $\epsilon$ is unknown, the adaptation costs are completely different. The lower bound for Laplace data in Proposition \ref{prop:laplace} implies that the conservative strategy of using $\epsilon_{\max}=0.05$ to quantify uncertainty is actually optimal. That is, we have $r_{\alpha}(\epsilon,[0,0.05])\asymp r_{\alpha}(0.05,[0,0.05])\asymp 1$ in this case.

The adaptation cost for the Laplace distribution is implied by the hardness of the following hypothesis testing problem,
\begin{eqnarray*}
H_0: && X_1,\cdots,X_n\overset{iid}\sim0.95\text{Laplace}(r,1)+0.05Q \\
H_1: && X_1,\cdots,X_n\overset{iid}\sim\text{Laplace}(0,1).
\end{eqnarray*}
For $r$ being a sufficiently small constant, we take $Q$ to be a distribution with density
\begin{equation}
q(x)=10\exp\left(-|x|\right)-\frac{19}{2}\exp(-|x-r|), \label{eq:laplace-Q}
\end{equation}
then
$$0.95\text{Laplace}(r,1)+0.05Q=\text{Laplace}(0,1).$$
In other words, one cannot tell whether the data is generated by $\text{Laplace}(0,1)$ or by $\text{Laplace}(r,1)$ together with $5\%$ contamination, which implies that a confidence interval with coverage property cannot be shorter than $r$, a consequence of Proposition \ref{prop:CI-imply-test}. Note that the construction (\ref{eq:laplace-Q}) is in the same form as (\ref{eq:attempt-con}) in the Gaussian case. However, the formula (\ref{eq:attempt-con}) with Gaussian densities is not a valid distribution, and thus a truncation (\ref{eq:actual-con}) is needed. For Laplace density, (\ref{eq:laplace-Q}) turns out to be valid, since $q(x)\geq 0$ for all $x\in\mathbb{R}$, as long as $r$ is set as a small constant.

\subsection{Theory for Location Models}

To understand the striking difference between Gaussian and Laplace, 
let us study a general location model $P_{\theta}$. That is, $X\sim P_{\theta}$ if and only if $X-\theta\sim P_0$. We assume that the distribution $P_0$ admits some density function $f(\cdot)$ that is continuous on its support and symmetric around $0$.  The CDF of $P_0$ is denoted by $F(t)=P_0(X\leq t)$. The quantity $r_{\alpha}(\epsilon,\cE)$ is defined similarly as (\ref{eq:minimax-length-ARCI}) with $\cE=[0,\epsilon_{\max}]$ for some small constant $\epsilon_{\max}$. We comment that the symmetry is to simplify the notation. There is no additional difficulty in extending beyond this assumption.

By leveraging the results in Section \ref{sec:test-equi}, optimal construction of ARCI will be solved via analyzing the robust testing problems $\{ \cH(\theta, \theta \pm r_{ \epsilon }, \epsilon): \epsilon \in [0,\epsilon_{\max}] \}$ defined in (\ref{eq:test-def}). To develop general upper and lower bounds, we need to define some additional quantities. For each $\epsilon\in[0,\epsilon_{\max}]$, we define
\begin{eqnarray}
\label{eq:qup} \overline{q}(\epsilon) &=& \frac{6}{(1-\epsilon_{\max})} \left(\epsilon + \frac{100 \log(32/\alpha)}{3(1-\epsilon_{\max})n} \right), \\
\label{eq:qlo} \underline{q}(\epsilon) &=& \frac{1}{2(1-\epsilon_{\max})}\left(\epsilon+\frac{\alpha(1-\epsilon_{\max})}{n}\right).
\end{eqnarray}
With these two quantities, we further define
\begin{eqnarray}
\quad \overline{r}(\epsilon) &=& \inf\left\{r\geq 0:  \sup_{r \leq t\leq r+F^{-1}(1-\overline{q}(\epsilon))}\left(1-F(t-r)-\frac{6}{1-\epsilon_{\max}}(1-F(t))\right)\geq 0\right\}, \\
\quad  \underline{r}(\epsilon) &=& \sup\left\{r\geq 0: \sup_{t\leq r+F^{-1}(1-\underline{q}(\epsilon))}\left(f(t-r)-\frac{1-\left(\epsilon\vee\frac{\alpha}{n}\right)}{1-\epsilon_{\max}}f(t)\right)\leq 0\right\}.
\end{eqnarray}
In fact, $\overline{r}(\epsilon)$ and $\underline{r}(\epsilon)$ will be the upper and lower bounds for the optimal separation of the testing problems $\cH(\theta, \theta \pm r_{ \epsilon }, \epsilon)$. We note that the quantities $\overline{q}(\epsilon)$, $\underline{q}(\epsilon)$, $\overline{r}(\epsilon)$ and $\underline{r}(\epsilon)$ also depend on $\alpha$ and $\epsilon_{\max}$, but since both $\alpha$ and $\epsilon_{\max}$ are treated as constants, we suppress the dependence for the conciseness of notation.

\subsubsection{A General Upper Bound}

An extension of the testing functions (\ref{eq:test-gaussian}) and (\ref{eq:test+gaussian}) for a general location model is given by
\begin{eqnarray}
\label{eq:test-general} \quad ~~ \phi_{\theta, \theta- r_{\epsilon}, \epsilon} &=& \indi \left\{ \frac{1}{n}\sum_{i=1}^n \indi \left \{ X_i - (\theta - r_{\epsilon} ) \geq t_{\epsilon} \right\} \leq \frac{3}{2}\left(1-F(t_{\epsilon})+\frac{10\log(4/\alpha)}{9n}+\epsilon\right) \right\}, \\
\label{eq:test+general}\quad ~~  \phi_{\theta, \theta+ r_{\epsilon}, \epsilon} &=& \indi \left\{ \frac{1}{n}\sum_{i=1}^n \indi \left \{ X_i - (\theta + r_{\epsilon} ) \leq -t_{\epsilon} \right\} < \frac{3}{2}\left(1-F(t_{\epsilon})+\frac{10\log(4/\alpha)}{9n}+\epsilon\right) \right\}.
\end{eqnarray}
The parameters $r_{\epsilon}$ and $t_{\epsilon}$ are set by 
\begin{equation} \label{eq:r-general}
	r_{\epsilon} = \overline{r}(\epsilon)
\end{equation}
\begin{equation}\label{def:t-constant-general}
	t_{\epsilon} = \argmax_t \left\{\overline{r}(\epsilon)\leq t\leq \overline{r}(\epsilon)+F^{-1}(1-\overline{q}(\epsilon)):  1-F(t-\overline{r}(\epsilon))-\frac{6}{1-\epsilon_{\max}} (1-F(t))\geq 0\right\}.
\end{equation}
The testing errors of (\ref{eq:test-general}) and (\ref{eq:test+general}) are bounded by the following theorem.
\begin{Theorem}\label{thm:upper-bound-general}
Suppose $n\geq 400$, and $\epsilon_{\max},\alpha\in(0,1)$ are constants satisfying $\overline{q}(\epsilon_{\max})\leq 1$. Then, the testing functions $\{\phi_{\theta, \theta \pm r_{ \epsilon }, \epsilon}: \epsilon \in [0,\epsilon_{\max}] ,\theta\in\mathbb{R}\}$ defined by (\ref{eq:test-general}) and (\ref{eq:test+general}) with parameters $r_{\epsilon}$ and $t_{\epsilon}$ set by (\ref{eq:r-general}) and (\ref{def:t-constant-general}) satisfy
\begin{equation*}
		\begin{split}
			\textnormal{(simultaneous Type-1 error)} &\quad \sup_QP_{\epsilon_{\max},\theta,Q}\left(\sup_{\epsilon\in[0,\epsilon_{\max}]}\phi_{\theta,\theta\pm \overline{r}(\epsilon),\epsilon}=1\right)\leq \frac{\alpha}{4},\\
			\textnormal{(Type-2 error)} &\quad \sup_QP_{\epsilon,\theta\pm \overline{r}(\epsilon),Q}\left(\phi_{\theta,\theta\pm \overline{r}(\epsilon),\epsilon}=0\right)\leq \frac{\alpha}{4},
		\end{split}
	\end{equation*}
for all $\theta\in\mathbb{R}$ and $\epsilon\in[0,\epsilon_{\max}]$.
\end{Theorem}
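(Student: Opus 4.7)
The plan is to verify the simultaneous Type-1 and Type-2 bounds for $\phi_{\theta,\theta-\overline{r}(\epsilon),\epsilon}$; the analysis of $\phi_{\theta,\theta+\overline{r}(\epsilon),\epsilon}$ is entirely symmetric. Throughout, set $T_\epsilon := \frac{1}{n}\sum_{i=1}^n\indi\{X_i - (\theta-\overline{r}(\epsilon))\geq t_\epsilon\}$, $\tau_\epsilon := \frac{3}{2}\bigl(1-F(t_\epsilon) + \tfrac{10\log(4/\alpha)}{9n} + \epsilon\bigr)$, and $s_\epsilon := t_\epsilon - \overline{r}(\epsilon)$, so that $s_\epsilon \in [0,\,F^{-1}(1-\overline{q}(\epsilon))]$.

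First I compute the population mean of $T_\epsilon$ under both distributions. Under the null $P_{\epsilon_{\max},\theta,Q}$, the marginal of $X_i - \theta$ is $(1-\epsilon_{\max})P_0 + \epsilon_{\max}Q'$, hence $\bbE[T_\epsilon] \geq (1-\epsilon_{\max})(1-F(s_\epsilon))$. The defining relation of $t_\epsilon$ in (\ref{def:t-constant-general}) yields $\bbE[T_\epsilon] \geq 6(1-F(t_\epsilon))$, while $s_\epsilon \leq F^{-1}(1-\overline{q}(\epsilon))$ implies $\bbE[T_\epsilon] \geq (1-\epsilon_{\max})\overline{q}(\epsilon) = 6\epsilon + \tfrac{200\log(32/\alpha)}{(1-\epsilon_{\max})n}$. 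Under the alternative, a direct computation gives $\bbE[T_\epsilon] \leq (1-\epsilon)(1-F(t_\epsilon)) + \epsilon \leq 1-F(t_\epsilon) + \epsilon$.

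Averaging the two null bounds delivers the key multiplicative gap $\bbE[T_\epsilon] \geq 2\tau_\epsilon$: the term $\tfrac{3}{2}(1-F(t_\epsilon))$ in $\tau_\epsilon$ is dominated by $\tfrac{1}{4}\cdot 6(1-F(t_\epsilon))$, and $\tfrac{3\epsilon}{2} + \tfrac{5\log(4/\alpha)}{3n}$ is dominated by $\tfrac{1}{4}\bigl(6\epsilon + \tfrac{200\log(32/\alpha)}{(1-\epsilon_{\max})n}\bigr)$, where the constants close under the assumptions $n \geq 400$ and $\overline{q}(\epsilon_{\max})\leq 1$. Under the alternative, one obtains the analogous gap $\tau_\epsilon - \bbE[T_\epsilon] \geq \tfrac{1}{2}\bbE[T_\epsilon] + \tfrac{5\log(4/\alpha)}{3n}$. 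With these in hand, the multiplicative Chernoff bound for Bernoulli sums gives, for each fixed $\epsilon$,
\begin{equation*}
P_{\epsilon_{\max},\theta,Q}(T_\epsilon \leq \tau_\epsilon) \leq P_{\epsilon_{\max},\theta,Q}\bigl(T_\epsilon \leq \tfrac{1}{2}\bbE[T_\epsilon]\bigr) \leq \exp(-n\bbE[T_\epsilon]/8) \leq \exp\!\left(-\frac{25\log(32/\alpha)}{1-\epsilon_{\max}}\right),
\end{equation*}
which is vastly smaller than $\alpha$; the Type-2 bound follows from a matching upper-tail Chernoff applied to $1-T_\epsilon$.

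The remaining step is to upgrade the per-$\epsilon$ Type-1 bound to the simultaneous statement. Rewriting $T_\epsilon = \widehat{G}_n(\theta+s_\epsilon)$ with $\widehat{G}_n(x) := \frac{1}{n}\sum_i \indi\{X_i \geq x\}$ and $G^*(x) := \bbE[\widehat{G}_n(x)]$, Step 2 gives $G^*(\theta+s_\epsilon) \geq 2\tau_\epsilon$ and $G^*(\theta+s_\epsilon) \geq \tfrac{200\log(32/\alpha)}{(1-\epsilon_{\max})n}$, so it suffices to show $\widehat{G}_n(x) \geq \tfrac{1}{2}G^*(x)$ uniformly on $\{\theta+s_\epsilon:\epsilon\in[0,\epsilon_{\max}]\}$. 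Since $\widehat{G}_n$ is a non-increasing right-continuous step function with at most $n$ jumps and the class $\{\indi\{x\geq a\}:a\in\bbR\}$ has VC dimension $1$, the uncountable supremum reduces to at most $n+1$ distinct events; a Vapnik-type relative deviation inequality (or equivalently, Bernstein at the order statistics followed by a monotonicity-based union bound) then gives the uniform $\tfrac{1}{2}G^*$ lower bound at probability cost $\alpha/4$. The main obstacle lies precisely here: one must ensure that the $\log n$ (or VC) penalty from uniformization is absorbed by the exponent $\log(32/\alpha)/(1-\epsilon_{\max})$, which is the reason the constants in $\overline{q}(\epsilon)$ and in the threshold $\tau_\epsilon$ were chosen aggressively, so that the per-$\epsilon$ Chernoff deviation is already polynomially small in $\alpha$ to spare.
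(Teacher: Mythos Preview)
Your overall strategy coincides with the paper's: bound the null and alternative means of $T_\epsilon$, show the threshold $\tau_\epsilon$ sits at a multiplicative gap from each, and use Bernstein/Chernoff for the Type-2 error. The paper packages this as a general lemma (their Theorem~\ref{th:testing-upper-general-coeff}) with free parameters $(\rho_\epsilon,t_\epsilon,\eta)$ and then specializes to $\eta=1/2$, but the arithmetic you carry out is the same.

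The weak point is your uniformization for the simultaneous Type-1 bound. Reducing to $n+1$ order-statistic events and then union-bounding against the per-$\epsilon$ Chernoff bound $\exp(-25\log(32/\alpha)/(1-\epsilon_{\max}))$ does \emph{not} close: that quantity is a fixed power of $\alpha$, independent of $n$, so multiplying by $n+1$ blows up. Your final sentence suggests the constants in $\overline{q}(\epsilon)$ were inflated to absorb a $\log n$ penalty, but there is no $\log n$ anywhere in $\overline{q}$, so this cannot be how the argument works. The correct tool---which you gesture at with ``Vapnik-type relative deviation''---is a ratio-type DKW inequality stating that, with probability $\geq 1-8\exp(-n\Delta/100)$, one has $|F_n(x)-F(x)|\leq \tfrac12 F(x)(1-F(x))$ simultaneously for all $x$ with $F(x)(1-F(x))\geq\Delta$. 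The paper states and proves this as a standalone lemma via a dyadic peeling of the variance range, and crucially there is \emph{no} $\log n$ prefactor. Applying it with $\Delta=100\log(32/\alpha)/n$ immediately gives $\widehat G_n(x)\geq\tfrac12 G^*(x)$ uniformly over the relevant range at cost $\alpha/4$; this is why the constant $100\log(32/\alpha)$ appears in $\overline{q}(\epsilon)$. So your plan is right but the justification of the key step is incomplete: you need to either invoke or prove the localized DKW, not rely on union-bound slack that is not actually there.
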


Using (\ref{def:CI-based-on-test}) and similar calculations as in \eqref{eq:interval-step1}-\eqref{eq:interval-step2}, we obtain the following ARCI,
\begin{equation}\label{eq:adaptive-CI-general}
\begin{split}
\widehat{\CI}=\bigcap_{\epsilon\in[0,\epsilon_{\max}]}&\left[F_n^{-1}\left( \frac{3}{2}\left(1-F(t_{\epsilon})+\frac{10\log(4/\alpha)}{9n}+\epsilon\right)\right)  + t_{\epsilon} -  r_{\epsilon},\right. \\
& \left. F_n^{-1}\left(1-\frac{3}{2}\left(1-F(t_{\epsilon})+\frac{10\log(4/\alpha)}{9n}+\epsilon\right)\right)-t_{\epsilon}+r_{\epsilon}\right].
\end{split}
\end{equation}
Its coverage and length properties are direct consequences of Proposition \ref{prop:test-to-CI} and Theorem \ref{thm:upper-bound-general}.

\begin{Corollary}\label{coro:length-upper-bound}
Suppose $n\geq 400$, and $\epsilon_{\max},\alpha\in(0,1)$ are constants satisfying $\overline{q}(\epsilon_{\max})\leq 1$. Then, the confidence interval $\widehat{\CI}$ defined by (\ref{eq:adaptive-CI-general}) with parameters $r_{\epsilon}$ and $t_{\epsilon}$ set by (\ref{eq:r-general}) and (\ref{def:t-constant-general}) satisfies
\begin{equation*}
		\begin{split}
			\textnormal{(coverage)} &\quad \inf_{ \epsilon \in [0, \epsilon_{\max}], \theta, Q} P_{\epsilon, \theta, Q}\left( \theta \in\widehat{\CI} \right) \geq 1-\alpha,\\
			\textnormal{(length)} & \quad\inf_{\epsilon \in [0, \epsilon_{\max}], \theta, Q } P_{\epsilon, \theta, Q}\left( |\widehat{\CI}| \leq 2\overline{r}(\epsilon) \right) \geq 1-\alpha.
		\end{split}
	\end{equation*}
\end{Corollary}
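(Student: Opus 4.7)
The claim is advertised by the authors as a ``direct consequence of Proposition \ref{prop:test-to-CI} and Theorem \ref{thm:upper-bound-general},'' so the plan is essentially bookkeeping: convert the testing guarantees of Theorem \ref{thm:upper-bound-general} into coverage and length guarantees via Proposition \ref{prop:test-to-CI}, after first checking that the interval in (\ref{eq:adaptive-CI-general}) is exactly the set obtained by inverting the tests in (\ref{eq:test-general})--(\ref{eq:test+general}) through the general recipe (\ref{def:CI-based-on-test}).

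The first step is the algebraic identification. Fix $\epsilon\in[0,\epsilon_{\max}]$, and consider the event $\phi_{\theta,\theta-r_{\epsilon},\epsilon}=0$. Writing the indicator sum as an empirical CDF and repeating the manipulations (\ref{eq:interval-step1})--(\ref{eq:interval-step2}) verbatim but with the threshold $\frac{3}{2}\bigl(1-F(t_{\epsilon})+\frac{10\log(4/\alpha)}{9n}+\epsilon\bigr)$ in place of $2(1-\Phi(t_{\epsilon}))$ and $r_{\epsilon}=\overline r(\epsilon)$ in place of $2/t_{\epsilon}$, I find
\[
\{\theta:\phi_{\theta,\theta-r_{\epsilon},\epsilon}=0\}=\Bigl(-\infty,\;F_n^{-1}\bigl(1-\tfrac{3}{2}(1-F(t_{\epsilon})+\tfrac{10\log(4/\alpha)}{9n}+\epsilon)\bigr)-t_{\epsilon}+r_{\epsilon}\Bigr].
\]
A symmetric computation for $\phi_{\theta,\theta+r_{\epsilon},\epsilon}$ produces the matching lower ray, so taking the intersection over $\epsilon\in[0,\epsilon_{\max}]$ gives exactly (\ref{eq:adaptive-CI-general}). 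Because each $\epsilon$-slice is an interval and a (possibly empty) intersection of intervals on $\mathbb{R}$ is again an interval, $\widehat{\CI}$ from (\ref{eq:adaptive-CI-general}) satisfies the hypothesis ``$\widehat{\CI}$ is an interval'' in the second half of Proposition \ref{prop:test-to-CI}.

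The second step combines the error budgets. Theorem \ref{thm:upper-bound-general} provides simultaneous Type-1 error and Type-2 error each bounded by $\alpha/4$ for the testing functions in (\ref{eq:test-general})--(\ref{eq:test+general}), under the stated assumptions $n\geq 400$ and $\overline q(\epsilon_{\max})\leq 1$. Applying Proposition \ref{prop:test-to-CI} with $\alpha/4$ in place of the $\alpha$ appearing there (which is legitimate since $\alpha/4<1/4$) immediately delivers the coverage bound $1-2(\alpha/4)=1-\alpha/2\geq 1-\alpha$ and, using the interval property established above, the length bound
\[
\inf_{\epsilon\in[0,\epsilon_{\max}],\theta,Q} P_{\epsilon,\theta,Q}\bigl(|\widehat{\CI}|\leq 2\overline r(\epsilon)\bigr)\geq 1-4(\alpha/4)=1-\alpha,
\]
since $r_{\epsilon}=\overline r(\epsilon)$ by (\ref{eq:r-general}). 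This closes both conclusions.

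The only nontrivial point, and hence the step I would write out with most care, is the algebraic identification in the first paragraph: one must track how the quantile inversion $F_n^{-1}$ interacts with strict versus non-strict inequalities defining the tests (in particular the strict ``$<$'' in (\ref{eq:test+general}) versus the ``$\leq$'' in (\ref{eq:test-general})) to make sure the two rays produced by the two families of tests fit together cleanly and reproduce (\ref{eq:adaptive-CI-general}) exactly rather than up to a boundary point. Once that identification is made, everything else is a mechanical propagation of the $\alpha/4$ budgets through Proposition \ref{prop:test-to-CI}.
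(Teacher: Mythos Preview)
Your proposal is correct and follows essentially the same approach as the paper's own proof: identify (\ref{eq:adaptive-CI-general}) as the test-inversion set (\ref{def:CI-based-on-test}) via the calculation (\ref{eq:interval-step1})--(\ref{eq:interval-step2}), then feed the $\alpha/4$ testing errors from Theorem \ref{thm:upper-bound-general} through Proposition \ref{prop:test-to-CI}. Your explicit tracking of the $\alpha/4\mapsto 1-\alpha/2$ and $\alpha/4\mapsto 1-\alpha$ arithmetic, and your remark about the strict/non-strict inequality bookkeeping in the quantile inversion, are more detailed than what the paper writes but entirely in line with its argument.
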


To summarize, we have established the optimal length upper bound $r_{\alpha}(\epsilon,[0,\epsilon_{\max}])\leq 2\overline{r}(\epsilon)$.

\subsubsection{A General Lower Bound}

The following theorem extends the result of Lemma \ref{prop:test-lower-bound-gaussian} from Gaussian to general location models.
\begin{Theorem}\label{th:test-lower-bound}
Suppose $\epsilon_{\max},\alpha\in(0,1)$ are constants satisfying $n\geq 1/\epsilon_{\max}$ and $\underline{q}(\epsilon_{\max})\leq 1$. Then, we have
$$
			\inf_{Q_0, Q_1} \TV(P^{\otimes n}_{\epsilon_{\max}, \theta, Q_0}, P^{\otimes n}_{\epsilon, \theta \pm \underline{r}(\epsilon), Q_1}) \leq \alpha,
		$$
for all $\theta\in\mathbb{R}$ and $\epsilon\in[0,\epsilon_{\max}]$.
\end{Theorem}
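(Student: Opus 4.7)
\textbf{Proof plan for Theorem~\ref{th:test-lower-bound}.}

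My plan is to construct explicit contaminating distributions $Q_0, Q_1$ so that the single-sample mixture densities are within total variation distance $\alpha/(2n)$ of each other; the $n$-sample conclusion will then follow from the tensorization inequality $\TV(P^{\otimes n}, Q^{\otimes n}) \leq n\, \TV(P, Q)$. I would first make two preliminary reductions: translation invariance of the location family lets me take $\theta = 0$, and symmetry of $f$ (via the pushforward $x \mapsto -x$, which sends $P_0$ to $P_0$ and $P_r$ to $P_{-r}$ while preserving $\TV$) reduces the $+\underline{r}(\epsilon)$ case to the $-\underline{r}(\epsilon)$ case. I would then only treat the latter.

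For the single-sample problem with $r = \underline{r}(\epsilon)$, define
\[
g(x) = (1-\epsilon_{\max}) f(x-r) - (1-\epsilon) f(x),
\]
with positive and negative parts $g^+, g^-$. The core step is to establish the mixture-coupling identity
\[
\inf_{Q_0, Q_1} \TV\bigl((1-\epsilon_{\max}) P_r + \epsilon_{\max} Q_0,\ (1-\epsilon) P_0 + \epsilon Q_1\bigr) = \Bigl(\int g^+ - \epsilon\Bigr)_+.
\]
The upper bound would come from a Neyman--Pearson-style construction that matches the mixtures pointwise wherever possible, absorbing $g^-$ into $\epsilon_{\max} q_0$ and as much of $g^+$ as the mass budget $\epsilon$ for $\epsilon q_1$ allows, with a common non-negative density added to both sides to satisfy the mass constraints (feasible because $\int g = \epsilon - \epsilon_{\max}$ links the two budgets). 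For the matching lower bound, total-mass balance forces $\int |g + \epsilon_{\max} q_0 - \epsilon q_1| \geq 2(\int g^+ - \epsilon)$ whenever $\int g^+ > \epsilon$. Making this identity fully rigorous is where I expect the main technical work to lie.

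The remaining step is to bound $\int g^+$ at $r = \underline{r}(\epsilon)$. The defining inequality of $\underline{r}(\epsilon)$ gives, for every $t$ with $t - r \leq F^{-1}(1-\underline{q}(\epsilon))$,
\[
(1-\epsilon_{\max}) f(t-r) \leq \bigl(1 - \epsilon \vee (\alpha/n)\bigr) f(t) \leq (1-\epsilon) f(t),
\]
so $g \leq 0$ on this region and $g^+$ is supported on the tail $\{t: t - r > F^{-1}(1-\underline{q}(\epsilon))\}$. Integrating the crude bound $g^+ \leq (1-\epsilon_{\max}) f(\cdot - r)$ over this tail yields $\int g^+ \leq (1-\epsilon_{\max})\, \underline{q}(\epsilon) = \epsilon/2 + \alpha(1-\epsilon_{\max})/(2n)$, so $(\int g^+ - \epsilon)_+ \leq \alpha(1-\epsilon_{\max})/(2n) \leq \alpha/(2n)$, and tensorization finishes the proof. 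The assumptions $n \geq 1/\epsilon_{\max}$ and $\underline{q}(\epsilon_{\max}) \leq 1$ are used only to ensure $F^{-1}(1-\underline{q}(\epsilon))$ is well-defined and that the defining set for $\underline{r}(\epsilon)$ is non-empty at $r = 0$.
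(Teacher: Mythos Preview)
Your proposal is correct and takes a genuinely different route from the paper. The paper splits into two cases according to whether $\alpha/n \geq \epsilon/(1-\epsilon_{\max})$: in Case I it sets $Q_1 = P_0$ and builds $Q_0$ by a truncation-and-renormalization of $f(\cdot)-(1-\epsilon_{\max})f(\cdot-r)$ at level $t = F^{-1}(1-\alpha/n)$, obtaining single-sample $\TV \leq \alpha(1-\epsilon_{\max})/n$; in Case II it constructs $Q_0,Q_1$ so that the two mixtures coincide exactly, using the set $S=\{(1-\epsilon_{\max})f(\cdot-r)>(1-\epsilon)f(\cdot)\}$ and checking the mass condition $\int_S (1-\epsilon_{\max})f(\cdot-r)\leq \epsilon$. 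Your approach replaces this case analysis by the single closed-form identity $\inf_{Q_0,Q_1}\TV = (\int g^+ - \epsilon)_+$ for $g = (1-\epsilon_{\max})f(\cdot-r)-(1-\epsilon)f(\cdot)$, then bounds $\int g^+$ once using the same tail argument. What your route buys is a cleaner, unified treatment that avoids the case split and makes transparent why exact matching ($\TV=0$) occurs precisely when $\int g^+\leq\epsilon$; what the paper's route buys is fully explicit $Q_0,Q_1$ in each regime, which may be pedagogically useful and connects directly to the Gaussian construction in Section~\ref{sec:Glow}. One small point you should make explicit: the defining constraint of $\underline{r}(\epsilon)$ is a supremum over $r$, so you need continuity of $f$ (assumed in the paper) to conclude that the inequality $f(t-r)\leq \frac{1-(\epsilon\vee \alpha/n)}{1-\epsilon_{\max}}f(t)$ still holds at $r=\underline{r}(\epsilon)$ itself, not just for strictly smaller $r$.
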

Together with Proposition \ref{prop:CI-imply-test}, Theorem \ref{th:test-lower-bound} immediately implies the following optimal length lower bound.
\begin{Corollary}\label{coro:length-lower-bound}
Suppose $\epsilon_{\max},\alpha\in(0,1/4)$ are constants satisfying $n\geq 1/\epsilon_{\max}$ and $\underline{q}(\epsilon_{\max})\leq 1$. Then, we have
$$r_{\alpha}(\epsilon,[0,\epsilon_{\max}])\geq \underline{r}(\epsilon),$$
for any $\epsilon\in[0,\epsilon_{\max}]$.
\end{Corollary}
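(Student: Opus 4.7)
The plan is to derive the length lower bound by a contradiction argument, combining Proposition \ref{prop:CI-imply-test} (which converts a short ARCI into a good test) with the total-variation bound of Theorem \ref{th:test-lower-bound} (which rules out good tests below separation $\underline{r}(\epsilon)$). Since this duality is precisely what Proposition \ref{prop:CI-imply-test} is designed for, the proof should be a short chain of implications rather than a fresh construction.

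First, I would suppose toward contradiction that $r_{\alpha}(\epsilon,[0,\epsilon_{\max}]) < \underline{r}(\epsilon)$ for some $\epsilon \in [0,\epsilon_{\max}]$. By the infimum definition in (\ref{eq:minimax-length-ARCI}), one can choose some $r$ with $r_{\alpha}(\epsilon,[0,\epsilon_{\max}]) < r < \underline{r}(\epsilon)$ and a corresponding interval $\widehat{\CI} \in \cI_{\alpha}([0,\epsilon_{\max}])$ such that
$$\sup_{\theta,Q} P_{\epsilon,\theta,Q}\left(|\widehat{\CI}| \geq r\right) \leq \alpha.$$
In particular $\widehat{\CI}$ satisfies both the coverage property at level $1-\alpha$ and a length bound $\inf_{\theta,Q} P_{\epsilon,\theta,Q}(|\widehat{\CI}| < \underline{r}(\epsilon)) \geq 1-\alpha$.

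Second, I would feed this $\widehat{\CI}$ into Proposition \ref{prop:CI-imply-test} with the choice $r_{\epsilon} = \underline{r}(\epsilon)$. The resulting test $\phi = \indi(\theta \notin \widehat{\CI})$ automatically inherits the simultaneous Type-1 bound $\sup_Q P_{\epsilon_{\max},\theta,Q}(\phi = 1) \leq \alpha$ from coverage, and the Type-2 bound $\sup_Q P_{\epsilon,\theta \pm \underline{r}(\epsilon),Q}(\phi = 0) \leq 2\alpha$ from coverage plus length via a union bound. The only delicate point is that the length event used in the Type-2 argument is $|\widehat{\CI}| < \underline{r}(\epsilon)$ strictly, so that if $\theta \pm \underline{r}(\epsilon)$ lies in the interval $\widehat{\CI}$, then $\theta$ cannot; this is where $r < \underline{r}(\epsilon)$ strictly enters, and it is the main (if mild) obstacle I expect.

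Third, I would collide this pair of error bounds with Theorem \ref{th:test-lower-bound}. That theorem furnishes contamination distributions $Q_0^*, Q_1^*$ with $\TV(P^{\otimes n}_{\epsilon_{\max},\theta,Q_0^*}, P^{\otimes n}_{\epsilon,\theta + \underline{r}(\epsilon),Q_1^*}) \leq \alpha$, so the elementary two-point testing bound gives
$$P_{\epsilon_{\max},\theta,Q_0^*}(\phi=1) + P_{\epsilon,\theta + \underline{r}(\epsilon),Q_1^*}(\phi=0) \geq 1-\alpha.$$
Comparing with the sum $\alpha + 2\alpha = 3\alpha$ coming from the previous step yields $3\alpha \geq 1-\alpha$, i.e.\ $\alpha \geq 1/4$, contradicting the hypothesis $\alpha \in (0,1/4)$. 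This establishes $r_{\alpha}(\epsilon,[0,\epsilon_{\max}]) \geq \underline{r}(\epsilon)$ for every $\epsilon \in [0,\epsilon_{\max}]$, as claimed.
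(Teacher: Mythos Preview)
Your proposal is correct and follows essentially the same contradiction argument as the paper's proof (which simply refers back to the proof of Theorem~\ref{th:lower-bound-Gaussian}): assume an ARCI with short length, convert it via Proposition~\ref{prop:CI-imply-test} into a test with small Type-1 and Type-2 errors, then contradict Theorem~\ref{th:test-lower-bound} using $3\alpha < 1-\alpha$ for $\alpha<1/4$. Your extra care in choosing $r$ strictly between $r_\alpha(\epsilon,[0,\epsilon_{\max}])$ and $\underline{r}(\epsilon)$ to handle the infimum in (\ref{eq:minimax-length-ARCI}) is a nice touch that the paper glosses over.
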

Corollaries \ref{coro:length-upper-bound} and \ref{coro:length-lower-bound} jointly characterize the optimal length of ARCI,
\begin{equation}
\underline{r}(\epsilon)\leq r_{\alpha}(\epsilon,[0,\epsilon_{\max}])\leq 2\overline{r}(\epsilon),
\end{equation}
which holds for any $\epsilon\in[0,\epsilon_{\max}]$.

\subsubsection{Simplifying the Formulas}

The quantities $\overline{r}(\epsilon)$ and $\underline{r}(\epsilon)$ can be computed explicitly for specific distributions. Though $\overline{r}(\epsilon)$ is defined through survival functions, while $\underline{r}(\epsilon)$ uses density functions, the two definitions usually lead to the same rate as a function of $n$ and $\epsilon$. In fact, for a location model with density $f$ satisfying some assumptions, we can work with the following simpler formulas,
\begin{eqnarray}
 \label{eq:r-uparraw} r^{\uparrow}(\epsilon) &=& \inf\left\{r\geq 0: \frac{f(F^{-1}(1-\overline{q}(\epsilon)))}{f(r+F^{-1}(1-\overline{q}(\epsilon)))}\geq \frac{6}{1-\epsilon_{\max}}\right\}, \\
r^{\downarrow}(\epsilon) &=& \sup\left\{r\geq 0: \frac{f(F^{-1}(1-\underline{q}(\epsilon)))}{f(r+F^{-1}(1-\underline{q}(\epsilon)))}\leq \frac{1-\epsilon_{\max}/2}{1-\epsilon_{\max}}\right\}.
\end{eqnarray}

\begin{Lemma}\label{lem:r-arrow}
Suppose $n\geq 400$, and $\epsilon_{\max},\alpha\in(0,1/4)$ are constants satisfying $n\geq 1/\epsilon_{\max}$ and $\overline{q}(\epsilon_{\max})\leq 1$. Assume $f$ is unimodal, and for any $r\geq 0$, $\frac{f(t-r)}{f(t)}$ is nondecreasing for all $t\geq r$ when it is well defined. Then, we have
\begin{equation}
r^{\downarrow}(\epsilon)\leq \underline{r}(\epsilon)\leq r_{\alpha}(\epsilon,[0,\epsilon_{\max}])\leq 2\overline{r}(\epsilon)\leq 2r^{\uparrow}(\epsilon), \label{eq:seq-ineq}
\end{equation}
for any $\epsilon\in[0,\epsilon_{\max}/2]$. Moreover, the last inequality above continues to hold under unimodality and $\frac{f(t-r)}{f(t)}$ being nondecreasing for $t\geq r+F^{-1}(1-\overline{q}(\epsilon))$.
\end{Lemma}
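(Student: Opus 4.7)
The middle two inequalities $\underline{r}(\epsilon)\leq r_{\alpha}(\epsilon,[0,\epsilon_{\max}])\leq 2\overline{r}(\epsilon)$ in \eqref{eq:seq-ineq} follow immediately by combining Corollaries \ref{coro:length-lower-bound} and \ref{coro:length-upper-bound}, so the plan is to establish the outer two inequalities $r^{\downarrow}(\epsilon)\leq\underline{r}(\epsilon)$ and $\overline{r}(\epsilon)\leq r^{\uparrow}(\epsilon)$. Throughout, I would abbreviate $s'' = F^{-1}(1-\overline{q}(\epsilon))$ and $s' = F^{-1}(1-\underline{q}(\epsilon))$. The numerical hypotheses force $\overline{q}(\epsilon),\underline{q}(\epsilon)\leq 1/2$, so $s'',s'\geq 0$, and symmetry together with unimodality makes $f$ nonincreasing on $[0,\infty)$; both facts will be used silently below.

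For the upper inequality $\overline{r}(\epsilon)\leq r^{\uparrow}(\epsilon)$, I would set $r=r^{\uparrow}(\epsilon)$, choose the specific test point $t=r+s''$ in the feasible range $[r,r+s'']$ that defines $\overline{r}(\epsilon)$, and rewrite the membership inequality via a translation $v=u+r$ as
\begin{equation*}
1-F(s'')-\tfrac{6}{1-\epsilon_{\max}}\bigl(1-F(r+s'')\bigr)\;=\;\int_{s''}^{\infty}\!\Bigl[f(u)-\tfrac{6}{1-\epsilon_{\max}}f(u+r)\Bigr]\,du.
\end{equation*}
By definition of $r^{\uparrow}(\epsilon)$, the ratio $f(s'')/f(s''+r)$ is at least $6/(1-\epsilon_{\max})$, and the monotonicity hypothesis that $f(t-r)/f(t)$ is nondecreasing for $t\geq r+s''$ (equivalently, $f(u)/f(u+r)$ nondecreasing for $u\geq s''$) propagates this lower bound to every $u\geq s''$. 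Hence the integrand is pointwise nonnegative, the displayed integral is $\geq 0$, and $r^{\uparrow}(\epsilon)$ lies in the set defining $\overline{r}(\epsilon)$. This argument needs only the weaker monotonicity stated in the lemma's final sentence.

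For the lower inequality $r^{\downarrow}(\epsilon)\leq\underline{r}(\epsilon)$, I would prove set containment: every $r$ meeting the $r^{\downarrow}$-condition also meets the $\underline{r}$-condition. The crux is the pointwise bound
\begin{equation*}
\sup_{t\leq r+s'}\frac{f(t-r)}{f(t)}\;\leq\;\frac{f(s')}{f(r+s')},
\end{equation*}
which I would establish by splitting on $t\geq r$ versus $t\leq r$. On $[r,r+s']$ the monotonicity hypothesis drives the supremum to $t=r+s'$. On $(-\infty,r]$ symmetry rewrites the ratio as $f(|t-r|)/f(|t|)$, and a short case analysis using unimodality (the ratio is at most $1$ for $t\leq 0$ and for $0\leq t\leq r/2$, and at most $f(0)/f(r)$ for $r/2\leq t\leq r$ using $f(t-r)\leq f(0)$ and $f(t)\geq f(r)$) gives the uniform upper bound $f(0)/f(r)$. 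Applying monotonicity at the two endpoints $t=r$ and $t=r+s'$ then upgrades $f(0)/f(r)\leq f(s')/f(r+s')$, completing the pointwise bound. Combined with the $r^{\downarrow}$-condition this yields $\sup\leq\frac{1-\epsilon_{\max}/2}{1-\epsilon_{\max}}$; and the assumptions $\epsilon\in[0,\epsilon_{\max}/2]$, $\alpha\in(0,1/4)$, and $n\geq 1/\epsilon_{\max}$ force $\epsilon\vee(\alpha/n)\leq\epsilon_{\max}/2$, so $\frac{1-\epsilon_{\max}/2}{1-\epsilon_{\max}}\leq\frac{1-(\epsilon\vee\alpha/n)}{1-\epsilon_{\max}}$, which closes the argument.

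The main technical obstacle is exactly the $t<r$ portion of the supremum defining $\underline{r}(\epsilon)$, since the monotonicity hypothesis as stated covers only $t\geq r$. It is the combination of symmetry and unimodality that carries the bound across the transition at $t=0$, reducing the entire off-range behavior to the single value $t=r$ which can then be transported to the endpoint by the in-range monotonicity.
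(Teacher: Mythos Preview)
Your proposal is correct and follows essentially the same route as the paper's proof. Both proofs reduce the outer inequalities to the key identity $\sup_{t\leq r+s'}\frac{f(t-r)}{f(t)}=\frac{f(s')}{f(r+s')}$ (via unimodality plus the monotonicity hypothesis) for the lower side, and to the pointwise density comparison $f(u)\geq\frac{6}{1-\epsilon_{\max}}f(u+r)$ on $[s'',\infty)$ integrated into a tail inequality for the upper side; your case analysis for $t\leq r$ is simply a more explicit version of what the paper compresses into the single phrase ``by the unimodality of $f$.''
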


\begin{Remark}
The validity of the inequalities (\ref{eq:seq-ineq}) requires $\epsilon\leq\epsilon_{\max}/2$, which is sufficient for our purpose. This is because when $\epsilon\in(\epsilon_{\max}/2,\epsilon_{\max}]$, all of the five quantities involved in (\ref{eq:seq-ineq}) are of constant order.
\end{Remark}

\subsection{Additional Examples}\label{sec:examples-add}

We will work out the formulas $\overline{r}(\epsilon)$ and $\underline{r}(\epsilon)$ (or $r^{\uparrow}(\epsilon)$ and $r^{\downarrow}(\epsilon)$ if the assumption is satisfied) for the following list of distributions:
\begin{enumerate}
\item $t$ distribution with degrees of freedome $\nu>0$: $f(t)\propto (1+t^2/\nu)^{-\frac{\nu+1}{2}}$.
\item Generalized Gaussian distribution with shape parameter $\beta>0$: $f(t)\propto \exp\left(-|t|^{\beta}\right)$.
\item Mollifier with shape parameter $\beta>0$: $f(t)\propto \exp\left(-\frac{1}{(1-t^2)^{\beta}}\right)\indi\{|t| < 1\}$.
\item Bates distribution with order $k\in\mathbb{N}$: $f$ is the density function of $\frac{1}{k}\sum_{j=1}^kU_j$ with $U_j\overset{iid}\sim\text{Uniform}[-1/2,1/2]$.
\end{enumerate}
The list is ordered from heavy to light tails. Both mollifier and Bates distributions are bounded. While the mollifier is analytically smooth at the boundary of the support, Bates only has a finite (including zero) order of derivative at the boundary of the support.

\begin{Theorem}\label{thm:example}
For any $\alpha\in(0,1/4)$, there exists some constant $c$ only depending on $\alpha$ and shape of the distribution, such that whenever $\frac{1}{n}+\epsilon_{\max}\leq c$, the following results hold.
\begin{enumerate}
\item $t$ distribution with $\nu>0$:
$$r_{\alpha}(\epsilon,[0,\epsilon_{\max}])\asymp 1,$$
for any $\epsilon\in[0,\epsilon_{\max}]$, where $\asymp$ suppresses dependence on $\epsilon_{\max}$, $\alpha$ and $\nu$.
\item Generalized Gaussian with $\beta>0$:
$$r_{\alpha}(\epsilon,[0,\epsilon_{\max}])\asymp \left(\frac{1}{\log n}+\frac{1}{\log(1/\epsilon)}\right)^{\frac{(\beta-1)_+}{\beta}},$$
for any $\epsilon\in[0,\epsilon_{\max}]$, where $\asymp$ suppresses dependence on $\epsilon_{\max}$, $\alpha$ and $\beta$ and for any $a \in \bbR$, $(a)_{+} = \max(a,0)$.
\item Mollifier with $\beta>0$:
$$r_{\alpha}(\epsilon,[0,\epsilon_{\max}])\asymp \left(\frac{1}{\log n}+\frac{1}{\log(1/\epsilon)}\right)^{\frac{\beta+1}{\beta}},$$
for any $\epsilon\in[0,\epsilon_{\max}]$, where $\asymp$ suppresses dependence on $\epsilon_{\max}$, $\alpha$ and $\beta$.
\item Bates with $k\in\mathbb{N}$:
$$r_{\alpha}(\epsilon,[0,\epsilon_{\max}])\asymp \left(\frac{1}{n}+\epsilon\right)^{\frac{1}{k}},$$
for any $\epsilon\in[0,\epsilon_{\max}]$, where $\asymp$ suppresses dependence on $\epsilon_{\max}$, $\alpha$ and $k$.
\end{enumerate}
\end{Theorem}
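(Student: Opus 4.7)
The plan is to instantiate the general sandwich $\underline{r}(\epsilon)\leq r_{\alpha}(\epsilon,[0,\epsilon_{\max}])\leq 2\overline{r}(\epsilon)$ from Corollaries \ref{coro:length-upper-bound} and \ref{coro:length-lower-bound} for each of the four families, and then show that the upper and lower bounds share the same order. The first step, before touching any specific density, is to decide which families admit the simplification of Lemma \ref{lem:r-arrow}: this requires $(\log f)'$ to be nonincreasing on $[0,\infty)$. The Mollifier, the Bates distribution, and the generalized Gaussian with $\beta\geq 1$ are log-concave on $[0,\infty)$ (by direct computation of $(\log f)''$ for the first and third, and by the convolution-of-uniforms structure for Bates), so Lemma \ref{lem:r-arrow} reduces the analysis to the scalar inequalities defining $r^{\uparrow}(\epsilon)$ and $r^{\downarrow}(\epsilon)$, which only involve a single quantile. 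The $t$-distribution and the generalized Gaussian with $\beta<1$ are not log-concave, so for those two I plan to work with $\overline{r}$ and $\underline{r}$ directly.

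The second step, for each family, is to carry out two analytic computations: the asymptotic of $F^{-1}(1-q)$ as $q\to 0$, and the behavior of $\log(f(s-r)/f(s))$ at the scale $s=F^{-1}(1-q)$. For the generalized Gaussian with $\beta\geq 1$, $F^{-1}(1-q)\asymp(\log(1/q))^{1/\beta}$ and a Taylor expansion gives $\log(f(s-r)/f(s))\asymp\beta r s^{\beta-1}$; equating this to a constant produces $r^{\uparrow}\asymp r^{\downarrow}\asymp(\log(1/\overline{q}(\epsilon)))^{-(\beta-1)/\beta}$, which, since $\overline{q}(\epsilon),\underline{q}(\epsilon)\asymp\epsilon+1/n$, rewrites as $(1/\log n+1/\log(1/\epsilon))^{(\beta-1)/\beta}$. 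For the Mollifier, Laplace's method near $s=1$ gives $1-F(1-u)\asymp u^{\beta+1}\exp(-(2u)^{-\beta})$, and inversion followed by the same kind of ratio expansion delivers the exponent $(\beta+1)/\beta$. For the Bates distribution, the polynomial vanishing $f(s)\asymp(1/2-s)^{k-1}$ yields $F^{-1}(1-q)\asymp 1/2-q^{1/k}$, and the ratio condition $(\overline{q}^{1/k}/(\overline{q}^{1/k}-r))^{k-1}\gtrsim 1$ produces $r^{\uparrow}\asymp(1/n+\epsilon)^{1/k}$. For the $t$-distribution and the generalized Gaussian with $\beta<1$, I choose $t=r$ (the boundary of the admissible range) in the definition of $\overline{r}$, which produces a shape-dependent constant upper bound; for $\underline{r}$ I locate the interior maximizer of $f(t-r)/f(t)$ analytically (at $t=r$ for the generalized Gaussian with $\beta<1$, and at $t^{*}=(r+\sqrt{r^{2}+4\nu})/2$ for the $t$-distribution), evaluate it to $1+O(r)$ for small $r$, and deduce a matching constant lower bound.

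The main obstacle is the non-log-concave cases, where Lemma \ref{lem:r-arrow} does not apply: I must find the argmax of $f(t-r)/f(t)$ explicitly, verify that it lies inside $(-\infty,r+F^{-1}(1-\underline{q}(\epsilon))]$ for every $\epsilon\in[0,\epsilon_{\max}]$ (which uses that the admissible upper endpoint tends to $+\infty$ as $\epsilon,1/n\to 0$), and carefully track the dependence on $\epsilon_{\max}$, $\alpha$, and the shape parameter without losing the claimed $\asymp$ rate. A secondary subtlety is the Mollifier case: the polynomial prefactor $u^{\beta+1}$ in the survival function must be carried through the inversion so that it only contributes lower-order corrections to $F^{-1}(1-q)$, leaving the leading term $(\log(1/q))^{-1/\beta}$ intact; the same care is needed when lifting the Taylor expansion of $(2(u-r))^{-\beta}-(2u)^{-\beta}$ to both the upper and lower bound without creating spurious logarithmic discrepancies.
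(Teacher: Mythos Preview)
Your proposal is correct and takes essentially the same route as the paper: sandwich $r_{\alpha}$ between $\underline{r}$ and $2\overline{r}$, reduce to $r^{\downarrow}$ and $r^{\uparrow}$ via Lemma~\ref{lem:r-arrow} in the log-concave cases (generalized Gaussian with $\beta\geq 1$, Mollifier, Bates), and locate the maximizer of $f(t-r)/f(t)$ directly for the $t$ distribution and the generalized Gaussian with $\beta<1$. The only cosmetic differences are that the paper obtains the constant upper bounds for those last two families by citing the median interval (Proposition~\ref{prop:minimax-rate-others-upper-bound}) rather than evaluating $\overline{r}$ at $t=r$, and that for the Bates lower bound it computes $\underline{r}$ by hand (a uniform-continuity argument on the interior of the support plus the explicit polynomial density near the edge) instead of invoking Pr\'ekopa and $r^{\downarrow}$ as you propose---your shortcut is legitimate.
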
 

It is clear from the above results that the optimal length of ARCI for a location model crucially depends on the tail of the density function. The adaptation cost of a distribution with a heavy tail is more severe than that of a distribution with a light tail. In particular, the Laplace tail serves as the boundary between shrinking versus non-shrinking optimal ARCI. It is worth noting that by explicitly tracking the dependence on the degrees of freedom $\nu$, a more precise formula for the rate of $t$ distribution is $\frac{1}{\sqrt{\nu}}+\frac{1}{F_{\nu}^{-1}(1-n^{-1}-\epsilon)}$, where $F_{\nu}(\cdot)$ is the $t$ distribution CDF. Interestingly, for a sufficiently large $\nu$ that diverges as $\frac{1}{n}+\epsilon\rightarrow 0$, this formula agrees with the Gaussian rate, since $\frac{1}{F_{\nu}^{-1}(1-n^{-1}-\epsilon)}$ dominates the two terms and $F_{\nu}^{-1}(\cdot)$ is close to $\Phi^{-1}(\cdot)$ for large $\nu$.

In comparison, when $\epsilon$ is known, the distribution shape does not affect the optimal length of the robust confidence interval. We have $r_{\alpha}(\epsilon,\{\epsilon\})\asymp \frac{1}{\sqrt{n}}+\epsilon$ for all distributions considered in Theorem \ref{thm:example}, which is achieved by quantifying the uncertainty of sample median. Rigorous proofs will be given in Appendix F. The only exceptional cases are $\beta\leq 1/2$ for generalized Gaussian and $k=1,2$ for Bates, in which case Fisher information does not exist, and thus optimal rates of location parameter estimation cannot be parametric.

For all $t$ distributions and generalized Gaussian with $\beta\leq 1$, since $r_{\alpha}(\epsilon,[0,\epsilon_{\max}])\asymp r_{\alpha}(\epsilon_{\max},[0,\epsilon_{\max}])\asymp 1$, an optimal ARCI is the conservative one (\ref{eq:conservative}).

For generalized Gaussian with $\beta>1$, mollifier, and Bates, an optimal ARCI is given by (\ref{eq:adaptive-CI-general}). One can also check that the condition of Lemma \ref{lem:r-arrow} is satisfied for these examples. Therefore, the parameters $r_{\epsilon}$ and $t_{\epsilon}$ can be set with much simpler forms than (\ref{eq:r-general}) and (\ref{def:t-constant-general}). With $r_{\epsilon}=r^{\uparrow}(\epsilon)$ and $t_{\epsilon}=r^{\uparrow}(\epsilon)+F^{-1}(1-\overline{q}(\epsilon))$, the length optimality in Theorem \ref{thm:example} can still be achieved.

\section{Some Extensions} \label{sec:extension}
\subsection{Results for $\epsilon_{\max} \to 0$} \label{sec:tozero}

The results in previous sections have focused on the setting where $\epsilon_{\max}$ is a small constant (e.g. $\epsilon_{\max} = 0.05$). In a situation where a statistician knows $\epsilon_{\max} \to 0$, the adaptation cost can be alleviated. For example, suppose $\epsilon_{\max}=\frac{1}{\sqrt{n}}$, and then the median interval (\ref{eq:median-RCI}) with $\epsilon$ replaced by $\epsilon_{\max}$ can be used to achieve the optimal rate $\frac{1}{\sqrt{n}}$. In this case, there is no adaptation cost. The following result extends Theorem \ref{th:lower-bound-Gaussian} and Theorem \ref{thm:upper-bound-general-Gaussian} by explicitly tracking the dependence on $\epsilon_{\max}$. 

\begin{Theorem} \label{th:optimal-eps-max}
	Consider $X_1, \ldots, X_n \overset{iid}\sim (1-\epsilon)N(\theta,1)+\epsilon Q$. There exists a universal constant $C>0$, such that for any $\alpha\in(0,0.1)$, $\epsilon_{\max} \in [0,0.05]$ and $n\geq C\log(1/\alpha)$, we have
	\begin{equation}
r_{\alpha}(\epsilon,[0,\epsilon_{\max}]) \asymp \frac{1}{\sqrt{n}} + \frac{\epsilon_{\max}}{\sqrt{\log(n\epsilon_{\max}^2+e)}} + \frac{\epsilon_{\max}}{\sqrt{\log\left(\frac{\epsilon_{\max}}{\epsilon}+e\right)}}, \label{eq:veryniubi}
\end{equation}  where $\asymp$ suppresses dependence on $\alpha$.
\end{Theorem}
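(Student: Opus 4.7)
Theorem~\ref{th:optimal-eps-max} refines Theorems~\ref{th:lower-bound-Gaussian} and~\ref{thm:upper-bound-general-Gaussian} by explicitly tracking the $\epsilon_{\max}$-dependence throughout. The plan is to rerun the testing-equivalence framework of Section~\ref{sec:t} with $\epsilon_{\max}$-sensitive parameters, then combine three separate sub-constructions for the three terms in~\eqref{eq:veryniubi}.

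\textbf{Upper bound.} I would build $\widehat{\CI}$ as the intersection of two pieces. When $\epsilon_{\max} \lesssim 1/\sqrt n$, the conservative sample-median interval of width $O(1/\sqrt n + \epsilon_{\max})$ already achieves $1/\sqrt n$, since the median's bias is at most $\epsilon_{\max}$ and its statistical fluctuation at most $1/\sqrt n$; this covers the first term. When $\epsilon_{\max} \gtrsim 1/\sqrt n$, I adapt~\eqref{eq:adaptive-CI} by choosing
\[
t_\epsilon = \Phi^{-1}\Bigl(1 - \epsilon - c\sqrt{\epsilon_{\max}\log(1/\alpha)/n}\Bigr), \qquad r_\epsilon = c'\,\epsilon_{\max}/t_\epsilon,
\]
for suitable constants $c,c'>0$. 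The Bernstein fluctuation of $\frac{1}{n}\sum_i\indi\{X_i-(\theta-r_\epsilon)\geq t_\epsilon\}$ is of order $\sqrt{\epsilon_{\max}/n}$ under $\epsilon_{\max}$-contamination (rather than $\sqrt{1/n}$ as in Section~\ref{sec:Gup}), which is what drives the $\epsilon_{\max}$ entering $t_\epsilon$. Paralleling Lemmas~\ref{prop:upper-bound-Gaussian} and~\ref{th:estimation-length-guarantee} with these parameters yields simultaneous Type-I and Type-II error control over $\epsilon \in [0,\epsilon_{\max}]$; Taylor-expanding $\Phi^{-1}(1-q)$ at the relevant $q$ then gives length $\lesssim \epsilon_{\max}/\sqrt{\log(n\epsilon_{\max}^2+e)} + \epsilon_{\max}/\sqrt{\log(\epsilon_{\max}/\epsilon+e)}$, which combined with (a) matches~\eqref{eq:veryniubi}.

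\textbf{Lower bound.} Via Proposition~\ref{prop:CI-imply-test}, I would exhibit hard testing pairs making the product TV small. The $1/\sqrt n$ term is the standard parametric two-point for $N(\theta,1)^{\otimes n}$ vs $N(\theta\pm r,1)^{\otimes n}$ (no contamination needed). For $\epsilon_{\max}/\sqrt{\log(n\epsilon_{\max}^2+e)}$, I improve on the hard-truncation construction of Section~\ref{sec:Glow} by a \emph{smooth-matching} construction: take
\[
P_*(x) = \phi(x)\indi\{x<T\} + (1-\epsilon_{\max})\phi(x-r)\indi\{x \geq T\}
\]
with $T$ fixed by $\int P_* = 1$ (forcing $T \approx \epsilon_{\max}/r$), and set $Q_0 = (P_* - (1-\epsilon_{\max})\phi(\cdot-r))/\epsilon_{\max}$, supported on $(-\infty,T)$. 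Because $P_* - \phi$ integrates to zero and its dip on $(T,T+r)$ is balanced by its bump on $(T+r,\infty)$, a second-order expansion of the Gaussian tail yields $\TV(P_*,\phi) \asymp r\epsilon_{\max}^2\phi(T)$, much sharper than the naive $O(1-\Phi(T))$. Imposing $n\TV \lesssim \alpha$ with $T = \epsilon_{\max}/r$ then produces $r \lesssim \epsilon_{\max}/\sqrt{\log(n\epsilon_{\max}^2+e)}$. The third term uses a variant in which $Q_1$ on the alternative side makes null and alternative exactly coincide in the regime $\epsilon \gtrsim r^2$, with an interpolation for smaller $\epsilon$.

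\textbf{Main obstacle.} The cubic-order TV cancellation in the second lower-bound construction. A naive truncation analysis gives $\TV \asymp 1-\Phi(T)$ and hence only the suboptimal rate $\epsilon_{\max}/\sqrt{\log n}$. Obtaining the correct $\TV \asymp r\epsilon_{\max}^2\phi(T)$, and thereby the sharp logarithmic factor $\log(n\epsilon_{\max}^2+e)$, requires matching the second-order Taylor expansions of $\phi(T-r)/\phi(T)$ and $(1-\Phi(T-r))/(1-\Phi(T))$ and verifying that the dip and bump of $P_*-\phi$ cancel to the claimed order; this bookkeeping is the delicate step on which the entire improvement rests.
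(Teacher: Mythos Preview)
Your lower-bound constructions are close to the paper's. The $1/\sqrt n$ term is standard, and for the second term your smooth-matching density $P_*$ with $Q_0=(P_*-(1-\epsilon_{\max})\phi(\cdot-r))/\epsilon_{\max}$ supported on $(-\infty,T)$ is exactly the construction~\eqref{eq:gaussian-q0-eps-tozero} used in the appendix. You correctly identify the main obstacle: the crude bound $\TV\asymp 1-\Phi(T)$ only gives $\epsilon_{\max}/\sqrt{\log n}$. The paper, however, sidesteps your proposed second-order TV cancellation by bounding the $\chi^2$-divergence directly (and then passing through Hellinger), which is cleaner than tracking the dip-bump cancellation. Your claimed order $\TV\asymp r\epsilon_{\max}^2\phi(T)$ also appears off by one power of $\epsilon_{\max}$---a direct expansion of $(1-\epsilon_{\max})(1-\Phi(x_0-r))-(1-\Phi(x_0))$ at the crossing point $x_0\approx\epsilon_{\max}/r$ gives leading term $\asymp r\epsilon_{\max}\phi(x_0)$---but this does not change the outcome, since the exponential factor $\phi(T)$ absorbs polynomial prefactors once $r\lesssim\epsilon_{\max}/\sqrt{\log(n\epsilon_{\max}^2)}$.

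Your upper bound has a genuine gap. With $r_\epsilon=c'\epsilon_{\max}/t_\epsilon$ the null-minus-alternative gap in the mean of the test statistic is
\[
(1-\epsilon_{\max})\bigl(1-\Phi(t_\epsilon-r_\epsilon)\bigr)-\bigl(1-\Phi(t_\epsilon)\bigr)-\epsilon
\;\approx\; r_\epsilon\phi(t_\epsilon)-\epsilon_{\max}\bigl(1-\Phi(t_\epsilon)\bigr)-\epsilon
\;\asymp\; \epsilon_{\max}\bigl(1-\Phi(t_\epsilon)\bigr)-\epsilon,
\]
so for the test to separate the hypotheses one needs $\epsilon_{\max}(1-\Phi(t_\epsilon))\gtrsim\epsilon+\text{(fluctuation)}$, i.e.\ $1-\Phi(t_\epsilon)\gtrsim(\epsilon+\text{fluctuation})/\epsilon_{\max}$. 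Your choice $1-\Phi(t_\epsilon)=\epsilon+c\sqrt{\epsilon_{\max}\log(1/\alpha)/n}$ is smaller by the crucial factor $1/\epsilon_{\max}$: with it the gap is only $\epsilon_{\max}\epsilon+O(\epsilon_{\max}^{3/2}/\sqrt n)\ll\epsilon$, so the test cannot overcome the alternative's contamination bias and Type-I control fails regardless of whether you use DKW or Bernstein. The $\epsilon_{\max}$ does not enter through the fluctuation term inside $\Phi^{-1}$ as you suggest; it enters because the \emph{gap itself} scales like $\epsilon_{\max}q$ rather than $q$ once $r_\epsilon\asymp\epsilon_{\max}/t_\epsilon$. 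The paper accordingly takes $1-\Phi(t_\epsilon)\asymp(\epsilon+n^{-1/2})/\epsilon_{\max}$ (equation~\eqref{eq:t-eps-max}), together with a modified rejection threshold $1-\Phi(t_\epsilon)+\epsilon+\sqrt{\log(2/\alpha)/(2n)}$ in place of the old $2(1-\Phi(t_\epsilon))$; both modifications are needed, and your proposal reflects neither.
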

When specialized to $\epsilon_{\max}=0.05$, the rate (\ref{eq:veryniubi}) recovers
$$r_{\alpha}(\epsilon,[0,0.05])\asymp \frac{1}{\sqrt{\log(n)}}+ \frac{1}{\sqrt{\log(1/\epsilon)}},$$
Another interesting example is
$$r_{\alpha}(\epsilon,[0,n^{-1/2}+\epsilon]) \asymp \frac{1}{\sqrt{n}}+\epsilon,$$
which implies no adaptation cost.
In general, different orders of $\epsilon_{\max}$ imply different adaptation costs. An optimal ARCI that achieves the rate (\ref{eq:veryniubi}) can also be derived by inverting robust testing procedures, and the details are deffered to Appendix E.1.

\subsection{Results for $\epsilon_{\max}=0.49$} \label{sec:49} In this section, we consider the extension when $\epsilon_{\max}$ is greater than $0.05$. Concretely, we present a simple modification of the optimal ARCI (\ref{eq:adaptive-CI}) for Gaussian data so that it will be adaptive for $\epsilon\in[0,0.49]$. It requires the following modification of Lemma \ref{th:estimation-guarantee}. Recall the definitions of $\wh{\theta}_L(t)$ and $\wh{\theta}_R(t)$ in (\ref{eq:theta-L}) and (\ref{eq:theta-R}).
\begin{Lemma}\label{th:estimation-guarantee-mod}
Simultaneously for all $t\in\left[4, \Phi^{-1}\left(1 - \sqrt{ \log (2/\alpha)/(2n) } \right)\right]$, we have
$$\wh{\theta}_L(t)-\theta\leq \frac{8}{t}\quad\text{and}\quad\wh{\theta}_R(t)-\theta\geq-\frac{8}{t},$$
with $P_{\epsilon, \theta, Q}$-probability at least $1-\alpha$, uniformly over all $\epsilon\in[0,0.99]$, all $\theta\in\mathbb{R}$, and all $Q$.
\end{Lemma}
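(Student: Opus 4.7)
The strategy mirrors the proof of Lemma \ref{th:estimation-guarantee}, with the slack constants enlarged to handle a much larger contamination range. The two modifications---replacing the lower endpoint $1.6$ by $4$ and the slack $2/t$ by $8/t$---are exactly what is needed so that a Mills-ratio comparison still goes through when $1-\epsilon$ can be as small as $0.01$. By symmetry, I only need to treat the bound for $\wh{\theta}_L(t)$; the statement for $\wh{\theta}_R(t)$ follows by reflecting the argument around $\theta$.

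First, I would invoke the Dvoretzky--Kiefer--Wolfowitz inequality to conclude that, with $P_{\epsilon,\theta,Q}$-probability at least $1-\alpha$,
\begin{equation*}
\sup_{x\in\bbR}|F_n(x)-F(x)| \leq \delta_n := \sqrt{\log(2/\alpha)/(2n)},
\end{equation*}
where $F(x)=(1-\epsilon)\Phi(x-\theta)+\epsilon Q((-\infty,x])$ is the common CDF of the $X_i$'s. On this event, standard quantile inversion gives $F_n^{-1}(q)\leq F^{-1}(q+\delta_n)$ and $F_n^{-1}(q)\geq F^{-1}(q-\delta_n)$ simultaneously for all $q\in(0,1)$, and hence simultaneously for all $t$ in the given range. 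The desired inequality $\wh{\theta}_L(t)-\theta\leq 8/t$ rearranges to $F_n^{-1}(2(1-\Phi(t)))\leq \theta-t+8/t$, which will follow once I establish $F(\theta-t+8/t)\geq 2(1-\Phi(t))+\delta_n$. Using $F(x)\geq(1-\epsilon)\Phi(x-\theta)$, it suffices to prove
\begin{equation*}
(1-\epsilon)\bigl(1-\Phi(t-8/t)\bigr) \geq 2(1-\Phi(t))+\delta_n
\end{equation*}
uniformly over $t\in[4,\Phi^{-1}(1-\delta_n)]$ and $\epsilon\in[0,0.99]$. Since $t\leq\Phi^{-1}(1-\delta_n)$ forces $\delta_n\leq 1-\Phi(t)$, the right-hand side is bounded by $3(1-\Phi(t))$, and the problem reduces to a purely analytic tail comparison.

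Second, I would handle that analytic step with sharp Mills-ratio bounds. Applying $(x^{-1}-x^{-3})\phi(x)\leq 1-\Phi(x)\leq x^{-1}\phi(x)$ for $x\geq 2$, together with the identity $\phi(t-8/t)/\phi(t)=\exp(8-32/t^2)$, yields for every $t\geq 4$ (noting that then $t-8/t\geq t/2\geq 2$ and $32/t^2\leq 2$)
\begin{equation*}
\frac{1-\Phi(t-8/t)}{1-\Phi(t)} \geq \frac{3t^2}{4(t^2-8)}\,\exp(8-32/t^2) \geq \tfrac{3}{2}e^{6} > 600.
\end{equation*}
Multiplying by $1-\epsilon\geq 0.01$ gives a ratio of at least $6$, comfortably exceeding $3$. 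The lower threshold $t\geq 4$ is precisely the smallest round value that simultaneously (i) makes $t-8/t$ large enough to invoke the Mills bound and (ii) keeps the distortion $32/t^2$ bounded so that the exponential factor $e^{8-32/t^2}$ remains of order hundreds. A symmetric computation on the lower tail $F(\theta+t-8/t)\leq 1-2(1-\Phi(t))-\delta_n$ gives $\wh{\theta}_R(t)-\theta\geq -8/t$ on the same DKW event, completing the proof.

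The main obstacle is purely quantitative: balancing the four competing effects---the DKW slack $\delta_n$, the factor $2$ in the definitions of $\wh{\theta}_L(t),\wh{\theta}_R(t)$, the factor $1-\epsilon$ that can be as small as $10^{-2}$, and the quadratic correction $32/t^2$ inside the Gaussian exponent---so that a single clean inequality absorbs all of them. The choices $t\geq 4$ and slack $8/t$ are essentially minimal in that they let $\exp(8-32/t^2)(1-\epsilon)$ dominate the constant $3$ with a generous margin; tightening either would force a delicate case split near $t=4$ and $\epsilon=0.99$. Apart from this calibration, every step is parallel to the proof of Lemma \ref{th:estimation-guarantee}.
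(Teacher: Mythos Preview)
Your proposal is correct and follows essentially the same approach as the paper. Both proofs condition on the DKW event, reduce the one-sided bounds for $\wh\theta_L$ and $\wh\theta_R$ to the tail comparison $(1-\epsilon)(1-\Phi(t-8/t))\geq 3(1-\Phi(t))$, and then verify this holds for all $t\geq 4$ and $\epsilon\leq 0.99$. The only cosmetic difference is in the final analytic step: the paper bounds the tail ratio via a pointwise density comparison (if $\phi(x-8/t)/\phi(x)\geq C$ for all $x\geq t$ then integrate), arriving at the sufficient condition $\exp(8-32/t_{\min}^2)>3/(1-\epsilon_{\max})$, while you use two-sided Mills-ratio bounds directly; both yield the same numerical check $e^6>300$ with margin to spare.
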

In other words, a similar one-sided uncertainty quantification to Lemma \ref{th:estimation-guarantee} can be obtained for all $\epsilon\in[0,0.99]$ with more conservative constants. In fact, the constants $4$ and $8$ can be slightly improved if we only require the result to hold for $\epsilon\in[0,0.49]$. We use $0.99$ instead of $0.49$ so that the same Lemma \ref{th:estimation-guarantee-mod} can also be applied in Section \ref{sec:list-decodable} later for a different purpose.

Lemma \ref{th:estimation-guarantee-mod} immediately implies that the following interval
\begin{equation}
\bigcap_{4 \leq t \leq  \Phi^{-1}\left(1 - \sqrt{ \log (2/\alpha)/(2n) } \right) }\left[\wh{\theta}_L(t)-\frac{8}{t},\wh{\theta}_R(t)+\frac{8}{t}\right] \label{eq:just-coverage}
\end{equation}
has coverage probability at least $1-\alpha$ for all $\epsilon\in[0,0.49]$. However, the formula (\ref{eq:just-coverage}) does not guarantee length optimality. By Lemma \ref{th:estimation-length-guarantee}, the length optimality of (\ref{eq:just-coverage}) only holds when $t_{\epsilon}$ defined in (\ref{def:t-constant}) belongs to the interval $\left[4, \Phi^{-1}\left(1 - \sqrt{ \log (2/\alpha)/(2n) } \right)\right]$, which requires that $\epsilon$ needs to be small. Therefore, we need to further modify (\ref{eq:just-coverage}) to extend the length optimality to large values of $\epsilon$. This could be achieved by considering the conservative interval (\ref{eq:conservative}).
Note that the length of (\ref{eq:conservative}) is a constant, which is optimal when $\epsilon$ is greater than some constant. Thus, the intersection of (\ref{eq:just-coverage}) and (\ref{eq:conservative}) results in both coverage property and length optimality for all $\epsilon\in[0,0.49]$.

\begin{Proposition}\label{th:ARCI-large-eps1}
Consider $X_1, \ldots, X_n \overset{iid}\sim P_{\epsilon, \theta, Q}=(1-\epsilon)N(\theta,1)+\epsilon Q$.
There exist some universal constants $C,R>0$, such that for any $\alpha\in(0,1)$ and $n\geq C\log(1/\alpha)$, the interval
$$\widehat{\CI}=\left(\bigcap_{4 \leq t \leq  \Phi^{-1}\left(1 - \sqrt{ \log (2/\alpha)/(2n) } \right) }\left[\wh{\theta}_L(t)-\frac{8}{t},\wh{\theta}_R(t)+\frac{8}{t}\right]\right)\bigcap \left[\wh{\theta}_{\rm median}-R,\wh{\theta}_{\rm median}+R\right]$$
satisfies
	\begin{equation*}
		\begin{split}
			\textnormal{(coverage)} &\quad \inf_{ \epsilon \in [0, 0.49], \theta, Q} P_{\epsilon, \theta, Q}\left( \theta \in\widehat{\CI} \right) \geq 1-\alpha,\\
			\textnormal{(length)} & \quad\inf_{\epsilon \in [0, 0.49], \theta, Q } P_{\epsilon, \theta, Q}\left( |\widehat{\CI}| \leq C' \left( \frac{1}{\sqrt{\log n } } + \frac{1}{\sqrt{ \log(1/\epsilon) }}\right) \right) \geq 1-\alpha,
		\end{split}
	\end{equation*}
where $C'>0$ is some constant only depending on $\alpha$.
\end{Proposition}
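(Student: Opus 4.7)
The plan is to prove the two conclusions separately and then combine them by a union bound, using the two ingredients that make up the intersection for different purposes.

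For the coverage property, I would apply Lemma~\ref{th:estimation-guarantee-mod} with $\alpha$ replaced by $\alpha/2$ to ensure that the one-sided bounds $\wh\theta_L(t)-\theta\leq 8/t$ and $\wh\theta_R(t)-\theta\geq -8/t$ hold simultaneously for all $t\in\bigl[4,\Phi^{-1}(1-\sqrt{\log(4/\alpha)/(2n)})\bigr]$ with $P_{\epsilon,\theta,Q}$-probability $\geq 1-\alpha/2$, uniformly over $\epsilon\in[0,0.99]\supset[0,0.49]$. This gives $\theta$ in the first intersection. Separately, a standard robust concentration bound for the sample median under Huber contamination with $\epsilon\leq 0.49$ (cf.\ (\ref{eq:median-error-bd}) and the references in Section~\ref{sec: introduction}) yields some universal constant $R>0$ such that $|\wh\theta_{\rm median}-\theta|\leq R$ with $P_{\epsilon,\theta,Q}$-probability $\geq 1-\alpha/2$, uniformly over $\epsilon\in[0,0.49]$, provided $n\geq C\log(1/\alpha)$. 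A union bound yields coverage $\geq 1-\alpha$.

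For the length property, I would split into two regimes according to the size of $\epsilon$. Fix a small absolute constant $\epsilon_0\in(0,0.49]$ chosen so that, for all $\epsilon\in[0,\epsilon_0]$ and all $n\geq C\log(1/\alpha)$, the quantity $t_\epsilon=\Phi^{-1}(1-\epsilon-\sqrt{\log(4/\alpha)/(2n)})$ satisfies $t_\epsilon\in[4,\Phi^{-1}(1-\sqrt{\log(4/\alpha)/(2n)})]$.
\begin{itemize}
\item In the small-$\epsilon$ regime $\epsilon\in[0,\epsilon_0]$, I would establish an analogue of Lemma~\ref{th:estimation-length-guarantee} for the range $t\in[4,\Phi^{-1}(1-\sqrt{\log(4/\alpha)/(2n)})]$: at $t=t_\epsilon$ one still has $\wh\theta_L(t_\epsilon)\geq \theta$ and $\wh\theta_R(t_\epsilon)\leq \theta$ with probability $\geq 1-\alpha/2$, by exactly the same DKW-type argument used in the original lemma (only the range of admissible $t$ changes, not the proof of the two-sided quantile bound). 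Combined with the one-sided bound from Lemma~\ref{th:estimation-guarantee-mod} plugged into (\ref{eq:intersect-quan}) with the constant $2$ replaced by $8$, the first intersection in $\widehat\CI$ has length $\leq 16/t_\epsilon$, which by Lemma~12 in Appendix~G is of order $1/\sqrt{\log n}+1/\sqrt{\log(1/\epsilon)}$.
\item In the large-$\epsilon$ regime $\epsilon\in(\epsilon_0,0.49]$, the median interval contributes length $\leq 2R$, which is constant. Since $1/\sqrt{\log(1/\epsilon)}\gtrsim 1/\sqrt{\log(1/\epsilon_0)}$ is bounded below by a positive constant for $\epsilon\geq\epsilon_0$, the bound $2R\lesssim 1/\sqrt{\log n}+1/\sqrt{\log(1/\epsilon)}$ holds trivially.
\end{itemize}
Taking another union bound with the event used in the coverage argument and absorbing constants into $C'$ yields the stated length guarantee at probability $\geq 1-\alpha$.

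The main obstacle will be the small-$\epsilon$ step: I must check that the two-sided quantile control that delivered $4/t_\epsilon$ in Lemma~\ref{th:estimation-length-guarantee} still goes through when the admissible range of $t$ is lifted from $[1.6,\cdot]$ to $[4,\cdot]$. This amounts to verifying that the choice $t_\epsilon$ in (\ref{def:t-constant}) still satisfies $t_\epsilon\geq 4$ under $\epsilon\leq\epsilon_0$ and $n\geq C\log(1/\alpha)$, which fixes the threshold $\epsilon_0$; the rest of the argument is an unchanged application of DKW. Everything else reduces to bookkeeping of constants and the union bound.
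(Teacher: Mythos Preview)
Your proposal is correct and follows essentially the same two-regime structure as the paper. The one noteworthy difference is that you split $\alpha$ and use union bounds, whereas the paper observes that \emph{all} the needed conclusions---the one-sided bounds of Lemma~\ref{th:estimation-guarantee-mod}, the median bound $|\wh\theta_{\rm median}-\theta|\leq R$, and the two-sided bounds of Lemma~\ref{th:estimation-length-guarantee}---are consequences of the \emph{single} DKW event $(A)=\{\sup_x|F_n(x)-F_{\epsilon,\theta,Q}(x)|\leq\sqrt{\log(2/\alpha)/(2n)}\}$, so no union bound is needed and the probability $1-\alpha$ comes out directly. In particular, the paper derives the median bound from $(A)$ itself via $|F_\theta(\theta)-F_\theta(\wh\theta_{\rm median})|\leq 1/n+\sqrt{\log(2/\alpha)/(2n)}+\epsilon\leq 0.491$ and a Taylor expansion of $\Phi$, rather than invoking (\ref{eq:median-error-bd}) as a black box (note that (\ref{eq:median-error-bd}) as stated is for small $\epsilon$). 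Your approach still works, but with slightly worse constants and more bookkeeping; the paper's single-event argument is cleaner.
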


For general location models, an extension of Corollary \ref{coro:length-upper-bound} to $\epsilon_{\max}=0.49$ can also be obtained by modifying (\ref{eq:seq-ineq}) with more conservative constants and intersection with (\ref{eq:conservative}).

\subsection{Robust Confidence Set and List-Decodable Learning}\label{sec:list-decodable}

In this section, we consider $\epsilon_{\max}=0.99$. This means in the worst case, only $1\%$ of the data set are generated from the specified parametric model $P_{\theta}$. For the purpose of estimating $\theta$, nothing is possible when there are more outliers than good samples, since the outliers can all be drawn from $P_{\theta'}$ with a different $\theta'$ that is arbitrarily far away from $\theta$. However, uncertainty quantification is still possible even with only $1\%$ clean samples, as long as we replace confidence intervals with more general confidence sets that are unions of intervals. This idea is resulted from an area called list decodable learning, which we will briefly introduce below.

\subsubsection{List Decodable Mean Estimation}

Consider $X_1, \ldots, X_n \overset{iid}\sim (1-\epsilon)N(\theta,1)+\epsilon Q$ with some $\epsilon\in[0,0.99]$. The goal of list decodable mean estimation is to compute a list $\mathcal{L}$, a set with finite cardinality, from data, such that one of the elements in the list is close to the parameter of interest. Namely, $\min_{\wh{\theta}\in\mathcal{L}}|\wh{\theta}-\theta|$ is bounded. Moreover, the cardinality of $\mathcal{L}$ should be $\#\mathcal{L}\lesssim 1$. The classical estimation problem is equivalent to requiring $\#\mathcal{L}=1$, which is only possible when $\epsilon<1/2$. The original list decodable learning framework was introduced by \cite{balcan2008discriminative} in a clustering context. The problem has gained much attention recently in robust estimation, especially in high-dimensional settings. We refer the readers to Chapter 5 of \cite{diakonikolas2023algorithmic} and references therein.

The idea of finding a list satisfying the above criteria is quite straightforward with one-dimensional data. Since we know $\epsilon_{\max}=0.99$, there are at least $(1-o(1))n/100$ samples that are actually generated from $N(\theta,1)$, and they cluster around $\theta$. Therefore, a point on the real line with a constant fraction of samples in its neighborhood serves as a reasonable candidate. The set of such candidates is defined by
$$\mathcal{H}=\left\{x\in\mathbb{R}: \frac{1}{n}\sum_{i=1}^n\indi\left(|X_i-x| \leq 2\right)\geq 0.005\right\}.$$
The final list is obtained by
\begin{equation}
\mathcal{L}=\argmax_L\left\{\#L: L\subset\mathcal{H},\min_{x,x'\in L:x\neq x'}|x-x'| > 4\right\}, \label{eq:list-mean}
\end{equation}
which is a maximal packing set of the candidate set $\mathcal{H}$. If the maximal packing set is not unique, we will take an arbitrary one as $\mathcal{L}$.

\begin{Proposition}\label{prop:list-d-m-e-c}
There exist some universal constants $C,C'>0$, such that for any $\alpha\in(0,1)$ and $n\geq C\log(1/\alpha)$, the list $\mathcal{L}$ defined in (\ref{eq:list-mean}) satisfies $\#\mathcal{L}\leq C'$ and 
\begin{equation}
\inf_{\theta,Q}P_{\epsilon,\theta,Q}\left(\min_{\wh{\theta}\in\mathcal{L}}|\wh{\theta}-\theta|\leq 4\right)\geq 1-\alpha, \label{eq:list-property}
\end{equation}
for any $\epsilon\in[0,0.99]$.
\end{Proposition}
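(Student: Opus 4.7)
The plan is to split the proof into two independent claims: the cardinality bound $\#\mathcal{L} \leq C'$ and the covering guarantee (\ref{eq:list-property}). Both are elementary once we extract the right two facts, namely (a) each point of $\mathcal{H}$ has many samples in a radius-$2$ window around it, and (b) under Huber contamination with $\epsilon \le 0.99$, a constant fraction of data concentrate near $\theta$.

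For the cardinality bound, observe that any two distinct elements $x, x' \in \mathcal{L}$ satisfy $|x - x'| > 4$, so the closed intervals $[x-2, x+2]$ and $[x'-2, x'+2]$ are pairwise disjoint. By the definition of $\mathcal{H}$, each such interval contains at least $0.005 n$ of the samples $X_1, \dots, X_n$. Summing over the elements of $\mathcal{L}$ and using that the total number of samples is $n$, one gets $0.005 n \cdot \#\mathcal{L} \leq n$, hence $\#\mathcal{L} \leq 200$, deterministically. So I would take $C' = 200$.

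For the coverage property, the key step is to show that $\theta \in \mathcal{H}$ with probability at least $1 - \alpha$. Under $P_{\epsilon,\theta,Q}$ with $\epsilon \le 0.99$, each $X_i$ satisfies
\[
P_{\epsilon,\theta,Q}\bigl(|X_i - \theta| \le 2\bigr) \;\geq\; (1-\epsilon)\,\mathbb{P}(|N(0,1)| \le 2) \;\geq\; 0.01 \cdot 0.95 \;\geq\; 0.009,
\]
so $\mathbb{E}\bigl[\tfrac{1}{n}\sum_i \indi(|X_i - \theta| \le 2)\bigr] \geq 0.009$, comfortably above the threshold $0.005$ in the definition of $\mathcal{H}$. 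A Hoeffding bound on this empirical mean (the summands are Bernoullis in $\{0,1\}$, hence $1$-bounded) then gives
\[
P_{\epsilon,\theta,Q}(\theta \notin \mathcal{H}) \;\leq\; \exp(-2 n (0.004)^2) \;\leq\; \alpha
\]
once $n \geq C \log(1/\alpha)$ for a suitable universal constant $C$. Note the bound is uniform over $\theta$ and $Q$, exactly as required.

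Conditionally on the event $\{\theta \in \mathcal{H}\}$, the covering conclusion follows purely from the maximality in the definition of $\mathcal{L}$. Indeed, if every element of $\mathcal{L}$ were at distance strictly greater than $4$ from $\theta$, then the set $\mathcal{L} \cup \{\theta\}$ would still be contained in $\mathcal{H}$ and still be $4$-separated, contradicting the fact that $\mathcal{L}$ attains the maximum cardinality among such packings. Hence $\min_{\hat\theta \in \mathcal{L}} |\hat\theta - \theta| \le 4$. Combining this deterministic implication with the high-probability event $\{\theta \in \mathcal{H}\}$ yields (\ref{eq:list-property}). There is no genuine obstacle here; the only thing to be careful about is the chain of constants ($0.01$, $0.95$, $0.009$, $0.005$, the Hoeffding exponent), which must be compatible with the universal $C$ in $n \geq C\log(1/\alpha)$.
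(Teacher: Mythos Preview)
Your proof is correct and follows essentially the same structure as the paper's: the deterministic $\#\mathcal{L}\le 200$ bound via disjoint radius-$2$ intervals, and the coverage via showing $\theta\in\mathcal{H}$ with high probability together with the maximality of $\mathcal{L}$. The only difference is that you invoke Hoeffding's inequality at the single point $x=\theta$, whereas the paper routes the concentration through the DKW event $(A)$; your route is slightly more direct since uniform control over $x$ is not needed here.
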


\subsubsection{Adaptive Robust Confidence Set}

When the majority of data points are from contamination, we aim to construct a confidence set $\widehat{\CS}$ that satisfies the following properties with high probability: (a) it covers the parameter of interest, (b) its Lebesgue measure is controlled, and (c) it is a union of $K$ intervals with $K=O(1)$. Ideally, we also hope that the confidence set is reduced to a single interval when $\epsilon$ is sufficiently small.

Our construction is based on the interval (\ref{eq:just-coverage}) and the list (\ref{eq:list-mean}). The definition is given by
\begin{equation}
\widehat{\CS} = \left(\bigcap_{4 \leq t \leq  \Phi^{-1}\left(1 - \sqrt{ \log (2/\alpha)/(2n) } \right) }\left[\wh{\theta}_L(t)-\frac{8}{t},\wh{\theta}_R(t)+\frac{8}{t}\right]\right)\bigcap\left(\bigcup_{\wh{\theta}\in\mathcal{L}}\left[\wh{\theta}-4,\wh{\theta}+4\right]\right). \label{eq:arcs-final}
\end{equation}
The coverage property of (\ref{eq:arcs-final}) is guaranteed by Lemma \ref{th:estimation-guarantee-mod} and Proposition \ref{prop:list-d-m-e-c} for all $\epsilon\in[0.99]$. Moreover, $\widehat{\CS}$ is a union of at most $\#\mathcal{L}=O(1)$ intervals. Its volume, quantified by Lebesgue measure, is at most $8\#\mathcal{L}=O(1)$, which is optimal when $\epsilon$ is of constant order. When $\epsilon$ is sufficiently small, the volume of $\widehat{\CS}$ is bounded by the length of the interval (\ref{eq:just-coverage}), which is still optimal by Lemma \ref{th:estimation-length-guarantee}. These properties are summarized in the following theorem.

\begin{Theorem} \label{th:ARCI-list-decodable}
There exist some universal constants $C,C_1>0$, such that for any $\alpha\in(0,1)$ and $n\geq C\log(1/\alpha)$, the confidence set $\widehat{\CS}$ defined in (\ref{eq:arcs-final}) can be written as $\widehat{\CS}=\bigcup_{k=1}^K[\ell_k,u_k]$, with $\ell_1<u_1<l_2<u_2<\cdots<l_K<u_K$ and $K\leq C_1$,
and it satisfies
\begin{equation*}
		\begin{split}
			\textnormal{(coverage)} &\quad \inf_{ \epsilon \in [0, 0.99], \theta, Q} P_{\epsilon, \theta, Q}\left( \theta \in\widehat{\CS} \right) \geq 1-\alpha,\\
			\textnormal{(volume)} & \quad\inf_{\epsilon \in [0, 0.99], \theta, Q } P_{\epsilon, \theta, Q}\left( |\widehat{\CS}| \leq C' \left( \frac{1}{\sqrt{\log n } } + \frac{1}{\sqrt{ \log(1/\epsilon) }}\right) \right) \geq 1-\alpha, 
		\end{split}
	\end{equation*}
where $|\widehat{\CS}|=\sum_{k=1}^K(u_k-l_k)$ denotes the volume, and $C'>0$ is some constant only depending on $\alpha$.
\end{Theorem}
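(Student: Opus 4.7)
The plan is to analyze the two pieces of $\widehat{\CS}$ separately. Write
$$\widehat{\CI}_1=\bigcap_{4\leq t\leq \Phi^{-1}(1-\sqrt{\log(2/\alpha)/(2n)})}\left[\wh{\theta}_L(t)-\tfrac{8}{t},\wh{\theta}_R(t)+\tfrac{8}{t}\right]\quad\text{and}\quad \widehat{\CS}_2=\bigcup_{\wh{\theta}\in\mathcal{L}}\left[\wh{\theta}-4,\wh{\theta}+4\right],$$
so that $\widehat{\CS}=\widehat{\CI}_1\cap\widehat{\CS}_2$. The set $\widehat{\CI}_1$ is an intersection of intervals indexed by a connected set of $t$'s, hence itself an interval (possibly empty); the set $\widehat{\CS}_2$ is a union of at most $\#\mathcal{L}$ length-$8$ intervals. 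Their intersection is therefore a union of at most $\#\mathcal{L}$ intervals, and after merging touching pieces this yields the representation $\bigcup_{k=1}^K[\ell_k,u_k]$ with $K\leq \#\mathcal{L}$. To bound $\#\mathcal{L}$ deterministically, I would note that each $x\in\mathcal{L}$ attracts at least $0.005n$ samples inside $[x-2,x+2]$ while the separation condition $\min_{x\neq x'\in\mathcal{L}}|x-x'|>4$ forces these windows to be disjoint, so $\#\mathcal{L}\leq 200$ and we may take $C_1=200$.

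For the coverage property, I would apply Lemma \ref{th:estimation-guarantee-mod} at level $\alpha/2$ to get $\theta\in\widehat{\CI}_1$ with probability at least $1-\alpha/2$ uniformly over $\epsilon\in[0,0.99]$, $\theta\in\bbR$, and $Q$, and Proposition \ref{prop:list-d-m-e-c} at level $\alpha/2$ to get $\min_{\wh{\theta}\in\mathcal{L}}|\wh{\theta}-\theta|\leq 4$, hence $\theta\in\widehat{\CS}_2$, with probability at least $1-\alpha/2$. A union bound delivers $\theta\in\widehat{\CS}$ with probability at least $1-\alpha$.

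For the volume bound, I would split cases based on whether $t_\epsilon$ from (\ref{def:t-constant}) lies in the intersection range $[4,\Phi^{-1}(1-\sqrt{\log(2/\alpha)/(2n)})]$. When $t_\epsilon\geq 4$, the defining inequality forces $\epsilon\leq 1-\Phi(4)<0.05$, so Lemma \ref{th:estimation-length-guarantee} applies and supplies $\wh{\theta}_L(t_\epsilon)\geq\theta\geq\wh{\theta}_R(t_\epsilon)$ with probability at least $1-\alpha$. Plugging $t_\epsilon$ into the intersection as in (\ref{ineq:length-guarantee}) gives $|\widehat{\CI}_1|\leq 16/t_\epsilon$, and Lemma 12 of Appendix G converts this into $|\widehat{\CS}|\leq|\widehat{\CI}_1|\lesssim 1/\sqrt{\log n}+1/\sqrt{\log(1/\epsilon)}$. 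When $t_\epsilon<4$, the same inequality implies $\epsilon$ is bounded below by a positive constant, so $1/\sqrt{\log(1/\epsilon)}$ is of constant order, and then the deterministic bound $|\widehat{\CS}|\leq|\widehat{\CS}_2|\leq 8\#\mathcal{L}\leq 1600$ is already of the required order.

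The main obstacle is the case split itself: one must verify that the regime $t_\epsilon\geq 4$ is genuinely contained in $[0,0.05]$ so that Lemma \ref{th:estimation-length-guarantee} may legitimately be invoked, and that in the complementary regime the lower bound on $\epsilon$ translates into a matching lower bound on $1/\sqrt{\log(1/\epsilon)}$. Once this dichotomy is in place, the remaining work is the routine union-bound bookkeeping and the Gaussian tail estimate of Lemma 12 of Appendix G already developed for Theorem \ref{thm:upper-bound-general-Gaussian}.
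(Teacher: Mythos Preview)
Your overall plan mirrors the paper's: the structure of $\widehat{\CS}$ as an intersection of an interval with a bounded union, the deterministic bound $\#\mathcal{L}\leq 200$, and the case split on whether $t_\epsilon\geq 4$ for the volume bound are exactly what the paper does.

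The one place your proposal diverges is the coverage argument. You apply Lemma~\ref{th:estimation-guarantee-mod} and Proposition~\ref{prop:list-d-m-e-c} separately at level $\alpha/2$ and take a union bound. This does not quite work as written: the construction of $\widehat{\CI}_1$ intersects over $t$ up to $\Phi^{-1}\bigl(1-\sqrt{\log(2/\alpha)/(2n)}\bigr)$, whereas invoking Lemma~\ref{th:estimation-guarantee-mod} with $\alpha$ replaced by $\alpha/2$ only yields the one-sided bounds for $t$ up to the strictly smaller threshold $\Phi^{-1}\bigl(1-\sqrt{\log(4/\alpha)/(2n)}\bigr)$. So on the $\alpha/2$-event you cannot conclude $\theta\in\widehat{\CI}_1$. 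The paper sidesteps this entirely by observing that the proofs of Lemma~\ref{th:estimation-guarantee-mod}, Proposition~\ref{prop:list-d-m-e-c}, and Lemma~\ref{th:estimation-length-guarantee} are all deduced from the \emph{same} DKW event $(A)=\{\sup_x|F_n(x)-F_{\epsilon,\theta,Q}(x)|\leq\sqrt{\log(2/\alpha)/(2n)}\}$, which already has probability at least $1-\alpha$. On $(A)$, both $\theta\in\widehat{\CI}_1$ and $\theta\in\widehat{\CS}_2$ hold simultaneously, and in the small-$\epsilon$ case the length bound $|\widehat{\CI}_1|\leq 16/t_\epsilon$ holds as well. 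No union bound is needed, and the range mismatch disappears. Once you replace your union-bound step by this single-event argument, the rest of your outline is correct and matches the paper.
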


One may be wondering if $\widehat{\CS}$ is actually reduced to a single interval when $\epsilon$ is sufficiently small. Unfortunately, the answer is no. According to its construction (\ref{eq:arcs-final}), it is possible that $\widehat{\CS}$ is a union of two intervals even when $\epsilon=0$. This is because there may be two intervals in the collection $\left\{\left[\wh{\theta}-4,\wh{\theta}+4\right]:\wh{\theta}\in\mathcal{L}\right\}$ that are arbitrarily close, and thus it is possible that both will survive the intersection with (\ref{eq:just-coverage}). To fix this issue, we can use the following simple modification,
\begin{equation}
\widehat{\CS} = \begin{cases}
\text{(\ref{eq:arcs-final})}, & |\text{(\ref{eq:just-coverage})}| \geq 8, \\
\text{(\ref{eq:just-coverage})}, & |\text{(\ref{eq:just-coverage})}| < 8.
\end{cases}\label{eq:arcs-mod}
\end{equation}
This construction (\ref{eq:arcs-mod}) inherits all the three properties in Theorem \ref{th:ARCI-list-decodable}, and it is reduced to a single interval when $\epsilon$ is sufficiently small.

\subsection{Beyond Huber Contamination} \label{sec:TV-contamination}

In this section, we consider a stronger contamination model \citep{hampel1968contributions}, where one observes
\begin{equation}
X_1, \ldots, X_n \overset{iid}\sim P\quad\text{for some}\quad P\in\{P:\TV(P,P_{\theta})\leq\epsilon\}.\label{eq:TV-model}
\end{equation}
It is well know that $\TV(P,P_{\theta})\leq\epsilon$ if and only if
$$P=P_{\theta}-\epsilon Q_1+\epsilon Q_2,$$
for some distributions $Q_1$ and $Q_2$. In other words, while Huber's setting only considers additive contamination, the total variation ball allows both addition and subtraction. A thorough discussion of various contamination models and their relations is given by \cite{diakonikolas2016robust}. Though the minimax rates of estimating $\theta$ are usually the same across different contamination models, a recent work \cite{canonne2023full} identifies the difference of information-theoretic limits when it comes to hypothesis testing. Given the equivalence between ARCI and robust testing established in Section \ref{sec:test-equi}, one would suspect that the optimal length of ARCI could also be different under the stronger setting (\ref{eq:TV-model}).

In fact, it is not hard to conclude from a similar argument to Proposition \ref{prop:CI-imply-test} that the optimal length of ARCI under (\ref{eq:TV-model}) is characterized by the following family of robust hypothesis testing,
\begin{equation}\label{eq:test-def-tv}
	\begin{split} &\cH^{\textnormal{tv}}(\theta, \theta- r, \epsilon): \begin{array}{l}
		H_{0}: P \in \left\{P: \TV(P,P_{\theta})\leq\epsilon_{\max} \right\} \quad  \textnormal { v.s. } \quad 
		H_{1}: P \in \left\{P: \TV(P,P_{\theta-r})\leq\epsilon \right\},
	\end{array}\\
	&\cH^{\textnormal{tv}}(\theta, \theta + r, \epsilon): \begin{array}{l}
		H_{0}: P \in \left\{P: \TV(P,P_{\theta})\leq\epsilon_{\max} \right\} \quad  \textnormal { v.s. } \quad
		H_{1}: P \in \left\{P: \TV(P,P_{\theta+r})\leq\epsilon \right\},
	\end{array}
	\end{split}
\end{equation}
which is a generalization of (\ref{eq:test-def}) to the setting of (\ref{eq:TV-model}).

Take $P_{\theta}=N(\theta,1)$ as an example. One can easily check that $P_{\theta-r}\in \left\{P: \TV(P,P_{\theta})\leq\epsilon_{\max} \right\}$ whenever $r\leq c\epsilon_{\max}$ for some universal constant $c>0$, which implies that the null and alternative of $\cH^{\textnormal{tv}}(\theta, \theta- r, \epsilon)$ cannot be separated even when $r$ is of constant order. The same conclusion holds for $\cH^{\textnormal{tv}}(\theta, \theta + r, \epsilon)$ as well. Therefore, the optimal length of ARCI under (\ref{eq:TV-model}) is of constant order, achieved by the conservative interval (\ref{eq:conservative}). In contrast, when $\epsilon$ is known, the confidence interval based on sample median (\ref{eq:median-RCI}) still achieves the length of order $\frac{1}{\sqrt{n}}+\epsilon$ under (\ref{eq:TV-model}) just as in Huber's contamination model.

\subsection{Unknown Scale Parameter} \label{sec:unknonw-variance}

The construction of the ARCI (\ref{eq:adaptive-CI}) crucially depends on the knowledge of variance. In this section, we study the following more general setting,
\begin{equation} \label{def:model-variance}
	X_1, \ldots, X_n \overset{iid}\sim P_{\epsilon,\theta, Q, \sigma}= (1- \epsilon ) N(\theta,\sigma^2) + \epsilon Q. 
\end{equation}
Suppose the value of $\sigma$ is given, a straightforward scaling argument leads to the following extension of (\ref{eq:adaptive-CI}),
\begin{equation}\label{eq:adaptive-CI-variance}
\begin{split}
	\widehat{\CI}(\sigma) = \Big[ &\max_{1.6 \leq t \leq  \Phi^{-1}\left(1 - \sqrt{ \log (2/\alpha)/(2n) } \right)  } \left(F_n^{-1}( 2(1 - \Phi(t) ) )  + t \sigma -  \frac{2\sigma}{t} \right), \\
	& \min_{ 1.6 \leq t \leq  \Phi^{-1}\left(1 - \sqrt{ \log (2/\alpha)/(2n) } \right)} \left( F_n^{-1}( 1- 2(1 - \Phi(t) ) )  - t \sigma  + \frac{2 \sigma}{t} \right) \Big].
\end{split}
\end{equation}
Theorem \ref{thm:upper-bound-general-Gaussian} continues to hold, and the order of the optimal length achieved by (\ref{eq:adaptive-CI-variance}) is
\begin{equation}
\sigma\left(\frac{1}{\sqrt{\log n}}+\frac{1}{\sqrt{\log(1/\epsilon)}}\right).\label{eq:sigma-length}
\end{equation}

When $\sigma$ is unknown, we can consider a robust estimator for it. It was proved by \cite{chen2018robust} that median absolute deviation (multiplied by a known constant) achieves
\begin{equation}
\frac{|\wh{\sigma}-\sigma|}{\sigma} = O_{\mathbb{P}}\left(\frac{1}{\sqrt{n}}+\epsilon\right), \label{eq:mad-error}
\end{equation}
and this rate is minimax optimal under (\ref{def:model-variance}). Since the rate (\ref{eq:mad-error}) is much faster than (\ref{eq:sigma-length}), one would naturally conjecture that the plug-in strategy $\widehat{\CI}(\wh{\sigma})$ would still achieve the length (\ref{eq:sigma-length}). This intuition turns out to be false. The ignorance of the scaling parameter changes the problem in a fundamental way.

To rigorously study robust confidence intervals that are adaptive to both $\epsilon$ and $\sigma$, we will need to extend the definition of the optimal confidence interval length (\ref{eq:minimax-length-ARCI}). We first define the set of all $(1-\alpha)$-level ARCIs over $\cE\times \Sigma$ by
$$\cI_{\alpha}(\cE,\Sigma) = \left\{ \widehat{\CI} = [l(\{X_i \}_{i=1}^n), u(\{X_i \}_{i=1}^n) ]: \inf_{\epsilon \in \cE}\inf_{\sigma\in\Sigma} \inf_{\theta, Q} P_{\epsilon, \theta, Q,\sigma} \left( \theta \in \widehat{\CI} \right) \geq 1- \alpha  \right\}.$$
The optimal length at some $(\epsilon,\sigma)\in\cE\times \Sigma$ over all $(1-\alpha)$-level ARCIs is defined by
$$
	r_{\alpha}( \epsilon,\sigma, \cE,\Sigma) = \inf \left\{r \geq 0: \inf_{\widehat{\CI} \in  \cI_{\alpha}(\cE,\Sigma) }\sup_{\theta, Q} P_{\epsilon, \theta, Q,\sigma} \left( |\widehat{\CI}| \geq r \right) \leq \alpha \right\}. 
$$
With this new notation, we know that the order of $r_{\alpha}\left(\epsilon,\sigma,[0,0.05],\{\sigma\}\right)$ is given by (\ref{eq:sigma-length}).
\begin{Theorem}\label{th:lower-bound-Gaussian-unknown-variance}
Suppose $\alpha\in(0,1/4)$ and $\sigma>0$. There exists some universal constant $c>0$, such that
\begin{equation}
r_{\alpha}\left(\epsilon,\sigma,[0,0.05],\left[\frac{\sigma}{2},\sigma\right]\right)\geq c\sigma, \label{eq:r-l-v}
\end{equation}
for any $\epsilon\in[0,0.05]$.
\end{Theorem}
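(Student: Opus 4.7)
The plan is a two-point reduction in the spirit of Proposition~\ref{prop:CI-imply-test}: I will construct two data-generating laws $P_0$ and $P_1$, both lying in the coverage range of any $\widehat{\CI}\in\cI_\alpha([0,0.05],[\sigma/2,\sigma])$, such that $P_0=P_1$ as distributions but the associated location parameters satisfy $|\theta_0-\theta_1|\geq c\sigma$ for a universal constant $c>0$. Since no statistic can distinguish identical laws, any coverage-valid $\widehat{\CI}$ must contain both $\theta_0$ and $\theta_1$ simultaneously with probability at least $1-2\alpha$, which exceeds $\alpha$ for $\alpha<1/4$; on this event $|\widehat{\CI}|\geq c\sigma$, and the definition (\ref{eq:minimax-length-ARCI}) then gives the theorem.

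By rescaling, assume $\sigma=1$. Fix $\sigma_0\in(0.95,1)$ to be tuned, set $\theta_1=0$ and $\theta_0=c>0$ for a constant $c$ specified below, and define
\[
P_1:=N(0,1),\qquad P_0:=0.95\,N(c,\sigma_0^2)+0.05\,Q_0,
\]
where $Q_0$ has density
\[
q_0(x)=20\left(\frac{1}{\sqrt{2\pi}}\,e^{-x^2/2}-\frac{0.95}{\sigma_0\sqrt{2\pi}}\,e^{-(x-c)^2/(2\sigma_0^2)}\right).
\]
The total mass $\int q_0=20(1-0.95)=1$ is automatic, so once $q_0\geq 0$ is verified, $Q_0$ is a probability density and $P_0=P_1$ exactly. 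The alternative $P_1$ lies in the model at $(\epsilon,1)$ for every $\epsilon\in[0,0.05]$ via the trivial contamination $(1-\epsilon)N(0,1)+\epsilon N(0,1)$, and $P_0$ lies in the null family at $(\epsilon',\sigma')=(0.05,\sigma_0)\in[0,0.05]\times[1/2,1]$; thus both laws fall within the coverage range.

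The technical heart is the nonnegativity check, equivalent to $(x-c)^2/(2\sigma_0^2)-x^2/2\geq\log(0.95/\sigma_0)$ for all $x\in\mathbb{R}$. Because $\sigma_0<1$, the left-hand side is a convex quadratic in $x$ with global minimum $-c^2/(2(1-\sigma_0^2))$, so the condition reduces to the scalar inequality $c^2\leq 2(1-\sigma_0^2)\log(\sigma_0/0.95)$. Since $\sigma_0>0.95$, the right-hand side is strictly positive; a one-variable optimization over $\sigma_0\in(0.95,1)$ (the maximum sits near $\sigma_0\approx 0.975$) yields a universal positive constant $c$ that is independent of $n$, $\epsilon$, and $\sigma$. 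Rescaling by $\sigma$ gives the claimed $r_\alpha(\epsilon,\sigma,[0,0.05],[\sigma/2,\sigma])\geq c\sigma$.

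The main obstacle is precisely this density-ratio calculation: within the allowed variance window $[\sigma/2,\sigma]$, only the narrow slice $(0.95\sigma,\sigma)$ is viable, because only there is the peak of $0.95\,N(c,\sigma_0^2)$ dominated by the density of $N(0,1)$ after an $O(\sigma)$ mean shift, and even inside this slice the admissible shift is only a fraction of $\sigma$. This sharp interaction between scale flexibility and the contamination budget is what separates the present constant-order lower bound from the logarithmic rate in Theorem~\ref{th:lower-bound-Gaussian}, where $\sigma$ was known. The remaining Le Cam reduction and the verification that both laws lie in the model classes are routine.
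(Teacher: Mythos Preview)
Your proposal is correct and follows essentially the same approach as the paper: construct $Q_0$ so that $0.95\,N(c,\sigma_0^2)+0.05\,Q_0=N(0,1)$ exactly, verify nonnegativity of $q_0$ via the convex quadratic $(x-c)^2/(2\sigma_0^2)-x^2/2$ whose minimum yields the scalar condition $c^2\le 2(1-\sigma_0^2)\log(\sigma_0/0.95)$, and then run the two-point/contradiction argument. The paper simply fixes $\sigma_0=0.99$ rather than optimizing over $\sigma_0\in(0.95,1)$, and packages the final step as a formal testing lower bound (its Lemma~\ref{prop:test-lower-bound-gaussian-unknown-variance}) rather than your direct simultaneous-coverage argument, but the constructions and the key density-ratio calculation are identical.
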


Theorem \ref{th:lower-bound-Gaussian-unknown-variance} rules out the possibility of shrinking adaptive confidence intervals when the variance parameter is unknown. The lower bound rate $\sigma$ can simply be achieved by the conservative strategy
$$[\wh{\theta}_{\rm median}-\wh{\sigma}R,\wh{\theta}_{\rm median}+\wh{\sigma}R],$$
where $\wh{\sigma}$ is an estimator of $\sigma$ satisfying (\ref{eq:mad-error}), and $R$ is some sufficiently large constant. The choice $\left[\frac{\sigma}{2},\sigma\right]$ in (\ref{eq:r-l-v}) actually implies that the statistician knows the order of the true scale parameter, but the lower bound (\ref{eq:r-l-v}) still rules out any non-trivial interval length. In particular, plugging the largest value of the interval $\left[\frac{\sigma}{2},\sigma\right]$ in some procedure such as (\ref{eq:adaptive-CI-variance}) does not lead to adaptation to the unknown $\epsilon$. One has to know the exact value of the scale parameter. In contrast, if the order of $\epsilon$ is known, then this is equivalent to knowing both $\epsilon$ and $\sigma$ in the sense that
$$r_{\alpha}\left(\epsilon,\sigma,\left[\frac{\epsilon}{2},\epsilon\right],(0,\infty)\right)\asymp r_{\alpha}\left(\epsilon,\sigma,\{\epsilon\},\{\sigma\}\right)\asymp \sigma\left(\frac{1}{\sqrt{n}} + \epsilon\right),$$
which is achieved by the median confidence interval (\ref{eq:median-RCI}) by plugging the largest value of $\left[\frac{\epsilon}{2},\epsilon\right]$ and an estimator of $\sigma$ satisfying (\ref{eq:mad-error}). The subtlety is summarized in Table \ref{tab:rate}.

\begin{table}[h]
	\centering
	\begin{tabular}{c | c | c}
		\hline
	 	& known $\epsilon$ & unknown $\epsilon$\\
	 \hline 
	 known $\sigma$ & $\sigma\left(\frac{1}{\sqrt{n}} + \epsilon\right)$ & $\sigma\left(\frac{1}{\sqrt{\log n}}+\frac{1}{\sqrt{\log(1/\epsilon)}}\right)$ \\
	 \hline 
	 unknown $\sigma$  & $\sigma\left(\frac{1}{\sqrt{n}} + \epsilon\right)$ & $\sigma$\\
	 \hline 
	\end{tabular} \caption{Optimal length of confidence interval for $\theta$ with samples $X_1, \ldots, X_n \overset{iid}\sim (1- \epsilon ) N(\theta,\sigma^2) + \epsilon Q$.}
	\label{tab:rate}
\end{table}

The proof of Theorem \ref{thm:upper-bound-general-Gaussian} is based on the following construction. There exist distributions $Q_0$ and $Q_1$, such that
\begin{equation}
0.95N(\theta, (0.99\sigma)^2)+0.05 Q_0=(1-\epsilon)N(\theta-c\sigma,\sigma^2)+\epsilon Q_1 \label{eq:construct-var-unknown}
\end{equation}
holds for some universal constant $c>0$ independent of $\sigma$. In other words, without the value of the scale parameter being fixed, one cannot tell the difference between the location parameters $\theta$ and $\theta-c\sigma$ in the presence of contamination. We note that the construction (\ref{eq:construct-var-unknown}) would be impossible when the two distributions have the same scale parameter. With this construction, Theorem \ref{th:lower-bound-Gaussian-unknown-variance} is an immediate consequence of a similar argument to Proposition \ref{prop:CI-imply-test}. An analogous result to Theorem \ref{th:lower-bound-Gaussian-unknown-variance} can also be proved for other location models considered in Section \ref{sec:examples-add} with a similar construction.

\vskip 0.2in
\bibliographystyle{apalike}
\bibliography{reference}

		\newpage
		\appendix
		\
			\begin{center}
			{\LARGE Supplement to "Adaptive Robust Confidence Intervals"	
				
			}		
			\medskip
			
			{\large Yuetian Luo \quad and\quad Chao Gao}
			\medskip
		\end{center}
		
		In this supplement, we provide a table of contents and all of the technical proofs.
		\tableofcontents

\section{Discussion of Universal Inference and HulC} \label{sec:median-bias}
In this section, we provide a brief discussion on the universal inference \citep{wasserman2020universal}, its robust version \citep{park2023robust}, and the HulC \citep{kuchibhotla2023hulc} procedures to illustrate the difficulty of applying them to the setting \eqref{def:model}.

\begin{itemize}
	\item (Universal Inference) In this procedure, one first splits the data into two sets $\cD_0$ and $\cD_1$. Then use $\cD_1$ to compute an estimator $\htheta$ for $\theta$. Finally, the confidence set from universal inference is given by
	\begin{equation*}
		\widehat{\CI} = \left\{ \theta: \frac{ \prod_{X_i \in \cD_0 } dP_{\theta}(X_i) }{ \prod_{X_i \in \cD_0 } dP_{\htheta}(X_i) } \geq \alpha \right\}.
	\end{equation*} 
	When applied to Huber's contamination model, one way to modify the above formula is to replace $P_{\theta}$ by $P_{\epsilon,\theta,Q}$, but this cannot be done without the knowledge of $Q$ or $\epsilon$. 
	\item (Robust Universal Inference) The robust universal inference can be applied to Huber's contamination model $P_{\epsilon,\theta,Q}$. However, the goal of robust universal inference is to cover a target that is different from the model parameter $\theta$. The target functional is defined by
	\begin{equation*}
		\theta^* = \argmin_{\theta' \in \bbR} \rho(P_{\theta'}|| P_{\epsilon, \theta, Q}),
	\end{equation*} 
	where $\rho(\cdot\|\cdot)$ is some divergence function. It is not hard to consider an example of a confidence interval that covers $\theta^*$ but not $\theta$. For instance, consider $n=\infty$, which means one can directly access the distribution $P_{\epsilon, \theta, Q}$. Since $\theta^*$ can be computed from $P_{\epsilon, \theta, Q}$, the set $\{\theta^*\}$ is a valid construction in the large sample limit to cover $\theta^*$, but it will not cover $\theta$.

	\item  (HulC) The HulC procedure is based on a (nearly) median unbiased estimator. Specifically, one first splits the data into $B \geq 1$ folds and computes $\htheta_1, \ldots, \htheta_B$ independent estimators of $\theta$ based on each fold. Then the HulC confidence interval is $\widehat{\CI} = [ \min_{i=1, \ldots, B} \htheta_i, \max_{i = 1, \ldots, B} \htheta_i ] $ with the following coverage guarantee
\begin{equation*}
	\bbP(\theta \notin \widehat{\CI}) \leq (1/2 - \Delta)^B + (1/2 + \Delta)^B,
\end{equation*} where $\Delta = \max_{i = 1, \ldots, B} \textnormal{Med-Bias}_{\theta}(\htheta_i)$ and $\textnormal{Med-Bias}_{\theta}(\htheta_i)$ is the median bias of $\htheta_i$ defined as
	\begin{equation*}
	\textnormal{Med-Bias}_{\theta}(\htheta_i) = \left( \frac{1}{2} - \min\{ \bbP(\htheta_i \geq \theta), \bbP(\htheta_i \leq \theta) \} \right)_{+}. 
\end{equation*}  
	
	However, under the contamination setting, finding a median unbiased estimator is a nontrivial task. For example, it can be shown that the sample median can have a median bias as least as large as $1/2 - \exp(-n \epsilon^2/2)$. As a consequence, if each fold contains $\lfloor n/B \rfloor $ many samples, we need $B$ to be at least of order $n\epsilon^2/\log(n \epsilon^2)$ to maintain validity, and this number requires the knowledge of $\epsilon$. To see why the median bias of the sample median is large, let us consider independent $X_1, \ldots, X_n$ with exactly $1-\epsilon$ proportion generated from $P_{\theta}=N(\theta,1)$ and the rest of them being infinite. Denote $G \subseteq [n]$ as the index set of good samples. Then,
\begin{equation*}
	\begin{split}
	 \bbP( \htheta_{\textnormal{median}} \leq \theta ) &= \bbP\left( \sum_{i=1}^n \indi(X_i \leq \theta) \geq n/2 \right) \\
	 & =P_{\theta }\left( \sum_{i \in G} \indi(X_i \leq \theta) \geq n/2 \right)\\
	& = \bbP \left( \textnormal{Binomial}( (1-\epsilon)n, 1/2 ) - (1-\epsilon)n/2 \geq \epsilon n/2 \right) \\
	& \leq \exp( - n\epsilon^2 /2 ),
	\end{split}
\end{equation*} by Hoeffding's inequality. Thus, $\textnormal{Med-Bias}_{\theta}(\htheta_{\textnormal{median}}) \geq 1/2 - \exp( - n\epsilon^2 /2 )$. 
\end{itemize}

\section{Proofs in Section \ref{sec:adaptiveRCI}} \label{sec:app-adaptiveCI-additional-result}
\subsection{Proof of Theorem \ref{th:lower-bound-Gaussian} }

We prove the claim by a contradiction argument. Take $c$ to be the small constant in Lemma \ref{prop:test-lower-bound-gaussian}. Suppose $r_{\alpha}(\epsilon, [0,0.05]) > c \left( \frac{1}{\sqrt{\log(1/\epsilon)}} + \frac{1}{\sqrt{\log(n)}} \right)$ for all $\epsilon \in [0, 0.05]$ does not hold, i.e., there exists a $\epsilon \in [0,0.05]$ and an ARCI $\widehat{\CI}$ such that the following conditions hold
\begin{equation} \label{ineq:contradiction-condition}
	\inf_{ \epsilon \in [0, 0.05], \theta, Q} P_{\epsilon, \theta, Q}\left( \theta \in\widehat{\CI} \right) \geq 1-\alpha \quad \textnormal{ and } \quad \sup_{\theta, Q} P_{\epsilon, \theta, Q} \left( |\widehat{\CI}| \geq r \right) \leq \alpha,
\end{equation} with $r = c \left( \frac{1}{\sqrt{\log(1/\epsilon)}} + \frac{1}{\sqrt{\log(n)}} \right)$. Then given any $\theta \in \bbR$, by Proposition \ref{prop:CI-imply-test}, we can design a test using $\widehat{\CI}$ with the following guarantee
\begin{equation} \label{ineq:typeI+II}
\begin{split}
	 & \sup_{Q } P_{\epsilon_{\max}, \theta , Q } \left( \phi_{\theta, \theta- r, \epsilon} = 1  \right) + \sup_{Q } P_{ \epsilon, \theta - r, Q  } \left( \phi_{\theta, \theta- r, \epsilon} = 0  \right) \\
	\leq & \sup_{Q } P_{\epsilon_{\max}, \theta , Q } \left( \sup_{\epsilon \in [0, \epsilon_{\max} ]} \phi_{\theta, \theta- r, \epsilon} = 1  \right) + \sup_{Q } P_{ \epsilon, \theta - r, Q  } \left( \phi_{\theta, \theta- r, \epsilon} = 0  \right)  \leq  3 \alpha.
\end{split}
	\end{equation}

On the other hand, by Lemma \ref{prop:test-lower-bound-gaussian} and the Neyman-Person Lemma, we have the following lower bound for testing $\cH(\theta, \theta - r, \epsilon)$, 
	\begin{equation} \label{ineq:neyman-person}
		\begin{split}
		&\inf_{ \phi \in \{0,1 \} } \left\{ \sup_{Q } P_{\epsilon_{\max}, \theta, Q } \left( \phi = 1  \right) + \sup_{Q } P_{ \epsilon , \theta - r , Q  } \left( \phi = 0  \right) \right\}\\
			 &\geq 1 - \inf_{Q_0, Q_1} \TV\left(P^{\otimes n}_{\epsilon_{\max},\theta, Q_0}, P^{\otimes n}_{\epsilon, \theta - r , Q_1} \right) \\
			& \geq 1 - \alpha.
		\end{split}
	\end{equation}
	This contradicts with \eqref{ineq:typeI+II} since $\alpha< 1/4$. Thus, the assumption does not hold, and this finishes the proof of this theorem.

\subsection{Proof of Theorem \ref{thm:upper-bound-general-Gaussian}}
First, we note that the coverage guarantee directly follows from Lemma \ref{th:estimation-guarantee}. Now, we move to the length guarantee, it is easy to check that $t_{\epsilon} \in \left[1.6, \Phi^{-1}\left(1 - \sqrt{ \log (2/\alpha)/(2n) } \right)\right]$ for all $\epsilon \in [0,0.05]$ when $n/\log(1/\alpha)$ is sufficiently large. Since Lemma \ref{th:estimation-guarantee} and \ref{th:estimation-length-guarantee} both only use concentration from the DKW inequality (see Lemma \ref{lm:DKW}), by the calculation in \eqref{ineq:length-guarantee}, we have $|\widehat{\CI}| \leq 4/t_{\epsilon}$ with probability at least $1-\alpha$. Finally, we have $1/t_{\epsilon} \lesssim  \frac{1}{\sqrt{\log(1/\epsilon)}} + \frac{1}{\sqrt{\log( n/(\log(1/\alpha)) )}}$ for all $\epsilon \in [0, \epsilon_{\max}]$ by Lemma \ref{lm:t-epsilon-bound}.

\subsection{Proof of Lemma \ref{th:estimation-guarantee}}

	For simplicity, we will denote the range of $t$ we are considering as $[t_{\min}, t_{\max}]$. By the DKW inequality (see Lemma \ref{lm:DKW}), we have with probability at least $1 - \alpha$, the following event holds:
	\begin{equation*}
		(A) = \{ \textnormal{for all } x \in \bbR, |F_n(x) - F_{\epsilon, \theta, Q}(x)| \leq \sqrt{ \log (2/\alpha)/(2n) } \},
	\end{equation*}where $F_{\epsilon, \theta, Q }(\cdot)$ denotes the CDF of $P_{\epsilon, \theta, Q}$.
	Next, let us deduce a sufficient condition on the error guarantee of $\htheta_{R}(t)$ given the event $(A)$ happens: for any $t \in [t_{\min}, t_{\max}]$,
	\begin{equation} \label{eq:right-estimator-reduction}
		\begin{split}
			& \quad \quad \htheta_{R}(t)  - \theta \geq -2/t \\
			&\Longleftrightarrow \theta + t - 2/t \leq F_n^{-1}( 1- 2(1 - \Phi(t) ) ) \\
			&\Longleftrightarrow \sum_{j=1}^n \indi\left( X_j < \theta + t - 2/t  \right) < n - 2n( 1 - \Phi(t) ) \\
			&\Longleftarrow \sum_{j=1}^n \indi\left( X_j \leq \theta + t - 2/t  \right) < n - 2n( 1 - \Phi(t) ) \\
			& \Longleftrightarrow 2( 1 - \Phi(t) ) < 1 - F_n(\theta +t -2/t) \\
			& \overset{(A)}\Longleftarrow 2( 1 - \Phi(t) ) < 1 - F_{\epsilon, \theta, Q }(\theta +t -2/t) - \sqrt{ \log (2/\alpha)/(2n) } \\
			& \Longleftrightarrow 2( 1 - \Phi(t) ) <P_{\epsilon, \theta, Q}(X > \theta +t -2/t) - \sqrt{ \log (2/\alpha)/(2n) }\\
			& \Longleftarrow 2( 1 - \Phi(t) ) < (1 - \epsilon_{\max} )P_{\theta}(X \geq \theta +t -2/t) - \sqrt{ \log (2/\alpha)/(2n) },
		\end{split}
	\end{equation} where $P_\theta$ denotes the distribution of $N(\theta,1)$. Similarly, for $\htheta_{L}(t)$, we have for any $t \in [t_{\min}, t_{\max}]$,
	\begin{equation}\label{eq:left-estimator-reduction}
		\begin{split}
			& \quad \quad \htheta_{L}(t)  - \theta \leq \frac{2}{t} \\
			& \Longleftrightarrow \theta - t + 2/t \geq F_n^{-1}( 2(1 - \Phi(t) ) ) \\
			& \Longleftrightarrow F_n\left(\theta - t + 2/t \right) \geq 2 (1 - \Phi(t) ) \\
			& \overset{(A)}\Longleftarrow  F_{\epsilon, \theta, Q} (\theta - t + 2/t ) - \sqrt{ \log (2/\alpha)/(2n) } \geq 2 (1 - \Phi(t) ) \\
			& \Longleftarrow (1 - \epsilon_{\max} )P_{\theta}(X \geq \theta +t -2/t) - \sqrt{ \log (2/\alpha)/(2n) }\geq 2 (1 - \Phi(t) ).
		\end{split}
	\end{equation} Based on \eqref{eq:right-estimator-reduction} and \eqref{eq:left-estimator-reduction}, to show $\htheta_{R}(t)  - \theta \geq -2/t$ and $\htheta_{L}(t)  - \theta \leq 2/t$ hold simultaneously for all $t \in [t_{\min}, t_{\max}]$ and uniformly over all $\epsilon\in[0,0.05]$, all $\theta\in\mathbb{R}$, and all $Q$ with probability $1 - \alpha$, it is enough to show 
	\begin{equation} \label{ineq:second-step-reduction}
		\begin{split}
			& 2( 1 - \Phi(t) ) < (1 - \epsilon_{\max} )P_{\theta}(X \geq \theta +t -2/t) - \sqrt{ \log (2/\alpha)/(2n) }, \forall t \in [t_{\min}, t_{\max}] \\
			\Longleftrightarrow & 2 P_{0}(X \geq t) < (1 - \epsilon_{\max} )P_{0}(X \geq t -2/t) - \sqrt{ \log (2/\alpha)/(2n) }, \forall t \in [t_{\min}, t_{\max}] \\
			\Longleftarrow & \left\{ \begin{array}{l}
				P_{0}(X \geq t) \geq \sqrt{ \log (2/\alpha)/(2n) }, \forall t \in [t_{\min}, t_{\max}] \\
				(1 - \epsilon_{\max} )P_{0}(X \geq t -2/t) > 3 P_{0}(X \geq t),\forall t \in [t_{\min}, t_{\max}]
			\end{array} \right. \\
			\overset{ (a) }\Longleftarrow & \left\{ \begin{array}{l}
				t_{\max} \leq \Phi^{-1} ( 1- \sqrt{ \log (2/\alpha)/(2n) } ), \\
				(1 - \epsilon_{\max} )\phi(x - 2/t) > 3 \phi(x),\forall x \geq t, \forall t \in [t_{\min}, t_{\max}]
			\end{array} \right. \\
			\Longleftrightarrow & \left\{ \begin{array}{l}
				t_{\max} \leq \Phi^{-1} ( 1- \sqrt{ \log (2/\alpha)/(2n) } ), \\
				\exp( 2 - 2/t^2_{\min} ) > \frac{3}{1- \epsilon_{\max} },
			\end{array} \right.
		\end{split}
	\end{equation} where in (a), $\phi(\cdot)$ denotes the density of $N(0,1)$ and it is because if $(1 - \epsilon_{\max} )\phi(x - 2/t) > 3 \phi(x),\forall x \geq t$, then
	\begin{equation} \label{ineq:density-ratio-to-tail-prob}
	\begin{split}
		(1 - \epsilon_{\max} )P_{0}(X \geq t -2/t) = (1 - \epsilon_{\max} )P_{2/t}(X \geq t) & = \int_{x \geq t} (1 - \epsilon_{\max} )\phi(x - 2/t) dx  \\
		& >  \int_{x \geq t} 3 \phi(x) dx = 3 P_{0}(X \geq t).
	\end{split}
	\end{equation}

	Plug in $\epsilon_{\max} = 0.05$. Then $t_{\min} = 1.6$ and $t_{\max} = \Phi^{-1} ( 1- \sqrt{ \log (2/\alpha)/(2n) } )$ satisfy the above conditions. This finishes the proof of this lemma.

\subsection{Proof of Lemma \ref{th:estimation-length-guarantee}}

	First, by the DKW inequality (see Lemma \ref{lm:DKW}), we have with probability at least $1 - \alpha$, the following event holds:
	\begin{equation*}
		(A) = \{ \textnormal{for all } x \in \bbR, |F_n(x) - F_{\epsilon, \theta, Q}(x)| \leq \sqrt{ \log (2/\alpha)/(2n) } \},
	\end{equation*}where $F_{\epsilon, \theta, Q }(\cdot)$ denotes the CDF of $P_{\epsilon, \theta, Q}$.
	
	Given $(A)$ happens, for any $t$, we have
	\begin{equation} \label{ineq:right-estimator-gua2}
		\begin{split}
			\htheta_{R}(t)  - \theta \leq 0 &\Longleftrightarrow F_n^{-1}( 1- 2(1 - \Phi(t) ) )  - t - \theta \leq 0 \\
			& \Longleftrightarrow 1 - F_n(\theta + t) \leq 2(1 - \Phi(t) ) \\
			& \overset{(A)}\Longleftarrow 1 - F_{\epsilon, \theta, Q}(\theta + t) + \sqrt{ \log (2/\alpha)/(2n) } \leq 2(1 - \Phi(t) ) \\
			& \Longleftrightarrow P_{\epsilon, \theta, Q} ( X > \theta + t ) + \sqrt{ \log (2/\alpha)/(2n) } \leq 2(1 - \Phi(t) ) \\
			& \Longleftarrow P_{\theta} ( X \geq \theta + t ) + \epsilon + \sqrt{ \log (2/\alpha)/(2n) } \leq 2(1 - \Phi(t) )\\
			& \Longleftrightarrow 1 - \Phi(t) + \epsilon + \sqrt{ \log (2/\alpha)/(2n) } \leq 2(1 - \Phi(t) ) \\
			& \Longleftrightarrow 1 - \Phi(t) \geq \epsilon + \sqrt{ \log (2/\alpha)/(2n) }.
		\end{split}
	\end{equation} Similarly, for any $t$, we have
	\begin{equation} \label{ineq:left-estimator-gua2}
		\begin{split}
			0 \leq \htheta_{L}(t)  - \theta &\Longleftrightarrow F_n^{-1}( 2(1 - \Phi(t) ) )  + t - \theta \geq 0 \\
			& \Longleftrightarrow \sum_{j=1}^n \indi\left( X_j < \theta -t \right) < 2n(1 - \Phi(t) ) \\
			& \Longleftarrow \sum_{j=1}^n \indi\left( X_j \leq  \theta -t \right) < 2n(1 - \Phi(t) ) \\
			& \Longleftrightarrow 2(1 - \Phi(t) ) > F_n(\theta -t ) \\
			& \overset{(A)}\Longleftarrow 2(1 - \Phi(t) ) > F_{ \epsilon, \theta, Q }(\theta -t ) + \sqrt{ \log (2/\alpha)/(2n) } \\
			& \Longleftarrow 2(1 - \Phi(t) ) \geq  P_{\theta}( X \leq \theta -t ) + \epsilon +  \sqrt{ \log (2/\alpha)/(2n) } \\
			& \Longleftrightarrow 2(1 - \Phi(t) ) \geq   1- \Phi(t) + \epsilon +  \sqrt{ \log (2/\alpha)/(2n) }\\
			& \Longleftrightarrow 1 - \Phi(t) \geq \epsilon + \sqrt{ \log (2/\alpha)/(2n) }.
		\end{split}
	\end{equation}
	Notice that if we plug in $ t = t_{\epsilon} = \Phi^{-1}\left( 1- \epsilon- \sqrt{ \log(2/\alpha)/(2n) } \right) $, then the sufficient conditions on the right-hand sides of \eqref{ineq:right-estimator-gua2} and \eqref{ineq:left-estimator-gua2} will be satisfied. This finishes the proof of this lemma.

\section{Proofs in Section \ref{sec:t}} \label{app:proof-test}

\subsection{Proof of Proposition \ref{prop:test-to-CI}}

	Let us begin with the coverage guarantee. For any $\epsilon \in [0, \epsilon_{\max}]$ and $\theta \in \bbR$,
	\begin{equation} \label{eq:coverage-proof}
		\begin{split}
			&\sup_{Q} P_{\epsilon,\theta, Q}\left( \theta \notin \widehat{\CI} \right) \\
			&= \sup_{Q} P_{\epsilon,\theta, Q}\left(  \sup_{\epsilon \in [0, \epsilon_{\max}]} \phi_{\theta, \theta -r_{\epsilon}, \epsilon} = 1 \text{ or }  \sup_{\epsilon \in [0, \epsilon_{\max}]} \phi_{\theta, \theta +r_{\epsilon}, \epsilon} = 1  \right)\\
			& \leq  \sup_{Q}  P_{\epsilon,\theta, Q}\left(  \sup_{\epsilon \in [0, \epsilon_{\max}]} \phi_{\theta, \theta -r_{\epsilon}, \epsilon} = 1  \right) + \sup_{Q}  P_{\epsilon,\theta, Q}\left( \sup_{\epsilon \in [0, \epsilon_{\max}]} \phi_{\theta, \theta +r_{\epsilon}, \epsilon} = 1  \right) \\
			& \overset{(a)}\leq \sup_{Q}  P_{\epsilon_{\max},\theta, Q}\left(\sup_{\epsilon \in [0, \epsilon_{\max}]} \phi_{\theta, \theta -r_{\epsilon}, \epsilon} = 1  \right) +  \sup_{Q}  P_{\epsilon_{\max},\theta, Q}\left(\sup_{\epsilon \in [0, \epsilon_{\max}]} \phi_{\theta, \theta +r_{\epsilon}, \epsilon} = 1  \right) \\
			& \leq 2\alpha,
		\end{split}
	\end{equation}  where (a) is because $\{ P_{\epsilon, \theta, Q }: Q, \theta \} \subseteq \{ P_{\epsilon_{\max}, \theta, Q}: Q, \theta \}$ since $\epsilon \in [0, \epsilon_{\max}]$; the last inequality is by the simultaneous Type-1 error control of the tests.

	Next, we show the length guarantee. For any $\theta \in \bbR$ and $\epsilon \in [0, \epsilon_{\max}]$,
	\begin{equation} \label{eq:length-guarantee}
		\begin{split}
			\sup_{Q} P_{\epsilon,\theta, Q}\left( |\widehat{\CI}| \geq 2 r_{ \epsilon } \right) &\leq \sup_{Q} P_{\epsilon,\theta, Q}\left( |\widehat{\CI}| \geq 2 r_{ \epsilon }, \theta \in \widehat{\CI} \right) \\
			& \quad + \sup_{Q} P_{\epsilon,\theta, Q}\left( \theta \notin \widehat{\CI} \right).
		\end{split}
	\end{equation} Notice that the second term on the right-hand side of \eqref{eq:length-guarantee} is bounded by $2\alpha$ by the coverage guarantee we have shown in the first part. To finish the proof, we just need to show the first term on the right-hand side of \eqref{eq:length-guarantee} is bounded by $2\alpha$,
	\begin{equation*}
		\begin{split}
			 & \sup_{Q} P_{\epsilon,\theta, Q}\left( |\widehat{\CI}| \geq 2 r_{ \epsilon }, \theta \in \widehat{\CI} \right) \\
			&\overset{(a)}\leq \sup_{Q} P_{\epsilon,\theta, Q}\left( \theta + r_{ \epsilon } \in \widehat{\CI} \text{ or } \theta - r_{ \epsilon } \in \widehat{\CI} \right) \\
			& \leq \sup_{Q} P_{\epsilon,\theta, Q}\left( \theta + r_{ \epsilon } \in \widehat{\CI}  \right) + \sup_{Q} P_{\epsilon,\theta, Q}\left( \theta - r_{ \epsilon } \in \widehat{\CI}  \right)\\
			& \leq \sup_{Q} P_{\epsilon,\theta, Q}\left( \phi_{\theta + r_{ \epsilon }, \theta, \epsilon} =0  \right) + \sup_{Q} P_{\epsilon,\theta, Q}\left( \phi_{\theta - r_{ \epsilon }, \theta, \epsilon} = 0 \right) \\
			& \leq  \sup_{\theta,Q} P_{\epsilon,\theta - r_{ \epsilon }, Q}\left( \phi_{\theta, \theta - r_{ \epsilon }, \epsilon} =0  \right) + \sup_{\theta,Q} P_{\epsilon,\theta + r_{ \epsilon }, Q}\left( \phi_{\theta, \theta + r_{ \epsilon }, \epsilon} = 0 \right) \\
			& \leq 2 \alpha,
		\end{split}
	\end{equation*} where (a) is by the assumption $\widehat{\CI}$ is an interval and the last inequality is by the Type-2 error control of the tests $\phi_{\theta, \theta + r_{ \epsilon }, \epsilon}$ and $\phi_{\theta, \theta - r_{ \epsilon }, \epsilon}$. 

\subsection{Proof of Proposition \ref{prop:CI-imply-test}}
The proof of Proposition \ref{prop:CI-imply-test} is similar to that of Proposition 8.3.6 in \cite{gine2015mathematical}.  
 We start with the simultaneous Type-1 error guarantee for $\phi_{\theta, \theta- r_{\epsilon}, \epsilon}$ and $\phi_{\theta, \theta+ r_{\epsilon}, \epsilon}$.
\begin{equation*}
	\begin{split}
		 & \sup_{Q} P_{\epsilon_{\max}, \theta , Q } \left( \sup_{ \epsilon \in [0, \epsilon_{\max}] } \phi_{\theta, \theta- r_{\epsilon}, \epsilon} = 1  \right)  = \sup_{Q} P_{\epsilon_{\max}, \theta, Q } \left( \theta \notin  \widehat{\CI} \right)  \leq \alpha, \\
		 &\sup_{Q} P_{\epsilon_{\max}, \theta , Q } \left( \sup_{ \epsilon \in [0, \epsilon_{\max}] } \phi_{\theta, \theta + r_{\epsilon}, \epsilon} = 1  \right)  = \sup_{Q} P_{\epsilon_{\max}, \theta, Q } \left( \theta \notin  \widehat{\CI} \right)  \leq \alpha,
	\end{split}
\end{equation*}	where the equality is by the definition of $\phi_{\theta, \theta- r_{\epsilon}, \epsilon}$ and $\phi_{\theta, \theta+ r_{\epsilon}, \epsilon}$ and the inequality is by the coverage guarantee of $\widehat{\CI}$. Next, we bound the Type-2 error of $\phi_{\theta, \theta- r_{\epsilon}, \epsilon}$ and the same result holds for $\phi_{\theta, \theta+ r_{\epsilon}, \epsilon}$.	For any $\epsilon \in [0, \epsilon_{\max}]$ such that $\inf_{\theta, Q } P_{\epsilon, \theta, Q}\left( |\widehat{\CI}| \leq r_{\epsilon}   \right) \geq 1 - \alpha$, we have
\begin{equation*}  
		\begin{split}
			& \sup_{Q} P_{ \epsilon, \theta - r_{\epsilon}, Q  } \left( \phi_{\theta, \theta- r_{\epsilon}, \epsilon} = 0  \right) \\
			&\overset{(a)}= \sup_{Q} P_{ \epsilon, \theta - r_{\epsilon}, Q  } \left( \theta   \in \widehat{\CI}   \right) \\
			& \leq \sup_{Q} P_{\epsilon, \theta - r_{\epsilon}, Q } \left( \theta \in  \widehat{\CI}, \theta -  r_{\epsilon}\in \widehat{\CI} \right) + \sup_{Q} P_{\epsilon, \theta - r_{\epsilon}, Q } \left( \theta -  r_{\epsilon}\notin  \widehat{\CI} \right) \\
			& \overset{(b)}\leq \sup_{Q} P_{\epsilon, \theta - r_{\epsilon}, Q } \left( |\widehat{\CI}| \geq r_{\epsilon}\right) + \sup_{Q} P_{\epsilon, \theta - r_{\epsilon}, Q } \left( \theta -  r_{\epsilon}\notin  \widehat{\CI} \right)  \\
			& \leq \alpha + \sup_{Q} P_{\epsilon, \theta - r_{\epsilon}, Q } \left( |\widehat{\CI}| \geq r_{\epsilon}\right) \leq \alpha + \sup_{\theta,Q } P_{\epsilon, \theta, Q } \left( |\widehat{\CI}| \geq r_{\epsilon}\right) \\
			& \leq 2 \alpha.
		\end{split}
	\end{equation*}
where (a) is by the definition of $\phi_{\theta, \theta- r_{\epsilon}, \epsilon}$ and in (b), we use the fact $\widehat{\CI}$ is an interval, the second to last inequality is by the adaptive coverage property of $\widehat{\CI}$ and the last inequality is by the length guarantee of $\widehat{\CI}$.

\subsection{Proof of Lemma \ref{prop:upper-bound-Gaussian}}
Due to symmetry, here we only present the proof for the error control of $\phi_{\theta, \theta- r_{\epsilon}, \epsilon}$ for solving $\cH(\theta, \theta- r_{\epsilon}, \epsilon)$.
	Let us begin with the simultaneous Type-1 error control, and we will show that an equivalent statement is true: for any $\theta \in \bbR$,
	\begin{equation*}
		\inf_{Q}P_{ \epsilon_{\max}, \theta, Q }\left( \phi_{\theta, \theta- r_{\epsilon}, \epsilon} = 0 \text{ for all } \epsilon \in [0,0.05] \right) \geq 1- \alpha.
		\end{equation*} First, by the DKW inequality (see Lemma \ref{lm:DKW}), we have with probability at least $1 - \alpha$, the following event holds:
	\begin{equation*}
		(A) = \{ \textnormal{for all } x \in \bbR, |F_n(x) - F_{\epsilon_{\max}, \theta, Q}(x)| \leq \sqrt{ \log (2/\alpha)/(2n) } \},
	\end{equation*}where $F_{\epsilon_{\max}, \theta, Q }(\cdot)$ denotes the CDF of $P_{\epsilon_{\max}, \theta, Q}$. Notice that $r_{\epsilon} = 2/t_{\epsilon}$ by definition, then under $P_{ \epsilon_{\max}, \theta, Q }$, given $(A)$ happens, for any $\epsilon \in [0,0.05]$,
	\begin{equation} \label{ineq:type-I-error-reduction}
		\begin{split}
			\phi_{\theta, \theta- 2/t_{\epsilon}, \epsilon} = 0 &\Longleftrightarrow \sum_{j =1}^n \indi(X_j - (\theta - 2/t_{\epsilon}) \geq t_{\epsilon}) > 2n( 1-  \Phi(t_{\epsilon}) ) \\
			& \Longleftarrow \sum_{j =1}^n \indi(X_j > t_{\epsilon} + \theta - 2/t_{\epsilon}) > 2n( 1-  \Phi(t_{\epsilon}) ) \\
			&\Longleftarrow \frac{1}{n} \sum_{j=1}^n \indi\left( X_j \leq \theta + t_{\epsilon} - 2/t_{\epsilon}  \right) < 1 - 2( 1 - \Phi(t_{\epsilon}) ) \\
			& \overset{(A)}\Longleftarrow 2( 1 - \Phi(t_{\epsilon}) ) < 1 - F_{\epsilon_{\max}, \theta, Q }(\theta +t_{\epsilon} -2/t_{\epsilon}) - \sqrt{ \log (2/\alpha)/(2n) } \\
			& \Longleftarrow 2P_{0}(X \geq t_{\epsilon}) < (1 - \epsilon_{\max} )P_{\theta}(X \geq \theta +t_{\epsilon} -2/t_{\epsilon}) - \sqrt{ \log (2/\alpha)/(2n) }\\
			& \Longleftarrow \left\{ \begin{array}{l}
				P_{0}(X \geq t_{\epsilon}) \geq \sqrt{ \log (2/\alpha)/(2n) }  \\
				(1 - \epsilon_{\max} )P_{0}(X \geq t_{\epsilon} -2/t_{\epsilon}) > 3 P_{0}(X \geq t_{\epsilon})
			\end{array} \right. \\
			& \overset{\eqref{ineq:density-ratio-to-tail-prob} }\Longleftarrow \left\{ \begin{array}{l}
				t_{\epsilon} \leq \Phi^{-1} ( 1- \sqrt{ \log (2/\alpha)/(2n) } ), \\
				(1 - \epsilon_{\max} ) \phi(x -2/t_{\epsilon} ) > 3 \phi(x),\forall x \geq t_{\epsilon},
			\end{array} \right. \\
			& \Longleftrightarrow  \left\{ \begin{array}{l}
				t_{\epsilon} \leq \Phi^{-1} ( 1- \sqrt{ \log (2/\alpha)/(2n) } ), \\
				\exp( 2 - 2/t^2_{\epsilon} ) > \frac{3}{1- \epsilon_{\max} }.
			\end{array} \right.
		\end{split}
	\end{equation} 
	
 In addition, since $t_{\epsilon}$ is a decreasing function with respect to $\epsilon$, to guarantee $\phi_{\theta, \theta- 2/t_{\epsilon}, \epsilon} = 0$ for all $\epsilon \in [0,0.05]$, by 
 \eqref{ineq:type-I-error-reduction}, it is enough to have
	\begin{equation*}
		t_{0} \leq \Phi^{-1} ( 1- \sqrt{ \log (2/\alpha)/(2n) } ) \, \textnormal{ and } \, \exp( 2 - 2/t^2_{0.05} ) > \frac{3}{1- \epsilon_{\max} }.
	\end{equation*} It is easy to check the above conditions are satisfied when $n/\log(1/\alpha)$ is large enough. Thus, we have shown that when $(A)$ happens, then $\phi_{\theta, \theta- 2/t_{\epsilon}, \epsilon} = 0$ for all $\epsilon \in [0,0.05]$. This finishes the proof for simultaneous Type-1 error control.
	
	Now, let us bound the Type-2 error. For any $\theta, Q$,
		\begin{equation*}
			\begin{split}
				&P_{ \epsilon, \theta - r_{\epsilon}, Q }\left( \phi_{\theta, \theta- r_{\epsilon}, \epsilon} = 0 \right) \\
				=& P_{ \epsilon, \theta - r_{\epsilon}, Q }\left( \sum_{j=1}^n \indi(X_j - (\theta - r_{\epsilon}) \geq t_{\epsilon})  > 2n( 1-  \Phi(t_{\epsilon}) )  \right) \\
				= & P_{ \epsilon, \theta, Q }\left( \sum_{j=1}^n \indi(X_j - \theta \geq t_{\epsilon})  > 2n( 1-  \Phi(t_{\epsilon}) )\right) \\
				= & P_{ \epsilon, \theta, Q }\left( \sum_{j=1}^n \indi(X_j  \geq \theta +  t_{\epsilon}) - n P_{ \epsilon, \theta, Q }(X \geq \theta + t_{\epsilon})  > 2n( 1-  \Phi(t_{\epsilon}) ) - n P_{ \epsilon, \theta, Q }(X \geq \theta + t_{\epsilon})\right) \\
				\leq &  P_{ \epsilon, \theta, Q }\left( \sum_{j=1}^n \indi(X_j  \geq \theta +  t_{\epsilon}) - n P_{ \epsilon, \theta, Q }(X \geq \theta + t_{\epsilon})  > 2n( 1-  \Phi(t_{\epsilon}) ) - n ( \epsilon + P_{\theta}(X \geq \theta + t_{\epsilon}) )\right) \\
				\overset{(a)}= &  P_{ \epsilon, \theta, Q }\left( \sum_{j=1}^n \indi(X_j  \geq \theta +  t_{\epsilon}) - n P_{ \epsilon, \theta, Q }(X \geq \theta + t_{\epsilon})  > \sqrt{n\log(2/\alpha)/2} \right) \\
				\overset{(b)}\leq & \alpha
			\end{split}
		\end{equation*} where in (a), we plug in the value of $t_{\epsilon}$ and (b) is by Hoeffding's inequality.

\subsection{Proof of Lemma \ref{prop:test-lower-bound-gaussian}}
 This result can be deduced from the general result in Theorem \ref{th:test-lower-bound}, but here, we provide a clean proof for the Gaussian case with an explicit construction of the hard contamination distributions. Due to symmetry and location family property, we just need to show $\inf_{Q_0, Q_1} \TV(P^{\otimes n}_{\epsilon_{\max}, r, Q_0}, P^{\otimes n}_{\epsilon, 0, Q_1}) \leq \alpha$. We will show the result separately for two terms: when $r \leq c/\sqrt{\log n}$ or $r \leq c/\sqrt{\log(1/\epsilon )}$ for some small $c$ depending on $\alpha$ only, we can find $Q_0$ and $Q_1$ such that $\TV(P^{\otimes n}_{ \epsilon_{\max}, r, Q_0}, P^{\otimes n}_{\epsilon, 0, Q_1}) \leq \alpha$.

{\bf (Case 1: $r \leq c/\sqrt{\log n}$)} Take $Q_1 = N(0, 1)$ and define the density function of $Q_0$ by 
\begin{equation} \label{eq:gaussian-q0}
q_0(x) = \frac{1}{\epsilon_{\max}}\phi(x)-\frac{1-\epsilon_{\max}}{\epsilon_{\max} } \frac{\indi\left\{x - r\leq t\right\}}{\bbP(N(0,1) \leq t)} \phi(x-r).
\end{equation} It is easy to check $\int q_0(x) dx = 1$, so to guarantee $Q_0$ is a legitimate probability distribution, we need 
\begin{equation} \label{eq:Gaussian-valid-density-condition}
	\begin{split}
		&\phi(x) \geq \frac{(1-\epsilon_{\max}) \phi(x - r)  }{\bbP(N(0,1) \leq t)} \,\text{ for all } x \leq r + t \\
		\Longleftrightarrow & 1 \geq \frac{(1-\epsilon_{\max}) \exp(xr - r^2/2)  }{\bbP(N(0,1) \leq t)} \,\text{ for all } x \leq r + t  \\
		\Longleftrightarrow & 1 \geq \frac{(1-\epsilon_{\max}) \exp(tr + r^2/2)  }{\bbP(N(0,1) \leq t)}.
	\end{split}
\end{equation} At the same time,
\begin{equation} \label{eq:Gaussian-TV-bound}
	\begin{split}
		\TV( P_{\epsilon_{\max}, r, Q_0}, P_{\epsilon, 0, Q_1} ) &= \frac{1}{2} \int | (1-\epsilon_{\max} ) \phi(x - r) + \epsilon_{\max} q_0(x) - \phi(x) | dx \\
		& = \frac{1}{2} \int (1-\epsilon_{\max} ) \left|1  - \frac{\indi\left\{x - r\leq t\right\}}{\bbP(N(0,1) \leq t)}\right| \phi(x - r) dx \\
		& = (1-\epsilon_{\max}) \bbP(N(0,1) \geq t).
	\end{split}
\end{equation} We take $t = \Phi^{-1}(1 - \alpha/n)$, then 
\begin{equation*}
	\begin{split}
		\TV(P^{\otimes n}_{ \epsilon_{\max} , r, Q_0}, P^{\otimes n}_{\epsilon, 0, Q_1}) \overset{(a)}\leq n \TV( P_{\epsilon_{\max}, r, Q_0}, P_{\epsilon, 0, Q_1} ) \overset{(b)}\leq \alpha,
	\end{split}
\end{equation*} where (a) is by the property of TV distance on the product measure; (b) is by \eqref{eq:Gaussian-TV-bound} and the choice of $t$. Finally, when $t = \Phi^{-1}(1 - \alpha/n)$ and $r \leq c/\sqrt{\log n} $ for some sufficiently small constant $c$ depending on $\alpha$ only, we have $r \leq c'/t$ by Lemma \ref{lm:normal-quantile} with another small $c' > 0$ depending on $c$ only and the following holds
\begin{equation*}
	\begin{split}
		\exp(tr + r^2/2) \leq \exp(c' + c^2/\log n) \leq \frac{1 - \epsilon_{\max}/2 }{1 - \epsilon_{\max} } \leq \frac{1  - \alpha/n }{1  - \epsilon_{\max} },
	\end{split}
\end{equation*} where the last inequality is because $n \geq 100$ so that $\epsilon_{\max}/2 \geq \alpha/n$. This shows that \eqref{eq:Gaussian-valid-density-condition} holds and $Q_0$ is a valid distribution.

{\bf (Case 2: $r \leq c/\sqrt{\log (1/\epsilon )}$)} It is enough to show that when $\epsilon \in [0, \epsilon_{\max}/2]$, if $r \leq c/\sqrt{\log (1/\epsilon )}$, then $\inf_{Q_0,Q_1}\TV(P^{\otimes n}_{ \epsilon_{\max} , r, Q_0}, P^{\otimes n}_{\epsilon, 0, Q_1}) \leq \alpha$. This is because when $\epsilon \in [\epsilon_{\max}/2, \epsilon_{\max}]$, there exists another constant $c'$ such that when $r \leq c'/\sqrt{\log (1/\epsilon )} \leq c/\sqrt{\log (2/\epsilon_{\max} )}$, then $$\inf_{Q_0,Q_1}\TV(P^{\otimes n}_{ \epsilon_{\max} , r, Q_0}, P^{\otimes n}_{\epsilon, 0, Q_1}) \leq \inf_{Q_0,Q_1} \TV(P^{\otimes n}_{ \epsilon_{\max} , r, Q_0}, P^{\otimes n}_{\epsilon_{\max}/2, 0, Q_1}) \leq  \alpha,$$ where the first inequality is because $\{ P_{\epsilon_{\max}/2, 0, Q_1}: Q_1 \} \subseteq \{P_{\epsilon, 0, Q_1}: Q_1 \}$ when $\epsilon \geq \epsilon_{\max}/2$.

In this case, we take $Q_1 = N( \sqrt{ \log(1/ \epsilon ) }, 1 )$ and define the density of $Q_0$ as
\begin{equation*}
	q_0(x) = \frac{(1-\epsilon) \phi(x) + \epsilon q_1(x) - (1 - \epsilon_{\max} )\phi(x - r) }{ \epsilon_{\max} } \, \textnormal{ for all } x \in \bbR.
\end{equation*} The intuition is that given $Q_0$ is a valid distribution, the choice of $Q_0$ above can exactly match $P_{\epsilon_{\max}, r, Q_0 }$ and $P_{\epsilon, 0, Q_1 }$, i.e., $\TV(P^{\otimes n}_{ \epsilon_{\max} , r, Q_0}, P^{\otimes n}_{\epsilon_{\max}/2, 0, Q_1}) = 0$. The choice of $Q_1$ comes from the requirement that $Q_0$ needs to be a valid distribution. 

Next, we verify that when $r \lesssim 1/\sqrt{\log(1 /\epsilon )} $, $Q_0$ is a valid distribution. It is easy to check $\int q_0(x) dx = 1$, so we just need to show $q_0(x) \geq 0$ for all $x \in \bbR$.

{\bf Regime 1: $x \geq 2 \sqrt{\log(1/\epsilon )} $.} In this case
\begin{equation} \label{ineq:large-x-regime}
	\begin{split}
		 &\epsilon q_0(x) \geq (1 - \epsilon_{\max} )\phi(x - r) \\
		 \Longleftrightarrow &  \epsilon \phi\left(x -  \sqrt{\log(1/\epsilon )}\right) \geq (1 - \epsilon_{\max} )\phi(x - r) \\
		  \Longleftrightarrow & \exp\left( x \sqrt{\log(1/\epsilon )} - \log(1/\epsilon )/2 \right) \geq \frac{1 - \epsilon_{\max} }{\epsilon} \exp(xr - r^2/2) \\
		  \Longleftarrow & \exp\left( x (\sqrt{\log(1/\epsilon )}-r) - \log(1/\epsilon )/2 \right)  \geq \frac{1 - \epsilon_{\max} }{\epsilon} \\
		 \overset{(a)} \Longleftarrow & \exp\left( 3x \sqrt{\log(1/\epsilon )}/4 - \log(1/\epsilon )/2 \right)  \geq \frac{1 - \epsilon_{\max} }{\epsilon}
	\end{split}
\end{equation} where in (a) we use the fact that $r \lesssim 1/\sqrt{\log(1 /\epsilon )} \leq \sqrt{\log(1/\epsilon )}/4$. Notice that the last condition in \eqref{ineq:large-x-regime} is satisfied for all $x \geq  2 \sqrt{\log(1/\epsilon )} $. So in this regime, $q_0(x) \geq 0$.

{\bf Regime 2: $x \leq 2 \sqrt{\log(1/\epsilon )} $.} In this case 
\begin{equation}\label{ineq:small-x-regime}
	\begin{split}
		(1-\epsilon) \phi(x) \geq (1 - \epsilon_{\max} )\phi(x - r) &\overset{(a)}\Longleftarrow (1-\epsilon_{\max}/2) \phi(x) \geq (1 - \epsilon_{\max} )\phi(x - r) \\
		& \Longleftrightarrow \frac{1-\epsilon_{\max}/2}{1 - \epsilon_{\max}}  \geq \exp(xr - r^2/2) \\
		& \Longleftarrow \frac{1-\epsilon_{\max}/2}{1 - \epsilon_{\max}}  \geq \exp(xr)
	\end{split}
\end{equation} where in (a) we use the fact $\epsilon \leq \epsilon_{\max}/2$. Notice that the last condition in \eqref{ineq:small-x-regime} is true when $r \lesssim 1/\sqrt{\log(1/\epsilon )}$ as $x \leq 2 \sqrt{\log(1/\epsilon )} $. This shows that in this regime, $q_0(x) \geq 0$. This finishes the proof.

\section{Proofs in Section \ref{sec:ARCI-general-distribution}}

\subsection{Proof of Proposition \ref{prop:laplace}} The result follows from the generalized Gaussian result with $\beta = 1$ in Theorem \ref{thm:example} and its intuition has also been provided in Section \ref{sec:laplace-example}.

\subsection{Proof of Theorem \ref{thm:upper-bound-general}}
To show Theorem \ref{thm:upper-bound-general}, let us first introduce a more general result for solving the testing problems $\cH(\theta, \theta \pm r_{\epsilon}, \epsilon)$. Due to symmetry, we will state the result for $\cH(\theta, \theta - r_{\epsilon}, \epsilon)$ only. Suppose we use tests 
\begin{equation} \label{eq:test}
	\begin{split}
		\phi_{\theta, \theta- r_{\epsilon}, \epsilon} &= \indi \left\{ \sum_{j=1}^n \indi \left \{ X_j - (\theta - r_{\epsilon} ) \geq t_{\epsilon} \right\} \leq \rho_{\epsilon} \right\}
	\end{split}
\end{equation} for solving a family of robust testing problems $\{\cH(\theta, \theta - r_{\epsilon}, \epsilon): \epsilon \in [0, \epsilon_{\max}]\}$. Then, we have the following result.

\begin{Theorem} \label{th:testing-upper-general-coeff} Consider the testing problems $\{\cH(\theta, \theta- r_{\epsilon}, \epsilon): \epsilon \in [0, \epsilon_{\max}] \}$ in \eqref{eq:test-def} and the testing procedure  $\phi_{\theta, \theta- r_{\epsilon}, \epsilon}$ in \eqref{eq:test}. Suppose $n \geq 400$. For any $\alpha \in (0,1)$ and $\eta \in (0,1)$, if $r_{\epsilon}$ and $(t_{\epsilon}, \rho_{\epsilon})$ in $\phi_{\theta, \theta- r_{\epsilon}, \epsilon}$ satisfy the following three conditions,
		\begin{equation} \label{ineq:testing-upper-bound-general-cond1} 
	\begin{split}
				 &\rho_{\epsilon} \geq (1 + \eta) n \left(  \epsilon + P_{0}(X \geq t_{\epsilon} )\right) +  \left(2/3 + 1/(2\eta)\right) \log(1/\alpha),\\
				 & (1- \epsilon_{\max} )^2 P_{r_{\epsilon}}(X \geq t_{\epsilon}) (1 - P_{r_{\epsilon}}(X \geq t_{\epsilon})) \geq 100 \log(8/\alpha)/n,  \\
				 &(1- \epsilon_{\max} )  n P_{r_{\epsilon}}(X \geq t_{\epsilon}) \geq 2\rho_{\epsilon} .
	\end{split}
	\end{equation} for all $\epsilon \in [0, \epsilon_{\max}]$, then 
	\begin{equation} \label{ineq:general-testing-guarantee}
		\begin{split}
			\textnormal{(simultaneous Type-1 error)} &\quad \sup_QP_{\epsilon_{\max},\theta,Q}\left(\sup_{\epsilon\in[0,\epsilon_{\max}]}\phi_{\theta,\theta -r_{\epsilon},\epsilon}=1\right)\leq \alpha,\\
			\textnormal{(Type-2 error)} &\quad \sup_QP_{\epsilon,\theta - r_{\epsilon},Q}\left(\phi_{\theta,\theta - r_{\epsilon},\epsilon}=0\right)\leq \alpha,
		\end{split}
	\end{equation} for all $\theta\in\mathbb{R}$ and $\epsilon\in[0,\epsilon_{\max}]$.
\end{Theorem}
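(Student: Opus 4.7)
The plan is to establish the two error bounds separately via Bernstein-type concentration, with conditions 1--3 in \eqref{ineq:testing-upper-bound-general-cond1} tailored respectively to the Type-2 error and to the simultaneous Type-1 error. The structure parallels the Gaussian argument in Lemma \ref{prop:upper-bound-Gaussian}: conditions 2--3 replace the explicit Gaussian tail comparison there, and condition 1 replaces the Hoeffding step used for the Type-2 control.

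For the Type-2 control, I fix $\epsilon\in[0,\epsilon_{\max}]$, $\theta\in\mathbb{R}$, and $Q$, and work under $P_{\epsilon,\theta-r_\epsilon,Q}$. Each indicator $Z_j=\indi\{X_j-(\theta-r_\epsilon)\geq t_\epsilon\}$ is Bernoulli with mean at most $P_0(X\geq t_\epsilon)+\epsilon$, so $\mu_1:=\mathbb{E}[S_\epsilon]\leq n(\epsilon+P_0(X\geq t_\epsilon))$ and $\mathrm{Var}(S_\epsilon)\leq\mu_1$. The one-sided Bernstein inequality gives
$$\sup_{Q}P_{\epsilon,\theta-r_\epsilon,Q}(S_\epsilon\geq \rho_\epsilon)\;\leq\;\exp\!\left(-\frac{(\rho_\epsilon-\mu_1)^2}{2\mu_1+(2/3)(\rho_\epsilon-\mu_1)}\right).$$
Inverting this, the bound $\leq\alpha$ reduces to $\rho_\epsilon-\mu_1\geq\sqrt{2\mu_1\log(1/\alpha)}+(2/3)\log(1/\alpha)$. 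Condition 1 supplies $\rho_\epsilon-\mu_1\geq \eta\mu_1+(2/3+1/(2\eta))\log(1/\alpha)$, and the AM--GM inequality $\eta\mu_1+\log(1/\alpha)/(2\eta)\geq \sqrt{2\mu_1\log(1/\alpha)}$ shows this is exactly the required strength, explaining the precise shape of condition 1.

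For the simultaneous Type-1 control, I work under $P_{\epsilon_{\max},\theta,Q}$, where $\mu_0(\epsilon):=\mathbb{E}[S_\epsilon]\geq(1-\epsilon_{\max})nP_{r_\epsilon}(X\geq t_\epsilon)$; condition 3 then yields the gap $\mu_0(\epsilon)-\rho_\epsilon\geq \mu_0(\epsilon)/2$. For each fixed $\epsilon$, Bernstein's inequality combined with the Bernoulli variance bound and condition 2 yields $P(S_\epsilon\leq \rho_\epsilon)\lesssim\alpha/(n+1)$. To upgrade this to a uniform bound over $\epsilon\in[0,\epsilon_{\max}]$, I exploit the fact that the map $\epsilon\mapsto S_\epsilon=\#\{j:X_j\geq\theta+t_\epsilon-r_\epsilon\}$ is a step function taking at most $n+1$ distinct values---the jumps occur precisely at those $\epsilon$ for which $\theta+t_\epsilon-r_\epsilon$ coincides with an order statistic $X_{(j)}$. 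Partitioning $[0,\epsilon_{\max}]$ accordingly and using monotonicity of $\rho_\epsilon$ in $\epsilon$ to pick the worst-case threshold on each piece, a union bound over the at most $n+1$ pieces absorbs the $1/(n+1)$ factor and yields the simultaneous control.

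The main obstacle is this uniformization step for the Type-1 error: $\epsilon$ ranges over an uncountable set while Bernstein gives only pointwise control, and the numerical constants in condition 2 (the factor $100$ and the $8/\alpha$ inside the logarithm) must be calibrated so that the per-$\epsilon$ deviation bound is strong enough to absorb the union-bound factor $n+1$ arising from the step-function structure of $S_\epsilon$. An alternative route, closer to the proof of Lemma \ref{prop:upper-bound-Gaussian}, is to apply the DKW inequality to the empirical CDF, which gives uniform fluctuations of order $\sqrt{n\log(1/\alpha)}$ and reduces the Type-1 check to verifying $\mu_0(\epsilon)/2\geq\sqrt{n\log(1/\alpha)/2}$ from conditions 2--3; this avoids the discrete union bound but is wasteful when $P_{r_\epsilon}(X\geq t_\epsilon)$ is small.
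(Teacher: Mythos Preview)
Your Type-2 argument is correct and essentially identical to the paper's: Bernstein for the upper tail of $S_\epsilon$ under $P_{\epsilon,\theta-r_\epsilon,Q}$, followed by the AM--GM step $\eta\mu_1+\tfrac{1}{2\eta}\log(1/\alpha)\geq\sqrt{2\mu_1\log(1/\alpha)}$ to arrive at condition~1.

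The Type-1 argument, however, has a genuine gap. Your plan is to obtain a per-$\epsilon$ Bernstein bound of order $\alpha/(n+1)$ and then union-bound over the at most $n+1$ pieces of the step function $\epsilon\mapsto S_\epsilon$. But condition~2 only guarantees
\[
\mu_0(\epsilon)\;\geq\;(1-\epsilon_{\max})\,n\,P_{r_\epsilon}(X\geq t_\epsilon)\;\gtrsim\;\log(8/\alpha),
\]
with no dependence on $n$. Feeding the gap $\mu_0(\epsilon)-\rho_\epsilon\geq\mu_0(\epsilon)/2$ into Bernstein yields a per-$\epsilon$ probability of order $\exp(-c\,\mu_0(\epsilon))\leq\alpha^{c'}$ for some constant $c'$, not $\alpha/(n+1)$; a union bound over $n+1$ events would require $\log((n+1)/\alpha)$ in condition~2, which is not there. (There is also the secondary issue that the step locations are data-dependent, so the naive union bound over them is not immediately valid without further argument, but the constants already fail.) Your DKW alternative has the same defect in the opposite direction: it demands $\mu_0(\epsilon)/2\gtrsim\sqrt{n\log(1/\alpha)}$, whereas conditions~2--3 only supply $\mu_0(\epsilon)\gtrsim\log(1/\alpha)$.

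What the paper actually does is use a \emph{variance-weighted} DKW inequality (Lemma~\ref{lm:ratio-DKW}): with probability at least $1-8\exp(-\Delta n/100)$, one has $|F_n(x)-F(x)|\leq\tfrac{1}{2}F(x)(1-F(x))$ simultaneously for all $x$ with $F(x)(1-F(x))\geq\Delta$. Taking $\Delta=100\log(8/\alpha)/n$ makes the failure probability exactly $\alpha$, and condition~2 is precisely the statement that $x=\theta+t_\epsilon-r_\epsilon$ meets this variance threshold for every $\epsilon$ (using $F_{\epsilon_{\max},\theta,Q}(x)(1-F_{\epsilon_{\max},\theta,Q}(x))\geq(1-\epsilon_{\max})^2P_{r_\epsilon}(X\geq t_\epsilon)(1-P_{r_\epsilon}(X\geq t_\epsilon))$). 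On this single high-probability event, condition~3 then gives $\rho_\epsilon/n<\tfrac{1}{2}(1-F_{\epsilon_{\max},\theta,Q}(x))\leq 1-F_n(x)$ for every $\epsilon$, which is exactly $\phi_{\theta,\theta-r_\epsilon,\epsilon}=0$. The specific constants $100$ and $8/\alpha$ in condition~2 are the constants of Lemma~\ref{lm:ratio-DKW}, not artifacts of a union bound over $n+1$ pieces.
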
 Theorem \ref{th:testing-upper-general-coeff} will be proved in the following subsection. Now, let us prove Theorem \ref{thm:upper-bound-general} using Theorem \ref{th:testing-upper-general-coeff}. Specifically, we replace $\alpha$ with $\alpha/4$ and set $\eta = 1/2$ in Theorem \ref{th:testing-upper-general-coeff}, then Theorem \ref{th:testing-upper-general-coeff} shows when the following three conditions
	\begin{equation} \label{ineq:final-two-conditions-simple}
		\begin{split}
				 &\rho_{\epsilon} \geq \frac{3}{2} n \left(  \epsilon + P_{0}(X \geq t_{\epsilon} )\right) +  \frac{5}{3} \log(4/\alpha),\\
				 & (1- \epsilon_{\max} )^2 P_{r_{\epsilon}}(X \geq t_{\epsilon}) (1 - P_{r_{\epsilon}}(X \geq t_{\epsilon})) \geq 100 \log(32/\alpha)/n,\\
				 & (1- \epsilon_{\max} )  n P_{r_{\epsilon}}(X \geq t_{\epsilon}) \geq 2\rho_{\epsilon},
		\end{split}
	\end{equation} hold for all $\epsilon \in [0, \epsilon_{\max}]$,	then \eqref{ineq:general-testing-guarantee} holds. Now let us further simplify the three conditions in \eqref{ineq:final-two-conditions-simple}, 
\begin{equation} \label{ineq:three-conditions-with-constant}
	\begin{split}
		\eqref{ineq:testing-upper-bound-general-cond1}
			&\Longleftarrow \left\{ \begin{array}{l}
				\rho_{\epsilon} = \frac{3}{2} n \left(  \epsilon + P_{0}(X \geq t_{\epsilon} )\right) +  \frac{5}{3} \log(4/\alpha), \\
				t_{\epsilon} \geq r_{\epsilon},\\
				P_{r_{\epsilon}}(X \geq t_{\epsilon}) \geq \frac{200}{(1- \epsilon_{\max})^2} \log(32/\alpha)/n,\\
				(1- \epsilon_{\max} )  n P_{r_{\epsilon}}(X \geq t_{\epsilon}) \geq 3 n \left(  \epsilon + P_{0}(X \geq t_{\epsilon} )\right) +  \frac{10}{3} \log(4/\alpha)
			\end{array} \right. \\
			&\Longleftarrow \left\{ \begin{array}{l}
				\rho_{\epsilon} = \frac{3}{2} n \left(  \epsilon + P_{0}(X \geq t_{\epsilon} )\right) +  \frac{5}{3} \log(4/\alpha), \\
				t_{\epsilon} \geq r_{\epsilon},\\
				P_{r_{\epsilon}}(X \geq t_{\epsilon}) \geq \frac{200}{(1- \epsilon_{\max})^2} \log(32/\alpha)/n,\\
				\frac{1}{2} (1- \epsilon_{\max} )  nP_{r_{\epsilon}}(X \geq t_{\epsilon})  \geq 3 nP_{0}(X \geq t_{\epsilon} ),\\
				 \frac{1}{2} (1- \epsilon_{\max} )  n P_{r_{\epsilon}}(X \geq t_{\epsilon}) \geq 3 n \epsilon +   \frac{10}{3} \log(4/\alpha),
			\end{array} \right. \\
			&\Longleftrightarrow \left\{ \begin{array}{l}
				\rho_{\epsilon} = \frac{3}{2} n \left(  \epsilon + P_{0}(X \geq t_{\epsilon} )\right) +  \frac{5}{3} \log(4/\alpha), \\
				t_{\epsilon} \geq r_{\epsilon},\\
				1 - F(t_{\epsilon} - r_{\epsilon} ) -  \frac{6}{1 - \epsilon_{\max} } (1 - F(t_{\epsilon})) \geq 0, \\
				P_{r_{\epsilon}}(X \geq t_{\epsilon}) \geq \bar{q}(\epsilon ).
			\end{array} \right.
	\end{split}
\end{equation} Here in the last expression, we use the definition of $\overline{q}(\epsilon) = \frac{6}{(1-\epsilon_{\max})} (\epsilon + \frac{100 \log(32/\alpha)}{3(1-\epsilon_{\max})n} )$. Notice that $\bar{r}( \epsilon)$ in the theorem is the smallest $r$ allowed given the conditions in \eqref{ineq:three-conditions-with-constant} are satisfied and it can be achieved since in our setting CDF function is a continuous function. In addition, $t_{\epsilon}$ is also chosen to satisfy the conditions in \eqref{ineq:three-conditions-with-constant} given $\bar{r}(\epsilon )$. This finishes the proof of this theorem.

\subsubsection{Proof of Theorem \ref{th:testing-upper-general-coeff}}
	 Let us begin with the Type-2 error control. Fix any $\theta, Q$, we have
	\begin{equation*}
		\begin{split}
			&P_{ \epsilon, \theta - r_{\epsilon}, Q }\left( \phi_{\theta, \theta- r_{\epsilon}, \epsilon} = 0 \right) \\
			& =P_{ \epsilon, \theta - r_{\epsilon}, Q }\left( \sum_{j=1}^n \indi \left \{ X_j - (\theta - r_{\epsilon} ) \geq t_{\epsilon} \right\} > \rho_{\epsilon} \right) \\
			& = P_{ \epsilon, 0, Q }\left( \sum_{j=1}^n \indi \left \{ X_j  \geq t_{\epsilon} \right\} > \rho_{\epsilon} \right)  \\
			& \leq  P_{ \epsilon, 0, Q }\left( \sum_{j=1}^n \indi \left \{ X_j  \geq t_{\epsilon} \right\} - n P_{ \epsilon, 0, Q }(X \geq t_{\epsilon}) \geq \rho_{\epsilon} - n P_{ \epsilon, 0, Q }(X \geq t_{\epsilon}) \right) \\
			& \leq \exp\left( - \frac{ \frac{1}{2} \left( \rho_{\epsilon} - n P_{ \epsilon, 0, Q }(X \geq t_{\epsilon}) \right)^2 }{n P_{ \epsilon, 0, Q }(X \geq t_{\epsilon}) \left( 1- P_{ \epsilon, 0, Q }(X \geq t_{\epsilon}) \right) + ( \rho_{\epsilon} - n P_{ \epsilon, 0, Q }(X \geq t_{\epsilon}) )/3 }  \right),
		\end{split}
	\end{equation*} where the last inequality is by Bernstein inequality and the condition $\rho_{\epsilon} \geq n P_{ \epsilon, 0, Q }(X \geq t_{\epsilon})$ as we will see later.
	
	Let us denote $A = \rho_{\epsilon} - n P_{ \epsilon, 0, Q }(X \geq t_{\epsilon})$ and $B = n P_{ \epsilon, 0, Q }(X \geq t_{\epsilon}) \left( 1- P_{ \epsilon, 0, Q }(X \geq t_{\epsilon}) \right)$. Then, to achieve Type-2 error control, we need the condition
	\begin{equation} \label{eq:type-II-error}
		\begin{split}
			& \exp\left( - \frac{ \frac{1}{2} A^2 }{ B + A/3 } \right) \leq \alpha \\
			\Longleftrightarrow & A \geq \frac{1}{3} \log(1/\alpha) + \sqrt{ \frac{1}{9} \log^2(1/\alpha)  + 2 B \log(1/\alpha) } \\
			\Longleftrightarrow & \rho_{\epsilon} \geq n P_{ \epsilon, 0, Q }(X \geq t_{\epsilon}) +  \frac{1}{3} \log(1/\alpha) + \sqrt{ \frac{1}{9} \log^2(1/\alpha)  + 2 B \log(1/\alpha) }\\
			\Longleftarrow & \rho_{\epsilon} \geq n P_{ \epsilon, 0, Q }(X \geq t_{\epsilon}) +  \frac{2}{3} \log(1/\alpha) + \sqrt{ 2  n P_{ \epsilon, 0, Q }(X \geq t_{\epsilon}) \log(1/\alpha) }.
		\end{split}
	\end{equation} Notice that we do have $\rho_{\epsilon} \geq n P_{ \epsilon, 0, Q }(X \geq t_{\epsilon})$ being satisfied based on \eqref{eq:type-II-error}. 
	
	Next, we move on to the simultaneous Type-1 error control. We will show an equivalent statement is true:
	\begin{equation*}
		\inf_{Q}P_{ \epsilon_{\max}, \theta, Q }\left( \phi_{\theta, \theta- r_{\epsilon}, \epsilon} = 0 \text{ for all } \epsilon \in [0, \epsilon_{\max}] \right) \geq 1- \alpha.
		\end{equation*} By the DKW inequality with the variance bound (see Lemma \ref{lm:ratio-DKW}), we have the following event happening with probability at least $1-\alpha$, 
		\begin{equation*}
		\begin{split}
			(A_1) = \Big\{ & \textnormal{for all } x \in \bbR \textnormal{ such that } F_{\epsilon_{\max}, \theta, Q}(x)(1-F_{\epsilon_{\max}, \theta, Q}(x)) \geq 100 \log(8/\alpha)/n, \\
			& \quad |F_n(x) - F_{\epsilon_{\max}, \theta, Q}(x)| \leq  F_{\epsilon_{\max}, \theta, Q}(x)(1 - F_{\epsilon_{\max}, \theta, Q}(x))/2  \Big\},
		\end{split}
	\end{equation*} $F_{\epsilon_{\max}, \theta, Q }(\cdot)$ denotes the CDF of $P_{\epsilon_{\max}, \theta, Q}$.

	Fix any $\theta,Q$ and any $\epsilon \in [0, \epsilon_{\max}]$, under the null $P_{ \epsilon_{\max}, \theta, Q }$, we have
	 \begin{equation*}
	 	\begin{split}
	 		& \phi_{\theta, \theta - r_{\epsilon}, \epsilon} = 0 \\
	 		&\Longleftrightarrow \sum_{j=1}^n \indi\left( X_j - (\theta - r_{\epsilon}) \geq t_{\epsilon} \right) > \rho_{\epsilon} \\
	 		& \Longleftrightarrow \sum_{j=1}^n \indi\left( X_j < \theta - r_{\epsilon} + t_{\epsilon} \right) < n - \rho_{\epsilon} \\
	 		& \Longleftarrow F_n(\theta + t_{\epsilon} - r_{\epsilon}) < 1 - \rho_{\epsilon}/n \\
	 		&  \Longleftrightarrow \rho_{\epsilon}/n < 1 - F_n(\theta + t_{\epsilon} - r_{\epsilon})\\
	 		& \overset{(A_1)}\Longleftarrow \left\{\begin{array}{l}
	 			 F_{\epsilon_{\max}, \theta, Q}(\theta + t_{\epsilon} - r_{\epsilon}) (1 -  F_{\epsilon_{\max}, \theta, Q}(\theta + t_{\epsilon} - r_{\epsilon})) \geq 100 \log(8/\alpha)/n\\
	 			 \rho_{\epsilon}/n < \frac{1}{2} \left(1 - F_{\epsilon_{\max}, \theta, Q}(\theta + t_{\epsilon} - r_{\epsilon}) \right)
	 		\end{array} \right..
	 	\end{split}
	 \end{equation*}

	In summary, in order to have the simultaneous Type-1 and pointwise Type-2 error control, we need the following two conditions to hold for all $\epsilon \in [0, \epsilon_{\max}]$,
	\begin{equation} \label{eq:two-conditions}
		\begin{split}
			&\left\{ \begin{array}{l}
				\rho_{\epsilon} \geq n P_{ \epsilon, 0, Q }(X \geq t_{\epsilon}) +  \frac{2}{3} \log(1/\alpha) + \sqrt{ 2  n P_{ \epsilon, 0, Q }(X \geq t_{\epsilon}) \log(1/\alpha) },  \\
				F_{\epsilon_{\max}, \theta, Q}(\theta + t_{\epsilon} - r_{\epsilon}) (1 -  F_{\epsilon_{\max}, \theta, Q}(\theta + t_{\epsilon} - r_{\epsilon})) \geq 100 \log(8/\alpha)/n\\
	 			 \rho_{\epsilon}/n < \frac{1}{2} \left(1 - F_{\epsilon_{\max}, \theta, Q}(\theta + t_{\epsilon} - r_{\epsilon}) \right)
			\end{array} \right.\\
			\overset{(a)}\Longleftarrow  &\left\{ \begin{array}{l}
				\rho_{\epsilon} \geq n P_{ \epsilon, 0, Q }(X \geq t_{\epsilon}) +  \frac{2}{3} \log(1/\alpha) + \eta n P_{ \epsilon, 0, Q }(X \geq t_{\epsilon}) + \frac{1}{2\eta} \log(1/\alpha), \\
				F_{\epsilon_{\max}, \theta, Q}(\theta + t_{\epsilon} - r_{\epsilon}) (1 -  F_{\epsilon_{\max}, \theta, Q}(\theta + t_{\epsilon} - r_{\epsilon})) \geq 100 \log(8/\alpha)/n\\
	 			P_{ \epsilon_{\max}, \theta , Q }(X > \theta + t_{\epsilon} -r_{\epsilon} ) > 2\rho_{\epsilon}/n
			\end{array} \right.
		\end{split}
	\end{equation} where (a) holds for any $0<\eta < 1$. 
	
	 Since $Q$ could be arbitrary, we have $$P_{ \epsilon, 0, Q }(X \geq t) \leq \epsilon + P_{0}(X \geq t )  \text{ and } (1 - \epsilon_{\max})  P_{r_{\epsilon} }(X \geq t) \leq P_{ \epsilon_{\max}, \theta , Q }(X > \theta + t -r_{\epsilon} ).$$ Thus, the last three conditions in \eqref{eq:two-conditions} are implied by the following three conditions:
	\begin{equation*} 
		\begin{split}
				 &\rho_{\epsilon} \geq (1 + \eta) n \left(  \epsilon + P_{0}(X \geq t_{\epsilon} )\right) +  \left(2/3 + 1/(2\eta)\right) \log(1/\alpha),\\
				 & (1- \epsilon_{\max} )^2 P_{r_{\epsilon}}(X \geq t_{\epsilon}) (1 - P_{r_{\epsilon}}(X \geq t_{\epsilon})) \geq 100 \log(8/\alpha)/n,  \\
				 &(1- \epsilon_{\max} )  n P_{r_{\epsilon}}(X \geq t_{\epsilon}) \geq 2\rho_{\epsilon} .
		\end{split}
	\end{equation*}	
	These are the three conditions in \eqref{ineq:testing-upper-bound-general-cond1}. We finish the proof.

\subsection{Proof of Corollary \ref{coro:length-upper-bound}}
Following the exact same calculation as in \eqref{eq:interval-step1}-\eqref{eq:interval-step2}, we have that the ARCI in \eqref{eq:adaptive-CI-general} is the $\widehat{\CI}$ in \eqref{def:CI-based-on-test} using the tests in \eqref{eq:test-general} and \eqref{eq:test+general}. Thus, the coverage and length guarantee of $\widehat{\CI}$ in \eqref{eq:adaptive-CI-general} is given by a combination of Theorem \ref{thm:upper-bound-general} and Proposition \ref{prop:test-to-CI}.

\subsection{Proof of Theorem \ref{th:test-lower-bound}}

First, it is easy to observe the minimax separation rate for $\cH(\theta, \theta- r, \epsilon)$ and $\cH(\theta, \theta + r, \epsilon)$ are the same by symmetry, so we provide the proof for $\cH(\theta, \theta- r, \epsilon)$ only. In addition, since $\theta$ is known, we can shift the locations of both the null and the alternative distributions, and it would not affect the hardness of the testing problem. Thus, for simplicity, we would let $\theta = r$ and the hardness of the testing problem $\cH(\theta, \theta- r, \epsilon)$ is the same as the following one: given $\{X_i \}_{i=1}^n \overset{i.i.d.}\sim P$, 
\begin{equation*}
	H_0: P \in \{ (1 - \epsilon_{\max} )P_{r } + \epsilon_{\max} Q: Q\} \quad \text{v.s.} \quad  H_1: P \in \{ (1 - \epsilon )P_{0} + \epsilon Q: Q \}.
\end{equation*}
We divide the rest of the proof into two cases.

{\bf Case I ($\frac{\alpha}{n} \geq \frac{\epsilon}{1 - \epsilon_{\max}}$)}. In this case, we will construct distributions $Q_0$ and $Q_1$ such that $\TV(P_{\epsilon_{\max}, r, Q_0}, P_{\epsilon, 0, Q_1}) \leq \frac{\alpha(1 - \epsilon_{\max})}{n}$. Let $Q_1 = P_{0}$. Given some $t \geq 0 $, which is to be specified later, let $Q_0$ be the distribution with the following density $q_0(x)$ with respect to the Lebesgue measure:
\begin{equation}\label{def:Q}
		q_0(x) = \frac{f(x) - (1-\epsilon_{\max} ) f(x-r) \bar{c} \indi( x - r \leq t ) }{ \epsilon_{\max} }, \, \forall x \in \bbR.
\end{equation} Here we set $\bar{c} := \frac{1}{P_{r}( X - r \leq t )  } = \frac{1}{P_{0}( X \leq t )  }$, where the second equality is because $P_\theta$ is a location family. By this choice of $\bar{c}$, it is easy to check 
\begin{equation*}
	\begin{split}
		\int q_0(x) dx = \frac{1 -  (1 - \epsilon_{\max} ) }{\epsilon_{\max} } = 1. 
	\end{split}
\end{equation*} Thus to guarantee $Q_0$ is a valid distribution, we just need the condition 
\begin{equation} \label{eq:valid-dist-condition}
\begin{split}
	\inf_{x \in \bbR } q_0(x) \geq 0 \quad  &\Longleftrightarrow \quad \inf_{x \in \bbR} \left\{  f(x) - (1-\epsilon_{\max} ) f(x-r) \bar{c} \indi( x - r \leq t )  \right\} \geq 0 \\
	& \Longleftrightarrow \inf_{x \leq r + t } \left\{  f(x) - (1-\epsilon_{\max} ) f(x-r) \bar{c}  \right\} \geq 0 \\
	& \Longleftrightarrow \sup_{ x \leq  r + t  } \left\{ f(x-r) - \frac{f(x)}{(1 - \epsilon_{\max} ) \bar{c} } \right\} \leq 0.
\end{split}
\end{equation}

Now, given constructed $Q_0$ and $Q_1$, let us bound $\TV(P_{\epsilon_{\max}, r, Q_0}, P_{\epsilon, 0, Q_1})$. 
\begin{equation*}
	\begin{split}
		\TV(P_{\epsilon_{\max}, r, Q_0}, P_{\epsilon, 0, Q_1}) &= \frac{1}{2} \int \left| (1 - \epsilon_{\max} ) f(x-r) + \epsilon_{\max} q_0(x) - f(x) \right| dx \\
		 & \overset{ \eqref{def:Q} }=  \frac{1}{2} \int \left| (1 - \epsilon_{\max} ) f(x-r) -(1-\epsilon_{\max} ) f(x-r) \bar{c} \indi( x - r \leq t ) \right| dx \\
		 & = \frac{1}{2} \int (1 - \epsilon_{\max} ) f(x-r) \left| 1 -\bar{c} \indi( x - r \leq t ) \right| dx \\
		 & =  \frac{1}{2} \int_{x \leq r + t} (1 - \epsilon_{\max} ) f(x-r) (\bar{c}-1) dx + \frac{1}{2} \int_{x \geq r + t} (1 - \epsilon_{\max} ) f(x-r) dx \\
		 & = \frac{1 - \epsilon_{\max} }{2} \left( (\bar{c} - 1) P_{0}( X \leq t ) +  P_{0}( t \leq X )  \right) \\
		 & \overset{(a)}= \frac{1 - \epsilon_{\max} }{2} \left( 1 - P_0(X \leq t) +  P_{0}( t \leq X )  \right) \\
		 & = (1 - \epsilon_{\max}) P_{0}( X \geq t ).
	\end{split}
\end{equation*} where in (a) we plug in the definition of $\bar{c}$. Now we take $t = F^{-1}( 1- \frac{\alpha}{n})$, then we have
\begin{equation} \label{eq:TV_bound}
	\begin{split}
		\TV(P_{\epsilon_{\max}, r, Q_0}, P_{\epsilon, 0, Q_1}) \leq \alpha(1 - \epsilon_{\max})/n
	\end{split}
\end{equation} given the condition in \eqref{eq:valid-dist-condition} is satisfied. When $t = F^{-1}( 1- \frac{\alpha}{n})$, the condition in \eqref{eq:valid-dist-condition} becomes
\begin{equation*}
	\begin{split}
	 \sup_{ x \leq  r + F^{-1}( 1- \frac{\alpha}{n}) } \left\{ f(x-r) - \frac{1 - \frac{\alpha}{n} }{1 - \epsilon_{\max} } f(x)  \right\} \leq 0.
	\end{split}
\end{equation*} In summary, if 
\begin{equation} \label{eq:lowerbound-cond1}
	\begin{split}
		r & \in \left\{r \geq 0:  \sup_{ x \leq  r + F^{-1}( 1- (\frac{\alpha}{2n} +  \frac{\epsilon}{2(1- \epsilon_{\max})} ) ) } \left\{ f(x-r) - \frac{1 - (\frac{\alpha}{n} \vee \epsilon) }{1 - \epsilon_{\max} } f(x)  \right\} \leq 0  \right\}\\
		& \subseteq   \left\{r \geq 0:  \sup_{ x \leq  r + F^{-1}( 1- \frac{\alpha}{n}) } \left\{ f(x-r) - \frac{1 - (\frac{\alpha}{n} \vee \epsilon) }{1 - \epsilon_{\max} } f(x)  \right\} \leq 0 \right\}\\
		& \overset{(a)}=\left\{r \geq 0:  \sup_{ x \leq  r + F^{-1}( 1- \frac{\alpha}{n}) } \left\{ f(x-r) - \frac{1 - \frac{\alpha}{n} }{1 - \epsilon_{\max} } f(x)  \right\} \leq 0 \right\},
	\end{split}
\end{equation} where (a) is because $\frac{\alpha}{n} \geq \frac{\epsilon}{1 - \epsilon_{\max}}$, then we have the TV bound in \eqref{eq:TV_bound} holds. Notice that the set on the right hand side of \eqref{eq:lowerbound-cond1} is nonempty since $r = 0$ always satisfies the constraint because $\frac{1- (\epsilon \vee \frac{\alpha}{n}) }{1- \epsilon_{\max} } \geq 1$ by the conditions on $\epsilon$ and $n$. This finishes the proof of Case I.

{\bf Case II ($\frac{\alpha}{n} < \frac{\epsilon}{1 - \epsilon_{\max}}$).} In this case, we need a different construction of $Q_0$ and $Q_1$. Specifically, we would like to design $Q_0$ and $Q_1$ such that
\begin{equation} \label{eq:equi-distribution}
	(1 - \epsilon_{\max} )P_{r } + \epsilon_{\max} Q_0 \overset{d}= (1 - \epsilon )P_{0} + \epsilon Q_1.
\end{equation} Denote the densities of $Q_0$ and $Q_1$ as $q_0(x)$ and $q_1(x)$, respectively, then \eqref{eq:equi-distribution} is equivalent to
\begin{equation} \label{eq:equivalent-condition}
	\begin{split}
		(1 - \epsilon_{\max} )f(x-r) + \epsilon_{\max} q_0(x) = (1 - \epsilon )f(x) + \epsilon q_1(x),\, \forall x \in \bbR.
	\end{split}
\end{equation}

Let $S = \{x: (1 - \epsilon_{\max} ) f(x-r) > (1 - \epsilon )f(x) \}$. We first consider the case $S$ has measure zero under $P_r$. Then we can let $q_1(x)$ to be any density function supported on $\bbR \setminus S$, and then set 
\begin{equation*} 
	\begin{split}
		q_0(x) =  \left\{\begin{array}{ll}
			\frac{(1 - \epsilon )f(x) + \epsilon q_1(x) - (1 - \epsilon_{\max} )f(x-r)}{ \int_{ \bbR \setminus S } ((1 - \epsilon )f(x) + \epsilon q_1(x) - (1 - \epsilon_{\max} )f(x-r)) dx }, & \, \forall x \in \bbR \setminus S, \\
			0,  & x \in S. 
		\end{array} \right.
	\end{split}
\end{equation*} Then $q_0(x)$ is a valid probability density function (PDF) by the definition of $S$. Thus, if 
\begin{equation} 
	r \in \mathcal{R}_1:=  \left\{ r \geq 0: (1 - \epsilon_{\max} ) f(x-r) \leq (1 - \epsilon )f(x) \text{ for } P_{r} \text{ a.s.} \right\},
\end{equation} then there exists $Q_0$ and $Q_1$, such that $\TV(P_{\epsilon_{\max}, r, Q_0}, P_{\epsilon, 0, Q_1}) = 0$

Now, let us consider the setting $S$ has a positive measure under $P_r$. Then we can construct $q_1(x)$ as follows:
\begin{equation} \label{def:q1}
	\begin{split}
		q_1(x) = \left\{ \begin{array}{ll}
			c^*\frac{1 - \epsilon_{\max} }{\epsilon} f(x-r), & \text{ if } x \in S, \\
			0 , & \text{ otherwise},
		\end{array}  \right.
	\end{split}
\end{equation} where $c^* = \frac{1}{\int_{S} \frac{1 - \epsilon_{\max} }{\epsilon} f(x-r) dx}$. It is clear $\int q_1(x) dx = 1$ and is a valid PDF. Given $q_1(x)$, $q_0(x)$ is constructed so that \eqref{eq:equivalent-condition} holds, i.e.,
\begin{equation*}
	\begin{split}
		q_0(x) = \frac{(1 - \epsilon )f(x) + \epsilon q_1(x) - (1 - \epsilon_{\max} )f(x-r)}{\epsilon_{\max}},\, \forall x \in \bbR.
	\end{split}
\end{equation*} It is easy to check $\int q_0(x)  dx = 1$, so to ensure $q_0(x)$ is a valid PDF, we just need to show $q_0(x) \geq 0$ for all $x \in \bbR$. When $x \in S$, 
\begin{equation*}
	(1 - \epsilon )f(x) + \epsilon q_1(x) - (1 - \epsilon_{\max} )f(x-r) = (1 - \epsilon )f(x) + (c^* - 1)(1 - \epsilon_{\max} ) f(x-r),
\end{equation*} which is greater or equal to zero when $c^* \geq 1$. When $x \notin S$, 
\begin{equation*}
	(1 - \epsilon )f(x) + \epsilon q_1(x) - (1 - \epsilon_{\max} )f(x-r) \geq (1 - \epsilon )f(x) - (1 - \epsilon_{\max} )f(x-r) \geq 0,
\end{equation*} by the definition of $S$. Thus to guarantee $q_0(x)$ is a valid PDF, a sufficient condition is $c^* \geq 1$, i.e.,
\begin{equation} \label{eq:q1-valide-distribution-condition}
	\int_{S} \frac{1 - \epsilon_{\max} }{\epsilon} f(x-r) dx \leq 1.
\end{equation} 

This proves if
\begin{equation} \label{eq:lowerbound-cond2}
	r \in \mathcal{R}_2:= \left\{ r \geq 0: \int_{ \{x: (1 - \epsilon_{\max} ) f(x-r) > (1 - \epsilon )f(x) \} } \frac{1 - \epsilon_{\max} }{\epsilon} f(x-r) dx \leq 1  \right\},
\end{equation} then there exists $Q_0$ and $Q_1$, such that $\TV(P_{\epsilon_{\max}, r, Q_0}, P_{\epsilon, 0, Q_1}) = 0$. At the same time, it is easy to observe that $\mathcal{R}_1 \subseteq \mathcal{R}_2$. So combining these two, we have proved that if $r \in \mathcal{R}_2$, then there exists $Q_0$ and $Q_1$, such that $\TV(P_{\epsilon_{\max}, r, Q_0}, P_{\epsilon, 0, Q_1}) = 0$.

Next, let us rewrite \eqref{eq:lowerbound-cond2} so that it can be combined with \eqref{eq:lowerbound-cond1}. 
\begin{equation} \label{eq:lowerbound-cond2-rewrite}
	\begin{split}
		& \left\{ r\geq 0 : \int_{ \{x: (1 - \epsilon_{\max} ) f(x-r) > (1 - \epsilon )f(x) \}} \frac{1 - \epsilon_{\max} }{\epsilon} f(x-r) dx \leq 1  \right\} \\
		 & = \left\{ r \geq 0: \int_{ \{x: (1 - \epsilon_{\max}  ) f(x-r) > (1 - \epsilon )f(x)\} } f(x-r) dx \leq \frac{\epsilon}{1- \epsilon_{\max} }  \right\}\\
		 & \overset{(a)}\supseteq \left\{ r \geq 0: \sup_{ x \leq  r + F^{-1}( 1- \frac{\epsilon}{1- \epsilon_{\max} } ) } \left\{ f(x-r) - \frac{1 - \epsilon }{1 - \epsilon_{\max} } f(x)  \right\} \leq 0  \right\},
	\end{split}
\end{equation} where (a) is because if $r$ belongs to the latter set, then 
\begin{equation*}
	\{x: (1 - \epsilon_{\max}  ) f(x-r) > (1 - \epsilon )f(x)\} \subseteq \left\{x: x \geq r+ F^{-1} \left( 1- \frac{\epsilon}{1- \epsilon_{\max} }  \right)\right\},
\end{equation*} so 
\begin{equation*}
	\int_{\{x: (1 - \epsilon_{\max}  ) f(x-r) > (1 - \epsilon )f(x)\} } f(x-r) dx \leq \int_{ \{x:  x \geq r+F^{-1  }( 1- \frac{\epsilon}{1- \epsilon_{\max} }  )\}} f(x-r) dx = \frac{\epsilon}{1- \epsilon_{\max} },
\end{equation*} which implies $r$ also belongs to the previous set.

 Therefore, a conbination of \eqref{eq:lowerbound-cond2} and \eqref{eq:lowerbound-cond2-rewrite} implies that if 
\begin{equation} \label{eq:lowerbound-cond2new}
\begin{split}
	r & \in  \left\{ r \geq 0: \sup_{x \leq r + F^{-1  } \left( 1- ( \frac{\alpha}{2n} + \frac{\epsilon}{2(1- \epsilon_{\max}) })  \right) } \left\{ f(x-r) - \frac{1 - (\epsilon \vee \frac{\alpha}{n}) }{1 - \epsilon_{\max} } f(x)  \right\} \leq 0   \right\} \\
	 &\subseteq  \left\{ r \geq 0: \sup_{ x \leq  r + F^{-1}( 1- \frac{\epsilon}{1- \epsilon_{\max} } ) } \left\{ f(x-r) - \frac{1 - (\epsilon \vee \frac{\alpha}{n}) }{1 - \epsilon_{\max} } f(x)  \right\} \leq 0  \right\} \\ 
	& \subseteq \left\{ r \geq 0: \sup_{ x \leq  r + F^{-1}( 1- \frac{\epsilon}{1- \epsilon_{\max} } ) } \left\{ f(x-r) - \frac{1 - \epsilon }{1 - \epsilon_{\max} } f(x)  \right\} \leq 0  \right\},
\end{split}
\end{equation} then there exists $Q_0$ and $Q_1$, such that $\TV(P_{\epsilon_{\max}, r, Q_0}, P_{\epsilon, 0, Q_1}) = 0$. Notice that the set in \eqref{eq:lowerbound-cond2new} is nonempty since $r = 0$ belongs to the set as $\frac{1- (\epsilon \vee \frac{\alpha}{n}) }{1- \epsilon_{\max} } \geq 1$.

Combining Case I and Case II, we have if 
\begin{equation} \label{eq:lowerbound-cond-final}
	r = \underline{r}(\epsilon) :=  \sup \left\{ r \geq 0: \sup_{x \leq r + F^{-1  } \left( 1- ( \frac{\alpha}{2n} + \frac{\epsilon}{2(1- \epsilon_{\max}) })  \right) } \left\{ f(x-r) - \frac{1 - (\epsilon \vee \frac{\alpha}{n}) }{1 - \epsilon_{\max} } f(x)  \right\} \leq 0   \right\},
\end{equation} then there exists $Q_0$ and $Q_1$, such that $\TV(P_{\epsilon_{\max}, r, Q_0}, P_{\epsilon, 0, Q_1}) \leq \frac{\alpha (1 - \epsilon_{\max} ) }{n} \leq \frac{\alpha }{n} $. Notice that the supremum in \eqref{eq:lowerbound-cond-final} can be achieved as $f(x-r) $ and $f(x)$ are continuous. Finally, by the property of TV distance on a product measure, 
\begin{equation*}
	\TV(P^{\otimes n}_{\epsilon_{\max}, r, Q_0}, P^{\otimes n}_{\epsilon, 0, Q_1}) \leq n \TV(P_{\epsilon_{\max}, r, Q_0}, P_{\epsilon, 0, Q_1}) \leq \alpha.
\end{equation*}

\subsection{Proof of Corollary \ref{coro:length-lower-bound}}
The proof follows from the same contradiction argument as in the proof of Theorem \ref{th:lower-bound-Gaussian}. For simplicity, we omit it here.

\subsection{Proof of Lemma \ref{lem:r-arrow} }
  First, Corollaries \ref{coro:length-upper-bound} and \ref{coro:length-lower-bound} jointly applies that $\underline{r}(\epsilon)\leq r_{\alpha}(\epsilon,[0,\epsilon_{\max}])\leq 2\overline{r}(\epsilon)$. Next, we show $r^{\downarrow}(\epsilon)\leq \underline{r}(\epsilon)$. First,
		\begin{equation} \label{ineq:underline-r-lower-bound}
	\begin{split}
		 &r^{\downarrow}(\epsilon) = \sup \Big\{ r \geq 0:  \frac{f(x-r)}{f(x)} \leq \frac{1 - \epsilon_{\max}/2 }{1 - \epsilon_{\max} }  \textnormal{ for } x= r + F^{-1  }   \left( 1- \underline{q}(\epsilon)  \right) \Big\} \\
		&\overset{ (a) }= \sup \Big\{ r \geq 0:  \sup_{ x \leq r + F^{-1  } \left( 1- \underline{q}(\epsilon)  \right) } \left\{ f(x-r) - \frac{1 - \epsilon_{\max}/2 }{1 - \epsilon_{\max} } f(x)  \right\} \leq 0  \Big\} \\
		&\overset{(b)}\leq \sup \Big\{ r \geq 0:  \sup_{ x \leq r + F^{-1  } \left( 1- \underline{q}(\epsilon)  \right) } \left\{ f(x-r) - \frac{1 - (\epsilon \vee \frac{\alpha}{n}) }{1 - \epsilon_{\max} } f(x)  \right\} \leq 0   \Big\} = \underline{r}(\epsilon)
	\end{split}
	\end{equation} (a) is because $\sup_{ x \leq r + F^{-1  }   \left( 1- \underline{q}(\epsilon)  \right)} \frac{f(x-r)}{f(x)} = \frac{f(F^{-1  }   \left( 1- \underline{q}(\epsilon)  \right))}{f(r + F^{-1  }   \left( 1- \underline{q}(\epsilon)  \right))}$ as $\sup_{ x \leq r} \frac{f(x-r)}{f(x)} = \frac{f(0)}{f(r)}$ by the unimodality of $f(x)$ and $\frac{f(x-r)}{f(x)}$ is a nondecreasing function of $x$ when $x \geq r$; (b) holds since $\frac{1- \epsilon_{\max}/2 }{1- \epsilon_{\max} } \leq \frac{1- (\epsilon \vee \frac{\alpha}{n}) }{1- \epsilon_{\max} }$ due to the constraints on $\epsilon,n$ and $\alpha$.

	Next, we show the upper bound for $\bar{r}(\epsilon)$ under unimodality and the weaker condition $\frac{f(x-r)}{f(x)}$ is a nondecreasing function of $x$ when $x \geq r + F^{-1} ( 1 - \bar{q}(\epsilon) )$. For any $r \geq 0$ such that $\frac{f(x-r)}{f(x)} \geq \frac{6}{1 - \epsilon_{\max}}$ for $x = r + F^{-1} ( 1 - \bar{q}(\epsilon) )$, we have $\frac{f(t-r)}{f(t)} \geq \frac{6}{1 - \epsilon_{\max}}$ for any $t \geq x$ due to the nondecreasing property of $\frac{f(x-r)}{f(x)}$. As a result of that, we have
\begin{equation*} 
	\begin{split}
		P_{r}(X \geq x) = \int_{x}^\infty f(t - r) dt \geq \int_x^\infty  \frac{6}{1 - \epsilon_{\max}} f(t) dt = \frac{6}{1 - \epsilon_{\max}} P_{0}(X \geq x).
	\end{split}
\end{equation*}  Thus, 
\begin{equation}\label{ineq:density-ratio-to-tail-ratio}
	\begin{split}
		& \Big\{r \geq 0 :  \frac{P_{r}(X \geq x)}{P_{0}(X \geq x )} \geq \frac{6}{1 - \epsilon_{\max}} \textnormal{ for } x = r + F^{-1} ( 1 - \bar{q}(\epsilon) )  \Big\} \\
		 \supseteq & \left\{ r\geq 0: \frac{f(x-r)}{f(x)} \geq \frac{6}{1 - \epsilon_{\max}} \textnormal{ for }  x = r + F^{-1} ( 1 - \bar{q}(\epsilon) ) \right\},
	\end{split}
\end{equation} As a result, 
	\begin{equation*}
		\begin{split}
			\bar{r}( \epsilon)=&\inf  \Big\{r \geq 0 : \sup_{x \leq  r + F^{-1  } \left( 1- \bar{q}(\epsilon)  \right) } \left\{ P_{r}(X \geq x) - \frac{6}{1 - \epsilon_{\max}}P_{0}(X \geq x ) \right\}  \geq 0\Big\}\\
			\leq &  \inf \Big\{r \geq 0 :  \frac{P_{r}(X \geq x)}{P_{0}(X \geq x )} \geq \frac{6}{1 - \epsilon_{\max}} \textnormal{ for } x = r + F^{-1} ( 1 - \bar{q}(\epsilon) )  \Big\} \\
			\overset{ \eqref{ineq:density-ratio-to-tail-ratio} }\leq &  \inf \left\{ r\geq 0: \frac{f(x-r)}{f(x)} \geq \frac{6}{1 - \epsilon_{\max}} \textnormal{ for }  x = r + F^{-1} ( 1 - \bar{q}(\epsilon) ) \right\} = r^{\uparrow}(\epsilon).
		\end{split}
	\end{equation*} This finishes the proof of this lemma.

\subsection{Proof of Theorem \ref{thm:example}}
We will prove the result separately for each example in the following subsections. In each example, we use $h_r(x)$ to denote $ \frac{f(x-r)}{f(x)}$. Recall $\underline{q}(\epsilon) = \frac{\epsilon}{2(1- \epsilon_{\max}) } +  \frac{\alpha}{2n}$ and $\bar{q}(\epsilon) = \frac{6}{(1-\epsilon_{\max})} \left(\epsilon + \frac{100 \log(32/\alpha)}{3(1-\epsilon_{\max})n} \right)$, we set $\underline{t} = F^{-1} \left( 1- \underline{q}(\epsilon) \right)$ and $\bar{t} = F^{-1} \left( 1- \bar{q}(\epsilon) \right)$ throughout the proof.

\subsubsection{$t$ distribution} \label{proof:t-distribution}
 First, the upper bound is given by the interval centering at the sample median with constant length (see Proposition \ref{prop:minimax-rate-others-upper-bound}). Next, we show the length lower bound. Since $r_{\alpha}(\epsilon, [0,\epsilon_{\max} ])$ is an nondecreasing function with respect to $\epsilon$, it is enough to show $r_{\alpha}(0, [0,\epsilon_{\max} ]) \gtrsim 1$. The main tool we will use is Corollary \ref{coro:length-lower-bound}. It is clear that $f(x)$ is unimodal. Now, let us take a look at the monotonicity of $h_r(x) = \left( \frac{\nu + x^2}{\nu + (x-r)^2}  \right)^{ \frac{\nu + 1}{2}}$ when $x \geq r$ for any $r \geq 0$. Let us denote $s(x) = \frac{\nu + x^2}{\nu + (x-r)^2}$, then $s'(x) = \frac{2r( \nu -x(x-r) )}{(\nu + (x-r)^2)}$. Thus $s(x)$ first increases when $x \in [r, (r + \sqrt{r^2 + 4\nu})/2]$ and decreases when $x \geq (r + \sqrt{r^2 + 4\nu})/2$. As a result, $h_r(x)$ achieves its maximum at $x = (r + \sqrt{r^2 + 4\nu})/2$. Then,
\begin{equation} \label{ineq:t-r-lower-bound}
	\begin{split}
		r_{\alpha}(0, [0,\epsilon_{\max} ]) \overset{ \textnormal{Corollary }\ref{coro:length-lower-bound} }\geq \underline{r}(0) & := \sup \left\{r \geq 0: \sup_{  x \leq r + F^{-1}( 1- \frac{\alpha}{2n} )  } \frac{f(x - r)}{f(x)} \leq \frac{1- \alpha/n }{1- \epsilon_{\max} } \right\} \\
		& \overset{(a)}\geq \sup \left\{r \geq 0: \sup_{ x \leq r + F^{-1}( 1- \frac{\alpha}{2n} )  } \frac{f(x-r)}{f(x)} \leq \frac{1- \epsilon_{\max}/2 }{1- \epsilon_{\max} } \right\} \\
		& \geq  \sup \left\{r \geq 0: h_r((r + \sqrt{r^2 + 4\nu})/2) \leq \frac{1- \epsilon_{\max}/2 }{1- \epsilon_{\max} } \right\},
	\end{split}
\end{equation} where $(a)$ is because we can set $n$ to be sufficiently large so that $\alpha/n \leq \epsilon_{\max}/2 $.  Finally, let
\begin{equation*}
	\begin{split}
		k(r) :=  h_r((r + \sqrt{r^2 + 4\nu})/2) = \left( \frac{\nu + \left( \frac{ \sqrt{r^2 + 4 \nu} + r }{2} \right)^2 }{\nu + \left( \frac{ \sqrt{r^2 + 4 \nu} - r }{2} \right)^2} \right)^{(\nu+1)/2}.
	\end{split}
\end{equation*} Notice that $k(0) = 1$ and $k(r) \to \infty$ as $r \to \infty$. Thus $\sup \left\{r \geq 0: h_r((r + \sqrt{r^2 + 4\nu})/2) \leq \frac{1- \epsilon_{\max}/2 }{1- \epsilon_{\max} } \right\}$ on the right-hand side of \eqref{ineq:t-r-lower-bound} is some universal positive constant independent of $n$. This finishes the proof.

\subsubsection{Generalized Gaussian distribution}
 The proof strategy is similar to the t-distribution case. Let us first take a look at the monotonicity of $h_r(x) = \frac{f(x-r)}{f(x)} = \exp(x^\beta - (x - r)^\beta )$ when $x \geq r$. Let $s(x) = x^\beta - (x - r)^\beta$, then $s'(x) = \beta( \frac{1}{x^{1-\beta}} - \frac{1}{(x-r)^{1-\beta}} )$. So $h_r(x)$ is a nonincreasing function of $x$ when $x \geq r$ for any $r \geq 0$ when $\beta \in (0,1]$, and $h_r(x)$ is a nondecreasing function of $x$ when $x \geq r$ for any $r \geq 0$ when $\beta > 1$. We divide the proof into two different cases: (1) $\beta \in (0,1]$ and (2) $\beta > 1$.

{\noindent \bf Case 1: $\beta \in (0,1]$}.  First, the upper bound is given by the interval centering at the sample median with constant length (see Proposition \ref{prop:minimax-rate-others-upper-bound}). Now, let us provide the length lower bound. Again, it is enough to show $r_{\alpha}(0, [0,\epsilon_{\max} ]) \gtrsim 1$ It is clear that $f(x)$ is unimodal and since $h_r(x)$ is a nonincreasing function of $x$ when $x \geq r$ for any $r \geq 0$, then $\sup_{  x \leq r + F^{-1}( 1- \frac{\alpha}{2n} )  } h_r(x) = h_r(r)$ and
\begin{equation*}
		\begin{split}
			r_{\alpha}(0, [0,\epsilon_{\max} ]) \overset{ \textnormal{Corollary }\ref{coro:length-lower-bound} }\geq\underline{r}(0) & := \sup \left\{r \geq 0: \sup_{  x \leq r + F^{-1}( 1- \frac{\alpha}{2n} )  } h_r(x) \leq \frac{1- \alpha/n }{1- \epsilon_{\max} } \right\} \\
			& = \sup \left\{r \geq 0: h_r(r) \leq \frac{1- \alpha/n }{1- \epsilon_{\max} } \right\} \\
			 & \geq \sup  \left\{r \geq 0: h_r(r) \leq \frac{1- \epsilon_{\max}/2 }{1- \epsilon_{\max} } \right\} \\
			& = \left[ \log\left( \frac{1- \epsilon_{\max}/2 }{1- \epsilon_{\max} } \right) \right]^{1/\beta}.
		\end{split}
	\end{equation*}This finishes the proof for case 1.
	
\vskip.2cm	
{\noindent \bf Case 2: $\beta > 1$}.	Let us begin with the lower bound. It is enough to show that when $\epsilon \in [0, \epsilon_{\max}/2]$, 
	\begin{equation*}
		r_{\alpha}(\epsilon, [0,\epsilon_{\max}]) \gtrsim \left( \frac{1}{\log n} + \frac{1}{\log(1/ \epsilon )} \right)^{ \frac{\beta - 1}{\beta} }.
	\end{equation*} Since if we can show the above result, then when $\epsilon \in [\epsilon_{\max}/2, \epsilon_{\max}]$, 
	\begin{equation*}
		r_{\alpha}(\epsilon, [0,\epsilon_{\max}]) \geq r_{\alpha}(\epsilon_{\max}/2, [0,\epsilon_{\max}]) \asymp 1.
	\end{equation*}

	 By Corollary \ref{coro:length-lower-bound}, we only need to compute a lower bound for $\underline{r}(\epsilon)$ in the generalized Gaussian distribution with $\beta > 1$. Since $h_r(x)$ is a nondecreasing function of $x$ when $x \geq r$ for any $r \geq 0$, by Lemma \ref{lem:r-arrow},
	\begin{equation*}
		\begin{split}
			\underline{r}(\epsilon) \geq r^{\downarrow}(\epsilon)  &=  \sup \left\{r \geq 0:  h_r\left( r + \underline{t} \right) \leq \frac{1- \epsilon_{\max}/2 }{1- \epsilon_{\max} }  \right\} \\
			 &\geq  \sup \left\{r \geq 0:  h_r\left( r + \underline{t} \right) \leq \frac{1- \epsilon_{\max}/2 }{1- \epsilon_{\max} }, r/\underline{t} \leq 1/2  \right\} \\
			& \overset{(a)}\geq \sup \left\{r \geq 0: \exp( c \beta r \underline{t}^{\beta -1} ) \leq \frac{1 - \epsilon_{\max}/2 }{1- \epsilon_{\max} }, r/\underline{t} \leq 1/2  \right\} \\
			& = \sup \left\{r \geq 0: r \leq \frac{\log((1 -\epsilon_{\max}/2 )/(1 - \epsilon_{\max} ))}{c \beta \underline{t}^{\beta -1}}, r/\underline{t} \leq 1/2  \right\}\\
			& \overset{(b)}=\frac{\log((1 -\epsilon_{\max}/2 )/(1 - \epsilon_{\max} ))}{c \beta \underline{t}^{\beta -1}}  \\
			& \overset{ \textnormal{Lemma } \ref{lm:generalized-gaussian-quantile}}\geq  \frac{\log((1 -\epsilon_{\max}/2 )/(1 - \epsilon_{\max} ))}{c \beta (\log(1/\underline{q}(\epsilon)) )^{(\beta -1)/\beta}} \\
			& \gtrsim \frac{1}{(\log(1/\underline{q}(\epsilon)) )^{(\beta -1)/\beta}} \gtrsim \left( \frac{1}{\log n} + \frac{1}{\log(1/ \epsilon )} \right)^{ \frac{\beta - 1}{\beta} },
		\end{split}
	\end{equation*} where (a) is because when $r \leq \underline{t}/2$,
		\begin{equation*}
		\begin{split}
			 h_r(r + \underline{t})  = \exp( (r + \underline{t})^\beta - \underline{t}^\beta ) =  \exp( \underline{t}^\beta (1 + r/\underline{t})^\beta - \underline{t}^\beta ) \leq \exp( c \beta r \underline{t}^{\beta -1} ),
		\end{split}
	\end{equation*} where in the last inequality, we use the property $(1 + r/\underline{t})^\beta \leq 1 + c \beta r /\underline{t}$ for some constant $c$ depending on $\beta$ only given $r/\underline{t} \leq 1/2$; (b) is because when $n$ is sufficiently large and $\epsilon_{\max}$ is sufficiently small, then $\underline{t}$ is bigger than a large constant and $r/\underline{t} \leq 1/2$ can always be satisfied when we set $r = \frac{\log((1 -\epsilon_{\max}/2 )/(1 - \epsilon_{\max} ))}{c \beta \underline{t}^{\beta -1}}$ given $\beta > 1$.

	 Let us now move on to the calculation of the upper bound. Since $h_r(x)$ is a nondecreasing function of $x$ when $x \geq r$, then by Lemma \ref{lem:r-arrow}, it is enough to show $r^{\uparrow}(\epsilon) \lesssim \left( \frac{1}{\log n} + \frac{1}{\log(1/ \epsilon )} \right)^{ \frac{\beta - 1}{\beta} }$, where $r^{\uparrow}(\epsilon) $ is defined in \eqref{eq:r-uparraw}. If we assume $r/\bar{t} \leq 1/2$, then 
	 \begin{equation}\label{ineq:gx-bound2-generalized-gaussian}
	\begin{split}
		h_r(r+\bar{t}) = \exp( - \bar{t}^\beta + (r + \bar{t})^\beta ) = \exp( \bar{t}^\beta (1 + r/\bar{t})^\beta - \bar{t}^\beta ) \overset{(a)}\geq \exp( c' \beta r \bar{t}^{\beta - 1} ),
	\end{split}
\end{equation}where in (a), we use the property $(1 + r/\bar{t})^\beta \geq 1 + c' \beta r/\bar{t}$ for some constant $c'$ depending on $\beta$ only given $r/\bar{t} \leq 1/2$. Then by Lemma \ref{lem:r-arrow}, we have  
	 \begin{equation} \label{ineq:generalized-Gaussian-hatr-upper-bound}
	 	\begin{split}
	 			r^{\uparrow}(\epsilon) =&   \inf \Big\{r \geq 0 :  h_r(r + \bar{t}) \geq \frac{6}{1 - \epsilon_{\max}} \Big\} \\
	 			\leq & \inf \Big\{r \geq 0 :  h_r(r + \bar{t}) \geq \frac{6}{1 - \epsilon_{\max}}, r/\bar{t} \leq 1/2\Big\} \\
	 			\overset{ \eqref{ineq:gx-bound2-generalized-gaussian} }\leq & \inf \Big\{r \geq 0 :  \exp( c' \beta r \bar{t}^{\beta - 1} ) \geq \frac{6}{1 - \epsilon_{\max}}, r/\bar{t} \leq 1/2\Big\} \\
	 			= & \inf \Big\{r \geq 0 :  r \geq \frac{\log(6/(1 - \epsilon_{\max}) )}{c' \beta \bar{t}^{\beta - 1}},  r/\bar{t} \leq 1/2 \Big\} \\
	 			\overset{(a)}= & \frac{\log(6/(1 - \epsilon_{\max}) )}{c' \beta \bar{t}^{\beta - 1}} \\
	 			\overset{ \textnormal{Lemma } \ref{lm:generalized-gaussian-quantile}}\leq & \frac{\log(6/(1 - \epsilon_{\max}) )}{c' \beta \left( \log(1/\bar{q}(\epsilon))/2 \right)^{(\beta - 1)/\beta} } \lesssim \frac{1}{\left( \log(1/\bar{q}(\epsilon)) \right)^{(\beta - 1)/\beta} } \\
	 			\lesssim &  \left( \frac{1}{\log n} + \frac{1}{\log(1/ \epsilon )} \right)^{ \frac{\beta - 1}{\beta} },
	 	\end{split}
	 \end{equation} where (a) is because when $\epsilon_{\max}$ is small enough and $n/\log(32/\alpha)$ is large enough, we have $\bar{t} \geq C$ for sufficiently large $C$, so $r/\bar{t} \leq 1/2$ will be satisfied when we take $r = \frac{\log(6/(1 - \epsilon_{\max}) )}{c' \beta \bar{t}^{\beta - 1}}$ when $\beta > 1$.

\subsubsection{Mollifier distribution}

	Let us begin with the proof of the lower bound. First, it is for the same reason as in the generalized Gaussian with $\beta > 1$, it is enough to show that when $\epsilon \in [0, \epsilon_{\max}/2]$, 
	\begin{equation*}
		r_{\alpha}(\epsilon, [0,\epsilon_{\max}]) \gtrsim \left( \frac{1}{\log n} + \frac{1}{\log(1/ \epsilon )} \right)^{ \frac{\beta + 1}{\beta} }.
	\end{equation*}

By Corollary \ref{coro:length-lower-bound}, we only need to compute a lower bound for $\underline{r}(\epsilon)$ in the mollifier distribution with $\beta > 0$. Note that $f(x)$ is clearly unimodal. Next, let us check the monotonicity of $h_r(x) = \frac{f(x-r)}{f(x)} = \exp\left( \frac{1}{(1 - x^2)^\beta} - \frac{1}{(1 - (x-r)^2 )^\beta} \right)$ when $x \geq r$.
	Let $s(x) = \frac{1}{(1 - x^2)^\beta} - \frac{1}{(1 - (x-r)^2 )^\beta}$, then 
	\begin{equation*}
	\begin{split}
		s'(x) &= - \beta( 1- x^2 )^{-(\beta + 1)} \cdot (-2x) + \beta ( 1 - (x-r)^2  )^{-(\beta + 1)}\cdot (-2(x -r)) \\
		& = \frac{2x \beta ( 1 - (x-r)^2 )^{\beta + 1} - 2(x - r) \beta  ( 1- x^2 )^{\beta + 1}}{( 1- x^2 )^{\beta + 1} ( 1 - (x-r)^2 )^{\beta + 1} } \\
		& \geq \frac{2x \beta ( 1 - x^2 )^{\beta + 1} - 2(x - r) \beta  ( 1- x^2 )^{\beta + 1}}{( 1- x^2 )^{\beta + 1} ( 1 - (x-r)^2 )^{\beta + 1} } \geq 0,
	\end{split}
	\end{equation*} so $s(x)$ is an nondecreasing function of $x$ when $x \geq r$. In addition, when $r \leq \frac{(1 - \underline{t}^2)}{6(\beta \vee 1)}$, then  
	\begin{equation} \label{ineq:mollifier-sup-lower-bound}
		\begin{split}
			 h_r(r + \underline{t}) & = \exp\left( \frac{1}{(1 - (r + \underline{t})^2)^\beta} - \frac{1}{(1 - \underline{t}^2 )^\beta} \right)  \\
			& = \exp \left\{  \frac{1}{(1 - \underline{t}^2 )^\beta} \left[ \left( \frac{1 - \underline{t}^2}{1 - (\underline{t}+r)^2} \right)^\beta -1 \right]  \right\} \\
			& =  \exp \left\{  \frac{1}{(1 - \underline{t}^2 )^\beta} \left[  \frac{1}{(1 - \frac{r^2 + 2\underline{t}r}{1 - \underline{t}^2} )^\beta}  -1 \right]  \right\} \\
			& \overset{(a)}\leq \exp \left\{  \frac{1}{(1 - \underline{t}^2 )^\beta} \left[  \frac{1}{(1 - \frac{3r}{1 - \underline{t}^2} )^\beta}  -1 \right]  \right\} \\
			& \overset{(b)}\leq  \exp \left\{  \frac{1}{(1 - \underline{t}^2 )^\beta} \left[  \frac{1}{1 - \frac{3c r \beta}{1 - \underline{t}^2}}  -1 \right]  \right\}  \\
			& \overset{(c)}\leq  \exp \left\{  \frac{1}{(1 - \underline{t}^2 )^\beta}\frac{3c' r \beta}{1 - \underline{t}^2} \right\} = \exp \left\{ \frac{3c' r \beta}{(1 - \underline{t}^2)^{\beta + 1}} \right\},
		\end{split}
	\end{equation} where in (a), we use the property $r^2 + 2\underline{t}r \leq r^2 + 2r \leq 3r$ since $r,\underline{t} < 1$; in (b), we use the property $(1 - \frac{3r}{1 - \underline{t}^2} )^\beta \geq 1 - c\frac{3r \beta}{1 - \underline{t}^2}$ for some constant $c < 1$ depending on $\beta$ only given $\frac{3r}{1 - \underline{t}^2} \leq 1/2$; (c) is because $\frac{3c r \beta}{1 - \underline{t}^2} \leq 1/2$.
	
Thus, by Lemma \ref{lem:r-arrow},
\begin{equation*}
	\begin{split}
		\underline{r}(\epsilon) \geq r^{\downarrow}(\epsilon)  & = \sup \left\{r \geq 0 : h_r\left( r + \underline{t} \right) \leq \frac{1- \epsilon_{\max}/2 }{1- \epsilon_{\max} }  \right\} \\
		& \geq \sup \left\{r \geq 0 : h_r\left( r + \underline{t} \right) \leq \frac{1- \epsilon_{\max}/2 }{1- \epsilon_{\max} }, r \leq \frac{(1 - \underline{t}^2)}{6(\beta \vee 1)}  \right\} \\
		& \overset{ \eqref{ineq:mollifier-sup-lower-bound} }\geq   \sup \left\{r \geq 0 : \exp \left\{ \frac{3c' r \beta}{(1 - \underline{t}^2)^{\beta + 1}} \right\} \leq \frac{1- \epsilon_{\max}/2 }{1- \epsilon_{\max} }, r \leq \frac{(1 - \underline{t}^2)}{6(\beta \vee 1)}  \right\} \\
		& = \sup \left\{r \geq 0 :  r\leq \frac{(1 - \underline{t}^2)^{\beta + 1}}{3 c' \beta} \log \left(\frac{1- \epsilon_{\max}/2 }{1- \epsilon_{\max} } \right) , r \leq \frac{(1 - \underline{t}^2)}{6(\beta \vee 1)}  \right\}\\
		& \overset{(a)}= \frac{(1 - \underline{t}^2)^{\beta + 1}}{3 c' \beta} \log \left(\frac{1- \epsilon_{\max}/2 }{1- \epsilon_{\max} } \right)\\
		& \overset{\textnormal{Lemma } \ref{lm:mollifier-quantile} }\gtrsim  \left( 1/\log(1/\underline{q}(\epsilon)) \right)^{(\beta + 1)/\beta} \gtrsim \left( \frac{1}{\log n} + \frac{1}{\log(1/ \epsilon )} \right)^{ \frac{\beta + 1}{\beta} },
	\end{split}
\end{equation*} where (a) is because when $n$ is sufficiently large and $\epsilon_{\max}$ is sufficiently small, then $1 - \underline{t}^2$ is smaller than a sufficiently small constant so that when we take $r = \frac{(1 - \underline{t}^2)^{\beta + 1}}{3 c \beta} \log \left(\frac{1- \epsilon_{\max}/2 }{1- \epsilon_{\max} } \right)$, it satisfies $r \leq \frac{(1 - \underline{t}^2)}{6(\beta \vee 1)}$. 

Next, let us move on to the proof of the upper bound.  We have already shown that $h_r(x)$ is a nondecreasing function of $x$ when $x \geq r$, then by Lemma \ref{lem:r-arrow}, it is enough to show $r^{\uparrow}(\epsilon) \lesssim \left( \frac{1}{\log n} + \frac{1}{\log(1/ \epsilon )} \right)^{ \frac{\beta + 1}{\beta} }$. Now if we assume $ 1 - \bar{t}^2 \leq 3/4$ and $\frac{r}{1 - \bar{t}^2} \leq \frac{1}{2} \wedge \frac{1}{\beta}$, then 
\begin{equation}\label{ineq:gx-bound2-mollifier}
	\begin{split}
		h_r(r+\bar{t}) 
		& \overset{ \eqref{ineq:mollifier-sup-lower-bound} }=  \exp \left\{  \frac{1}{(1 - \bar{t}^2 )^\beta} \left[  \frac{1}{(1 - \frac{r^2 + 2\bar{t}r}{1 - \bar{t}^2} )^\beta}  -1 \right]  \right\} \\
		& \overset{(a)}\geq \exp \left\{  \frac{1}{(1 - \bar{t}^2 )^\beta} \left[  \frac{1}{(1 - \frac{r}{1 - \bar{t}^2} )^\beta}  -1 \right]  \right\} \\
		& \overset{(b)}\geq \exp \left\{  \frac{1}{(1 - \bar{t}^2 )^\beta} \left[  \frac{1}{1 - \frac{c' r \beta}{1 - \bar{t}^2}}  -1 \right]  \right\} \\
		& =  \exp \left\{  \frac{c' r \beta}{(1 - \bar{t}^2 )^{\beta + 1}}\ \frac{1}{1 - \frac{c' r \beta}{1 - \bar{t}^2}}  \right\} \geq \exp \left\{  \frac{c'' r \beta}{(1 - \bar{t}^2 )^{\beta + 1}}  \right\}
	\end{split}
\end{equation} where (a) is because $\bar{t} \geq 1/2$ given $ 1 - \bar{t}^2 \leq 3/4$; in (b), we use the property $(1 - \frac{r}{1 - \bar{t}^2} )^\beta \leq 1 - c'\frac{r \beta}{1 - \bar{t}^2}$ for some constant $c' < 1$ depending on $\beta$ only given $\frac{r}{1 - \bar{t}^2} \leq 1/2$ and note that $c'\frac{r \beta}{1 - \bar{t}^2} \leq c'$ by our constraint on $r$. Then by Lemma \ref{lem:r-arrow}, we have
\begin{equation*}
	\begin{split}
		 r^{\uparrow}(\epsilon)= &    \inf \Big\{r \geq 0 :  h_r(r + \bar{t}) \geq \frac{6}{1 - \epsilon_{\max}}   \Big\} \\
		 \leq &  \inf \Big\{r \geq 0 :  h_r(r + \bar{t}) \geq \frac{6}{1 - \epsilon_{\max}}, 1 - \bar{t}^2 \leq 3/4, \frac{r}{1 - \bar{t}^2} \leq \frac{1}{2} \wedge \frac{1}{\beta}  \Big\}\\
		 \overset{ \eqref{ineq:gx-bound2-mollifier} }\leq & \inf \Big\{r \geq 0 :  \exp \left\{  \frac{c' r \beta}{(1 - \bar{t}^2 )^{\beta + 1}}  \right\} \geq \frac{6}{1 - \epsilon_{\max}}, 1 - \bar{t}^2 \leq 3/4, \frac{r}{1 - \bar{t}^2} \leq \frac{1}{2} \wedge \frac{1}{\beta}  \Big\} \\
		 = & \inf \Big\{r \geq 0 :  r \geq \frac{\log(6/(1 - \epsilon_{\max}) ) (1 - \bar{t}^2)^{\beta + 1 }}{c' \beta}, 1 - \bar{t}^2 \leq 3/4, \frac{r}{1 - \bar{t}^2} \leq \frac{1}{2} \wedge \frac{1}{\beta}  \Big\} \\
		 \overset{(a)}= &  \frac{\log(6/(1 - \epsilon_{\max}) ) (1 - \bar{t}^2)^{\beta + 1 }}{c' \beta} \\
		 \overset{ \textnormal{Lemma } \ref{lm:mollifier-quantile} }\leq & \frac{\log(6/(1 - \epsilon_{\max}) ) \left( 2/\log(1/\bar{q}(\epsilon)) \right)^{(\beta + 1)/\beta}}{c' \beta} \\
		 \lesssim& \left( \frac{1}{\log n} + \frac{1}{\log(1/ \epsilon )} \right)^{ \frac{\beta + 1}{\beta} },
	\end{split}
\end{equation*} where (a) is because when $n/\log(32/\alpha)$ is sufficiently large and $\epsilon_{\max}$ is sufficiently small, then $1 - \bar{t}^2$ is smaller than a sufficiently small constant so that $1 - \bar{t}^2 \leq 3/4, \frac{r}{1 - \bar{t}^2} \leq \frac{1}{2} \wedge \frac{1}{\beta}  $ are both satisfied when we take $r = \frac{\log(6/(1 - \epsilon_{\max}) ) (1 - \bar{t}^2)^{\beta + 1 }}{c' \beta}$.

\subsubsection{Bates distribution}

	Let us begin with the proof of the lower bound. First, it is enough to show that when $\epsilon \in [0, \epsilon_{\max}/2]$, 
	\begin{equation*}
		r_{\alpha}(\epsilon, [0,\epsilon_{\max}]) \gtrsim \left( \frac{1}{n} + \epsilon \right)^{ 1/k }.
	\end{equation*}  

We will compute a lower bound of $\underline{r}(\epsilon)$  in Corollary \ref{coro:length-lower-bound} for the Bates distribution. For convenience, we will divide the proof based on $k = 1$ and $k \geq 2$.

{\bf (Case 1: $k = 1$)}  Let
	\begin{equation} \label{eq:uniform-hr}
		h_r(x) := \frac{f(x-r)}{f(x)} = \left\{\begin{array}{l l}
			0, & -1/2 \leq x <  -1/2 + r; \\
			1, & -1/2 + r \leq x \leq 1/2; \\
			\infty, & 1/2 < x \leq 1/2 + r.
		\end{array} \right.
	\end{equation}
	Then
	\begin{equation*}
		\begin{split}
			\underline{r}(\epsilon)  & = \sup \left\{r \geq 0: \sup_{- 1/2 \leq x \leq r + \underline{t} } h_r\left( x \right) \leq \frac{1- (\epsilon \vee \frac{\alpha}{n}) }{1- \epsilon_{\max} }  \right\} \\
			& \overset{ \eqref{eq:uniform-hr} } = \sup \left\{r \geq 0: r + \underline{t} \leq 1/2 \right\} \\
			&  = \underline{q}(\epsilon) \gtrsim  \epsilon + \frac{1}{n}.
		\end{split}
	\end{equation*}

{\bf (Case 2: $k \geq 2$)} Notice that the density of Bates distribution is unimodal, this is because Bates distribution is the convolution of uniforms, thus its density is essentially a weighted average of the density of the uniform distribution with maximum weight in the center and decaying weight when moving beyond the center. Then $\sup_{-1/2 \leq x \leq r+\underline{t}} h_r(x) = \sup_{r \leq x \leq r+\underline{t}} h_r(x)$ and 
\begin{equation} \label{ineq:Bates-lower-bound-first-step}
	\begin{split}
		\underline{r}(\epsilon)  &= \sup \left\{r \geq 0 : \sup_{-1/2 \leq x \leq r + \underline{t} } h_r( x) \leq \frac{1- (\epsilon \vee \frac{\alpha}{n}) }{1- \epsilon_{\max} }  \right\}  \\
		 &= \sup \left\{r \geq 0 : \sup_{r \leq x \leq r + \underline{t} } h_r( x) \leq \frac{1- (\epsilon \vee \frac{\alpha}{n}) }{1- \epsilon_{\max} }  \right\} \\
		 & \overset{(a)}\geq \sup \left\{r \geq 0 : \sup_{r \leq x \leq r + \underline{t} } h_r( x) \leq \frac{1- \epsilon_{\max}/2 }{1- \epsilon_{\max} }  \right\},
	\end{split}
\end{equation} where (a) is because when $n$ is large, we have $\epsilon \vee \frac{\alpha}{n} \leq \epsilon_{\max}/2$.

Next, let us study the value $\sup_{r \leq x \leq r+\underline{t}} h_r(x)$. First, we can let $\epsilon_{\max}$ to be small enough and $n$ to be large enough so that $ \underline{q}(\epsilon) \leq 1/(k(k-1)!)$ and $\underline{t} \in (1/2 - 1/(2k), 1/2]$. We will divide the regime $[r, r+\underline{t}]$ into two parts: (i) $[r, 1/2 - 1/(2k)]$ and (ii) $[1/2 - 1/(2k), r + \underline{t}]$. Since the density $f(x)$ is continuous, $h_r(x)$ as a function of $(r,x)$ is continuous and compact on $[0, c] \times [0,1/2 - 1/k]$ for any small constant $c$. So $h_r(x)$ as a function of $(r,x)$ is uniformly continuous, i.e., for any $\gamma > 0$, $\exists \delta > 0$ such that when $\sqrt{(x_1 - x_2)^2 + (r_1 - r_2)^2} \leq \delta$, $|h_{r_1}(x_1) - h_{r_2}(x_2)| \leq \gamma$. Take $\gamma = \frac{\epsilon_{\max}/2}{1- \epsilon_{\max}}$, then there exists a universal constant $\delta^* > 0$ so that when $r \in [0,\delta^*]$, we have
\begin{equation} \label{ineq:bates-uniform-con}
	\sup_{0 \leq x \leq 1/2-1/k } |h_r(x) - h_0(x)| \leq \frac{\epsilon_{\max}/2}{1- \epsilon_{\max}}.
\end{equation}
In addition, $h_0(x) = 1$ for all $x$. Thus \eqref{ineq:bates-uniform-con} implies that 
\begin{equation} \label{ineq:uniform-con-result}
	\sup_{0 \leq x \leq 1/2-1/k } h_r(x) \leq \frac{1-\epsilon_{\max}/2 }{1- \epsilon_{\max}} \textnormal{ for all } r \in [0, \delta^*]. 
\end{equation} Thus,
\begin{equation}\label{ineq:uniform-control-consequence}
	\begin{split}
		& \left\{r \geq 0 : \sup_{r \leq x \leq r + \underline{t} } h_r( x) \leq \frac{1- \epsilon_{\max}/2 }{1- \epsilon_{\max} }  \right\} \\
		 & \supseteq \left\{r \geq 0 : \sup_{r \leq x \leq r + \underline{t} } h_r( x) \leq \frac{1- \epsilon_{\max}/2 }{1- \epsilon_{\max} }, r \leq \delta^*  \right\} \\
		& \overset{ \eqref{ineq:uniform-con-result} }= \left\{r \geq 0 : \sup_{1/2-1/k \leq x \leq r + \underline{t} } h_r( x) \leq \frac{1- \epsilon_{\max}/2 }{1- \epsilon_{\max} }, r \leq \delta^*  \right\}.
	\end{split}
\end{equation}

A combination of \eqref{ineq:Bates-lower-bound-first-step} and \eqref{ineq:uniform-control-consequence} yields,
\begin{equation*}
	\begin{split}
		\underline{r}(\epsilon)  & \overset{\eqref{ineq:Bates-lower-bound-first-step} }\geq  \sup \left\{r \geq 0 : \sup_{r \leq x \leq r + \underline{t} } h_r( x) \leq \frac{1- \epsilon_{\max}/2 }{1- \epsilon_{\max} }  \right\}  \\
		& \overset{ \eqref{ineq:uniform-control-consequence} }\geq \sup \left\{r \geq 0 : \sup_{1/2-1/k \leq x \leq r + \underline{t} } h_r\left( x \right) \leq \frac{1- \epsilon_{\max}/2 }{1- \epsilon_{\max} }, r \leq \delta^*  \right\} \\
		& \overset{(a)}= \sup \left\{r \geq 0 : \sup_{1/2-1/k \leq x \leq r + \underline{t} } \left(\frac{1/2 + r - x}{1/2 - x} \right)^{k-1}  \leq \frac{1- \epsilon_{\max}/2 }{1- \epsilon_{\max} }, r \leq \delta^*  \right\} \\
		& \overset{(b)} =  \sup \left\{r \geq 0 : \left(\frac{1/2 - \underline{t}}{1/2 - \underline{t} - r} \right)^{k-1}  \leq \frac{1- \epsilon_{\max}/2 }{1- \epsilon_{\max} }, r \leq \delta^*  \right\} \\
		& = \sup \left\{r \geq 0 : r \leq (1/2 - \underline{t}) \left(1 - \left(\frac{1- \epsilon_{\max}}{1- \epsilon_{\max}/2} \right)^{1/(k-1)}\right), r \leq \delta^*  \right\} \\
		& \overset{(c)}= (1/2 - \underline{t}) \left(1 - \left(\frac{1- \epsilon_{\max}}{1- \epsilon_{\max}/2} \right)^{1/(k-1)}\right)\\
		& \overset{ \textnormal{Lemma } \ref{lm:bates-quantile} }\gtrsim \underline{q}(\epsilon)^{1/k} \gtrsim  \left( \frac{1}{n} + \epsilon \right)^{ 1/k },
	\end{split}
\end{equation*} where (a) is because when $1/2 - 1/k \leq x \leq 1/2$, $f(x) = \frac{k^k}{(k-1)!} (1/2 - x)^{k-1}$ by Lemma \ref{lm:bates-quantile}; (b) is because $\frac{1/2 + r - x}{1/2 - x}$ is an increasing function of $x$; (c) is because we can take $\epsilon_{\max}$ to be small enough and $n$ to be large enough so that $\underline{t}$ is very close to $1/2$ so that when $r = (1/2 - \underline{t}) \left(1 - \left(\frac{1- \epsilon_{\max}}{1- \epsilon_{\max}/2} \right)^{1/(k-1)}\right)$, it satisfies $r \leq \delta^*$.

Next, let us show the upper bound. Again, we divide the rest of the proof into two cases: $k = 1$ and $k \geq 2$.

{\bf (Case 1: $k = 1$)} By Lemma \ref{lem:r-arrow}, it is enough to show $ r^{\uparrow}(\epsilon) \lesssim  \frac{1}{n} + \epsilon $, where $r^{\uparrow}(\epsilon) $ is defined in Lemma \ref{lem:r-arrow}.
	\begin{equation*}
		\begin{split}
			r^{\uparrow}(\epsilon) = &  \inf \Big\{r \geq 0 :  h_r(r + \bar{t}) \geq \frac{6}{1 - \epsilon_{\max}} \Big\} \\
		\overset{ \eqref{eq:uniform-hr} }= & \inf \Big\{r \geq 0 : r + \bar{t} > 1/2  \Big\} \\
		= & \bar{q}(\epsilon)  \lesssim \frac{1}{n} + \epsilon.
		\end{split}
	\end{equation*}

{\bf (Case 2: $k \geq 2$)}
Let $\epsilon_{\max}$ to be small enough and $n/\log(32/\alpha)$ to large enough so that the condition $\bar{t} > -1/k + 1/2$ is satisfied. Thus, $h_r(x) = \left(\frac{1/2 + r - x}{1/2 - x} \right)^{k-1}$ by Lemma \ref{lm:bates-quantile}, this shows that $h_r(x)$ is a nondecreasing function of $x$ when $x \geq r + F^{-1} ( 1 - \bar{q}(\epsilon) )$. Again by Lemma \ref{lem:r-arrow}, it is enough to show $r^{\uparrow}(\epsilon) \lesssim \left( \frac{1}{n} + \epsilon \right)^{ 1/k }$, and we have 
\begin{equation*}
	\begin{split}
		r^{\uparrow}(\epsilon)  &=   \inf \Big\{r \geq 0 :  h_r(r + \bar{t}) \geq \frac{6}{1 - \epsilon_{\max}} \Big\} \\
		= & \inf \Big\{r \geq 0 :  \frac{(1/2 - \bar{t})^{k-1}}{(1/2 - (\bar{t} + r))^{k-1}} \geq \frac{6}{1 - \epsilon_{\max}}\Big\} \\
		= & (1/2 - \bar{t}) \left( 1 - \left( \frac{1 - \epsilon_{\max} }{6} \right)^{1/(k-1)} \right)\\
		\overset{ \textnormal{Lemma } \ref{lm:bates-quantile} }= &\left(\frac{\bar{q}(\epsilon)  (k-1)!}{k^{k-1}}\right)^{1/k} \left( 1 - \left( \frac{1 - \epsilon_{\max} }{6} \right)^{1/(k-1)} \right) \\
		\lesssim & \left( \frac{1}{n} + \epsilon \right)^{ 1/k }.
	\end{split}
\end{equation*}  This finishes the proof.

\section{Proofs in Section \ref{sec:extension}}

\subsection{Proof of Theorem \ref{th:optimal-eps-max}} \label{proof:opt-eps-max}

\subsubsection{Optimal Procedure and Length Upper Bound}
The optimal ARCI procedure will be derived based on inverting robust testing problems. Based on Proposition \ref{prop:test-to-CI}, we know that to construct ARCI, we just need to design testing procedures to solve $\cH(\theta, \theta- r_{\epsilon}, \epsilon)$ and $\cH(\theta, \theta+ r_{\epsilon}, \epsilon)$. The testing procedures we design are
\begin{eqnarray}
\label{eq:test-eps-max} \quad ~~~~~ \phi_{\theta, \theta- r_{\epsilon}, \epsilon} &=& \indi \left\{ \frac{1}{n}\sum_{i=1}^n \indi \left \{ X_i - (\theta - r_{\epsilon} ) \geq t_{\epsilon} \right\} \leq 1-\Phi(t_{\epsilon})+ \epsilon  + \sqrt{\frac{\log(2/\alpha)}{2n}} \right\}, \\
\label{eq:test+eps-max}\quad ~~~~~  \phi_{\theta, \theta+ r_{\epsilon}, \epsilon} &=& \indi \left\{ \frac{1}{n}\sum_{i=1}^n \indi \left \{ X_i - (\theta + r_{\epsilon} ) \leq -t_{\epsilon} \right\} < 1-\Phi(t_{\epsilon})+ \epsilon  + \sqrt{\frac{\log(2/\alpha)}{2n}} \right\},
\end{eqnarray} where 
\begin{equation} \label{eq:t-eps-max}
	t_{\epsilon} = \epsilon_{\max} \sqrt{\frac{n}{2 \log(2/\alpha)} } \wedge \Phi^{-1} \left( 1 - \frac{1}{ \frac{\epsilon_{\max}}{ \sqrt{\frac{2 \log(2/\alpha)}{n}} + \epsilon } + 10 e }  \right)
\end{equation} The tests in \eqref{eq:test-eps-max} and \eqref{eq:test+eps-max} are slightly different from the ones in \eqref{eq:test-gaussian} and \eqref{eq:test+gaussian} and they can achieve better performance when $\epsilon_{\max} \to 0$. The guarantee of these tests is given as follows.
\begin{Lemma}\label{prop:upper-bound-Gaussian-eps-max}
There exists some universal constant $C>0$, such that for any $\alpha\in(0,1)$, $\epsilon_{\max} \in [0,0.05]$ and $n\geq C\log(1/\alpha)$, the testing functions $\{\phi_{\theta, \theta \pm r_{ \epsilon }, \epsilon}: \epsilon \in [0, \epsilon_{\max}],\theta\in\mathbb{R}\}$ defined by \eqref{eq:test-eps-max} and \eqref{eq:test+eps-max} with parameters $r_{\epsilon} = \frac{6 \epsilon_{\max} }{t_{\epsilon}}$ and $t_{\epsilon}$ set by \eqref{eq:t-eps-max} satisfy
\begin{equation*}
		\begin{split}
			\textnormal{(simultaneous Type-1 error)} &\quad \sup_Q P_{ \epsilon_{\max} ,\theta,Q}\left(\sup_{\epsilon\in[0,\epsilon_{\max}]}\phi_{\theta,\theta\pm r_{\epsilon},\epsilon}=1\right)\leq \alpha,\\
			\textnormal{(Type-2 error)} &\quad \sup_QP_{\epsilon,\theta\pm r_{\epsilon},Q}\left(\phi_{\theta,\theta\pm r_{\epsilon},\epsilon}=0\right)\leq \alpha,
		\end{split}
	\end{equation*}
for all $\theta\in\mathbb{R}$ and $\epsilon\in[0,\epsilon_{\max}]$.
\end{Lemma}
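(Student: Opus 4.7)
The plan is to follow the same two-step template used for Lemma \ref{prop:upper-bound-Gaussian}, but replace the fixed prefactors $2$ and $\epsilon_{\max}=0.05$ with quantities that scale correctly as $\epsilon_{\max}\to 0$. By symmetry, I only need to analyze $\phi_{\theta,\theta-r_\epsilon,\epsilon}$. The simultaneous Type-1 error will be controlled by a single DKW event applied to the null distribution $P_{\epsilon_{\max},\theta,Q}$, and the Type-2 error will be controlled via a Bernstein/Hoeffding bound under the alternative $P_{\epsilon,\theta-r_\epsilon,Q}$.

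For the simultaneous Type-1 control, I would condition on the DKW event
$\sup_x|F_n(x)-F_{\epsilon_{\max},\theta,Q}(x)|\leq \sqrt{\log(2/\alpha)/(2n)}$, which holds with probability $\geq 1-\alpha$. On this event, for each fixed $\epsilon\in[0,\epsilon_{\max}]$,
\[
\tfrac{1}{n}\sum_i\indi\{X_i-(\theta-r_\epsilon)\geq t_\epsilon\}\geq (1-\epsilon_{\max})(1-\Phi(t_\epsilon-r_\epsilon))-\sqrt{\tfrac{\log(2/\alpha)}{2n}},
\]
so non-rejection reduces to the deterministic inequality
\[
(1-\epsilon_{\max})(1-\Phi(t_\epsilon-r_\epsilon))\ \geq\ 1-\Phi(t_\epsilon)+\epsilon+2\sqrt{\tfrac{\log(2/\alpha)}{2n}}.
\]
Using $r_\epsilon=6\epsilon_{\max}/t_\epsilon$ and the Mills-ratio bound (Lemma 12 of Appendix G) $\phi(t_\epsilon-r_\epsilon)/\phi(t_\epsilon)=\exp(t_\epsilon r_\epsilon-r_\epsilon^2/2)\geq 1+5\epsilon_{\max}$ for $\epsilon_{\max}$ small, I get $(1-\epsilon_{\max})(1-\Phi(t_\epsilon-r_\epsilon))\geq(1+3\epsilon_{\max})(1-\Phi(t_\epsilon))$, so it suffices to check
\[
3\epsilon_{\max}(1-\Phi(t_\epsilon))\ \geq\ \epsilon+2\sqrt{\tfrac{\log(2/\alpha)}{2n}}.
\]
Here is exactly where the two-branch definition of $t_\epsilon$ in (\ref{eq:t-eps-max}) earns its keep: in the regime where $t_\epsilon=\Phi^{-1}(1-1/(\epsilon_{\max}/u_\epsilon+10e))$ with $u_\epsilon=\sqrt{2\log(2/\alpha)/n}+\epsilon$, direct substitution gives $\epsilon_{\max}(1-\Phi(t_\epsilon))=u_\epsilon\epsilon_{\max}/(\epsilon_{\max}+10e\,u_\epsilon)$, which after elementary algebra is at least $u_\epsilon/(10e+1)$, yielding the required bound; in the regime where $t_\epsilon$ saturates at $\epsilon_{\max}\sqrt{n/(2\log(2/\alpha))}$ (i.e.\ small $t_\epsilon$), $1-\Phi(t_\epsilon)$ is of constant order and the bound is trivial because $\epsilon\leq\epsilon_{\max}$ and the DKW slack is $\leq \epsilon_{\max}$ by the definition of this branch. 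I would package this as a uniform statement over $\epsilon\in[0,\epsilon_{\max}]$ by noting that the left side is monotone non-decreasing in $\epsilon$ through its dependence on $t_\epsilon$ in the relevant way while $u_\epsilon$ absorbs the right side uniformly.

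For the Type-2 control, fix $\theta,Q,\epsilon$ and let $Y_i=\indi\{X_i-(\theta-r_\epsilon)\geq t_\epsilon\}$ under $P_{\epsilon,\theta-r_\epsilon,Q}$. Then
\[
\mathbb{E} Y_i\ \leq\ (1-\epsilon)(1-\Phi(t_\epsilon))+\epsilon\ \leq\ 1-\Phi(t_\epsilon)+\epsilon,
\]
so by Hoeffding's inequality,
\[
\mathbb{P}\!\left(\tfrac{1}{n}\sum_iY_i>1-\Phi(t_\epsilon)+\epsilon+\sqrt{\tfrac{\log(2/\alpha)}{2n}}\right)\leq\alpha,
\]
which is exactly the complementary event to $\phi_{\theta,\theta-r_\epsilon,\epsilon}=0$. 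This step is essentially the same as in the proof of Lemma \ref{prop:upper-bound-Gaussian}, the only change being the slightly different threshold.

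The main obstacle I expect is the Type-1 step: verifying the deterministic inequality $3\epsilon_{\max}(1-\Phi(t_\epsilon))\geq\epsilon+2\sqrt{\log(2/\alpha)/(2n)}$ uniformly in $\epsilon\in[0,\epsilon_{\max}]$, and doing so sharply enough that the resulting $r_\epsilon=6\epsilon_{\max}/t_\epsilon$ matches the lower bound rate $\frac{\epsilon_{\max}}{\sqrt{\log(n\epsilon_{\max}^2+e)}}+\frac{\epsilon_{\max}}{\sqrt{\log(\epsilon_{\max}/\epsilon+e)}}$ of (\ref{eq:veryniubi}). The Mills-ratio expansion $\exp(t r - r^2/2)\geq 1+5\epsilon_{\max}$ requires $t_\epsilon r_\epsilon=6\epsilon_{\max}$ to be bounded (which holds since I pick $r_\epsilon=6\epsilon_{\max}/t_\epsilon$) and $r_\epsilon\ll 1$ (which forces a lower bound $t_\epsilon\geq c\epsilon_{\max}$, matching the saturation branch of (\ref{eq:t-eps-max})). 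Handling the cross-over between the two branches cleanly, and ensuring constants line up without requiring unrealistic smallness of $\epsilon_{\max}$, is the delicate part; once that algebra is nailed down, both error bounds follow as in the $\epsilon_{\max}=0.05$ case.
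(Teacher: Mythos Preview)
Your Type-2 argument is correct and identical to the paper's. The gap is in the Type-1 step, specifically in the regime where $\epsilon_{\max}$ is small relative to the DKW slack $\sqrt{2\log(2/\alpha)/n}$.

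Your Mills-ratio reduction proves $(1-\epsilon_{\max})(1-\Phi(t_\epsilon-r_\epsilon))\geq(1+3\epsilon_{\max})(1-\Phi(t_\epsilon))$ and hence reduces the task to $3\epsilon_{\max}(1-\Phi(t_\epsilon))\geq \epsilon+\sqrt{2\log(2/\alpha)/n}$. But the left side is $O(\epsilon_{\max})$, while the right side always contains the term $\sqrt{2\log(2/\alpha)/n}$, which does not scale with $\epsilon_{\max}$. When $\epsilon_{\max}\ll\sqrt{\log(2/\alpha)/n}$ (this is exactly the paper's Sub-case I, where $t_\epsilon=\epsilon_{\max}\sqrt{n/(2\log(2/\alpha))}$ is small), the reduced inequality simply fails. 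Your claim that ``the DKW slack is $\leq\epsilon_{\max}$ by the definition of this branch'' is backwards: the saturation branch is precisely where $\epsilon_{\max}$ can be much smaller than $\sqrt{\log/n}$. Moreover, the Mills-ratio step itself needs $\exp(6\epsilon_{\max}-r_\epsilon^2/2)\geq 1+5\epsilon_{\max}$, i.e.\ $r_\epsilon^2\lesssim\epsilon_{\max}$; in the saturation branch $r_\epsilon^2=72\log(2/\alpha)/n$, so this requirement forces $\epsilon_{\max}\gtrsim\log(2/\alpha)/n$, which is not assumed.

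The paper's fix is to abandon the tail-ratio comparison in this regime and instead bound the \emph{interval} probability directly:
\[
(1-\epsilon_{\max})P_0\bigl(t_\epsilon-r_\epsilon\leq X\leq t_\epsilon\bigr)\ \geq\ (1-\epsilon_{\max})\,r_\epsilon\,\phi\bigl(\max(|t_\epsilon-r_\epsilon|,t_\epsilon)\bigr).
\]
Since in the saturation branch $r_\epsilon=6\sqrt{2\log(2/\alpha)/n}$ and the density factor is a positive constant (because $t_\epsilon$ is bounded there), this yields a gain of order $\sqrt{\log(2/\alpha)/n}$, which is the correct scale to absorb both $\epsilon\leq\epsilon_{\max}$ and the DKW slack on the right. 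The paper then handles the remaining cases (its Sub-case II and Case~2) with essentially the argument you sketched for the non-saturation branch, using the explicit formula $1-\Phi(t_\epsilon)=1/(\epsilon_{\max}/u_\epsilon+10e)$.
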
	
The proof of Lemma \ref{prop:upper-bound-Gaussian-eps-max} is deferred to the end of this section. Following the same derivation as in \eqref{eq:interval-step1}-\eqref{eq:interval-step2}, the ARCI based on inverting \eqref{eq:test-eps-max} and \eqref{eq:test+eps-max} via \eqref{def:CI-based-on-test} is 
 \begin{equation}\label{eq:adaptive-CI-eps-max}
\begin{split}
	\widehat{\CI} = \Big[ &\max_{0 \leq \epsilon \leq  \epsilon_{\max}  } \left(F_n^{-1}\left( 1-\Phi(t_{\epsilon})+ \epsilon  + \sqrt{\frac{\log(2/\alpha)}{2n}}  \right)  + t_{\epsilon} -  \frac{6 \epsilon_{\max} }{t_{\epsilon}} \right), \\
	& \min_{0 \leq \epsilon \leq  \epsilon_{\max}  } \left(F_n^{-1}\left( \Phi(t_{\epsilon})- \epsilon  - \sqrt{\frac{\log(2/\alpha)}{2n}}  \right)  - t_{\epsilon} +  \frac{6 \epsilon_{\max} }{t_{\epsilon}} \right) \Big],
\end{split}
\end{equation} where $t_{\epsilon}$ is provided in \eqref{eq:t-eps-max}. Therefore, the combination of Proposition \ref{prop:test-to-CI} and Lemma \ref{prop:upper-bound-Gaussian-eps-max} leads to the coverage and length guarantee of the ARCI in \eqref{eq:adaptive-CI-eps-max}. In particular, it implies $r_{\alpha}(\epsilon,[0,\epsilon_{\max}]) \leq 2r_{\epsilon} = \frac{12 \epsilon_{\max} }{t_{\epsilon}}$ and its order is given in the following lemma. 
\begin{Lemma}\label{lm:order-eps-max}
		For any $\epsilon_{\max} \in (0,1]$, positive integer $n$ and $\alpha \in (0,1)$, then
	\begin{equation*}
		\epsilon_{\max}/t_{\epsilon} \asymp \sqrt{\frac{\log(1/\alpha)}{n}} + \frac{\epsilon_{\max}}{\sqrt{\log(n\epsilon_{\max}^2/\log(1/\alpha)+e)}} + \frac{\epsilon_{\max}}{\sqrt{\log\left(\frac{\epsilon_{\max}}{\epsilon}+e\right)}}, \forall \epsilon \in [0, \epsilon_{\max}]
	\end{equation*} where $t_{\epsilon}$ is given in \eqref{eq:t-eps-max}.
\end{Lemma}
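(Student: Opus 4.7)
The plan is to analyze the two terms in the minimum defining $t_{\epsilon}$ separately, then combine the resulting bounds via the elementary equivalences $u + v \asymp \max(u,v)$ and $\max(x_1,x_2,x_3) \asymp x_1+x_2+x_3$ for nonnegative reals. Since $t_{\epsilon} = \min(a,b)$ with
$$a := \epsilon_{\max}\sqrt{n/(2\log(2/\alpha))} \quad \text{and} \quad b := \Phi^{-1}\!\left( 1 - p\right), \quad p := \Big(\tfrac{\epsilon_{\max}}{ \sqrt{2\log(2/\alpha)/n} + \epsilon } + 10 e\Big)^{-1},$$
we have $\epsilon_{\max}/t_{\epsilon} = \max(\epsilon_{\max}/a,\, \epsilon_{\max}/b)$, so it suffices to identify the order of each reciprocal term.

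First I would handle $a$, which contributes $\epsilon_{\max}/a = \sqrt{2\log(2/\alpha)/n}$, exactly matching the first summand in the claimed rate up to absolute constants. Next I would analyze $b$ via the standard Gaussian tail asymptotic $\Phi^{-1}(1-p) \asymp \sqrt{\log(1/p)}$, which is valid for $p \in (0,1/2]$; the additive $10e$ in the definition of $p$ guarantees $p \leq 1/(10e) < 1/2$ uniformly, so the asymptotic applies without case distinctions. This reduces the problem to estimating $\log(1/p) = \log\!\Big(\tfrac{\epsilon_{\max}}{\sqrt{2\log(2/\alpha)/n} + \epsilon} + 10e\Big)$.

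The core of the argument is then simplifying this logarithm. Using $\sqrt{2\log(2/\alpha)/n} + \epsilon \asymp \max(\sqrt{2\log(2/\alpha)/n},\, \epsilon)$, I obtain
$$\tfrac{\epsilon_{\max}}{\sqrt{2\log(2/\alpha)/n} + \epsilon} \asymp \min\!\left(\epsilon_{\max}\sqrt{\tfrac{n}{2\log(2/\alpha)}},\, \tfrac{\epsilon_{\max}}{\epsilon}\right).$$
Because $\log$ is monotone, $\log(\min(u,v) + C) = \min(\log(u+C), \log(v+C))$ exactly (not merely up to constants) for any $C > 0$, so
$$\log(1/p) \asymp \min\!\left(\log\!\big(n\epsilon_{\max}^2/\log(1/\alpha) + e\big),\, \log\!\big(\epsilon_{\max}/\epsilon + e\big)\right),$$
where I have absorbed the $10e$ additive constant and the $\log(2/\alpha)$ versus $\log(1/\alpha)$ distinction into the $\asymp$. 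Taking reciprocals of square roots yields
$$\epsilon_{\max}/b \;\asymp\; \max\!\left( \tfrac{\epsilon_{\max}}{\sqrt{\log(n\epsilon_{\max}^2/\log(1/\alpha) + e)}},\; \tfrac{\epsilon_{\max}}{\sqrt{\log(\epsilon_{\max}/\epsilon + e)}} \right).$$

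Combining with $\epsilon_{\max}/a$ and collapsing the maximum of three nonnegative quantities into their sum yields the claim. The main obstacle is bookkeeping: one must verify that the Gaussian quantile estimate $\Phi^{-1}(1-p) \asymp \sqrt{\log(1/p)}$ holds with absolute constants uniformly in the regime $p \in (0, 1/(10e)]$ (which I would cite from a standard quantile lemma in the appendix or verify by the bound $1-\Phi(t) \asymp \phi(t)/t$ for $t \geq 1$), and handle boundary cases where $\epsilon = 0$ (so that $\epsilon_{\max}/\epsilon = \infty$ and the third summand simply becomes $\epsilon_{\max}/\sqrt{\log(e)} = \epsilon_{\max}$, consistent with taking $p \to (\epsilon_{\max}/\sqrt{2\log(2/\alpha)/n} + 10e)^{-1}$ in $b$) and where the two terms in $\min(a,b)$ are comparable. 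These edge cases are routine once the framework above is in place.
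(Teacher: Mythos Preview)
Your approach is correct and somewhat cleaner than the paper's. The paper proceeds by a case split on whether $\frac{\epsilon_{\max}}{\sqrt{2\log(2/\alpha)/n}+\epsilon}$ lies below or above $10e$: in the small-ratio case the quantile term $\Phi^{-1}(1-p)$ is of constant order, so $t_\epsilon \asymp 1 \wedge a$ and both logarithmic terms in the target collapse to $\epsilon_{\max}$; in the large-ratio case the paper checks that $b \leq a$, reducing to $t_\epsilon = b$, and then applies the quantile asymptotic. You bypass this dichotomy by writing $\epsilon_{\max}/t_\epsilon = \max(\epsilon_{\max}/a,\epsilon_{\max}/b)$ and handling both terms uniformly, exploiting that the additive $10e$ keeps $p \leq 1/(10e)$ so the asymptotic $\Phi^{-1}(1-p)\asymp\sqrt{\log(1/p)}$ applies throughout. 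This is more economical; the only cost is that a few ``absorbed into $\asymp$'' steps deserve one line each---in particular, passing the $\asymp$ through $\log(\,\cdot\,+10e)$ is legitimate precisely because the argument is bounded below by $10e$ (so the log is bounded away from zero), and the switch from $\epsilon_{\max}\sqrt{n/\log(1/\alpha)}$ inside the log to $n\epsilon_{\max}^2/\log(1/\alpha)$ in the target is a harmless squaring that contributes a factor of $2$ under the log. Both routes rest on the same Gaussian tail estimate; yours just organizes the algebra more uniformly.
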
 

\subsubsection{Length Lower Bound}
To show the ARCI length lower bound, we will still resort to establishing the testing lower bound. The sharper testing lower bound we can establish for $\cH(\theta, \theta - r_{ \epsilon }, \epsilon)$ and $\cH(\theta, \theta + r_{ \epsilon }, \epsilon)$ with respect to $\epsilon_{\max}$ is given as follows.
\begin{Lemma} \label{prop:test-lower-bound-gaussian-eps-tozero}
	For any $n$ greater than a sufficiently large positive constant, $\alpha \in (0,1)$ and $\epsilon \in [0, \epsilon_{\max}]$, there exists some constant $c > 0$ only depending on $\alpha$, such that as long as
	$$r \leq c \left( \frac{1}{\sqrt{n}} + \frac{\epsilon_{\max}}{\sqrt{\log(n\epsilon_{\max}^2+e)}} + \frac{\epsilon_{\max}}{\sqrt{\log\left(\frac{\epsilon_{\max}}{\epsilon}+e\right)}} \right),$$
	we have
		\begin{equation}
			\inf_{Q_0, Q_1} \TV(P^{\otimes n}_{\epsilon_{\max}, \theta, Q_0}, P^{\otimes n}_{\epsilon, \theta \pm r, Q_1}) \leq 0.6, \label{eq:gau-tv-match-eps-tozero}
		\end{equation}
for all $\theta\in\mathbb{R}$.
\end{Lemma}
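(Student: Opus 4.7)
The plan is to adapt the two-case argument in the proof of Lemma~\ref{prop:test-lower-bound-gaussian} (Appendix~C), reorganizing it into three sub-regimes matching the three terms in the bound on $r$. By symmetry in $\pm r$ and location invariance, it suffices to exhibit, for each sub-regime, contamination distributions $Q_0,Q_1$ making $\TV\bigl(P^{\otimes n}_{\epsilon_{\max},\theta,Q_0},P^{\otimes n}_{\epsilon,\theta-r,Q_1}\bigr)\le 0.6$; after setting $\theta=r$ and inflating the constant $c$ by a factor of $3$, I split into the three cases $r\lesssim 1/\sqrt n$, $r\lesssim\epsilon_{\max}/\sqrt{\log(n\epsilon_{\max}^2+e)}$, and $r\lesssim\epsilon_{\max}/\sqrt{\log(\epsilon_{\max}/\epsilon+e)}$, and handle them separately.

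\textbf{Sub-regime I} (parametric rate $1/\sqrt n$): take $Q_0=\phi$ and $Q_1=\phi(\cdot-r)$, so the null is $(1-\epsilon_{\max})\phi(\cdot-r)+\epsilon_{\max}\phi$ and the alternative is $(1-\epsilon)\phi+\epsilon\phi(\cdot-r)$. Their signed difference is $(1-\epsilon_{\max}-\epsilon)\bigl[\phi(\cdot-r)-\phi\bigr]$, and I bound the per-sample Hellinger distance via $H^2\le\chi^2$ using the pointwise lower bound $\text{alt}(x)\ge(1-\epsilon)\phi(x)$ together with the identity $\chi^2(\phi(\cdot-r),\phi)=e^{r^2}-1\le 2r^2$ for $r\le 1$, giving $H^2(\text{null},\text{alt})\lesssim r^2$. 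Tensorization $H^2(\mu^{\otimes n},\nu^{\otimes n})\le nH^2(\mu,\nu)$ combined with $\TV\le\sqrt{2H^2}$ then forces $\TV(\text{null}^{\otimes n},\text{alt}^{\otimes n})\lesssim\sqrt n\,r\le 0.6$ once $r\le c/\sqrt n$.

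\textbf{Sub-regime II} (logarithmic tail rate): reuse the truncation construction from Case~1 of the Appendix~C proof: take $Q_1=\phi$ and define
\[
q_0(x)=\frac{1}{\epsilon_{\max}}\Bigl[\phi(x)-(1-\epsilon_{\max})\phi(x-r)\,\frac{\indi\{x-r\le t\}}{\Phi(t)}\Bigr].
\]
The validity condition $q_0\ge 0$, evaluated at the worst point $x=r+t$, reduces to $\exp(tr+r^2/2)\le\Phi(t)/(1-\epsilon_{\max})$, which for small $\epsilon_{\max}$ and $t$ bounded away from $0$ is equivalent to $tr\lesssim\epsilon_{\max}$. The per-sample TV equals $(1-\epsilon_{\max})\bar\Phi(t)$, so choosing $t\asymp\sqrt{2\log n}$ makes $n\cdot\TV\lesssim 1$, yielding $r\asymp\epsilon_{\max}/\sqrt{\log n}$. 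To recover the stated denominator $\sqrt{\log(n\epsilon_{\max}^2+e)}$ I invoke the dichotomy: when $\epsilon_{\max}\le n^{-1/2}$, the target $\epsilon_{\max}/\sqrt{\log(n\epsilon_{\max}^2+e)}\asymp\epsilon_{\max}\le 1/\sqrt n$ is already covered by Sub-regime~I; when $\epsilon_{\max}\ge n^{-1/2}$ one has $\log(n\epsilon_{\max}^2+e)\asymp\log n$ up to a universal multiplicative constant, so the constructed bound matches the stated rate.

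\textbf{Sub-regime III} (exact-matching): mirror Case~2 of the Appendix~C proof by setting $Q_1=N\bigl(\sqrt{\log(\epsilon_{\max}/\epsilon)},1\bigr)$ and choosing $q_0(x)=\epsilon_{\max}^{-1}\bigl[(1-\epsilon)\phi(x)+\epsilon q_1(x)-(1-\epsilon_{\max})\phi(x-r)\bigr]$. With this choice the null and alternative coincide identically, so $\TV=0$, and the only thing to verify is $q_0\ge 0$ pointwise. This is checked by the same two-regime split on $x$ as in Appendix~C: on $\{x\ge 2\sqrt{\log(\epsilon_{\max}/\epsilon)}\}$ the term $\epsilon q_1(x)$ dominates $(1-\epsilon_{\max})\phi(x-r)$, and on the complementary bounded region $(1-\epsilon)\phi(x)$ does, provided $r\lesssim\epsilon_{\max}/\sqrt{\log(\epsilon_{\max}/\epsilon+e)}$.

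The main obstacle I anticipate is Sub-regime~II, specifically handling the transition between the $1/\sqrt n$ and $\epsilon_{\max}/\sqrt{\log(n\epsilon_{\max}^2+e)}$ terms cleanly and making sure the bookkeeping of the $\epsilon_{\max}$-dependence produces the exact logarithmic denominator claimed (as opposed to $\sqrt{\log n}$, which the truncation construction naturally delivers). The other two sub-regimes are essentially routine adaptations of standard Hellinger and exact-matching arguments, with the only new work being to carry the explicit $\epsilon_{\max}$ dependence through the Appendix~C calculations rather than hardcoding $\epsilon_{\max}=0.05$.
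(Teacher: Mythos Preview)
Your Sub-regime~II dichotomy fails: the claim that $\log(n\epsilon_{\max}^2+e)\asymp\log n$ whenever $\epsilon_{\max}\ge n^{-1/2}$ is false. Take $\epsilon_{\max}=(\log n)^2/\sqrt n$; then $n\epsilon_{\max}^2=(\log n)^4$ so $\log(n\epsilon_{\max}^2+e)\asymp\log\log n$, and the target $T_2:=\epsilon_{\max}/\sqrt{\log(n\epsilon_{\max}^2+e)}\asymp(\log n)^2/\sqrt{n\log\log n}$ sits strictly above both $1/\sqrt n$ and your construction-II range $\epsilon_{\max}/\sqrt{\log n}\asymp(\log n)^{3/2}/\sqrt n$, so for $r\asymp T_2$ neither sub-regime applies. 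The underlying obstruction is structural: with the Appendix~C $q_0$, on the tail $\{x>t+r\}$ the null density equals $(1-\epsilon_{\max})\phi(x-r)+\phi(x)\approx 2\phi(x)$, so \emph{any} divergence (TV, $H^2$, or $\chi^2$) from that region alone is $\gtrsim\bar\Phi(t)$, which forces $t\gtrsim\sqrt{\log n}$; combined with the validity constraint $tr\lesssim\epsilon_{\max}$ you can never beat $\epsilon_{\max}/\sqrt{\log n}$. The paper instead takes $q_0\propto(\phi(x)-(1-\epsilon_{\max})\phi(x-r))\indi\{x\le t\}$ with $t=2\epsilon_{\max}/(3r)$, so that on $\{x>t\}$ the null is just $(1-\epsilon_{\max})\phi(x-r)$, whose signed difference from $\phi(x)$ is only $O((\epsilon_{\max}+rx)\phi(x))$; a direct $\chi^2$ computation then yields $\chi^2\lesssim 1/n$ exactly when $r\lesssim\epsilon_{\max}/\sqrt{\log(n\epsilon_{\max}^2)}$.

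Sub-regime~III has the same defect. With $Q_1=N(\mu,1)$, $\mu=\sqrt{\log(\epsilon_{\max}/\epsilon)}$, your Regime-1 check $\epsilon q_1(x)\ge(1-\epsilon_{\max})\phi(x-r)$ at $x=2\mu$ reduces (after the Appendix~C algebra with your $\mu$) to $\log(1/\epsilon)\ge 3\log(1/\epsilon_{\max})$, i.e.\ $\epsilon\le\epsilon_{\max}^3$; for $\epsilon\in(\epsilon_{\max}^3,\epsilon_{\max}/2)$ the density $q_0$ is negative and the construction collapses. The paper again changes the construction: $q_1$ is taken proportional to the positive part $((1-\epsilon_{\max})\phi(x-r)-(1-\epsilon)\phi(x))_+$, and one only needs the total excess mass to be $\le\epsilon$, which (via a first-order Taylor bound on $\phi(\cdot-r)-\phi$ over the half-line $\{x\gtrsim\epsilon_{\max}/r\}$) holds precisely when $r\lesssim\epsilon_{\max}/\sqrt{\log(\epsilon_{\max}/\epsilon)}$. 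In short, the Appendix~C constructions were calibrated for constant $\epsilon_{\max}$ and do not track the $\epsilon_{\max}$-dependence; both Cases~2 and~3 require the new constructions the paper introduces (and explicitly flags as such).
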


We defer the proof of Lemma \ref{prop:test-lower-bound-gaussian-eps-tozero} to the end of this section. With this lemma, the establishment of the length lower bound in Theorem \ref{th:optimal-eps-max} is similar to the proof of Theorem \ref{th:lower-bound-Gaussian}. Concretely, we prove the claim by a contradiction argument. Take $c$ to be the small constant in Lemma \ref{prop:test-lower-bound-gaussian-eps-tozero}. Suppose $r_{\alpha}(\epsilon, [0,\epsilon_{\max}]) > c \left( \frac{1}{\sqrt{n}} + \frac{\epsilon_{\max}}{\sqrt{\log(n\epsilon_{\max}^2+e)}} + \frac{\epsilon_{\max}}{\sqrt{\log\left(\frac{\epsilon_{\max}}{\epsilon}+e\right)}} \right)$ for all $\epsilon \in [0, \epsilon_{\max}]$ does not hold, i.e., there exists a $\epsilon \in [0,\epsilon_{\max}]$ and an ARCI $\widehat{\CI}$ such that the following conditions hold
\begin{equation*} 
	\inf_{ \epsilon \in [0, \epsilon_{\max}], \theta, Q} P_{\epsilon, \theta, Q}\left( \theta \in\widehat{\CI} \right) \geq 1-\alpha \quad \textnormal{ and } \quad \sup_{\theta, Q} P_{\epsilon, \theta, Q} \left( |\widehat{\CI}| \geq r \right) \leq \alpha,
\end{equation*} with $r = c \left( \frac{1}{\sqrt{n}} + \frac{\epsilon_{\max}}{\sqrt{\log(n\epsilon_{\max}^2+e)}} + \frac{\epsilon_{\max}}{\sqrt{\log\left(\frac{\epsilon_{\max}}{\epsilon}+e\right)}} \right)$. Then given any $\theta \in \bbR$, following the same analysis as in \eqref{ineq:typeI+II}, we can design a test using $\widehat{\CI}$ with the following guarantee
\begin{equation} \label{ineq:typeI+II-to-zero}
\begin{split}
	 \sup_{Q } P_{\epsilon_{\max}, \theta , Q } \left( \phi_{\theta, \theta- r, \epsilon} = 1  \right) + \sup_{Q } P_{ \epsilon, \theta - r, Q  } \left( \phi_{\theta, \theta- r, \epsilon} = 0  \right)  \leq  3 \alpha.
\end{split}
	\end{equation}

On the other hand, by Lemma \ref{prop:test-lower-bound-gaussian-eps-tozero} and the Neyman-Person Lemma, we have the following lower bound for testing $\cH(\theta, \theta - r, \epsilon)$, 
	\begin{equation}
		\begin{split}
		&\inf_{ \phi \in \{0,1 \} } \left\{ \sup_{Q } P_{\epsilon_{\max}, \theta, Q } \left( \phi = 1  \right) + \sup_{Q } P_{ \epsilon , \theta - r , Q  } \left( \phi = 0  \right) \right\}\\
			 &\geq 1 - \inf_{Q_0, Q_1} \TV\left(P^{\otimes n}_{\epsilon_{\max},\theta, Q_0}, P^{\otimes n}_{\epsilon, \theta - r , Q_1} \right) \\
			& \geq 1 - 0.6.
		\end{split}
	\end{equation}
	This contradicts with \eqref{ineq:typeI+II-to-zero} since $\alpha< 0.1$. Thus, the assumption does not hold, and this finishes the proof of the length lower bound part of this theorem.

\subsubsection{Proof of Lemma \ref{prop:upper-bound-Gaussian-eps-max}}
Due to symmetry, here we only present the proof for the error control of $\phi_{\theta, \theta- r_{\epsilon}, \epsilon}$ for solving $\cH(\theta, \theta- r_{\epsilon}, \epsilon)$. 	We first bound the Type-2 error. For any $\theta, Q$,
		\begin{equation*}
			\begin{split}
				&P_{ \epsilon, \theta - r_{\epsilon}, Q }\left( \phi_{\theta, \theta- r_{\epsilon}, \epsilon} = 0 \right) \\
				=& P_{ \epsilon, \theta - r_{\epsilon}, Q }\left( \frac{1}{n}\sum_{i=1}^n \indi \left \{ X_i - (\theta - r_{\epsilon} ) \geq t_{\epsilon} \right\} > 1-\Phi(t_{\epsilon})+ \epsilon  + \sqrt{\frac{\log(2/\alpha)}{2n}}   \right) \\
				= & P_{ \epsilon, 0, Q }\left( \frac{1}{n}\sum_{i=1}^n \indi \left \{ X_i  \geq t_{\epsilon} \right\} > 1-\Phi(t_{\epsilon})+ \epsilon  + \sqrt{\frac{\log(2/\alpha)}{2n}} \right) \\
				= & P_{ \epsilon, \theta, Q }\left( \frac{1}{n}\sum_{i=1}^n \indi \left \{ X_i  \geq t_{\epsilon} \right\} - P_{ \epsilon, 0, Q }(X \geq t_{\epsilon}) > P_0(X \geq t_{\epsilon})+ \epsilon - P_{ \epsilon, 0, Q }(X \geq t_{\epsilon})  + \sqrt{\frac{\log(2/\alpha)}{2n}} \right) \\
				\overset{(a)}\leq &  P_{ \epsilon, \theta, Q }\left( \frac{1}{n}\sum_{i=1}^n \indi \left \{ X_i  \geq t_{\epsilon} \right\} - P_{ \epsilon, 0, Q }(X \geq t_{\epsilon}) > \sqrt{\frac{\log(2/\alpha)}{2n}} \right) \\
				\overset{(b)}\leq & \alpha
			\end{split}
		\end{equation*} where (a) is because $P_{ \epsilon, 0, Q }(X \geq t_{\epsilon}) \leq \epsilon + P_0(X \geq t_{\epsilon})$ by the model assumption and (b) is by Hoeffding's inequality.

Next, we move on to the simultaneous Type-1 error control, and we will show an equivalent statement is true: for any $\theta \in \bbR$,
	\begin{equation*}
		\inf_{Q}P_{ \epsilon_{\max}, \theta, Q }\left( \phi_{\theta, \theta- r_{\epsilon}, \epsilon} = 0 \text{ for all } \epsilon \in [0, \epsilon_{\max} ] \right) \geq 1- \alpha.
		\end{equation*} First, by the DKW inequality (see Lemma \ref{lm:DKW}), we have with probability at least $1 - \alpha$, the following event holds:
	\begin{equation*}
		(A) = \{ \textnormal{for all } x \in \bbR, |F_n(x) - F_{\epsilon_{\max}, \theta, Q}(x)| \leq \sqrt{ \log (2/\alpha)/(2n) } \},
	\end{equation*}where $F_{\epsilon_{\max}, \theta, Q }(\cdot)$ denotes the CDF of $P_{\epsilon_{\max}, \theta, Q}$. Then under $P_{ \epsilon_{\max}, \theta, Q }$, given $(A)$ happens, for any $\epsilon \in [0,\epsilon_{\max}]$,
	\begin{equation*}
		\begin{split}
			\phi_{\theta, \theta- r_{\epsilon}, \epsilon} = 0 &\Longleftrightarrow \frac{1}{n} \sum_{j =1}^n \indi(X_j - (\theta - r_{\epsilon}) \geq t_{\epsilon}) > 1-\Phi(t_{\epsilon})+ \epsilon  + \sqrt{\frac{\log(2/\alpha)}{2n}} \\
			& \Longleftarrow \frac{1}{n}\sum_{j =1}^n \indi(X_j - (\theta - r_{\epsilon}) > t_{\epsilon}) > 1-\Phi(t_{\epsilon})+ \epsilon  + \sqrt{\frac{\log(2/\alpha)}{2n}} \\
			& \Longleftrightarrow F_n(\theta - r_{\epsilon} + t_{\epsilon}) < \Phi(t_{\epsilon}) - \epsilon  - \sqrt{\frac{\log(2/\alpha)}{2n}}\\
			& \overset{(A)}\Longleftarrow F_{\epsilon_{\max}, \theta, Q }(\theta - r_{\epsilon} + t_{\epsilon}) + \sqrt{\frac{\log(2/\alpha)}{2n}} < \Phi(t_{\epsilon}) - \epsilon  - \sqrt{\frac{\log(2/\alpha)}{2n}} \\
			& \Longleftrightarrow P_{\epsilon_{\max}, \theta, Q }(X > \theta - r_{\epsilon} + t_{\epsilon}) > P_0(X \geq t_{\epsilon} ) + \epsilon + \sqrt{\frac{2\log(2/\alpha)}{n}} \\
			& \overset{(a)}\Longleftarrow (1 - \epsilon_{\max}) P_{0}(X \geq t_{\epsilon} - r_{\epsilon}) > P_0(X \geq t_{\epsilon} ) + \epsilon + \sqrt{\frac{2\log(2/\alpha)}{n}}\\
			& \Longleftrightarrow (1 - \epsilon_{\max}) P_{0}( t_{\epsilon} - r_{\epsilon}\leq X \leq t_{\epsilon}) >  \epsilon_{\max} P_0(X \geq t_{\epsilon} ) + \epsilon + \sqrt{\frac{2\log(2/\alpha)}{n}},
		\end{split}
	\end{equation*} where (a) is by the model assumption. Thus, to show the simultaneous Type-1 error control, it is enough to show 
	\begin{equation} \label{ineq:type-1-error-condition}
		(1 - \epsilon_{\max}) P_{0}( t_{\epsilon} - r_{\epsilon}\leq X \leq t_{\epsilon}) >  \epsilon_{\max} P_0(X \geq t_{\epsilon} ) + \epsilon + \sqrt{\frac{2\log(2/\alpha)}{n}}
	\end{equation} holds for all $\epsilon \in [0, \epsilon_{\max}]$. We will divide the rest of the proof into two cases.
	\begin{itemize}[leftmargin=*]
		\item (Case 1: $\frac{\epsilon_{\max}}{ \sqrt{\frac{2 \log(2/\alpha)}{n}} + \epsilon } \leq 10 e$) By the construction of $t_{\epsilon}$, we know
		\begin{equation} \label{ineq:case1-range-t}
			\Phi^{-1}\left(1 - \frac{1}{10e} \right) \wedge \epsilon_{\max} \sqrt{\frac{n}{2 \log(2/\alpha)} } \leq t_{\epsilon} \leq \Phi^{-1}\left(1 - \frac{1}{20e} \right), \quad \forall \epsilon \in [0, \epsilon_{\max}].
		\end{equation} Next, we can proceed based on the value of $\epsilon_{\max}$.
		
		{\noindent \bf (I: $ \epsilon_{\max} \sqrt{\frac{n}{2 \log(2/\alpha)} }  \leq \Phi^{-1}\left(1 - \frac{1}{10e} \right)$)} In this case, we also have $ \epsilon_{\max} \sqrt{\frac{n}{2 \log(2/\alpha)} } \leq \Phi^{-1} \left( 1 - \frac{1}{ \frac{\epsilon_{\max}}{ \sqrt{\frac{2 \log(2/\alpha)}{n}} + \epsilon } + 10 e }  \right)$, so 
		\begin{equation*}
			t_{\epsilon} = \epsilon_{\max} \sqrt{\frac{n}{2 \log(2/\alpha)} }  \quad \textnormal{ and } \quad r_{\epsilon} =  6 \sqrt{\frac{2 \log(2/\alpha)}{n}}.
		\end{equation*} Thus, for any $\epsilon \in [0, \epsilon_{\max}]$,
		\begin{equation*}
			\begin{split}
				&(1 - \epsilon_{\max}) P_{0}( t_{\epsilon} - r_{\epsilon}\leq X \leq t_{\epsilon}) -  \epsilon_{\max} P_0(X \geq t_{\epsilon} ) - \epsilon - \sqrt{\frac{2\log(2/\alpha)}{n}}\\
				&\geq  (1 - \epsilon_{\max}) P_{0}( t_{\epsilon} - r_{\epsilon}\leq X \leq t_{\epsilon}) -  2\epsilon_{\max} - \sqrt{\frac{2\log(2/\alpha)}{n}}\\
				& \overset{(a)}\geq (1 - \epsilon_{\max}) r_{\epsilon} \phi\left(\Phi^{-1}\left(1 - \frac{1}{10e} \right) \right) - \left(2 \Phi^{-1}\left(1 - \frac{1}{10e} \right) +1 \right)\sqrt{\frac{2\log(2/\alpha)}{n}} \\
				& \overset{(b)}\geq \left( 6\cdot0.95\cdot\phi\left(\Phi^{-1}\left(1 - \frac{1}{10e} \right) \right) - \left(2 \Phi^{-1}\left(1 - \frac{1}{10e} \right) +1 \right) \right) \sqrt{\frac{2\log(2/\alpha)}{n}}\\
				& > 0
			\end{split}
		\end{equation*} where (a) is because $ \epsilon_{\max}  \leq \sqrt{\frac{2 \log(2/\alpha)}{n}} \Phi^{-1}\left(1 - \frac{1}{10e} \right)$ and $\max\{|t_{\epsilon } - r_{\epsilon}|, t_{\epsilon } \} \leq \max\{t_{\epsilon },  r_{\epsilon}\} \leq \Phi^{-1}\left(1 - \frac{1}{10e} \right)$ where the last inequality is because $t_{\epsilon} \leq \Phi^{-1}\left(1 - \frac{1}{10e} \right)$ in the (I) case and $r_{\epsilon} \leq \Phi^{-1}\left(1 - \frac{1}{10e} \right)$ as long as $n/\log(1/\alpha)$ is large enough; (b) is because $\epsilon_{\max} \leq 0.05$. Thus, \eqref{ineq:type-1-error-condition} is checked.
		
		{\noindent \bf (II: $ \epsilon_{\max} \sqrt{\frac{n}{2 \log(2/\alpha)} }  > \Phi^{-1}\left(1 - \frac{1}{10e} \right)$)} In this case, we have $t_{\epsilon} \geq \Phi^{-1}\left(1 - \frac{1}{10e} \right)$ by \eqref{ineq:case1-range-t}. In addition,
		\begin{equation} \label{ineq:rlesst}
			r_{\epsilon} \leq t_{\epsilon} \quad \textnormal{ since } \quad \frac{6 \epsilon_{\max}}{t_{\epsilon}} \leq t_{\epsilon} \Longleftarrow 6 \epsilon_{\max} \leq 6 \cdot 0.05 \leq  \left(\Phi^{-1}\left(1 - \frac{1}{10e} \right) \right)^2.
		\end{equation} Then for any $\epsilon \in [0, \epsilon_{\max}]$,
		\begin{equation*}
			\begin{split}
				&(1 - \epsilon_{\max}) P_{0}( t_{\epsilon} - r_{\epsilon}\leq X \leq t_{\epsilon}) -  \epsilon_{\max} P_0(X \geq t_{\epsilon} ) - \epsilon - \sqrt{\frac{2\log(2/\alpha)}{n}}\\
				&\geq  (1 - \epsilon_{\max}) P_{0}( t_{\epsilon} - r_{\epsilon}\leq X \leq t_{\epsilon}) -  2\epsilon_{\max} - \sqrt{\frac{2\log(2/\alpha)}{n}}\\
				& \overset{(a)}\geq (1 - \epsilon_{\max}) r_{\epsilon} \phi\left( \Phi^{-1}\left(1 - \frac{1}{20e} \right)\right) - 2\epsilon_{\max} - \frac{\epsilon_{\max}}{\Phi^{-1}\left(1 - \frac{1}{10e} \right) }\\
				& \overset{(b)}\geq \frac{6(1 - \epsilon_{\max})}{\Phi^{-1}\left(1 - \frac{1}{20e} \right)} \epsilon_{\max}  \phi\left( \Phi^{-1}\left(1 - \frac{1}{20e} \right)\right) - 2\epsilon_{\max} - \frac{\epsilon_{\max}}{\Phi^{-1}\left(1 - \frac{1}{10e} \right) } \\
				& \geq \epsilon_{\max}\left( 6 \cdot 0.95 \frac{ \phi\left( \Phi^{-1}\left(1 - \frac{1}{20e} \right)\right)}{\Phi^{-1}\left(1 - \frac{1}{20e} \right)} - 2 - \frac{1}{\Phi^{-1}\left(1 - \frac{1}{10e} \right) } \right)\\
				& > 0
			\end{split}
		\end{equation*} where (a) is because $\sqrt{\frac{2\log(2/\alpha)}{n}} \leq \frac{\epsilon_{\max}}{\Phi^{-1}\left(1 - \frac{1}{10e} \right) }$ under the (II) setting and $r_{\epsilon} \leq t_{\epsilon}$, $$P_{0}( t_{\epsilon} - r_{\epsilon}\leq X \leq t_{\epsilon}) \geq r_{\epsilon} \phi(t_{\epsilon}) \geq r_{\epsilon} \phi\left( \Phi^{-1}\left(1 - \frac{1}{20e} \right)\right)$$ where the last inequality is because $t_{\epsilon} \leq \Phi^{-1}\left(1 - \frac{1}{20e} \right)$ by \eqref{ineq:case1-range-t}; (b) is by the choice of $r_{\epsilon}$ and $t_{\epsilon} \leq \Phi^{-1}\left(1 - \frac{1}{20e} \right)$. Thus, \eqref{ineq:type-1-error-condition} is checked.

		\item (Case 2: $\frac{\epsilon_{\max}}{ \sqrt{\frac{2 \log(2/\alpha)}{n}} + \epsilon }> 10 e$) In this case, we have 
		\begin{equation*}
			\begin{split}
				\Phi^{-1} \left( 1 - \frac{1}{ \frac{\epsilon_{\max}}{ \sqrt{\frac{2 \log(2/\alpha)}{n}} + \epsilon } + 10 e }  \right) \geq \Phi^{-1} \left( 1 - \frac{1}{ 20 e }  \right) \geq 2.
			\end{split}
		\end{equation*} Thus, by Lemma \ref{lm:normal-quantile}, we have
		\begin{equation*}
			\begin{split}
				\Phi^{-1} \left( 1 - \frac{1}{ \frac{\epsilon_{\max}}{ \sqrt{\frac{2 \log(2/\alpha)}{n}} + \epsilon } + 10 e }  \right) &\leq \sqrt{2\log \left( \frac{\epsilon_{\max}}{ \sqrt{\frac{2 \log(2/\alpha)}{n}} + \epsilon } + 10 e \right) } \\
				& \leq \sqrt{2\log \left( \frac{2\epsilon_{\max}}{ \sqrt{\frac{2 \log(2/\alpha)}{n}} + \epsilon }\right) }\\
				& \overset{(a)}\leq 2\log \left( \frac{2\epsilon_{\max}}{ \sqrt{\frac{2 \log(2/\alpha)}{n}} + \epsilon } \right)\\
				& \overset{(a)}\leq \frac{\epsilon_{\max}}{ \sqrt{\frac{2 \log(2/\alpha)}{n}} + \epsilon }  \leq \epsilon_{\max} \sqrt{\frac{n}{2 \log(2/\alpha)} }
			\end{split}
		\end{equation*} where in (a) we use the fact that $\frac{\epsilon_{\max}}{ \sqrt{\frac{2 \log(2/\alpha)}{n}} + \epsilon }> 10 e$. Thus, in this regime, we have
		\begin{equation*}
			t_{\epsilon} = \Phi^{-1} \left( 1 - \frac{1}{ \frac{\epsilon_{\max}}{ \sqrt{\frac{2 \log(2/\alpha)}{n}} + \epsilon } + 10 e }  \right).
		\end{equation*} This implies that 
		\begin{equation} \label{ineq:tail-bound-eps-max}
			\begin{split}
				P_0(X \geq t_{\epsilon}) = \frac{1}{ \frac{\epsilon_{\max}}{ \sqrt{\frac{2 \log(2/\alpha)}{n}} + \epsilon } + 10 e } =\frac{1}{\sqrt{2\pi}} \int^{\infty}_{t_{\epsilon}} \exp(-x^2/2) dx \leq \frac{1}{\sqrt{2\pi}} \frac{1}{t_{\epsilon}} \exp(-t_{\epsilon}^2/2),
			\end{split}
		\end{equation} where the last inequality is by Theorem 1.2.3 in \cite{durrett2019probability}. Then
		\begin{equation*}
			\begin{split}
				& (1 - \epsilon_{\max}) P_{0}( t_{\epsilon} - r_{\epsilon}\leq X \leq t_{\epsilon}) >  \epsilon_{\max} P_0(X \geq t_{\epsilon} ) + \epsilon + \sqrt{\frac{2\log(2/\alpha)}{n}}, \forall \epsilon \in [0, \epsilon_{\max}]\\
				\overset{(a)}\Longleftarrow & (1 - \epsilon_{\max}) r_{\epsilon} \phi(t_{\epsilon})  >  \epsilon_{\max} \frac{1}{\sqrt{2\pi}} \frac{1}{t_{\epsilon}} \exp(-t_{\epsilon}^2/2)+ \epsilon + \sqrt{\frac{2\log(2/\alpha)}{n}}, \forall \epsilon \in [0, \epsilon_{\max}] \\
				\Longleftrightarrow & (1 - \epsilon_{\max}) \frac{6 \epsilon_{\max} }{\sqrt{2\pi}t_{\epsilon}} \exp(-t_{\epsilon}^2/2)  >  \epsilon_{\max} \frac{1}{\sqrt{2\pi}} \frac{1}{t_{\epsilon}} \exp(-t_{\epsilon}^2/2)+ \epsilon + \sqrt{\frac{2\log(2/\alpha)}{n}}, \forall \epsilon \in [0, \epsilon_{\max}] \\
				\Longleftrightarrow & (1 - \epsilon_{\max}) 6 \epsilon_{\max} >  \epsilon_{\max} + \sqrt{2\pi}t_{\epsilon} \exp(t_{\epsilon}^2/2) \left( \epsilon + \sqrt{\frac{2\log(2/\alpha)}{n}}\right), \forall \epsilon \in [0, \epsilon_{\max}] \\
				\overset{\eqref{ineq:tail-bound-eps-max}}\Longleftarrow & (1 - \epsilon_{\max}) 6 \epsilon_{\max}  >  \epsilon_{\max} + \left(\frac{\epsilon_{\max}}{ \sqrt{\frac{2 \log(2/\alpha)}{n}} + \epsilon } + 10 e \right) \left( \epsilon + \sqrt{\frac{2\log(2/\alpha)}{n}}\right), \forall \epsilon \in [0, \epsilon_{\max}] \\
				\overset{(b)}\Longleftarrow & (1 - \epsilon_{\max}) 6 \epsilon_{\max}  >  \epsilon_{\max} + 2\epsilon_{\max}\\
				\Longleftarrow & 0.95 \cdot 6 >  3,
			\end{split}
		\end{equation*} where (a) is because \eqref{ineq:tail-bound-eps-max} and $t_{\epsilon} \geq r_{\epsilon}$ by the same argument as in \eqref{ineq:rlesst} and (b) is because $\frac{\epsilon_{\max}}{ \sqrt{\frac{2 \log(2/\alpha)}{n}} + \epsilon }> 10 e$ in Case 2. Thus, \eqref{ineq:type-1-error-condition} is checked.
	\end{itemize}
	This finishes the check of \eqref{ineq:type-1-error-condition} in two cases and finishes the proof of this lemma.

\subsubsection{Proof of Lemma \ref{lm:order-eps-max}} We divide the proof into two cases. First, when $\frac{\epsilon_{\max}}{ \sqrt{\frac{2 \log(2/\alpha)}{n}} + \epsilon } \leq 10 e$, then $\Phi^{-1} \left( 1 - \frac{1}{ \frac{\epsilon_{\max}}{ \sqrt{\frac{2 \log(2/\alpha)}{n}} + \epsilon } + 10 e }  \right) \asymp 1$, so $t_{\epsilon}  \asymp 1 \wedge \epsilon_{\max} \sqrt{\frac{n}{2 \log(2/\alpha)} }$. Thus,
\begin{equation*}
	\begin{split}
		\epsilon_{\max}/t_{\epsilon} \asymp \sqrt{\frac{\log(1/\alpha)}{n}}  + \epsilon_{\max}.  
	\end{split}
\end{equation*} At the same time, when $\frac{\epsilon_{\max}}{ \sqrt{\frac{2 \log(2/\alpha)}{n}} + \epsilon } \leq 10 e$,
\begin{equation*}
	\frac{\epsilon_{\max}}{\sqrt{\log(n\epsilon_{\max}^2/\log(1/\alpha)+e)}} + \frac{\epsilon_{\max}}{\sqrt{\log\left(\frac{\epsilon_{\max}}{\epsilon}+e\right)}} \asymp  \epsilon_{\max}.
\end{equation*} Thus,
\begin{equation*}
	\begin{split}
		\epsilon_{\max}/t_{\epsilon} \asymp \sqrt{\frac{\log(1/\alpha)}{n}}  + \frac{\epsilon_{\max}}{\sqrt{\log(n\epsilon_{\max}^2/\log(1/\alpha)+e)}} + \frac{\epsilon_{\max}}{\sqrt{\log\left(\frac{\epsilon_{\max}}{\epsilon}+e\right)}}.  
	\end{split}
\end{equation*}

Second, when $\frac{\epsilon_{\max}}{ \sqrt{\frac{2 \log(2/\alpha)}{n}} + \epsilon } \geq 10 e$. Since $\Phi^{-1} \left( 1 - \frac{1}{ \frac{\epsilon_{\max}}{ \sqrt{\frac{2 \log(2/\alpha)}{n}} + \epsilon } + 10 e }  \right) \geq \Phi^{-1} \left( 1 - \frac{1}{ 20 e }  \right) \geq 2$, by Lemma \ref{lm:normal-quantile}, we have
\begin{equation*}
	\begin{split}
		\Phi^{-1} \left( 1 - \frac{1}{ \frac{\epsilon_{\max}}{ \sqrt{\frac{2 \log(2/\alpha)}{n}} + \epsilon } + 10 e }  \right) & \asymp \sqrt{ \log \left(  \frac{\epsilon_{\max}}{ \sqrt{\frac{2 \log(2/\alpha)}{n}} + \epsilon } + 10 e \right) } \\
		&\asymp \sqrt{ \log \left(  \frac{\epsilon_{\max}}{ \sqrt{\frac{2 \log(2/\alpha)}{n}} + \epsilon } \right) } \\
		& \asymp \sqrt{\log\left(\epsilon^2_{\max} n/\log(1/\alpha) \right)} \wedge \sqrt{\log\left(\epsilon_{\max}/\epsilon \right)}.
	\end{split}
\end{equation*} Thus,
\begin{equation*}
	\begin{split}
		t_{\epsilon} \asymp \epsilon_{\max} \sqrt{\frac{n}{2 \log(2/\alpha)} } \wedge \sqrt{\log\left(\epsilon^2_{\max} n/\log(1/\alpha) \right)} \wedge \sqrt{\log\left(\epsilon_{\max}/\epsilon \right)},
	\end{split}
\end{equation*} and 
\begin{equation*}
	\begin{split}
		\epsilon_{\max}/t_{\epsilon} \asymp \sqrt{\frac{\log(1/\alpha)}{n}}  + \frac{\epsilon_{\max}}{\sqrt{\log(n\epsilon_{\max}^2/\log(1/\alpha))}} + \frac{\epsilon_{\max}}{\sqrt{\log\left(\frac{\epsilon_{\max}}{\epsilon}\right)}}.  
	\end{split}
\end{equation*} At the same time, when $\frac{\epsilon_{\max}}{ \sqrt{\frac{2 \log(2/\alpha)}{n}} + \epsilon } \geq 10 e$, 
\begin{equation*}
	\frac{\epsilon_{\max}}{\sqrt{\log(n\epsilon_{\max}^2/\log(1/\alpha))}} + \frac{\epsilon_{\max}}{\sqrt{\log\left(\frac{\epsilon_{\max}}{\epsilon}\right)}} \asymp \frac{\epsilon_{\max}}{\sqrt{\log(n\epsilon_{\max}^2/\log(1/\alpha)+e)}} + \frac{\epsilon_{\max}}{\sqrt{\log\left(\frac{\epsilon_{\max}}{\epsilon}+e\right)}}.
\end{equation*} Thus,
\begin{equation*}
	\epsilon_{\max}/t_{\epsilon} \asymp \sqrt{\frac{\log(1/\alpha)}{n}} + \frac{\epsilon_{\max}}{\sqrt{\log(n\epsilon_{\max}^2/\log(1/\alpha)+e)}} + \frac{\epsilon_{\max}}{\sqrt{\log\left(\frac{\epsilon_{\max}}{\epsilon}+e\right)}}.
\end{equation*} This finishes the proof.

\subsubsection{Proof of Lemma \ref{prop:test-lower-bound-gaussian-eps-tozero}}
For simplicity, in this proof, we will use the notation $a_n \lesssim b_n$ to mean that $a_n \leq C b_n$ holds for some constant $C > 0$ independent of $n$, but $C$ could depend on $\alpha$. Next, we show the lower bound term by term.
\begin{itemize}[leftmargin=*]
	\item (Case 1: $r \lesssim  \frac{1}{\sqrt{n}}$) This is the lower bound we can establish even when there is no contamination, and it can be proved via standard Le Cam's two-point testing method.
	\item (Case 2: $r \lesssim  \frac{\epsilon_{\max}}{\sqrt{\log(n\epsilon_{\max}^2+e)}}$) First, if $\epsilon_{\max} \lesssim \frac{1}{\sqrt{n}}$, then $\frac{\epsilon_{\max}}{\sqrt{\log(n\epsilon_{\max}^2+e)}} \lesssim \frac{1}{\sqrt{n}}$ and the result follows from Case 1. Now, let us focus on the setting $\epsilon_{\max} \gtrsim \frac{1}{\sqrt{n}}$ and in this case, the assumption on $r$ is $r \lesssim  \frac{\epsilon_{\max}}{\sqrt{\log(n\epsilon_{\max}^2)}} \asymp  \frac{\epsilon_{\max}}{\sqrt{\log(n\epsilon_{\max}^2+e)}}$. Due to the location invariant of the Gaussian distribution, to show the statement, it is enough to show $\inf_{Q_0, Q_1} \TV(P^{\otimes n}_{\epsilon_{\max}, r, Q_0}, P^{\otimes n}_{\epsilon, 0, Q_1}) \leq 0.6$. To show that it is enough to show $H^2(P_{\epsilon_{\max}, r, Q_0}, P_{\epsilon, 0, Q_1}) \lesssim 1/n $ where $H^2(P, Q)$ denotes the Hellinger distance between $P$ and $Q$. This is because by \eqref{ineq:testing-to-hellinger}, we have
	\begin{equation*}
		1 - \TV\left( P^{\otimes n}_{\epsilon_{\max}, r, Q_0}, P^{\otimes n}_{\epsilon, 0, Q_1}\right) \geq \frac{1}{2} \left( 1- \frac{1}{2} H^2(P_{\epsilon_{\max}, r, Q_0}, P_{\epsilon, 0, Q_1}) \right)^{2n},
	\end{equation*} and when $$H^2(P_{\epsilon_{\max}, r, Q_0}, P_{\epsilon, 0, Q_1}) \leq c/n$$ for a small enough constant and $n$ is large enough, we can have $\frac{1}{2} \left( 1- \frac{1}{2} H^2(P_{\epsilon_{\max}, r, Q_0}, P_{\epsilon, 0, Q_1}) \right)^{2n} \geq 0.4$ and it implies that $ \TV\left( P^{\otimes n}_{\epsilon_{\max}, r, Q_0}, P^{\otimes n}_{\epsilon, 0, Q_1}\right) \leq 0.6$. 
	
	To show $H^2(P_{\epsilon_{\max}, r, Q_0}, P_{\epsilon, 0, Q_1}) \lesssim 1/n $, we will actually show 
	$\chi^2(P_{\epsilon_{\max}, r, Q_0}, P_{\epsilon, 0, Q_1}) \lesssim 1/n$ as $H^2(P,Q) \leq \chi^2(P,Q)$ for any $P,Q$ by \cite[Eq. (2.27)]{tsybakov2009introduction}. We consider the following construction of $Q_0$ and $Q_1$. Take $Q_1 = N(0, 1)$ and define the density function of $Q_0$ by 
\begin{equation} \label{eq:gaussian-q0-eps-tozero}
q_0(x) = \frac{\delta( \phi(x) - (1-\epsilon_{\max}) \phi(x-r) ) \indi(x \leq t)}{\epsilon_{\max}},
\end{equation} where $\delta = \frac{\epsilon_{\max}}{P_0(X \leq t) - (1-\epsilon_{\max}) P_r(X \leq t)}$ so that we can check $\int q_0(x) dx = 1$. Note that this construction strategy is slightly different from the one in the proof of Lemma \ref{prop:test-lower-bound-gaussian} and the new construction is better when $\epsilon_{\max} \to 0$. Here we truncate the difference $\phi(x) - (1-\epsilon_{\max}) \phi(x-r)$ at level $x \leq t$ to guarantee that $q_0(x)$ is a valid density, specifically, we need to choose $t$ such that $\phi(x) - (1-\epsilon_{\max}) \phi(x-r) \geq 0 $ for all $ x \leq t$ and this can be guaranteed as
\begin{equation*}
	\begin{split}
		& \phi(x) - (1-\epsilon_{\max}) \phi(x-r) \geq 0, \quad \forall x \leq t \\
		\Longleftrightarrow & \phi(x)/\phi(x-r) \geq 1 - \epsilon_{\max}, \quad \forall x \leq t \\ 
		\Longleftrightarrow & \exp(xr - r^2/2) \leq \frac{1}{1 - \epsilon_{\max} }, \quad \forall x \leq t \\
		\Longleftrightarrow & tr - r^2/2 \leq \log\left(\frac{1}{1 - \epsilon_{\max} } \right) \\
		\Longleftrightarrow & t \leq \frac{\log\left(\frac{1}{1 - \epsilon_{\max} } \right)}{r} + \frac{r}{2} \\
		\overset{(a)}\Longleftarrow & t \leq \frac{2 \epsilon_{\max} }{3r}
	\end{split}
\end{equation*} where (a) is because $\log\left(\frac{1}{1 - \epsilon_{\max} } \right) \geq \frac{2}{3} \epsilon_{\max}$ for any $\epsilon_{\max} \in [0, 1/3]$. So we will take $t = \frac{2 \epsilon_{\max} }{3r}$, and now $q_0(x)$ is a valid density. In the following, we will show that when $r \lesssim  \frac{\epsilon_{\max}}{\sqrt{\log(n\epsilon_{\max}^2)}}$, then $\chi^2(P_{\epsilon_{\max}, r, Q_0}, P_{\epsilon, 0, Q_1}) \lesssim 1/n$. 

For the specific construction of $Q_0$ and $Q_1$ above, we have 
\begin{equation} \label{ineq:chi-square-bound-eps-tozero}
	\begin{split}
		&\chi^2(P_{\epsilon_{\max}, r, Q_0}, P_{\epsilon, 0, Q_1}) = \int \frac{\left((1 - \epsilon_{\max}) \phi(x - r) + \epsilon_{\max} q_0(x) - \phi(x) \right)^2}{\phi(x)} dx \\
		&\overset{\eqref{eq:gaussian-q0-eps-tozero}}= \int \frac{\left((1 - \epsilon_{\max}) \phi(x - r) + \delta( \phi(x) - (1-\epsilon_{\max}) \phi(x-r) ) \indi(x \leq t) - \phi(x) \right)^2}{\phi(x)} dx \\
		& = (1 - \delta)^2 \int_{-\infty}^t \frac{\left((1 - \epsilon_{\max}) \phi(x - r)  - \phi(x) \right)^2}{\phi(x)} dx + \int_t^{\infty} \frac{\left((1 - \epsilon_{\max}) \phi(x - r)  - \phi(x) \right)^2}{\phi(x)} dx.
	\end{split}
\end{equation} Next, we bound the two integrals at the end of \eqref{ineq:chi-square-bound-eps-tozero} separately. First, notice that when $\epsilon_{\max} \gtrsim \frac{1}{\sqrt{n}}$, we have $n \epsilon_{\max}^2 \gtrsim 1$ and
\begin{equation} \label{ineq:range-r-t}
	r \lesssim  \frac{\epsilon_{\max}}{\sqrt{\log(n\epsilon_{\max}^2)}} \lesssim \epsilon_{\max} \lesssim 1 \quad \textnormal{and}\quad  t = \frac{2 \epsilon_{\max} }{3r} \gtrsim \sqrt{\log(n\epsilon_{\max}^2)} \gtrsim 1 \gtrsim r.
\end{equation} We begin by bounding the second integration term at the end of \eqref{ineq:chi-square-bound-eps-tozero}. 
\begin{equation} \label{ineq:chi-square-term2}
	\begin{split}
		&\int_t^{\infty} \frac{\left((1 - \epsilon_{\max}) \phi(x - r)  - \phi(x) \right)^2}{\phi(x)}  dx \\
		&= (1 - \epsilon_{\max})^2 \exp(r^2) P_{0}(X \geq t - 2r) + P_0(X \geq t) - 2(1 - \epsilon_{\max}) P_0(X \geq t - r) \\
		& \overset{(a)}\leq (1 - 2 \epsilon_{\max})(1 + Cr^2) P_{0}(X \geq t - 2r) + P_0(X \geq t) - 2(1 - \epsilon_{\max}) P_0(X \geq t - r) \\
		& \quad \quad + \epsilon^2_{\max} P_{0}(X \geq t - 2r) \\
		& \leq (1 - 2 \epsilon_{\max}) P_{0}(X \geq t - 2r) + P_0(X \geq t) - 2(1 - \epsilon_{\max}) P_0(X \geq t - r) \\
		& \quad \quad  + C(\epsilon^2_{\max}+r^2) P_{0}(X \geq t - 2r),
	\end{split}
\end{equation} where (a) is by Taylor expansion. If we denote $f(r) = (1 - 2 \epsilon_{\max}) P_{0}(X \geq t - 2r) + P_0(X \geq t) - 2(1 - \epsilon_{\max}) P_0(X \geq t - r)$, by Taylor expansion, we have
\begin{equation*}
	\begin{split}
		f(r) = f(0) + rf'(0) + r^2 f''(\xi)/2 
	\end{split}
\end{equation*} for some $\xi \in [0, r]$. It is easy to check $f(0) = 0$, $f'(0) = -2 \epsilon_{\max} \phi(t)$ and $|f''(\xi)| \lesssim \exp(-t^2/8)t$ when $2r \leq t/2$, which is implied by \eqref{ineq:range-r-t}. Thus,
\begin{equation} \label{ineq:term1-1}
	|f(r)| \lesssim \epsilon_{\max} r \phi(t) + r^2 \exp(-t^2/8)t.
\end{equation} At the same time, 
\begin{equation} \label{ineq:term1-2}
	P_{0}(X \geq t - 2r) \leq P_{0}(X \geq t/2) \lesssim \exp(-t^2/8)/t
\end{equation} when $2r \leq t/2$. So plug \eqref{ineq:term1-1} and \eqref{ineq:term1-2} into \eqref{ineq:chi-square-term2}, we have 
\begin{equation*}
	\begin{split}
		&\int_t^{\infty} \frac{\left((1 - \epsilon_{\max}) \phi(x - r)  - \phi(x) \right)^2}{\phi(x)}  dx \\
		& \lesssim  \epsilon_{\max} r \phi(t) + r^2 \exp(-t^2/8)t + (\epsilon^2_{\max}+r^2)\exp(-t^2/8)/t \\
		& \overset{(a)}\lesssim  \epsilon_{\max} r \phi(t) \\
		& \overset{(b)}\lesssim  \epsilon_{\max} \frac{\epsilon_{\max}}{\sqrt{\log(n\epsilon_{\max}^2)}} \exp(-t^2/2) \\
		& \overset{(c)}\lesssim  \epsilon_{\max} \frac{\epsilon_{\max}}{\sqrt{\log(n\epsilon_{\max}^2)}} \frac{1}{n \epsilon^2_{\max} } \lesssim  1/n
	\end{split}
\end{equation*} where in (a) we use the fact $t =  \frac{2 \epsilon_{\max} }{3r}$ and $t \gtrsim 1$; (b) is by the assumption on $r$; (c) is because $t \gtrsim \sqrt{\log(n\epsilon_{\max}^2)}$.

	Next, we bound the first integration term in \eqref{ineq:chi-square-bound-eps-tozero}. First,
	\begin{equation*}
		\begin{split}
			\int_{-\infty}^t \frac{\left((1 - \epsilon_{\max}) \phi(x - r)  - \phi(x) \right)^2}{\phi(x)} dx &\leq \int_{-\infty}^{\infty} \frac{\left((1 - \epsilon_{\max}) \phi(x - r)  - \phi(x) \right)^2}{\phi(x)} dx \\
			& = (1 - \epsilon_{\max})^2 \exp(r^2) + 1 - 2(1 - \epsilon_{\max})\\
			& \lesssim (1 - \epsilon_{\max})^2(1 + Cr^2) -1 + 2 \epsilon_{\max} \\
			& \lesssim r^2 + \epsilon^2_{\max}.
		\end{split}
	\end{equation*} At the same time, by the definition of $\delta$, 
	\begin{equation*}
		\begin{split}
			|1- \delta| &= \left| 1- \frac{\epsilon_{\max}}{P_0(X \leq t) - (1-\epsilon_{\max}) P_r(X \leq t)} \right| \\
			& = \left| \frac{P_0(X \leq t) - (1-\epsilon_{\max}) P_r(X \leq t) - \epsilon_{\max}}{P_0(X \leq t) - (1-\epsilon_{\max}) P_r(X \leq t)} \right| \\
			& = \left| \frac{P_0(X \leq t) - (1-\epsilon_{\max}) P_0(X \leq t-r) - \epsilon_{\max}}{P_0(X \leq t) - (1-\epsilon_{\max}) P_0(X \leq t-r)} \right| \\
			& =  \left| \frac{- \epsilon_{\max} P_0(X \geq t) + (1-\epsilon_{\max}) P_0(t-r \leq X \leq t) }{ \epsilon_{\max}P_0(X \leq t) + (1-\epsilon_{\max}) P_0(t-r \leq X \leq t)} \right| \\
			& \leq \frac{P_0(X \geq t)}{P_0(X \leq t)} + \frac{(1-\epsilon_{\max}) P_0(t-r \leq X \leq t)}{\epsilon_{\max}P_0(X \leq t)}  \\
			& \overset{\eqref{ineq:range-r-t}}\leq 2P_0(X \geq t) + 2(1-\epsilon_{\max}) r \phi(t-r)/\epsilon_{\max} \\
			& \overset{\eqref{ineq:range-r-t}  }\lesssim \phi(t)/t + \frac{r}{\epsilon_{\max}} \phi(t/2)\\
			& \lesssim \frac{r}{\epsilon_{\max}} \phi(t/2),
		\end{split}
	\end{equation*}  where in the last inequality, we use the fact $t =  \frac{2 \epsilon_{\max} }{3r}$. Thus, for the first integration term in \eqref{ineq:chi-square-bound-eps-tozero}, 
	\begin{equation*}
		\begin{split}
			&(1 - \delta)^2 \int_{-\infty}^t \frac{\left((1 - \epsilon_{\max}) \phi(x - r)  - \phi(x) \right)^2}{\phi(x)} dx \\
			& \lesssim (r^2 + \epsilon^2_{\max})\frac{r^2}{\epsilon^2_{\max}} \exp(-t^2/4) \\
			& \overset{\eqref{ineq:range-r-t}  }\lesssim r^2 \exp(-t^2/4) \lesssim \frac{\epsilon^2_{\max}}{\log(n\epsilon_{\max}^2)} \frac{1}{n \epsilon^2_{\max} } \lesssim  1/n. 
		\end{split}
	\end{equation*} In summary, we have proved $\chi^2(P_{\epsilon_{\max}, r, Q_0}, P_{\epsilon, 0, Q_1}) \lesssim 1/n$ in Case 2.
	
	\item (Case 3: $r \lesssim  \frac{\epsilon_{\max}}{\sqrt{\log\left(\frac{\epsilon_{\max}}{\epsilon}+e\right)}}$) First, if $\epsilon_{\max} \leq C \epsilon$ for some large constant $C \geq 1$, then $r \lesssim  \frac{\epsilon_{\max}}{\sqrt{\log\left(\frac{\epsilon_{\max}}{\epsilon}+e\right)}}$ is the same as $r \lesssim  \epsilon_{\max}$. In addition, 
	\begin{equation*}
		\begin{split}
			\inf_{Q_0, Q_1} \TV(P^{\otimes n}_{\epsilon_{\max}, r, Q_0}, P^{\otimes n}_{\epsilon, 0, Q_1}) & \leq \inf_{Q_0, Q_1} \TV(P^{\otimes n}_{\epsilon_{\max}, r, Q_0}, P^{\otimes n}_{\epsilon_{\max}/C, 0, Q_1}) \\
			& \leq \inf_{Q_0, Q_1} \TV(P^{\otimes n}_{\epsilon_{\max}/C, r, Q_0}, P^{\otimes n}_{\epsilon_{\max}/C, 0, Q_1}).
		\end{split}
	\end{equation*} By the proof of \cite[Theorem 2.2]{chen2018robust}, we know that when $r \lesssim \epsilon_{\max}/C$, then $\inf_{Q_0, Q_1} \TV(P^{\otimes n}_{\epsilon_{\max}/C, r, Q_0}, P^{\otimes n}_{\epsilon_{\max}/C, 0, Q_1}) = 0$. So the claim is proved.  

Now, let us focus on the setting $\epsilon_{\max} \geq C \epsilon$. In this case, we have 
$\epsilon_{\max}/\epsilon \gtrsim 1$ and
\begin{equation} \label{ineq:range-r-case2}
	r \lesssim  \frac{\epsilon_{\max}}{\sqrt{\log(\epsilon_{\max}/ \epsilon + e)}} \lesssim \frac{\epsilon_{\max}}{\sqrt{\log(\epsilon_{\max}/ \epsilon)}}  \lesssim  \epsilon_{\max} \lesssim 1. 
\end{equation} In this case, we want to construct $Q_0$ and $Q_1$ to exactly match the density of $P_{\epsilon_{\max}, r, Q_0}$ and $P_{\epsilon, 0, Q_1}$. Define the density of $Q_1$ as 
\begin{equation} \label{eq:q1-density-eps-tozero}
	\begin{split}
		q_1(x) = \left\{ \begin{array}{ll}
			\zeta\frac{(1 - \epsilon_{\max}) \phi(x-r) - (1 - \epsilon) \phi(x)}{\epsilon}, & \textnormal{ if } (1 - \epsilon_{\max}) \phi(x-r) > (1 - \epsilon) \phi(x),\\
			0, & \textnormal{ otherwise },
		\end{array} \right.
	\end{split}
\end{equation} where $\zeta$ satisfies that $\zeta \int_{\{x: (1 - \epsilon_{\max}) \phi(x-r) > (1 - \epsilon) \phi(x)\}} \frac{(1 - \epsilon_{\max}) \phi(x-r) - (1 - \epsilon) \phi(x)}{\epsilon} dx= 1$. We note that 
\begin{equation} \label{eq:integration-range}
	\begin{split}
		(1 - \epsilon_{\max}) \phi(x-r) > (1 - \epsilon) \phi(x) &\Longleftrightarrow \exp(xr - r^2/2 ) > \frac{1 - \epsilon}{1 - \epsilon_{\max}} \\
		& \Longleftrightarrow x > \frac{\log \left( \frac{1 - \epsilon}{1 - \epsilon_{\max}}\right) }{r} + \frac{r}{2}.
	\end{split}
\end{equation} So by construction, $Q_1$ is a valid distribution. Note again that this construction of $Q_1$ is slightly different from the one in the proof of Lemma \ref{prop:test-lower-bound-gaussian} and this new construction is better when $\epsilon_{\max} \to 0$. In addition, we define the density of $Q_0$ as
\begin{equation*}
	q_0(x) = \frac{(1-\epsilon) \phi(x) + \epsilon q_1(x) - (1 - \epsilon_{\max} )\phi(x - r) }{ \epsilon_{\max} } \, \textnormal{ for all } x \in \bbR.
\end{equation*} The intuition is that given $Q_0$ is a valid distribution, the choice of $Q_0$ above can exactly match $P_{\epsilon_{\max}, r, Q_0 }$ and $P_{\epsilon, 0, Q_1 }$. By construction, we have $\int q_0(x) dx = 1$ and if $\zeta \geq 1$, then $q_0(x) \geq 0$ for all $x \in \bbR$. Thus, if $\zeta \geq 1$, then $Q_0$ is a valid distribution.

Next, we are going to show that when $r \lesssim \frac{\epsilon_{\max}}{\sqrt{\log(\epsilon_{\max}/ \epsilon)}}$, then $\zeta \geq 1$ and the above constructed $Q_0$ and $Q_1$ are valid distributions and $\TV(P^{\otimes n}_{\epsilon_{\max}, r, Q_0}, P^{\otimes n}_{\epsilon, 0, Q_1}) = 0$, which yields the desired result. Since $\zeta \int_{\{x: (1 - \epsilon_{\max}) \phi(x-r) > (1 - \epsilon) \phi(x)\}} \frac{(1 - \epsilon_{\max}) \phi(x-r) - (1 - \epsilon) \phi(x)}{\epsilon} dx= 1$, to show $\zeta \geq 1$, it is equivalent to show that
\begin{equation*}
	 \int_{\{x: (1 - \epsilon_{\max}) \phi(x-r) > (1 - \epsilon) \phi(x)\}} \frac{(1 - \epsilon_{\max}) \phi(x-r) - (1 - \epsilon) \phi(x)}{\epsilon} dx \leq 1
\end{equation*} when $r \lesssim \frac{\epsilon_{\max}}{\sqrt{\log(\epsilon_{\max}/ \epsilon)}}$. Note
\begin{equation} \label{ineq:exact-match-verification}
	\begin{split}
		 &\int_{\{x: (1 - \epsilon_{\max}) \phi(x-r) > (1 - \epsilon) \phi(x)\}} \frac{(1 - \epsilon_{\max}) \phi(x-r) - (1 - \epsilon) \phi(x)}{\epsilon} dx \\
		 &\leq   \int_{\{x: (1 - \epsilon_{\max}) \phi(x-r) > (1 - \epsilon) \phi(x)\}} \frac{(1 - \epsilon_{\max}) (\phi(x-r) - \phi(x))}{\epsilon} dx \\
		 & \overset{\eqref{eq:integration-range}}= \int_{x \geq \frac{\log \left( \frac{1 - \epsilon}{1 - \epsilon_{\max}}\right) }{r} + \frac{r}{2}} \frac{(1 - \epsilon_{\max}) (\phi(x-r) - \phi(x))}{\epsilon} dx\\
		 & \leq \int_{x \geq \frac{\log \left( \frac{1 - \epsilon}{1 - \epsilon_{\max}}\right) }{r} } \frac{(1 - \epsilon_{\max}) (\phi(x-r) - \phi(x))}{\epsilon} dx.
	\end{split}
\end{equation} Notice that when for any $x \geq \frac{\log \left( \frac{1 - \epsilon}{1 - \epsilon_{\max}}\right) }{r} $, since $\epsilon_{\max} \geq C \epsilon$, 
\begin{equation} \label{ineq:x-range-bound}
	x \geq \frac{\log \left( \frac{1 - \epsilon_{\max} /C}{1 - \epsilon_{\max}}\right) }{r} \gtrsim \frac{\epsilon_{\max}}{r} \overset{ \eqref{ineq:range-r-case2} } \geq  \sqrt{8\log(\epsilon_{\max}/ \epsilon)} \gtrsim r.
\end{equation} So by Taylor expansion, we have $\phi(x-r) - \phi(x) \lesssim r x \phi(x/2)$. Plug it into \eqref{ineq:exact-match-verification}, we have 
\begin{equation*}
	\begin{split}
&\int_{x \geq \frac{\log \left( \frac{1 - \epsilon}{1 - \epsilon_{\max}}\right) }{r} } \frac{(1 - \epsilon_{\max}) (\phi(x-r) - \phi(x))}{\epsilon} dx \\
& \lesssim \int_{x \geq \sqrt{8\log(\epsilon_{\max}/ \epsilon)}} \frac{(1 - \epsilon_{\max}) rx \phi(x/2)}{\epsilon} dx \\
& \lesssim \int_{x \geq \sqrt{8\log(\epsilon_{\max}/ \epsilon)}} \frac{(1 - \epsilon_{\max}) rx \exp(-x^2/8) }{\epsilon} dx \\
& = \frac{(1 - \epsilon_{\max}) r }{\epsilon} \left(-4 \exp(-x^2/8) \right)\Big|^{\infty}_{\sqrt{8\log(\epsilon_{\max}/ \epsilon)} } \\
& =\frac{4(1 - \epsilon_{\max}) r }{\epsilon} \frac{\epsilon}{\epsilon_{\max}} \\
& \leq 1
	\end{split}
\end{equation*} where the last inequality holds as $r \lesssim \frac{\epsilon_{\max}}{\sqrt{\log(\epsilon_{\max}/ \epsilon)}}$ and $\frac{\epsilon_{\max}}{\sqrt{\log(\epsilon_{\max}/ \epsilon)}} \gtrsim 1$.
\end{itemize}
This finishes the proof for both cases.

\subsection{Proof of Lemma \ref{th:estimation-guarantee-mod}}
The proof of this lemma is almost the same as that of Lemma \ref{th:estimation-guarantee}. We will briefly sketch the proof here. By the DKW inequality (see Lemma \ref{lm:DKW}), we have with probability at least $1 - \alpha$, the following event holds:
	\begin{equation*}
		(A) = \{ \textnormal{for all } x \in \bbR, |F_n(x) - F_{\epsilon, \theta, Q}(x)| \leq \sqrt{ \log (2/\alpha)/(2n) } \},
	\end{equation*} where $F_{\epsilon, \theta, Q }(\cdot)$ denotes the CDF of $P_{\epsilon, \theta, Q}$. Then following the same analysis as in \eqref{eq:right-estimator-reduction} and \eqref{eq:left-estimator-reduction}, to show $\htheta_{R}(t)  - \theta \geq -8/t$ and $\htheta_{L}(t)  - \theta \leq 8/t$ hold simultaneously for all $t\in\left[4, \Phi^{-1}\left(1 - \sqrt{ \log (2/\alpha)/(2n) } \right)\right]$ and uniformly over all $\epsilon\in[0,0.99]$, all $\theta\in\mathbb{R}$, and all $Q$ given $(A)$ happens, it is enough to show: for $t_{\min} = 4$ and $t_{\max} = \Phi^{-1} ( 1- \sqrt{ \log (2/\alpha)/(2n) } )$, the following condition is satisfied: 
\begin{equation*}
	\begin{split}
			& 2( 1 - \Phi(t) ) < (1 - \epsilon_{\max} )P_{\theta}(X \geq \theta +t -8/t) - \sqrt{ \log (2/\alpha)/(2n) }, \forall t \in [t_{\min}, t_{\max}] \\
			\Longleftrightarrow & 2 P_{0}(X \geq t) < (1 - \epsilon_{\max} )P_{0}(X \geq t -8/t) - \sqrt{ \log (2/\alpha)/(2n) }, \forall t \in [t_{\min}, t_{\max}] \\
			\Longleftarrow & \left\{ \begin{array}{l}
				P_{0}(X \geq t) \geq \sqrt{ \log (2/\alpha)/(2n) }, \forall t \in [t_{\min}, t_{\max}] \\
				(1 - \epsilon_{\max} )P_{0}(X \geq t -8/t) > 3 P_{0}(X \geq t),\forall t \in [t_{\min}, t_{\max}]
			\end{array} \right. \\
			\overset{ \eqref{ineq:density-ratio-to-tail-prob} }\Longleftarrow & \left\{ \begin{array}{l}
				t_{\max} \leq \Phi^{-1} ( 1- \sqrt{ \log (2/\alpha)/(2n) } ), \\
				(1 - \epsilon_{\max} ) \phi(x - 8/t) > 3 \phi(x),\forall x \geq t, \forall t \in [t_{\min}, t_{\max}]
			\end{array} \right. \\
			\Longleftrightarrow & \left\{ \begin{array}{l}
				t_{\max} \leq \Phi^{-1} ( 1- \sqrt{ \log (2/\alpha)/(2n) } ), \\
				\exp( 8 - 32/t^2_{\min} ) > \frac{3}{1- \epsilon_{\max} }.
			\end{array} \right.
	\end{split}
\end{equation*} It is easy to check that the above two conditions are satisfied.

\subsection{Proof of Proposition \ref{th:ARCI-large-eps1}}
We begin by showing the guarantee of $\htheta_{\textnormal{median}}$ under the following event 
 	\begin{equation*}
		(A) = \{ \textnormal{for all } x \in \bbR, |F_n(x) - F_{\epsilon, \theta, Q}(x)| \leq \sqrt{ \log (2/\alpha)/(2n) } \},
	\end{equation*} where $F_{\epsilon, \theta, Q }(\cdot)$ denotes the CDF of $P_{\epsilon, \theta, Q}$. Notice that (A) happens with probability at least $1-\alpha$ by the DKW inequality (see Lemma \ref{lm:DKW}), and it is the same event that guarantees the coverage of \eqref{eq:just-coverage} as we have shown in Lemma \ref{th:estimation-guarantee-mod}. 
	
	Notice that $\htheta_{\textnormal{median}} = F_n^{-1}(1/2) = X_{( \lceil n/2 \rceil )}$ and $F_\theta(\theta) = 1/2$, where $F_\theta$ is the CDF of $P_\theta$. Thus
	\begin{equation} \label{eq:median-guarantee}
		\begin{split}
			 & |F_\theta(\theta) - F_\theta(\htheta_{\textnormal{median}}) | \\
			= &  |F_\theta(\theta) - F_n(\htheta_{\textnormal{median}}) + F_n(\htheta_{\textnormal{median}})  - F_{\epsilon, \theta, Q}(\htheta_{\textnormal{median}}) + F_{\epsilon, \theta, Q}(\htheta_{\textnormal{median}}) - F_{\theta}(\htheta_{\textnormal{median}}) | \\
			\leq & |F_\theta(\theta) - F_n(\htheta_{\textnormal{median}}) | + |F_n(\htheta_{\textnormal{median}})  - F_{\epsilon, \theta, Q}(\htheta_{\textnormal{median}})| + |F_{\epsilon, \theta, Q}(\htheta_{\textnormal{median}}) - F_{\theta}(\htheta_{\textnormal{median}})| \\
			\overset{(a)}\leq & \frac{1}{n} + \sqrt{ \log (2/\alpha)/(2n) } + \epsilon,
		\end{split}
	\end{equation} where in (a) we use event (A) property and the model assumption $P_{\epsilon, \theta, Q} = (1 - \epsilon )P_\theta + \epsilon Q$.
	
	Then when $n/\log(1/\alpha)$ is large enough, we have  $\frac{1}{n} + \sqrt{ \log (2/\alpha)/(2n) } + \epsilon \leq 0.491$, so by Taylor expansion
	\begin{equation} \label{ineq:taylor-expansion}
		|F_\theta(\theta) - F_\theta(\htheta_{\textnormal{median}}) | = |\htheta_{\textnormal{median}} - \theta| \phi(\xi),
	\end{equation} where $\xi \in [0, \Phi^{-1}(0.991)]$. A combination of \eqref{eq:median-guarantee} and \eqref{ineq:taylor-expansion} yields that 
	\begin{equation*}
		|\htheta_{\textnormal{median}} - \theta| \leq \frac{1}{\phi( \Phi^{-1}(0.991) )} \left( \frac{1}{n} + \sqrt{ \log (2/\alpha)/(2n) } + \epsilon \right) \leq R
	\end{equation*} where $R$ is chosen so that the last inequality holds even when $\epsilon = 0.49$. By combining with Lemma \ref{th:estimation-guarantee-mod}, this shows the coverage of $\widehat{\CI}$ given (A) happens as both intervals
 	\begin{equation*}
 		\bigcap_{4 \leq t \leq  \Phi^{-1}\left(1 - \sqrt{ \log (2/\alpha)/(2n) } \right) }\left[\wh{\theta}_L(t)-\frac{8}{t},\wh{\theta}_R(t)+\frac{8}{t}\right]\quad \textnormal{and}\quad \left[\wh{\theta}_{\rm median}-R,\wh{\theta}_{\rm median}+R\right]
 	\end{equation*} will cover $\theta$.

	Next, we move on to the length guarantee. Suppose $\epsilon_0$ is a sufficiently small constant so that for any $\epsilon \leq \epsilon_0$, we have $t_{\epsilon}$ in \eqref{def:t-constant} is greater than $4$. We divide the rest of the proof into two cases.
	\vskip.2cm
	{\bf (Case 1: $\epsilon_0\leq \epsilon \leq \epsilon_{\max}$)} In this case,
	\begin{equation*}
		|\widehat{\CI}| \leq \Big|\left[\wh{\theta}_{\rm median}-R,\wh{\theta}_{\rm median}+R\right]\Big| = 2R \asymp \frac{1}{\sqrt{\log n}} + \frac{1}{\sqrt{\log(1/ \epsilon )}},
	\end{equation*}  where the second equivalence is because both sides are of constant order.
	
	{\bf (Case 2: $0\leq \epsilon \leq \epsilon_0$)} In this case, given (A) happens, by a similar calculation as in \eqref{ineq:length-guarantee}, we have $|\widehat{\CI}| \leq 16/t_{\epsilon}$ by Lemmas \ref{th:estimation-length-guarantee} and \ref{th:estimation-guarantee-mod}. Finally, we have $1/t_{\epsilon} \lesssim  \frac{1}{\sqrt{\log(1/\epsilon)}} + \frac{1}{\sqrt{\log( n/(\log(1/\alpha)) )}}$ for all $\epsilon \in [0, \epsilon_0]$ by Lemma \ref{lm:t-epsilon-bound}. This finishes the proof.

\subsection{Proof of Proposition \ref{prop:list-d-m-e-c} } 
Let us introduce a good event $(B)$,
	\begin{equation*}
		\begin{split}
			(B) = \left\{  \frac{1}{n}\sum_{i=1}^n\indi\left(|X_i-\theta| \leq 2\right)\geq 0.005 \right\}. 
		\end{split}
	\end{equation*}
	It is clear that given $(B)$ happens, we have $\theta \in \cH$. Thus, one of the points in $\mathcal{L}$ must be of at most $4$ distance to $\theta$ as otherwise it contradicts the maximum size property of $\mathcal{L}$. In addition, the size of $\mathcal{L}$ can also be controlled deterministically by the construction of $\mathcal{L}$. In particular, for each $\htheta_i \in \mathcal{L}$, by definition 
	\begin{equation*}
		 \sum_{j =1}^n \indi(|X_j - \htheta_i| \leq 2) \geq 0.005 n.
	\end{equation*}
	Thus
	\begin{equation*}
		n \geq \sum_{\htheta_i \in \mathcal{L}} \sum_{j =1}^n \indi(|X_j - \htheta_i| \leq 2) \geq |\mathcal{L}| \frac{1}{200} n.
	\end{equation*} This yields an upper bound $|\mathcal{L}| \leq 200$ as desired. 
		
	Thus, to finish the proof, we just need to show $(B)$ happens with probability at least $1-\alpha$. We define the following event 
 	\begin{equation*}
		(A) = \{ \textnormal{for all } x \in \bbR, |F_n(x) - F_{\epsilon, \theta, Q}(x)| \leq \sqrt{ \log (2/\alpha)/(2n) } \},
	\end{equation*} where $F_{\epsilon, \theta, Q }(\cdot)$ denotes the CDF of $P_{\epsilon, \theta, Q}$. Notice that (A) happens with probability at least $1-\alpha$ by the DKW inequality (see Lemma \ref{lm:DKW}).
	
	Then, given (A) happens,
	\begin{equation*}
	\begin{split}
		(B) &\Longleftrightarrow \left\{ \frac{1}{n} \sum_{j =1}^n \indi(X_j - \theta \leq 2) - \frac{1}{n} \sum_{j =1}^n \indi(X_j - \theta < -2) \geq 0.005  \right\} \\
		& \Longleftarrow \left\{ F_n(\theta + 2) - F_n(\theta - 2) \geq 0.005  \right\} \\
		& \overset{(A)}\Longleftarrow \left\{ F_{\epsilon, \theta, Q }(\theta + 2) - F_{\epsilon, \theta, Q }(\theta - 2) - \sqrt{ 2\log (2/\alpha)/n } \geq 0.005  \right\} \\
		& \Longleftarrow \left\{ (1-0.99) \bbP(N(0,1) \in [-2,2]) - \sqrt{ 2\log (2/\alpha)/n } \geq 0.005 \right\},
	\end{split}
	\end{equation*} notice that the last condition holds when $n/\log(1/\alpha)$ is large enough. This shows that the event (B) happens whenever (A) happens, thus it happens with probability at least $1-\alpha$. This finishes the proof of this proposition.
	
\subsection{Proof of Theorem \ref{th:ARCI-list-decodable} }
First, it is clear that the confidence set in \eqref{eq:arcs-final} can be written as the union of intervals, and the number of unions is at most $C_1$ by Proposition \ref{prop:list-d-m-e-c}.

Now, let us move to the coverage guarantee. Recall that the event (A) is denoted as 
 \begin{equation*}
		(A) = \{ \textnormal{for all } x \in \bbR, |F_n(x) - F_{\epsilon, \theta, Q}(x)| \leq \sqrt{ \log (2/\alpha)/(2n) } \},
	\end{equation*}  where $F_{\epsilon, \theta, Q }(\cdot)$ denotes the CDF of $P_{\epsilon, \theta, Q}$ and (A) happens with probability at least $1-\alpha$ by the DKW inequality (see Lemma \ref{lm:DKW}).
 
 Lemma \ref{th:estimation-guarantee-mod} and Proposition \ref{prop:list-d-m-e-c} show that when (A) happens, both intervals
 	\begin{equation*}
 		\bigcap_{4 \leq t \leq  \Phi^{-1}\left(1 - \sqrt{ \log (2/\alpha)/(2n) } \right) }\left[\wh{\theta}_L(t)-\frac{8}{t},\wh{\theta}_R(t)+\frac{8}{t}\right]\quad \textnormal{and}\quad \bigcup_{\wh{\theta}\in\mathcal{L}}\left[\wh{\theta}-4,\wh{\theta}+4\right]
 	\end{equation*} will cover $\theta$. This shows the coverage property of the proposed $\widehat{\CS}$.
 	
	Next, we move on to the length guarantee. Suppose $\epsilon_0$ is a sufficiently small constant so that for any $\epsilon \leq \epsilon_0$, we have $t_{\epsilon}$ in \eqref{def:t-constant} is greater than $4$. We divide the rest of the proof into two cases.
	\vskip.2cm
	{\bf (Case 1: $\epsilon_0\leq \epsilon \leq \epsilon_{\max}$)} In this case,
	\begin{equation*}
		|\widehat{\CI}| \leq \Big|\bigcup_{\wh{\theta}\in\mathcal{L}}\left[\wh{\theta}-4,\wh{\theta}+4\right]\Big| \leq 8\#\mathcal{L}  \asymp \frac{1}{\sqrt{\log n}} + \frac{1}{\sqrt{\log(1/ \epsilon )}},
	\end{equation*}  where the second equivalence is because both sides are of constant order.
	
	{\bf (Case 2: $0\leq \epsilon \leq \epsilon_0$)} In this case, given (A) happens, by a similar calculation as in \eqref{ineq:length-guarantee}, we have $|\widehat{\CI}| \leq 16/t_{\epsilon}$ by Lemmas \ref{th:estimation-length-guarantee} and \ref{th:estimation-guarantee-mod}. Finally, we have $1/t_{\epsilon} \lesssim  \frac{1}{\sqrt{\log(1/\epsilon)}} + \frac{1}{\sqrt{\log( n/(\log(1/\alpha)) )}}$ for all $\epsilon \in [0,\epsilon_0 ]$ by Lemma \ref{lm:t-epsilon-bound}. This finishes the proof.

\subsection{Proof of Theorem \ref{th:lower-bound-Gaussian-unknown-variance}}
Let us first present a useful lemma in identifying a hard testing instance, and its proof will be given in the following subsection.
\begin{Lemma} \label{prop:test-lower-bound-gaussian-unknown-variance}
For any $\sigma > 0$ and any $\epsilon \in [0, \epsilon_{\max}]$ with $\epsilon_{\max} = 0.05$, if
	\begin{equation} \label{eq:Gaussian-lower-bound-condition-unknown-variance}
		r \leq  \sigma \sqrt{ 2(1 - 0.99^2) \log\left( \frac{0.99}{1- \epsilon_{\max} } \right) },
	\end{equation} then there exists $Q_0$ and $Q_1$ such that $P_{\epsilon_{\max}, \theta, Q_0, 0.99 \sigma}   = P_{\epsilon, \theta - r, Q_1, \sigma}$.
\end{Lemma}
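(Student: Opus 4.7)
The plan is to produce $(Q_0,Q_1)$ by a single explicit ansatz. Write $g_{\mu,\tau^2}(x):=(\tau\sqrt{2\pi})^{-1}\exp(-(x-\mu)^2/(2\tau^2))$ for the $N(\mu,\tau^2)$ density. The affine change of variables $x\mapsto (x-(\theta-r))/\sigma$ pushes $N(\theta,(0.99\sigma)^2)\mapsto N(r/\sigma,0.99^2)$ and $N(\theta-r,\sigma^2)\mapsto N(0,1)$, so writing $\tilde r := r/\sigma$ it suffices to construct densities $q_0,q_1$ with
$$
0.95\,g_{\tilde r,0.99^2}(x)+0.05\,q_0(x)=(1-\epsilon)\,g_{0,1}(x)+\epsilon\,q_1(x),\qquad x\in\mathbb{R},
$$
under the normalized hypothesis $\tilde r^{\,2}\leq 2(1-0.99^2)\log(0.99/0.95)$. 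The decisive choice is the ansatz $q_1 := g_{\tilde r,0.99^2}$: the alternative's contamination is taken to coincide with the null's Gaussian base. Substituting this choice into the matching equation and solving for $q_0$ yields
$$
q_0(x)=\frac{(1-\epsilon)\,g_{0,1}(x)-(0.95-\epsilon)\,g_{\tilde r,0.99^2}(x)}{0.05},
$$
and $\int q_0\,dx=((1-\epsilon)-(0.95-\epsilon))/0.05=1$ is immediate.

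Everything then reduces to verifying $q_0\geq 0$ pointwise, i.e., $(0.95-\epsilon)\,g_{\tilde r,0.99^2}(x)\leq(1-\epsilon)\,g_{0,1}(x)$ for every $x$. Taking the ratio, this is equivalent to
$$
\exp\!\Big(\frac{x^2}{2}-\frac{(x-\tilde r)^2}{2\cdot 0.99^2}\Big)\;\leq\;\frac{(1-\epsilon)\cdot 0.99}{0.95-\epsilon}.
$$
The exponent is a quadratic in $x$ with leading coefficient $\frac{1}{2}(1-1/0.99^2)<0$, hence concave. A direct calculation locates the maximizer at $x^{\ast}=\tilde r/(1-0.99^2)$ with peak value $\tilde r^{\,2}/(2(1-0.99^2))$. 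The pointwise inequality therefore collapses to the scalar condition
$$
\tilde r^{\,2}\;\leq\; 2(1-0.99^2)\,\log\!\frac{(1-\epsilon)\cdot 0.99}{0.95-\epsilon}.
$$

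It remains to check that the hypothesis of the lemma implies this condition uniformly over $\epsilon\in[0,\epsilon_{\max}]$. The map $\epsilon\mapsto(1-\epsilon)/(0.95-\epsilon)$ has derivative $0.05/(0.95-\epsilon)^2>0$ and is therefore strictly increasing on $[0,0.05]$; its minimum at $\epsilon=0$ produces the tightest bound $\tilde r^{\,2}\leq 2(1-0.99^2)\log(0.99/0.95)=2(1-0.99^2)\log(0.99/(1-\epsilon_{\max}))$, which is exactly the hypothesis after undoing $\tilde r = r/\sigma$. The only genuinely delicate step is the concave-quadratic optimization that extracts the sharp peak value $\tilde r^{\,2}/(2(1-0.99^2))$; everything else is elementary bookkeeping, and the resulting $(Q_0,Q_1)$ witnesses the lemma for every admissible $\epsilon$.
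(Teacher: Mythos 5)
Your proof is correct, and the mechanism is the same as the paper's: build $Q_0,Q_1$ so the two mixtures match exactly, then reduce nonnegativity of the constructed density to a pointwise Gaussian density-ratio bound whose log is a concave quadratic (because of the variance mismatch $0.99\sigma$ vs.\ $\sigma$), and maximize that quadratic to get precisely the threshold $r^2/(2(1-0.99^2))\leq \sigma^2\log(0.99/(1-\epsilon_{\max}))$. The one genuine difference is the decomposition: the paper sets $Q_1=N(\theta-r,\sigma^2)$, i.e.\ the alternative's own clean component, so the alternative collapses to a single Gaussian, all $\epsilon$-dependence vanishes, and $q_0=(f_{\theta-r,\sigma}-(1-\epsilon_{\max})f_{\theta,0.99\sigma})/\epsilon_{\max}$ is checked nonnegative once; you instead set $Q_1$ equal to the null's clean component $N(\theta,(0.99\sigma)^2)$, which leaves the $\epsilon$-dependent coefficient $0.95-\epsilon$ in $q_0$ and forces the extra (easy) monotonicity argument in $\epsilon$ to see that $\epsilon=0$ is the worst case. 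Both choices land on the same scalar condition, which is exactly the hypothesis \eqref{eq:Gaussian-lower-bound-condition-unknown-variance}; the paper's choice is marginally cleaner since it avoids the uniformity-in-$\epsilon$ step, but your construction is equally valid and complete.
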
	
	Now, we prove Theorem \ref{th:lower-bound-Gaussian-unknown-variance} using Lemma \ref{prop:test-lower-bound-gaussian-unknown-variance} with a contradiction argument. Let $c \leq \sqrt{ 2(1 - 0.99^2) \log\left( \frac{0.99}{1- \epsilon_{\max} } \right) }$ and suppose $r_{\alpha}\left(\epsilon,\sigma,[0,0.05],\left[\frac{\sigma}{2},\sigma\right]\right)> c\sigma$ does not hold. Then there exists a $\epsilon \in [0, \epsilon_{\max}]$ and an ARCI $\widehat{\CI}$ such that the following conditions hold
\begin{equation} \label{ineq:contradiction-condition-unknown-variance}
	\widehat{\CI} \in \cI_{\alpha}([0,0.05],[\sigma/2, \sigma ]) \quad \textnormal{ and } \quad \sup_{\theta, Q} \bbP _{\epsilon, \theta, Q, \sigma} \left( |\widehat{\CI}| \geq r \right) \leq \alpha,
\end{equation} where $r = c \sigma$. Then let us consider the testing problem 
\begin{equation}\label{eq:test-def-unknown-variance}
\begin{split}
		H_{0}: P \in \left\{ P_{\epsilon_{\max}, r, Q, 0.99\sigma}:  Q \right\} \quad  \textnormal { v.s. } \quad 
		H_{1}: P \in \left\{ P_{\epsilon, 0, Q, \sigma}: Q \right\}.
	\end{split}
\end{equation} 
We can construct a test for solving the above testing problem based on the ARCI $\widehat{\CI}$:
	\begin{equation*}
		\begin{split}
			\phi( \{ X_i \}_{i=1}^n ) := \indi \left(  r  \notin  \widehat{\CI} \right)
		\end{split}
	\end{equation*} with the following Type-1 error guarantee,
		\begin{equation} \label{eq:CI-to-test-error-unknown-variance}
		\begin{split}
		\sup_{Q} P_{\epsilon_{\max}, r , Q, 0.99\sigma} \left( \phi = 1  \right)  = \sup_{Q} P_{\epsilon_{\max}, r, Q, 0.99\sigma } \left( r \notin  \widehat{\CI} \right)  \leq \alpha,
		\end{split}
	\end{equation} where the inequality is because $\widehat{\CI} \in \cI_{\alpha}([0,0.05],[\sigma/2, \sigma ]) $ and the Type-2 error guarantee,
\begin{equation*} 
		\begin{split}
			& \sup_{Q} P_{ \epsilon, 0, Q, \sigma} \left( \phi = 0  \right) \\
			&= \sup_{Q} P_{ \epsilon, 0, Q, \sigma} \left( r   \in \widehat{\CI}   \right) \\
			& \leq \sup_{Q} P_{ \epsilon, 0, Q, \sigma} \left( r \in  \widehat{\CI}, 0 \in \widehat{\CI} \right) + \sup_{Q} P_{ \epsilon, 0, Q, \sigma} \left( 0 \notin  \widehat{\CI} \right) \\
			& \overset{(a)}\leq \sup_{Q} P_{ \epsilon, 0, Q, \sigma} \left( |\widehat{\CI}| \geq r \right) + \sup_{Q} P_{\epsilon, 0, Q,\sigma} \left( 0 \notin  \widehat{\CI} \right)  \\
			& \overset{(b)}\leq \alpha + \sup_{Q } P_{\epsilon, 0 , Q, \sigma} \left( |\widehat{\CI}| \geq r\right)\\
			& \overset{\eqref{ineq:contradiction-condition-unknown-variance}}\leq 2 \alpha,
		\end{split}
	\end{equation*}
where in (a), we use the fact that $\widehat{\CI}$ is an interval, and (b) is again by the coverage property of $\widehat{\CI}$. This yields
	\begin{equation} \label{ineq:typeI+II-unknown-variance}
		\sup_{ Q } P_{\epsilon_{\max}, r , Q, 0.99 \sigma } \left( \phi = 1  \right)+ \sup_{Q } P_{ \epsilon, 0, Q, \sigma  } \left( \phi = 0  \right) \leq  3 \alpha.
	\end{equation}

On the other hand, by Lemma \ref{prop:test-lower-bound-gaussian-unknown-variance} and the Neyman-Person Lemma, when $r= c \sigma$, we have
	\begin{equation} \label{ineq:neyman-person-unknown-variance}
		\begin{split}
		&\inf_{ \phi \in \{0,1 \} } \left\{ \sup_{Q } P_{\epsilon_{\max}, r, Q, 0.99 \sigma } \left( \phi = 1  \right) + \sup_{Q } P_{ \epsilon , 0, Q, \sigma} \left( \phi = 0  \right) \right\}\\
			&\geq 1 - \inf_{Q_0, Q_1} \TV\left(P^{\otimes n}_{\epsilon_{\max},r, Q_0, 0.99 \sigma }, P^{\otimes n}_{\epsilon, 0 , Q_1, \sigma} \right) \\
			& =1,
		\end{split}
	\end{equation} which contradicts with \eqref{ineq:typeI+II-unknown-variance} since $\alpha< 1/4$. This finishes the proof of this theorem.
	 
\subsubsection{Proof of Lemma \ref{prop:test-lower-bound-gaussian-unknown-variance}}
	Without loss of generality, we take $\theta = r$, so we just need to find $Q_0$ and $Q_1$ such that $P_{\epsilon_{\max}, r, Q_0, 0.99 \sigma}   = P_{\epsilon, 0, Q_1, \sigma}$. We let $Q_1 =  N(0, \sigma^2)$ and take $Q_0$ having density given as follows
	\begin{equation*}
		q_0(x) := \frac{f_{0, \sigma}(x) - (1 - \epsilon_{\max}) f_{r, 0.99 \sigma}(x) }{\epsilon_{\max}}, \forall x \in \bbR,
	\end{equation*} where $f_{\theta, \sigma}(x)$ is the density of $N(\theta, \sigma^2)$ at $x$. It is clear that $\int q_0(x) dx = 1$. As long as we show $q_0(x) \geq 0$ for all $x \in \bbR$ under the condition \eqref{eq:Gaussian-lower-bound-condition-unknown-variance} regarding $r$, we are done.
	\begin{equation} \label{eq:unknown-variance-exact-match}
		\begin{split}
			q_0(x) \geq 0, \forall x \in \bbR &\Longleftrightarrow f_{0, \sigma}(x) \geq ( 1- \epsilon_{\max} ) f_{r, 0.99 \sigma}(x), \forall x \in \bbR \\
			& \Longleftrightarrow \frac{x^2}{2} - \frac{(x - r)^2}{2 \times 0.99^2} \leq \sigma^2\log\left( \frac{0.99}{ 1- \epsilon_{\max} } \right), \forall x \in \bbR\\
			& \Longleftrightarrow \left(\frac{1}{2} - \frac{1}{2 \times 0.99^2} \right)x^2 + \frac{rx}{0.99^2} - \frac{r^2}{2\times 0.99^2} \leq \sigma^2\log\left( \frac{0.99}{ 1- \epsilon_{\max} } \right), \forall x \in \bbR \\
			& \overset{(a)}\Longleftrightarrow \frac{r^2}{2(1 - 0.99^2) 0.99^2} -  \frac{r^2}{2 \times 0.99^2} \leq \sigma^2 \log\left( \frac{0.99}{ 1- \epsilon_{\max} } \right) \\
			& \Longleftrightarrow \frac{r^2}{2(1 - 0.99^2) } \leq \sigma^2 \log\left( \frac{0.99}{ 1- \epsilon_{\max} } \right)
		\end{split}
	\end{equation} where in (a), we directly compute the maximum of the quadratic function $\left(\frac{1}{2} - \frac{1}{2 \times 0.99^2} \right)x^2 + \frac{rx}{0.99^2} - \frac{r^2}{2\times 0.99^2}$ in $x$. We note that the right-hand side of \eqref{eq:unknown-variance-exact-match} is satisfied by the condition on $r$ posed in \eqref{eq:Gaussian-lower-bound-condition-unknown-variance}.

\section{Optimal Estimation Error Over Different Classes of Distributions} \label{sec:optimal-estimation-error}
In this section, we provide the minimax optimal estimation error rates for estimating $\theta$ under the four classes of distributions we considered in Section \ref{sec:examples-add}. First, we have the following upper bound.
\begin{Proposition} \label{prop:minimax-rate-others-upper-bound}
	Suppose $P_\theta$ is either the $t$ distribution, generalized Gaussian, mollifier, or Bates distribution with location parameter $\theta$. Given $n$ i.i.d. samples drawn from model \eqref{def:model} or the TV contamination model in Section \ref{sec:TV-contamination}. If $n/\log(1/\alpha)$ is sufficiently large and $\epsilon$ is smaller than a sufficiently small constant, then with probability at least $1 - \alpha$, the median achieves error $|\htheta_{\textnormal{median}} - \theta| \leq C( \epsilon +  \sqrt{ \log(1/\alpha)/n })$ for some universal constant $C > 0$ depending on the distribution class only. 
\end{Proposition}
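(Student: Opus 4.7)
The plan is the standard robust median argument via DKW combined with a local inversion of the CDF at $\theta$. First, let $F_\theta$ denote the CDF of $P_\theta$, $F_n$ the empirical CDF, and $F_{\mathrm{data}}$ the CDF of the actual data-generating distribution (either $(1-\epsilon)P_\theta+\epsilon Q$ or any $P$ with $\TV(P,P_\theta)\leq\epsilon$). In both contamination models one has the deterministic bound $\sup_x |F_{\mathrm{data}}(x)-F_\theta(x)|\leq \epsilon$. By Lemma \ref{lm:DKW} (DKW), with probability at least $1-\alpha$, $\sup_x |F_n(x)-F_{\mathrm{data}}(x)|\leq \sqrt{\log(2/\alpha)/(2n)}$. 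On this event, by the triangle inequality,
\[
\sup_{x\in\mathbb{R}} |F_n(x) - F_\theta(x)| \;\leq\; \epsilon + \sqrt{\log(2/\alpha)/(2n)} \;=:\; \Delta.
\]

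Next, since $\htheta_{\mathrm{median}} = X_{(\lceil n/2\rceil)}$, we have $|F_n(\htheta_{\mathrm{median}}) - 1/2|\leq 1/n$, and by symmetry of $P_0$ around $0$ together with the location-family structure, $F_\theta(\theta)=1/2$. Combining,
\[
|F_\theta(\htheta_{\mathrm{median}}) - F_\theta(\theta)| \;\leq\; \Delta + 1/n \;\lesssim\; \epsilon + \sqrt{\log(1/\alpha)/n},
\]
under the assumption that $n/\log(1/\alpha)$ is large enough. The last step is to invert $F_\theta$ locally. Since $f$ is continuous and symmetric around $0$, it is strictly positive in a neighborhood of $0$ for each of the four families: for the $t$, generalized Gaussian, and mollifier densities $f(0)>0$ directly; for the Bates density $f(0)>0$ as the maximum of a symmetric unimodal convolution of uniforms. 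Choose a constant $\delta_0>0$ (depending only on the distribution family) such that $\inf_{|u|\leq \delta_0} f(u) \geq c_0$ for some $c_0>0$. Then for any $t$ with $|t-\theta|\leq \delta_0$, $|F_\theta(t)-F_\theta(\theta)|\geq c_0 |t-\theta|$. Since both $\epsilon$ and $\sqrt{\log(1/\alpha)/n}$ are assumed to be smaller than a sufficiently small constant, the right-hand side above is at most $c_0\delta_0$, forcing $|\htheta_{\mathrm{median}}-\theta|\leq\delta_0$ (via a continuity/bootstrapping argument: if it were larger, the CDF error would exceed $c_0\delta_0$). Hence
\[
|\htheta_{\mathrm{median}}-\theta| \;\leq\; c_0^{-1}\bigl(\Delta + 1/n\bigr) \;\leq\; C\bigl(\epsilon + \sqrt{\log(1/\alpha)/n}\bigr),
\]
which is the claimed bound.

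The routine bookkeeping step is making the ``small enough constant'' in the assumptions on $\epsilon$ and $n$ precise so that $\htheta_{\mathrm{median}}$ is guaranteed to land in the neighborhood $[\theta-\delta_0,\theta+\delta_0]$; this is a standard consequence of the fact that $F_\theta$ is continuous and strictly increasing on the support, so CDF closeness forces quantile closeness whenever the target quantile is an interior point. The only distribution-specific input is the existence of the lower bound $c_0$ on $f$ near $0$, which holds uniformly inside each family (with constants depending on the shape parameter $\nu$, $\beta$, or $k$); this is where the constant $C$ in the statement absorbs the dependence on the family. There is no substantive obstacle: the whole argument is driven by DKW plus the crude deterministic inequality $\sup_x|F_{\mathrm{data}}-F_\theta|\leq\epsilon$, which is valid for both Huber and TV contamination, so the same proof covers both models simultaneously.
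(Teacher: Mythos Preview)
Your proof is correct and follows essentially the same approach as the paper: bound $|F_\theta(\htheta_{\mathrm{median}})-F_\theta(\theta)|$ by $\epsilon+\sqrt{\log(1/\alpha)/n}$ plus a $1/n$ slack, then invert using a lower bound on the density near the center. The only cosmetic difference is that the paper outsources the first step to Proposition~1.14 of \cite{diakonikolas2023algorithmic}, whereas you derive it directly from DKW plus the deterministic $\sup_x|F_{\mathrm{data}}-F_\theta|\leq\epsilon$; your version is more self-contained and in fact mirrors the calculation the paper itself does for the Gaussian case in the proof of Proposition~\ref{th:ARCI-large-eps1}.
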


\begin{proof}
	This can be derived from Proposition 1.14 in \cite{diakonikolas2023algorithmic}. From their result, we have for any location family $P_\theta$, with probability $1-\alpha$, $\htheta_{\textnormal{median}}$ is between $(1/2 - \epsilon - C \sqrt{ \log(1/\alpha)/n } )$ quantile of $P_\theta$ and $(1/2 + \epsilon + C \sqrt{ \log(1/\alpha)/n } )$ quantile of $P_\theta$ for some $C > 0$. Since for all four classes of distributions, the density in $[\theta - c_1, \theta + c_1]$ for some small constant $c_1 > 0$ lies between $[c_2, C_2]$ for some constants $0 < c_2 < C_2$. Thus, $(1/2 + \epsilon + C \sqrt{ \log(1/\alpha)/n } )$ quantile of $P_\theta$ is of at most $\theta + O(\epsilon + \sqrt{ \log(1/\alpha)/n } )$ and similar for $(1/2 - \epsilon - C \sqrt{ \log(1/\alpha)/n } )$ quantile of $P_\theta$ due to symmetry.  
\end{proof}

Now, we move on to the lower bound.
\begin{Proposition} \label{prop:minimax-rate-others-lower-bound}
	Suppose $P_\theta$ is either the $t$ distribution, generalized Gaussian with shape parameter $\beta > 0.5$, mollifier, or Bates distribution with positive integer $k \geq 3$ with location parameter $\theta$. Given $n$ i.i.d. samples drawn from \eqref{def:model} with $\epsilon < 1/2$. Then there exists $c, C > 0$ such that $\inf_{\htheta} \sup_{\theta \in \bbR, Q} P_{\epsilon, \theta, Q }\left( |\htheta - \theta| \geq C\left( \frac{1}{\sqrt{n}} + \epsilon\right) \right) \geq c$. 
\end{Proposition}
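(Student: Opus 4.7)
The plan is to apply Le Cam's two-point method twice — once to extract the $\epsilon$ term and once the $1/\sqrt n$ term — and take the maximum of the two separations. The reduction is standard: if two data-generating laws in the Huber model class $\{P_{\epsilon,\theta,Q}\}$ with location parameters $\theta_0 = 0$ and $\theta_1 = r$ satisfy $\TV(P^{(0)\otimes n}, P^{(1)\otimes n}) \leq 1 - 2c$, then every estimator must incur error at least $r/2$ with probability at least $c$ under one of the two laws.

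For the $\epsilon$ term I would exhibit $r \asymp \epsilon$ together with contamination distributions $Q_0, Q_1$ such that $P_{\epsilon, 0, Q_0} = P_{\epsilon, r, Q_1}$ exactly, making the product TV distance identically zero. This requires expressing the signed measure $(1-\epsilon)(P_0 - P_r)$ as $\epsilon(Q_1 - Q_0)$ with valid densities, which is possible whenever $(1-\epsilon)\TV(P_0, P_r) \leq \epsilon$ — exactly the mass-balancing construction used in Case~II of the proof of Theorem~\ref{th:test-lower-bound}, now with $\epsilon_{\max}$ replaced by $\epsilon$. For each of the four families one has $\int|f'|<\infty$ (the mollifier and Bates with $k\geq 3$ have compactly supported derivatives; for $t$ and generalized Gaussian the integral is finite by direct computation), hence $\TV(P_0, P_r) \leq r\|f'\|_1/2$ and the construction succeeds with $r$ a shape-dependent small constant times $\epsilon$.

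For the $1/\sqrt n$ term I would take $Q_0 = P_0$ and $Q_1 = P_r$, so the two data-generating laws collapse to the uncontaminated $P_0$ and $P_r$, both trivially members of the Huber class. By Hellinger tensorization it then suffices to show $H^2(P_0, P_r) \lesssim 1/n$ when $r = c''/\sqrt n$. The quadratic-mean-differentiability expansion $H^2(P_0, P_r) = r^2 I(f)/8 + o(r^2)$ reduces this to finiteness of the Fisher information $I(f) = \int (f')^2/f$. The hypotheses of the proposition are precisely the regimes where $I(f) < \infty$: for generalized Gaussian $(f')^2/f \propto |x|^{2\beta-2} e^{-|x|^\beta}$ is integrable at the origin iff $\beta > 1/2$; for Bates the boundary asymptotics $f(x) \asymp (1/2-|x|)^{k-1}$ and $f'(x) \asymp (1/2-|x|)^{k-2}$ give $(f')^2/f \asymp (1/2-|x|)^{k-3}$, integrable iff $k \geq 3$; for $t$ and mollifier densities finiteness is routine.

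The main obstacle is justifying the Hellinger expansion at the edge cases where the density is only marginally smooth — Bates at $k=3$ and generalized Gaussian with $\beta$ just above $1/2$. I would sidestep pointwise differentiability of $\sqrt f$ and instead bound $H^2(P_0, P_r) = \int (\sqrt{f(x)} - \sqrt{f(x-r)})^2\,dx$ directly by splitting the domain into a bulk region on which $f$ is bounded away from zero — where a Taylor bound on $\sqrt f$ yields an $O(r^2)$ contribution via finite $I(f)$ — and a thin singular region of mass $O(r)$ near the boundary (Bates) or origin (generalized Gaussian), whose contribution is also $O(r^2)$ because $\sqrt f$ is locally Hölder there with exponent large enough to absorb the shift. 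Combining the two two-point constructions and choosing the better separation yields the stated lower bound of order $1/\sqrt n + \epsilon$ with constant probability.
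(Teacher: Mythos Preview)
Your proposal is correct and matches the paper's approach: Le Cam two-point separately for the $\epsilon$ term (via $\TV(P_0,P_r)=O(r)$ and an exact matching of the two contaminated laws, which the paper phrases through the modulus of continuity of \cite{chen2018robust}) and for the $1/\sqrt n$ term (via $H^2(P_0,P_r)=O(r^2)$, with the same domain-splitting you describe for the generalized Gaussian near the origin). The one shortcut in the paper you may find useful is for Bates: instead of treating every $k\geq 3$ directly, it reduces to $k=3$ by writing a Bates-$k$ variable as a deterministic function of an independent Bates-$3$ variable and $k-3$ extra uniforms, then invokes the data-processing inequality for Hellinger distance.
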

\begin{proof}	First, we note that it is enough to prove the lower bound separately for $1/\sqrt{n}$ and $\epsilon$. We start with the $1/\sqrt{n}$ lower bound, and it can be proved in the setting when there is no contamination. The main tool we use is Le Cam's two-point method \citep{tsybakov2009introduction}. To show the $1/\sqrt{n}$ rate lower bound in the setting $\epsilon = 0$, it is enough to show
	\begin{equation} \label{ineq:hellinger-general-bound}
		\H^2(P_r, P_0 ) = O(r^2)
	\end{equation} for $0\leq r \leq  c$ with a sufficiently small $c > 0$, where $\H(P_1, P_2) = \sqrt{ \int (\sqrt{p_1(x)} - \sqrt{p_2(x)} )^2 d x } $ denotes the Hellinger distance between distributions $P_1$ and $P_2$. This is because, in general 
	\begin{equation} \label{ineq:testing-to-hellinger}
		\begin{split}
			\inf_{\htheta} \sup_{\theta } P_{\theta }\left( |\htheta - \theta| \geq r/2 \right) &\geq \inf_{\htheta} \sup_{\theta \in \{0,r\}} P_{\theta }\left( |\htheta - \theta| \geq r/2 \right) \geq 1 - \TV(P_r^{\otimes n}, P_0^{\otimes n} ) \\
			& =  \int \left(\prod_{i=1}^n f_r(x_i) \wedge \prod_{i=1}^n f_0(x_i) \right) dx_1 \ldots dx_n \\
			& \geq \frac{1}{2} \left(  \prod_{i=1}^n \int\sqrt{f_r(x_i) f_0(x_i)}   dx_i\right)^2 = \frac{1}{2} \left( 1- \frac{1}{2} H^2(P_r, P_0) \right)^{2n},
		\end{split}
	\end{equation} where $f_\theta(\cdot)$ denotes the density function of $P_\theta$, the second inequality is by the Neyman-Person Lemma and the third inequality is by Le Cam's inequality (see Lemma 2.3 in \cite{tsybakov2009introduction}). So if \eqref{ineq:hellinger-general-bound} holds, we can take $r$ to be $c/\sqrt{n}$ for some small constant, and this would yield the lower bound. Next, we will consider four distribution classes separately.

	{\bf (Generalized Gaussian with $\beta  > 0.5$)} First
	\begin{equation} \label{ineq:generalized-gaussian-hellinger}
		\begin{split}
			 & \H^2(P_r, P_0 ) \\
			& = \int_r^\infty (\sqrt{f(x-r)} - \sqrt{f(x)} )^2 d x  + \int_{-\infty}^0 (\sqrt{f(x-r)} - \sqrt{f(x)} )^2 d x \\
			& \quad + \int_0^r (\sqrt{f(x-r)} - \sqrt{f(x)} )^2 d x \\
			& = 2 \int_r^\infty (\sqrt{f(x-r)} - \sqrt{f(x)} )^2 d x + \int_0^r (\sqrt{f(x-r)} - \sqrt{f(x)} )^2 d x.
		\end{split}
	\end{equation} Next, we show both terms on the right-hand side are of \eqref{ineq:generalized-gaussian-hellinger} of order $O(r^2)$ when $\beta > 0.5$. 
	
	By Taylor expansion of $g(r) = \sqrt{f(x-r)}$, for the first term on the right-hand side of \eqref{ineq:generalized-gaussian-hellinger}, we have
	\begin{equation*}
		\begin{split}
			\int_r^\infty (\sqrt{f(x-r)} - \sqrt{f(x)} )^2 d x & \lesssim r^2 \int_{r}^\infty \sup_{\xi \in [0, r]} \exp(- |x - \xi|^\beta) |x - \xi|^{2(\beta - 1)}d x \\
			& \lesssim \left\{ \begin{array}{ll}
				 r^2 \int_r^\infty \exp( - (x - r)^\beta ) x^{2(\beta - 1)}d x, &  \textnormal{ if } \beta \geq 1,\\
				 r^2 \int_r^\infty \exp( - (x - r)^\beta ) (x - r)^{2(\beta - 1)}d x, & \textnormal{ if } 0.5<\beta < 1,
			\end{array} \right.  \\
			& \overset{(a)}\lesssim \left\{ \begin{array}{ll}
				 r^2, &  \textnormal{ if } \beta \geq 1,\\
				 r^2 \int_0^\infty \exp( - x^\beta ) x^{2(\beta - 1)} d x, & \textnormal{ if } 0.5<\beta < 1,
			\end{array} \right. \\
			& \lesssim \left\{ \begin{array}{ll}
				 r^2, &  \textnormal{ if } \beta \geq 1,\\
				 r^2 \left( \int_0^1 x^{2(\beta - 1)}d x + \int_1^\infty \exp( - x^\beta ) d x \right), & \textnormal{ if } 0.5<\beta < 1,
			\end{array} \right. \\
			& \overset{(b)}\lesssim r^2,
		\end{split}
	\end{equation*} where in (a) we use the fact that all the moments of generalized Gaussian distribution exist and are a polynomial function with nonnegative exponents of the location, scale, and shape parameters \citep{nadarajah2005generalized}; (b) is because $\beta > 0.5$.
	
	Now, let us bound the second term on the right-hand side of \eqref{ineq:generalized-gaussian-hellinger}.
	\begin{equation*}
		\begin{split}
			 & \int_0^r (\sqrt{f(x-r)} - \sqrt{f(x)} )^2 d x \\
			&= \int_0^r f(x)  \left(\frac{\sqrt{f(x-r)}}{\sqrt{f(x)}} - 1 \right)^2 d x \\
			& \overset{(a)}\lesssim  \int_0^r \left(\exp\left( - (r - x)^\beta/2 + x^\beta/2 \right) - 1 \right)^2 d x \\
			& \lesssim \int_0^{r/2} \left(\exp\left( - (r - x)^\beta/2 \right) - 1 \right)^2 d x + \int_{r/2}^r \left(\exp\left( x^\beta/2 \right) - 1 \right)^2 d x \\
			& \lesssim \int_0^{r/2} \left(\exp\left( - r^\beta/2 \right) - 1 \right)^2 d x + \int_{r/2}^r \left(\exp\left( r^\beta/2 \right) - 1 \right)^2 d x \\
			& \lesssim r \left(\exp\left( - r^\beta/2 \right) - 1 \right)^2 + r \left(\exp\left( r^\beta/2 \right) - 1 \right)^2 \\
			& \overset{(b)}\lesssim r^{1 + 2 \beta} \overset{(c)}\lesssim r^2
		\end{split}
	\end{equation*} where (a) is because $f(x)$ is of constant order for $x \in [0,r]$ and (b) is because when $|\delta| < c$ for a sufficiently small $c$, $1 + \delta \leq \exp(\delta) \leq 1 + C_1 \delta $ for some universal constant $C_1 > 0$; (c) is because $\beta \geq 0.5$. This finishes the proof for the generalized Gaussian with $\beta > 0.5$. 
	
	{\bf (Student's $t$ distribution)}  Fix any degree of freedom $\nu > 0$,
	\begin{equation*}
		\begin{split}
			\H^2(P_r, P_0) & = \int_{-\infty}^\infty (\sqrt{f(x-r)} - \sqrt{f(x)} )^2 d x \\
			& \overset{(a)}\lesssim r^2 \int_{-\infty}^{\infty} \sup_{\xi \in [0, r]} \frac{( 1 + (x - \xi)^2/\nu )^{-(\nu+3)}}{( 1 + (x - \xi)^2/\nu )^{-(\nu+1)/2}} (x - \xi)^2 dx\\
			& \lesssim r^2 \int_{-\infty}^{\infty} \sup_{\xi \in [0, r]} ( 1 + (x - \xi)^2/\nu )^{-(\nu+5)/2} (x - \xi)^2 dx
		\end{split}
	\end{equation*} where (a) is by the Taylor expansion of $g(r) = \sqrt{f(x-r)}$. Next,
	\begin{equation*}
		\begin{split}
		& \int_{-\infty}^{\infty} \sup_{\xi \in [0, r]} ( 1 + (x - \xi)^2/\nu )^{-(\nu+5)/2} (x - \xi)^2 dx \\
		\leq & 	\int_{-\infty}^{0}( 1 + x^2/\nu )^{-(\nu+5)/2} (x - r)^2 dx + \int_{0}^{r} r^2 dx + \int_{r}^{\infty} ( 1 + (x - r)^2/\nu )^{-(\nu+5)/2} x^2 dx\\
		=  & r^3 + 2\int_{r}^{\infty} ( 1 + (x - r)^2/\nu )^{-(\nu+5)/2} x^2 dx \\
		= & r^3 + 2\int_{0}^{\infty} ( 1 + x^2/\nu )^{-(\nu+5)/2} (x+r)^2 dx \\
		=& r^3 + 2\int_{0}^{\infty} ( 1 + x^2/\nu )^{-(\nu+1)/2} \frac{(x+r)^2}{ (1 + x^2/\nu)^2} dx \\
		\overset{(a)}\lesssim & r^3 + 2\int_{0}^{\infty} ( 1 + x^2/\nu )^{-(\nu+1)/2}  dx\\
		\lesssim & 1,
		\end{split}
	\end{equation*} where (a) is because $ \frac{(x+r)^2}{ (1 + x^2/\nu)^2} \lesssim 1$ for any $x \geq 0$ when $\nu$ is fixed and $r \lesssim 1$. In summary, we have shown that $\H^2(P_r, P_0)  \lesssim r^2$, and this finishes the proof for the t-distribution case.
	
	{\bf (Mollifier distribution)} Fix any $\beta > 0$,
	\begin{equation} \label{ineq:hellinger-mollifier}
		\begin{split}
			&\H^2(P_r, P_0) \\
			& = \int_{-1}^{1+r} (\sqrt{f(x-r)} - \sqrt{f(x)} )^2 d x \\
			& \overset{(a)}\lesssim r^2 \int_{-1}^{1+r} \sup_{\xi \in [0,r]} (x - \xi)^2  \exp\left( - \frac{1}{( 1- (x-\xi)^2 )^\beta} \right) ( 1- (x - \xi)^2 )^{-2 (\beta + 1)} \indi(|x - \xi| < 1) dx.
		\end{split}
	\end{equation} where (a) is by the Taylor expansion of $g(r) = \sqrt{f(x-r)}$. Now let us focus on the function $$h(x) = \exp\left( - \frac{1}{( 1- x^2)^\beta} \right) ( 1- x^2 )^{-2 (\beta + 1)} = \exp\left( - \frac{1}{( 1- x^2)^\beta} + 2(\beta + 1)\log\left( \frac{1}{1-x^2} \right) \right)$$ when $x \in (-1,1)$. Since $h(x)$ is symmetric around zero, to show its monotonicity, we just need to focus on $x \in (0,1)$. 
	If we denote $g(x) = - \frac{1}{( 1- x^2)^\beta} + 2(\beta + 1)\log\left( \frac{1}{1-x^2} \right)$, then 
	\begin{equation*}
		g'(x) = -2x\beta (1 - x^2)^{-(\beta + 1)} + 4x(\beta + 1) \frac{1}{1 - x^2}  = \frac{2x}{1-x^2} \left( 2(\beta + 1) - \frac{\beta}{(1-x^2)^\beta}  \right). 
	\end{equation*} Thus when $x \in (0,1)$, $g'(x) \geq 0 \Longleftrightarrow x^2 \leq 1 - (\beta /(2(\beta+1)) )^{1/\beta}$. Thus, $\sup_{x \in (-1,1)} h(x) \leq h\left( \sqrt{1 - (\beta /(2(\beta+1)) )^{1/\beta} } \right) \lesssim 1$. As a result of that, we can simplify \eqref{ineq:hellinger-mollifier}, and it yields:
	\begin{equation*}
		\begin{split}
			\H^2(P_r, P_0)  \lesssim r^2 \int_{-1}^{1+r} \sup_{\xi \in [0,r]} (x - \xi)^2  \indi(|x - \xi| < 1) dx \lesssim r^2.
		\end{split}
	\end{equation*} This finishes the proof for the mollifier case.

	{\bf (Bates distribution with $k \geq 3$)} Let us use notation $P_{\theta}^k$ to denote the Bates distribution with parameter $k$ and location $\theta$. We claim that if we can show for $k = 3$, $\H(P^3_r, P^3_0) \lesssim r^2$ when $r \leq c$ for a sufficiently small $c > 0$, then we are done. This is because for any fixed $k \geq 4$, if we let $X \sim P_{r}^k, X' \sim P_{\frac{k}{3} r}^3$, $Y \sim P_0^k$ and $Y' \sim P_0^3$, which are all independent to each other, and let $U_1, U_2, \ldots$ be other independent Uniform[$-1/2,1/2$] random variables, then $X \overset{d}= \frac{3X' + \sum_{i=1}^{k-3} U_i}{k}$ and $Y \overset{d}= \frac{3Y' + \sum_{i=1}^{k-3} U_i}{k}$, by the data processing inequality for general $f$-divergence (see Chapter 7 in \cite{polyanskiy2024information}) , we have
	\begin{equation*}
		\begin{split}
			\H^2(P^k_r, P^k_0) = \H^2(X, Y) \leq \H^2(X', Y') = \H^2(P^3_{ \frac{k}{3}r }, P^3_0) \overset{(a)}\lesssim \frac{k^2}{3^2} r^2 \lesssim r^2,
		\end{split}
	\end{equation*} where (a) is by the assumption. So next, we will focus on $k = 3$, and we also omit the superscript $3$ in $P_\theta^3$ for simplicity. From Lemma \ref{lm:bates-quantile}, the density of Bates distribution with $k = 3$ is given as follows:
	\begin{equation*}
		\begin{split}
			f(x) = \left\{\begin{array}{ll}
				\frac{3}{2} ( 3 x + 1.5)^2, & \textnormal{ if } x \in [-1/2, -1/6);\\
				\frac{3}{2} \left( (3x + 1.5)^2 - 3(3x + 0.5)^2 \right), & \textnormal{ if } x \in [-1/6, 1/6);\\
				\frac{3}{2} (3x - 1.5)^2, & \textnormal{ if } x \in [1/6, 1/2].
			\end{array} \right.
		\end{split}
	\end{equation*}  Thus, for $r \leq c$ with a sufficiently small $c$,
	\begin{equation} \label{ineq:Bates-hellinger-bound}
		\begin{split}
			&\H^2(P_r, P_0) \\
			& = \int_{-1/2}^{1/2 + r} (\sqrt{f(x-r)} - \sqrt{f(x)} )^2 d x \\
			& = 2 \int_{-1/2}^{r/2} (\sqrt{f(x-r)} - \sqrt{f(x)} )^2 d x \\
			& \lesssim \int_{-1/2}^{-1/2 + r} f(x)  d x + \int_{-1/2+r}^{-1/6} (3(x-r) + 1.5 - 3x - 1.5 )^2 dx \\
			& \quad + \int_{-1/6}^{-1/6 + r} \left( 3(x - r) + 1.5 - \sqrt{ (3x + 1.5)^2 - 3(3x + 0.5)^2} \right)^2   dx \\
			& \quad + \int_{-1/6+r}^{r/2} \left( \sqrt{ (3(x-r) + 1.5)^2 - 3(3(x-r) + 0.5)^2} - \sqrt{(3x + 1.5)^2 - 3(3x + 0.5)^2}  \right)^2 dx. 
		\end{split}
	\end{equation} It is easy to check the sum of the first two terms in \eqref{ineq:Bates-hellinger-bound} is bounded by $C r^2$ for some $C > 0$. Let us bound the last two terms in \eqref{ineq:Bates-hellinger-bound}.
	\begin{equation*}
		\begin{split}
			& \int_{-1/6}^{-1/6 + r} \left( 3(x - r) + 1.5 - \sqrt{ (3x + 1.5)^2 - 3(3x + 0.5)^2} \right)^2   dx \\
			& =  \int_{-1/6}^{-1/6 + r} \left( \frac{(3(x - r) + 1.5)^2 - (3x + 1.5)^2 + 3(3x + 0.5)^2}{3(x - r) + 1.5 + \sqrt{ (3x + 1.5)^2 - 3(3x + 0.5)^2}} \right)^2  dx \\
			& = \int_{-1/6}^{-1/6 + r} \left( \frac{ (-3r)(6x + 3 - 3 r) + 3(3x + 0.5)^2}{3(x - r) + 1.5 + \sqrt{ (3x + 1.5)^2 - 3(3x + 0.5)^2}} \right)^2  dx \\
			& \overset{(a)}\lesssim r^3,
		\end{split}
	\end{equation*} (a) is because, inside the integral, the numerator has order $r$ while the denominator has order $1$. Next, let us bound the final term:
	\begin{equation*}
		\begin{split}
			& \int_{-1/6+r}^{r/2} \left( \sqrt{ (3(x-r) + 1.5)^2 - 3(3(x-r) + 0.5)^2} - \sqrt{(3x + 1.5)^2 - 3(3x + 0.5)^2}  \right)^2 dx \\
			= & \int_{-1/6+r}^{r/2} \left( \frac{(3(x-r) + 1.5)^2 - 3(3(x-r) + 0.5)^2 - (3x + 1.5)^2 + 3(3x + 0.5)^2}{\sqrt{ (3(x-r) + 1.5)^2 - 3(3(x-r) + 0.5)^2} + \sqrt{(3x + 1.5)^2 - 3(3x + 0.5)^2}}  \right)^2 dx\\
			= & \int_{-1/6+r}^{r/2} \left( \frac{ (-3r)(6x + 3 - 3r) + 9r(6x + 1 - 3r) }{\sqrt{ (3(x-r) + 1.5)^2 - 3(3(x-r) + 0.5)^2} + \sqrt{(3x + 1.5)^2 - 3(3x + 0.5)^2}}  \right)^2 dx \\
			\overset{(a)}\lesssim & r^2,
		\end{split}
	\end{equation*} where (a) is because inside the integral, the numerator has order bounded by $r$ and the denominator has constant order.  In summary, we have shown $\H^2(P_r, P_0) \lesssim r^2$, and this finishes the proof for the Bates distribution.

	Now, we have finished the proof for the $1/\sqrt{n}$ part in the lower bound. To show the $\epsilon$ part in the lower bound, we can use the general result of Theorem 5.1 in \cite{chen2018robust}. In particular, to characterize the effect of contamination on the minimax rate, we need to compute the modulus of continuity:
	\begin{equation} \label{eq:modulus-of-contunuity}
		\omega(\epsilon, \bbR) = \sup \{ |\theta_1 - \theta_2|: \TV(P_{\theta_1}, P_{\theta_2}) \leq \epsilon/(1 - \epsilon); \theta_1, \theta_2 \in \bbR\}. 
	\end{equation} If we can show $\TV(P_{r}, P_{0}) \lesssim r$, then we have $\omega(\epsilon, \bbR) \gtrsim r$.  Notice that two times $\TV(P_{r}, P_{0})$ is equal to the non-overlapping area of the density of $P_0$ and its parallel shift $P_{r}$. Then it is clear that $\TV(P_{r}, P_{0}) \leq r f_0(0)$. Since in all of our examples, $f_0(0) \lesssim 1$, then $\TV(P_{r}, P_{0}) \lesssim r$. Finally, by combining Theorem 5.1 in \cite{chen2018robust} and the result we have proved in the first part, we get the minimax rate lower bound is of order $ \frac{1}{\sqrt{n}} + \epsilon$.  
\end{proof}

\section{Additional Lemmas}  \label{sec:proof-additional-lemmas}
The following is the Dvoretzky-Kiefer-Wolfowitz-Massart inequality (DKW inequality) from \cite{massart1990tight}.
\begin{Lemma}\label{lm:DKW}
	Suppose $n$ is a positive integer. Let $X_1, \ldots, X_n$ be i.i.d. real-valued random variables drawn from a distribution with CDF $F(\cdot)$. Then for any $\alpha \in (0,1)$,
	\begin{equation*}
		\bbP\left( \sup_{t \in \bbR} |F_n(t) - F(t)| > \sqrt{ \frac{\log(2/\alpha)}{2n} } \right) \leq \alpha,
	\end{equation*}where $F_n(\cdot)$ is the empirical CDF.
\end{Lemma}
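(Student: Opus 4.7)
The plan is to reduce the statement to the case of the uniform distribution on $[0,1]$ via the probability integral transform, then prove a one-sided exponential inequality and finish with a union bound.

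First I would set $U_i = F(X_i)$. When $F$ is continuous, the $U_i$ are i.i.d.\ Uniform$[0,1]$; when $F$ has jumps, one handles the jumps by an independent randomization (or more simply by noting that the Kolmogorov statistic is distributionally bounded above by its continuous-$F$ counterpart). Because $F$ and its generalized inverse are monotone, the supremum is preserved: $\sup_{t \in \mathbb{R}}|F_n(t) - F(t)| = \sup_{u \in [0,1]}|G_n(u) - u|$, where $G_n$ is the empirical CDF of $U_1,\ldots,U_n$. So it suffices to prove the inequality in the uniform case.

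Next, split the two-sided statistic into $D_n^+ = \sup_{u \in [0,1]}(G_n(u) - u)$ and $D_n^- = \sup_{u \in [0,1]}(u - G_n(u))$. By the symmetry $u \mapsto 1-u$ (which sends $U_i$ to $1 - U_i$, again Uniform$[0,1]$), both have the same distribution, and the union bound yields
\begin{equation*}
\mathbb{P}\Bigl(\sup_{u \in [0,1]}|G_n(u) - u| > \lambda\Bigr) \leq 2\,\mathbb{P}(D_n^+ > \lambda).
\end{equation*}
Taking $\lambda = \sqrt{\log(2/\alpha)/(2n)}$, the desired conclusion is equivalent to the one-sided bound $\mathbb{P}(D_n^+ > \lambda) \leq e^{-2n\lambda^2}$.

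The main obstacle is precisely this sharp one-sided inequality with the optimal constant $2$ in the exponent. A pointwise Hoeffding/Bernstein bound combined with a grid union bound or a chaining argument loses logarithmic factors; Talagrand's inequality for empirical processes is also not sharp here. Massart's approach, which I would follow, rewrites $\{D_n^+ > \lambda\}$ in terms of the first passage time across a linear barrier of the partial-sum walk $S_k(u) = \sum_{i=1}^k(\mathbf{1}\{U_i \leq u\} - u)$, and then exploits combinatorial identities for the order statistics in the ballot/cycle-lemma spirit together with a carefully optimized exponential-moment estimate to extract the constant $2$. The rest of the proof is bookkeeping: substitute $\lambda = \sqrt{\log(2/\alpha)/(2n)}$ into the one-sided bound, multiply by the factor of two from the union bound, and check that the resulting expression equals $\alpha$.
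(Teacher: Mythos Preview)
The paper does not prove this lemma; it simply states it as the Dvoretzky--Kiefer--Wolfowitz--Massart inequality and cites \cite{massart1990tight}. So there is no in-paper argument to compare against.

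Your outline is a faithful high-level summary of Massart's strategy: reduce to the uniform case, split into the two one-sided statistics, use the $u\mapsto 1-u$ symmetry plus a union bound to reduce to $\mathbb{P}(D_n^+>\lambda)\le e^{-2n\lambda^2}$, and then invoke Massart's sharp one-sided bound. As a citation-level justification this is fine and indeed goes further than the paper itself. The only caveat is that you do not actually carry out the hard step: the sharp constant in the one-sided inequality is the entire content of Massart's paper, and your description (``ballot/cycle-lemma combinatorics together with a carefully optimized exponential-moment estimate'') is a pointer rather than a proof. If the intent is merely to document where the inequality comes from, your write-up is already more detailed than what the paper provides; if the intent is a self-contained proof, the one-sided bound still needs to be worked out in full.
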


The following result is a version of the DKW inequality, which bounds the difference of the empirical CDF and population CDF by the variance term at each point. A version of this result has appeared in \citep[Lemma 3.2]{bartl2023variance} without an explicit constant in the failure probability term; here, we provide explicit constants.

\begin{Lemma} \label{lm:ratio-DKW} Let $F(\cdot)$ be the distribution function and $F_n(\cdot )$ be the empirical distribution function. Suppose $n \geq 400$. Then, for every $\frac{100}{n} \leq \Delta \leq 1/4$, with probability at least $1-8\exp(-\Delta n/100)$, for every $t\in\mathbb{R}$ that satisfies $F(t)(1-F(t)) \geq \Delta$ we have
\begin{equation*}
    |F_n(t)-F(t)|\leq\frac{F(t)(1-F(t))}{2}.
\end{equation*}
\end{Lemma}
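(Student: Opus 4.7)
I would establish the bound by a Bernstein-plus-peeling argument that exploits the monotonicity of $F$ and $F_n$. First, by the symmetry $F \leftrightarrow 1-F$, $F_n \leftrightarrow 1-F_n$ (equivalently, replacing $X_i$ by $-X_i$), it suffices to prove the bound uniformly over $\mathcal{T} = \{t : \Delta \leq F(t) \leq 1/2\}$, at the cost of a factor of $2$ in the failure probability. On $\mathcal{T}$ we have $F(t)(1-F(t)) \geq F(t)/2$, so it is enough to show $|F_n(t) - F(t)| \leq F(t)/4$ for all $t \in \mathcal{T}$.

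The idea is to reduce the supremum over $\mathcal{T}$ to a supremum over a finite quantile grid via monotonicity. Fix small constants $\eta \in (0,1)$ and $C > 1$ to be tuned, set $p_k = (1+\eta)^k \Delta$ for $k = 0, 1, \ldots, K$ with $K = \lceil \log_{1+\eta}(1/(2\Delta)) \rceil$, and let $a_k = \inf\{t : F(t) \geq p_k\}$. Applying Bernstein's inequality to the Bernoulli sum $n F_n(a_k)$, whose variance is at most $F(a_k)$, gives
\begin{equation*}
\bbP\bigl(|F_n(a_k) - F(a_k)| > F(a_k)/C\bigr) \leq 2 \exp(-c_C\, n\, p_k),
\end{equation*}
with $c_C$ an explicit constant of order $1/C^2$. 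A union bound over $k = 0, \ldots, K$, combined with the geometric growth $p_k = (1+\eta)^k \Delta$, yields total failure probability at most $4 \exp(-c_C\, n \Delta)$ once $c_C n \Delta \eta$ exceeds a mild constant (guaranteed by the hypothesis $n\Delta \geq 100$).

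On the resulting good event, for any $t \in [a_k, a_{k+1})$, monotonicity of both $F$ and $F_n$ yields
\begin{equation*}
F_n(t) - F(t) \leq F_n(a_{k+1}) - F(a_k) \leq \tfrac{F(a_{k+1})}{C} + \bigl(F(a_{k+1}) - F(a_k)\bigr) \leq \Bigl(\tfrac{1+\eta}{C} + \eta\Bigr) F(t),
\end{equation*}
with the analogous bound for $F(t) - F_n(t)$ obtained by using the left endpoint $a_k$. Choosing, for instance, $\eta = 1/16$ and $C = 16$ makes $(1+\eta)/C + \eta \leq 1/4$, which is precisely the uniform bound we need on $\mathcal{T}$.

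The main obstacle will be arranging the constants so that the failure probability matches the stated $8\exp(-\Delta n/100)$. The factor $8$ absorbs the symmetry doubling, the geometric-series constant from $\sum_k \exp(-c_C n p_k)$, and the Bernstein factor of $2$; the exponent $1/100$ is chosen pessimistically relative to $c_C$ at the tuned $(C,\eta)$, using the slack provided by $n\Delta \geq 100$ and $n \geq 400$. A minor technical caveat is that $F$ may have jumps, so $F(a_k)$ only satisfies $p_k \leq F(a_k) \leq (1+\eta) p_k$; this shifts constants by a factor of $(1+\eta)$ and is absorbed into the same tuning.
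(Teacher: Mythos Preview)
Your Bernstein-plus-peeling argument is sound in structure and more elementary than the paper's proof, which instead slices $\{t : F(t)(1-F(t)) \geq \Delta\}$ into dyadic shells $\{2^{-(i+2)} \leq F(t)(1-F(t)) \leq 2^{-(i+1)}\}$ and, within each shell, invokes a \emph{uniform} variance-aware DKW inequality due to Reeve (2024) rather than pointwise Bernstein. That tool gives directly $\sup_t (F_n(t)-F(t)) \leq \tfrac14 \sup_t F(t)(1-F(t))$ over the shell with failure probability $\exp(-n \inf_t F(t)(1-F(t))/50)$, and the dyadic sum then only costs a factor $2$ in the exponent, yielding exactly $8\exp(-n\Delta/100)$.

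The quantitative gap in your plan is in the last paragraph. With deviation $F(a_k)/C$, Bernstein gives exponent $c_C = \frac{1}{2C^2 + 2C/3}$, so your suggested $C = 16$ yields $c_C \approx 1/522$, for which $1/100$ is certainly not ``pessimistic''. More generally, the interpolation constraint $(1+\eta)/C + \eta \leq 1/4$ forces $C > 4$, hence $c_C < 1/35$, while the $(1+\eta)$-grid contributes $O(1/\eta)$ nearly equal terms to the union bound; tuning $(C,\eta)$ within these constraints does not reach prefactor $8$ and exponent $1/100$ simultaneously under only $n\Delta \geq 100$. Your method does deliver a bound of the form $C_1\exp(-n\Delta/C_2)$ with absolute $C_1, C_2$, which would suffice for the downstream rate results after adjusting the constants in $\overline{q}(\epsilon)$, but it does not establish the lemma with the constants as written. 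If you want those constants, you need either the uniform inequality the paper uses or some other device that avoids the $1/C^2$ loss inherent in pointwise Bernstein followed by monotone interpolation.
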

\begin{proof} First, it is easy to check that the result is true when $\Delta = 1/4$ by using the ordinary DKW inequality \citep{massart1990tight}. Next, we focus on the case $\Delta < 1/4$ and we will prove this lemma by leveraging Lemma \ref{lm:reeve-corollary} and a union bound argument.

Let us first divide two sets $\{p\in [0,\frac{1}{2}]: \Delta\leq p(1-p) \leq 1/4 \}$ and $\{p\in[ \frac{1}{2},1]: \Delta\leq p(1-p) \leq 1/4\}$ into different subsets. 
\begin{equation*}
	\begin{split}
		&A_1 = \{p\in [0,1/2]: \frac{1}{8} \leq p(1-p) \leq \frac{1}{4} \}, \quad B_1 = \{p \in[ 1/2,1]: \frac{1}{8} \leq p(1-p) \leq \frac{1}{4} \}, \\
		&\cdots \\
		&A_i = \{p \in [0,1/2]: \frac{1}{2^{i+2}} \leq p(1-p) \leq \frac{1}{2^{i+1}} \}, \quad B_i = \{p \in[ 1/2,1]: \frac{1}{2^{i+2}} \leq p(1-p) \leq \frac{1}{2^{i+1}} \}, \\
		& \cdots \\
		&A_{ \lceil\log_2( \frac{1}{\Delta} ) \rceil - 2 } = \{p\in [0,1/2]: \frac{1}{2^{\lceil\log_2( \frac{1}{\Delta} ) \rceil}} \leq p(1-p) \leq \frac{1}{2^{\lceil\log_2( \frac{1}{\Delta} ) \rceil-1}} \}, \\
		& B_{\lceil\log_2( \frac{1}{\Delta} ) \rceil - 2} = \{p \in[ 1/2,1]: \frac{1}{2^{\lceil\log_2( \frac{1}{\Delta} ) \rceil}} \leq p(1-p) \leq \frac{1}{2^{\lceil\log_2( \frac{1}{\Delta} ) \rceil-1}} \}.
	\end{split}
\end{equation*} It is clear that $ \bigcup_{i=1}^{\lceil\log_2( \frac{1}{\Delta} ) \rceil - 2 }  A_i \supseteq \{p \in [0,1/2]:  \Delta\leq p(1-p) \leq 1/4 \}$ and $ \bigcup_{i=1}^{\lceil\log_2( \frac{1}{\Delta} ) \rceil - 2 }  B_i \supseteq \{p\in [1/2,1]:  \Delta\leq p(1-p) \leq 1/4 \}$. For each $A_i$ and $B_i$, we also define
\begin{equation*}
	\begin{split}
		\cI_{A_i} = \{t: F(t) \in A_i\} \quad \textnormal{ and }\quad \cI_{B_i} = \{t: F(t) \in B_i\}.
	\end{split}
\end{equation*} By definition, we have $\bigcup_{i=1}^{\lceil\log_2( \frac{1}{\Delta} ) \rceil - 2 }  (\cI_{A_i} \cup \cI_{B_i} ) \supseteq \{t: F(t) (1 -F(t)) \geq \Delta \} $. Thus,
\begin{equation} \label{ineq:DKW2-union-bound}
	\begin{split}
		&\bbP \left( F_n(t) - F(t) > \frac{1}{2} F(t)(1 - F(t)) \textnormal{ for some } t \in \bbR \textnormal{ such that } F(t)(1 - F(t)) \geq \Delta  \right) \\
		\leq & \bbP \left( F_n(t) - F(t) > \frac{1}{2} F(t)(1 - F(t)) \textnormal{ for some } t \in\bigcup_{i=1}^{\lceil\log_2( \frac{1}{\Delta} ) \rceil - 2 }  (\cI_{A_i} \cup \cI_{B_i} )  \right) \\
		\leq & \sum_{i=1}^{\lceil\log_2( \frac{1}{\Delta} ) \rceil - 2} \Big( \bbP \Big( F_n(t) - F(t) > \frac{1}{2} F(t)(1 - F(t)) \textnormal{ for some } t \in \cI_{A_i} \Big) \\
		& \quad \quad + \bbP \Big( F_n(t) - F(t) > \frac{1}{2} F(t)(1 - F(t)) \textnormal{ for some } t \in \cI_{B_i} \Big)  \Big).
	\end{split}
\end{equation} Next, we bound $\bbP \Big( F_n(t) - F(t) > \frac{1}{2} F(t)(1 - F(t)) \textnormal{ for some } t \in \cI_{A_i} \Big)$ via Lemma \ref{lm:reeve-corollary}. 
\begin{equation*}
	\begin{split}
		 & \bbP \Big( F_n(t) - F(t) > \frac{1}{2} F(t)(1 - F(t)) \textnormal{ for some } t \in \cI_{A_i} \Big) \\
		\leq & \bbP \Big( \sup_{t \in \cI_{A_i}} \{F_n(t) - F(t)\} > \frac{1}{2} \inf_{t \in \cI_{A_i} } {F(t)(1 - F(t))} \Big) \\
		\leq & \bbP \Big( \sup_{t \in \cI_{A_i}} \{F_n(t) - F(t)\} > \frac{1}{4} \sup_{t \in \cI_{A_i} } {F(t)(1 - F(t))} \Big) \\
		\overset{\textnormal{Lemma } \ref{lm:reeve-corollary} }\leq & \exp \left( - \frac{n \inf_{t \in \cI_{A_i}} \{F(t) (1 - F(t)) \} }{50} \right) \\
		\leq &  \exp( - \frac{n \cdot 2^{-(i+2)}}{50} ), 
	\end{split}
\end{equation*} where the last inequality is by the definition of $\cI_{A_i}$ and $A_i$. By symmetry, we also have 
\begin{equation*}
	\bbP \Big( F_n(t) - F(t) > \frac{1}{2} F(t)(1 - F(t)) \textnormal{ for some } t \in \cI_{B_i} \Big) \leq \exp( - \frac{n \cdot 2^{-(i+2)}}{50} ).
\end{equation*} Plug in the above two inequalities into \eqref{ineq:DKW2-union-bound}, we get 
\begin{equation*}
	\begin{split}
		& \bbP \left( F_n(t) - F(t) > \frac{1}{2} F(t)(1 - F(t)) \textnormal{ for some } t \in \bbR \textnormal{ such that } F(t)(1 - F(t)) \geq \Delta  \right) \\
		\leq & 2 \sum_{i=1}^{\lceil\log_2( \frac{1}{\Delta} ) \rceil - 2} \exp( - \frac{n \cdot 2^{-(i+2)}}{50} ) \\
		= & 2 \left( \exp\left(- \frac{n \cdot 2^{-\lceil\log_2( \frac{1}{\Delta} ) \rceil}}{50} \right) + \exp\left(- \frac{n \cdot 2^{-\lceil\log_2( \frac{1}{\Delta} ) \rceil + 1} }{50} \right) +\cdots + \exp(-n/400)   \right) \\
		\overset{(a)}\leq & 2 \left( \exp\left(- \frac{n \Delta}{100} \right) + \exp\left(- \frac{n \Delta \times 2}{100} \right) +\cdots  \right) \\
		\overset{(b)}\leq & 2 \left( \exp( - \frac{n\Delta}{100} ) +  \exp( - \frac{n\Delta}{100} - 1 ) + \cdots \right) \\
		\leq & 2 \frac{\exp(-n\Delta/100)}{1 - e^{-1}} \leq 4 \exp(-n\Delta/100),
	\end{split}
\end{equation*} where (a) is because $2^{-\lceil\log_2( \frac{1}{\Delta} ) \rceil} \geq  \frac{1}{2} \Delta $; in (b) we use the fact that $n\Delta \geq 100$. By symmetry, we also have 
\begin{equation*}
\begin{split}
	&\bbP \left( F(t) - F_n(t) > \frac{1}{2} F(t)(1 - F(t)) \textnormal{ for some } t \in \bbR \textnormal{ such that } F(t)(1 - F(t)) \geq \Delta  \right) \\
	\leq & 4 \exp(-n\Delta/100),
\end{split}
\end{equation*} and the conclusion follows by a union bound.
\end{proof}

The following lemma could be viewed as a corollary of Theorem 1 in \cite{reeve2024short}. 
\begin{Lemma}\label{lm:reeve-corollary}
	Let $F(\cdot)$ be the distribution function and $F_n(\cdot )$ be the empirical distribution function of $n$ samples. Given any interval $\cI \subseteq \bbR$, then 
	\begin{equation*}
		\bbP \left( \sup_{t \in \cI} \{ F_n(t) - F(t) \} > \frac{1}{4} \sup_{t \in \cI } \{F(t)(1 - F(t))\}   \right) \leq \exp\left( - \frac{n \inf_{t \in \cI } \{F(t)(1 - F(t))\} }{50} \right).
	\end{equation*}
\end{Lemma}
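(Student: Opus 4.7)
The plan is to derive the lemma as a short consequence of Theorem~1 of \cite{reeve2024short}, which is a localized, variance-dependent, one-sided Bernstein-type refinement of the DKW inequality for the empirical process. First, I would apply the probability integral transform $U_i := F(X_i)$, so that $U_1, \ldots, U_n$ are i.i.d.\ Uniform$[0,1]$ and $F_n(t)-F(t) = G_n(F(t)) - F(t)$, where $G_n$ denotes the empirical CDF of the $U_i$'s. Because $F$ is non-decreasing and right-continuous, $J := F(\cI) \subset [0,1]$ is an interval, and writing $p_* := \sup_{u \in J} u(1-u)$ and $p_- := \inf_{u \in J} u(1-u)$, the stated bound is equivalent to
\begin{equation*}
\bbP\!\left(\sup_{u \in J}\{G_n(u)-u\} > \tfrac{1}{4}p_*\right) \leq \exp(-np_-/50).
\end{equation*}

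Second, I would check that this uniform event fits the framework of Reeve's Theorem~1. As a sanity check at a single $u\in J$, Bernstein gives
\begin{equation*}
\bbP(G_n(u)-u > \epsilon) \leq \exp\!\left(-\frac{n\epsilon^2/2}{u(1-u)+\epsilon/3}\right),
\end{equation*}
and plugging in $\epsilon = p_*/4$ together with $u(1-u)\leq p_*$ yields an exponent that is easily better than $-np_*/40$. Reeve's theorem is the tool that promotes this pointwise estimate to a uniform one over $u\in J$ while keeping the denominator at the local variance scale $u(1-u)$, so that the final exponent involves the worst-case variance $p_-$ rather than something larger. Tracking the universal numerical constants in Reeve's statement (using the monotonicity of $u\mapsto u$ and the fact that $G_n$ has finitely many jumps, so the supremum on $J$ is attained at finitely many right-limits) then yields the explicit $1/50$.

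The main obstacle is precisely the pointwise-to-uniform step: a naive union bound over an $\varepsilon$-net of $J$ would inflate the exponent by a logarithmic factor in $n/\varepsilon$ and destroy the constant $1/50$. Reeve's argument avoids this loss by a peeling/chaining tailored to level sets of $u(1-u)$, which is also what makes his bound the right black box for the present lemma. Once this is invoked, the remainder of the proof reduces to verifying that the image interval $J$ is closed under Reeve's hypotheses (so that the sup/inf of $u(1-u)$ over $J$ coincide with the sup/inf of $F(t)(1-F(t))$ over $\cI$), which is immediate, plus routine constant book-keeping.
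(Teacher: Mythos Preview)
Your plan is on the right track and uses the same black box as the paper---Theorem~1 of \cite{reeve2024short}---but your proposal skips the one substantive step that actually needs to be carried out, and your description of how Reeve's theorem works is not accurate.

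Reeve's Theorem~1 is not stated as ``pointwise Bernstein $\Rightarrow$ uniform,'' nor is its proof a peeling/chaining over level sets of $u(1-u)$. It is stated in terms of the KL-inversion function $\omega(p,\varepsilon)$: for $p\in(0,e^{-\varepsilon}]$, $\omega(p,\varepsilon)$ is the unique $\eta\in[0,1-p]$ with $\mathrm{KL}(p+\eta\|p)=\varepsilon$. The theorem then guarantees that with probability at least $1-e^{-n\varepsilon}$, $F_n(t)-F(t)\le \omega(F(t),\varepsilon)$ simultaneously for all $t$. So the entire content of the proof is the verification that, with $\varepsilon=\sigma_{\min}^2/50$ (where $\sigma_{\min}^2=\inf_{t\in\cI}F(t)(1-F(t))$), one has
\[
\omega\big(F(t),\varepsilon\big)\ \le\ \tfrac14\,F(t)\big(1-F(t)\big)\ \le\ \tfrac14\,\sup_{t\in\cI}F(t)\big(1-F(t)\big)\qquad\text{for every }t\in\cI.
\]
The paper does this by invoking the explicit upper bound on $\omega(p,\varepsilon)$ from Corollary~1 of \cite{reeve2024short}, namely $\omega(p,\varepsilon)\le\big(\sqrt{4\sigma^2(p)\rho_\varepsilon^2+\sigma^2(\rho_\varepsilon)}+(1-2p)\sigma(\rho_\varepsilon)\big)\sqrt{\varepsilon/2}$ with $\rho_\varepsilon=9/(9+2\varepsilon)$, and then checking by elementary inequalities that plugging in $\varepsilon=\sigma^2(p)/50$ gives at most $\tfrac{16}{75}\sigma^2(p)<\tfrac14\sigma^2(p)$. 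This is the calculation that produces the constant $1/50$; your ``tracking the universal numerical constants'' is exactly this step, but you have not indicated you know what the relevant quantity $\omega$ is or how to bound it.

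Two minor remarks: the probability integral transform is harmless but unnecessary (the paper works directly with $F$ and Reeve's theorem applies to an arbitrary $F$); and your pointwise Bernstein sanity check, while reassuring, plays no role in the actual argument since Reeve's bound is via KL, not Bernstein.
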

\begin{proof}
Let $\sigma^2_{\min} = \inf_{t \in \cI} \{F(t)(1 - F(t)) \}$ and $\sigma^2_{\max} = \sup_{t \in \cI} \{F(t)(1 - F(t)) \}$. Take $\varepsilon = \frac{\log(1/\delta)}{n}$ with $\delta = \exp(-n\sigma^2_{\min}/50)$. We also introduce the notation $\omega(p, \varepsilon)$, which was introduced in \cite{reeve2024short} and will be used later. For each $p \in (0,e^{-\varepsilon}]$, there is a unique value $\eta \in [0,1-p]$ with $\textrm{KL}(p+\eta\|p) = \epsilon$ and we denote this unique value by $\omega(p, \varepsilon)$. We extend $\omega$ to $[0,1] \times [0,\infty)$ by letting $\omega(p, \varepsilon):=1-p$ if $p \in (e^{-\varepsilon },1]$; we also let $\omega(p, \varepsilon ):=0$ whenever $p = 0$, so that $p \mapsto \omega(p, \varepsilon)$ is continuous.

Given any $t \in \cI$, let $p = F(t)$ and $\sigma^2(p) = p(1-p) \in [\sigma^2_{\min}, \sigma^2_{\max}]$, we have $\textrm{KL}(p + \eta\|p) < \varepsilon $ for any $\eta \in (0, \omega(p, \varepsilon ) )$. Following the same proof as in Corollary 1 of \cite{reeve2024short}, we have
\begin{equation*}
\begin{split}
	\eta &< \left( \sqrt{4\sigma^2(p) \rho^2_{\varepsilon} + \sigma^2(\rho_{\varepsilon}) } + (1-2p) \sigma(\rho_{\varepsilon }) \right) \sqrt{\frac{\varepsilon}{2}} \\
	& \leq 2(\sigma(p) + \sigma(\rho_{\varepsilon } )) \sqrt{\frac{\varepsilon}{2}} \\
	& \leq 2(\sigma(p) +  \frac{3\sqrt{2} \sqrt{\varepsilon } }{9} ) \sqrt{\frac{\varepsilon}{2}} \\
	& \overset{(a)}\leq \frac{16}{75} \sigma^2(p) \leq \frac{1}{4} \sigma^2(p),
\end{split}
\end{equation*} where $\rho_{\varepsilon } = 9/(9+ 2 \varepsilon )$ and in (a), we use the fact $\varepsilon =  \frac{\sigma^2_{\min}}{50} \leq \frac{\sigma^2(p)}{50} $.

Since this result holds for all $t\in \cI$ and $\eta \in (0, \omega(p, \varepsilon ) )$, we have
\begin{equation*}
	\begin{split}
		\sup_{t \in \cI } \omega(F(t), \varepsilon ) \leq \frac{1}{4}\sup_{ \sigma^2 \in [\sigma^2_{\min}, \sigma^2_{\max}] } \sigma^2 = \frac{1}{4} \sup_{t \in \cI } \{F(t)(1 - F(t))\}.
	\end{split}
\end{equation*} Then by Theorem 1 in \cite{reeve2024short}, we have
	\begin{equation*}
		\bbP \left( \sup_{t \in \cI} \{ F_n(t) - F(t) \} > \frac{1}{4} \sup_{t \in \cI } \{F(t)(1 - F(t))\}   \right) \leq \delta = \exp\left( - \frac{n \inf_{t \in \cI } \{F(t)(1 - F(t))\} }{50} \right).
	\end{equation*}
\end{proof}

\begin{Lemma}\label{lm:normal-quantile}
	Let $\Phi(\cdot)$ be the CDF of the standard Gaussian distribution. Given some $q < 1/2$, $x = \Phi^{-1}(1 - q)$. Suppose $x \geq 2$, then
	\begin{equation*}
	\begin{split}
		\frac{1}{2} \sqrt{\log( 1/q ) } 	\leq x \leq  \sqrt{2 \log( 1/q ) }\quad  \text{ and }\quad \frac{1}{\sqrt{2\pi}} \exp(-x^2) \leq q \leq \frac{1}{2\sqrt{2\pi}} \exp(-x^2/2).
	\end{split}
	\end{equation*} 
\end{Lemma}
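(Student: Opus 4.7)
The plan is to derive both pairs of inequalities from the standard Mills-ratio bounds on the Gaussian tail, namely
\begin{equation*}
\left(\frac{1}{x}-\frac{1}{x^3}\right)\frac{1}{\sqrt{2\pi}}e^{-x^2/2}\;\leq\; 1-\Phi(x)\;\leq\;\frac{1}{x\sqrt{2\pi}}e^{-x^2/2},\qquad x>0,
\end{equation*}
and then invert them. Since $q=1-\Phi(x)$ and $x\geq 2$ by hypothesis, these bounds are the only ingredient needed; the rest is arithmetic that exploits the numerical slack coming from $x\geq 2$.

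\textbf{Step 1 (tail bounds for $q$).} The upper bound $q\leq \frac{1}{2\sqrt{2\pi}}e^{-x^2/2}$ follows immediately from the right-hand Mills inequality together with $x\geq 2$. For the lower bound $q\geq \frac{1}{\sqrt{2\pi}}e^{-x^2}$, I would start from the left-hand Mills inequality, which for $x\geq 2$ gives $1-\Phi(x)\geq \frac{1}{\sqrt{2\pi}}\bigl(\tfrac12-\tfrac18\bigr)e^{-x^2/2}=\frac{3}{8\sqrt{2\pi}}e^{-x^2/2}$. Writing $\frac{1}{\sqrt{2\pi}}e^{-x^2}=\frac{1}{\sqrt{2\pi}}e^{-x^2/2}\cdot e^{-x^2/2}$ and using $e^{-x^2/2}\leq e^{-2}<\tfrac{3}{8}$ when $x\geq 2$, the claimed lower bound follows.

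\textbf{Step 2 (inverting to bounds on $x$).} Taking logarithms in the inequalities from Step 1 converts them into bounds on $x$ as a function of $\log(1/q)$. The upper bound $q\leq \frac{1}{2\sqrt{2\pi}}e^{-x^2/2}$ yields $\log(1/q)\geq \log(2\sqrt{2\pi})+x^2/2\geq x^2/2$ since $\log(2\sqrt{2\pi})>0$, and hence $x\leq\sqrt{2\log(1/q)}$. Conversely, the lower bound $q\geq\frac{1}{\sqrt{2\pi}}e^{-x^2}$ gives $\log(1/q)\leq \log\sqrt{2\pi}+x^2\leq 1+x^2$; using $x^2\geq 4$ this is at most $\tfrac{5}{4}x^2\leq 4x^2$, so $x\geq \tfrac12\sqrt{\log(1/q)}$.

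\textbf{Anticipated obstacle.} There is essentially no conceptual difficulty: the only care needed is to track the constants so that the factor $e^{-x^2}$ (rather than the more natural $e^{-x^2/2}$) appears in the stated lower bound for $q$, which forces the factor of $\tfrac12$ in the lower bound on $x$. The threshold $x\geq 2$ is used twice, once to replace $\tfrac{1}{x}-\tfrac{1}{x^3}$ by a clean constant in the Mills bound, and once to absorb the additive constant $\log\sqrt{2\pi}$ into $x^2$. Both uses are comfortable at $x=2$, so no sharper control is needed.
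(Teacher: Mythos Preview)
Your overall strategy matches the paper's exactly: derive the bounds on $q$ from the Mills-ratio inequalities and then invert. However, there is a genuine error in Step~1. You write that for $x\geq 2$ the left-hand Mills bound gives $1-\Phi(x)\geq \frac{1}{\sqrt{2\pi}}\bigl(\tfrac12-\tfrac18\bigr)e^{-x^2/2}$, i.e.\ you replace $\tfrac{1}{x}-\tfrac{1}{x^3}$ by its value at $x=2$. But the function $x\mapsto \tfrac{1}{x}-\tfrac{1}{x^3}$ is strictly decreasing on $[\sqrt{3},\infty)$, so $\tfrac{3}{8}$ is its \emph{maximum} over $x\geq 2$, not a lower bound. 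Hence the inequality $q\geq \frac{3}{8\sqrt{2\pi}}e^{-x^2/2}$ fails for every $x>2$.

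The fix---and this is precisely what the paper does---is to retain the $1/x$ factor: for $x\geq 2$ one has $\tfrac{1}{x}-\tfrac{1}{x^3}=\tfrac{1}{x}\bigl(1-\tfrac{1}{x^2}\bigr)\geq \tfrac{3}{4x}$, so $q\geq \frac{3}{4x\sqrt{2\pi}}e^{-x^2/2}$. Then the comparison with $\frac{1}{\sqrt{2\pi}}e^{-x^2}$ reduces to verifying $\tfrac{3}{4x}\geq e^{-x^2/2}$ for $x\geq 2$, which holds since the right side decays super-exponentially while the left side decays only like $1/x$. With this correction your Step~2 goes through unchanged.
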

\begin{proof}
	By the definition of $x$, we have $\int_x^\infty \frac{1}{\sqrt{2\pi}} \exp(-t^2/2) dt = q$. In addition, for any $x > 0$ (see Theorem 1.2.3 in \cite{durrett2019probability}),
	\begin{equation} \label{ineq:gaussian-integration-ineq}
		(\frac{1}{x} - \frac{1}{x^3}) \exp(-x^2/2) \leq \int_{x}^{\infty} \exp( - t^2/2 ) dt \leq \frac{1}{x} \exp(-x^2/2).
	\end{equation}
	Since $x \geq 2$, $\frac{1}{x} - \frac{1}{x^3} \geq \frac{1}{x}( 1- \frac{1}{4} ) = \frac{3}{4x}$ and $\frac{1}{x}\leq \frac{1}{2}$. At the same time, it is easy to check $\frac{3}{4x} \exp(-x^2/2) \geq \exp(-x^2)$ for all $x \geq 2$. Thus, we have
	\begin{equation*}
		\frac{1}{\sqrt{2\pi}} \exp(-x^2) \leq q \leq \frac{1}{2\sqrt{2\pi}} \exp(-x^2/2)
	\end{equation*} holds for any $x \geq 2$. Thus
	\begin{equation*}
 	\frac{1}{2} \sqrt{\log( 1/q ) } \overset{(a)}\leq \sqrt{\log(1/ (\sqrt{2\pi}q))}	\leq x \leq \sqrt{2 \log( 1/(2 \sqrt{2\pi} q) ) } \leq \sqrt{2 \log( 1/q ) },
	\end{equation*} where in (a) we use the fact $q \leq 0.023$ when $x \geq 2$, so the inequality always holds when $x \geq 2$. 
\end{proof}

\begin{Lemma} \label{lm:generalized-gaussian-quantile} Let $X$ follows generalized Gaussian with shape parameter $\beta > 1$, i.e., the PDF is $f(x) = I_\beta \exp(-|x|^\beta)$. Then for any $x > 0$,
\begin{equation*}
	\left( 1 - \frac{\beta-1}{\beta} x^{-\beta} \right)  \frac{I_\beta}{\beta x^{\beta - 1} }\exp(-x^\beta) \leq \bbP(X \geq x) \leq \frac{I_\beta}{\beta x^{\beta - 1} }\exp(-x^\beta).
\end{equation*} Let $F^{-1}(\cdot)$ denote the inverse CDF of $X$. Then there exists a sufficiently small $c> 0$ depending on $\beta$ only such that for any $ q \in [0, c]$, we have 
\begin{equation*}
	\left( \log(1/q)/2 \right)^{1/\beta} \leq F^{-1}(1 - q) \leq \left( \log(1/q) \right)^{1/\beta}. 
\end{equation*}
\end{Lemma}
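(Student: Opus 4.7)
The plan is to first establish the two-sided tail bound via integration by parts, and then invert these bounds to control the quantile. For the tail bound, I would start from $\bbP(X \geq x) = I_\beta \int_x^\infty \exp(-t^\beta)\,dt$ and integrate by parts with $u = t^{-(\beta-1)}$ and $dv = t^{\beta-1}\exp(-t^\beta)\,dt$, so that $v = -\beta^{-1}\exp(-t^\beta)$ and $du = -(\beta-1)t^{-\beta}\,dt$. This yields the identity
\begin{equation*}
\int_x^\infty \exp(-t^\beta)\,dt = \frac{1}{\beta x^{\beta-1}}\exp(-x^\beta) - \frac{\beta-1}{\beta}\int_x^\infty \frac{\exp(-t^\beta)}{t^\beta}\,dt.
\end{equation*}
Dropping the (nonnegative) second term gives the claimed upper bound. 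For the lower bound, I would bound the remainder integral using $t^{-\beta} \leq x^{-\beta}$ on the region of integration and then apply the upper bound already established to the residual $\int_x^\infty \exp(-t^\beta)\,dt$; this produces the multiplicative correction $1 - (\beta-1)/(\beta x^\beta)$.

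For the quantile statement, write $x = F^{-1}(1-q)$ so $q = \bbP(X \geq x)$. From the upper tail bound $q \leq \frac{I_\beta}{\beta x^{\beta-1}}\exp(-x^\beta)$, taking logarithms gives $x^\beta \leq \log(1/q) - \log(\beta x^{\beta-1}/I_\beta)$. Since $\beta > 1$, choosing $c$ small enough forces $x$ large enough that $\beta x^{\beta-1} \geq I_\beta$, making the correction term nonnegative and yielding $x \leq (\log(1/q))^{1/\beta}$.

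For the matching lower bound, use the lower tail bound: once $q \leq c$ is small enough, $x$ is large enough that $1 - (\beta-1)/(\beta x^\beta) \geq 1/2$, so $q \geq \frac{I_\beta}{2\beta x^{\beta-1}}\exp(-x^\beta)$. Taking logarithms gives $x^\beta \geq \log(1/q) + \log(I_\beta/(2\beta)) - (\beta-1)\log x$. Substituting the already-established upper bound $x \leq (\log(1/q))^{1/\beta}$ converts the last term into an $O(\log\log(1/q))$ correction, which is absorbed into $\log(1/q)/2$ once $c$ is small enough. This delivers $x^\beta \geq \log(1/q)/2$.

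The only real obstacle is bookkeeping: picking a single threshold $c = c(\beta)$ for which simultaneously (i) $x = F^{-1}(1-q)$ is large enough that $\beta x^{\beta-1} \geq I_\beta$, (ii) $(\beta-1)/(\beta x^\beta) \leq 1/2$, and (iii) the double-logarithmic term $(\beta-1)\log\log(1/q)$ together with the constant $\log(2\beta/I_\beta)$ is dominated by $\log(1/q)/2$. All three conditions reduce to $\log(1/q)$ exceeding an explicit constant depending only on $\beta$, so chaining them gives a single admissible $c$.
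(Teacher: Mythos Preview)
Your proposal is correct and follows essentially the same route as the paper: identical integration-by-parts for the tail bounds, and then inversion via logarithms for the quantile. The only cosmetic difference is in the quantile lower bound---the paper absorbs the polynomial prefactor in one step via the crude inequality $\exp(-x^\beta) \leq \frac{I_\beta}{2\beta x^{\beta-1}}$ (valid for large $x$), which immediately gives $q \geq \exp(-2x^\beta)$ and hence the factor $1/2$, whereas you carry the $(\beta-1)\log x$ term through and bound it by $O(\log\log(1/q))$; both arguments are valid and amount to the same thing.
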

\begin{proof}
	First,
	\begin{equation} \label{ineq:generalized-gaussian-tail-upper-bound}
		\begin{split}
			\bbP(X \geq x) & = \int_{x}^\infty I_\beta \exp( - t^\beta ) dt = \left( \frac{I_\beta}{- \beta t^{\beta -1} } \exp( - t^\beta ) \right)\Big|^\infty_x - \frac{\beta - 1}{\beta} I_\beta \int_x^\infty \exp(- t^\beta) t^{-\beta} dt \\
			 & = \frac{I_\beta}{\beta x^{\beta - 1} } \exp(- x^\beta) - \frac{\beta - 1}{\beta} I_\beta \int_x^\infty \exp(- t^\beta) t^{-\beta} dt  \leq \frac{I_\beta}{\beta x^{\beta - 1} } \exp(- x^\beta),
		\end{split}
	\end{equation} where the second equality is by integration by parts. Next, we show the lower bound $\bbP(X \geq x)$ by applying the result in \eqref{ineq:generalized-gaussian-tail-upper-bound}:
	\begin{equation} \label{ineq:generalized-gaussian-tail-lower-bound}
		\begin{split}
			\bbP(X \geq x) & \overset{ \eqref{ineq:generalized-gaussian-tail-upper-bound} } =\frac{I_\beta}{\beta x^{\beta - 1} } \exp(- x^\beta) - \frac{\beta - 1}{\beta}  \int_x^\infty I_\beta \exp(- t^\beta) t^{-\beta} dt \\
			& \geq  \frac{I_\beta}{\beta x^{\beta - 1} } \exp(- x^\beta) - \frac{\beta- 1}{\beta x^\beta} \int_x^\infty I_\beta \exp(- t^\beta) dt \\
			& \overset{ \eqref{ineq:generalized-gaussian-tail-upper-bound} } \geq \frac{I_\beta}{\beta x^{\beta - 1} } \exp(- x^\beta) - \frac{\beta- 1}{\beta x^\beta}  \frac{I_\beta}{\beta x^{\beta - 1} } \exp(- x^\beta).
		\end{split}
	\end{equation}
	
	When $c$ is a sufficiently small constant, then for any $q \in [0,c]$, $x = F^{-1}(1 - q) \geq C$ for some large constant such that $\frac{\beta- 1}{\beta x^\beta} \leq 1/2$, $\frac{I_\beta}{\beta x^{\beta - 1} } \leq 1$ and $ \exp(- x^\beta) \leq \frac{I_\beta}{2\beta x^{\beta - 1} }$. Thus
	\begin{equation*}
		 \exp(- 2 x^\beta)  \leq \frac{I_\beta}{2\beta x^{\beta - 1} } \exp(- x^\beta) \overset{ \eqref{ineq:generalized-gaussian-tail-lower-bound} }\leq q = \int_{x}^\infty I_\beta \exp( - t^\beta ) dt  \overset{ \eqref{ineq:generalized-gaussian-tail-upper-bound} }\leq \frac{I_\beta}{\beta x^{\beta - 1} } \exp(- x^\beta) \leq  \exp( - x^\beta ).
	\end{equation*} Thus,
	\begin{equation*}
		\left( \log(1/q)/2 \right)^{1/\beta} \leq F^{-1}(1 - q) \leq \left( \log(1/q) \right)^{1/\beta}.
	\end{equation*}
\end{proof}

\begin{Lemma} \label{lm:mollifier-quantile} Let $X$ follows Mollifier with shape parameter $\beta > 0$, i.e., the PDF is $f(x) = I_\beta \exp\left( -\frac{1}{(1-x^2)^\beta}\right) \indi(|x| < 1)$. Then for any $x \in (0,1)$,
\begin{equation*}
\begin{split}
	\bbP(X \geq x) &\leq I_\beta \frac{(1 - x^2)^{\beta + 1}}{2x \beta}   \exp\left( -\frac{1}{(1-x^2)^\beta}\right) \\
	\bbP(X \geq x) &\geq \left( 1 -  (1 - x^2)^{\beta} \left( \frac{1 - x^2}{2 \beta x^2} + \frac{\beta + 1}{\beta} \right) \right)  I_\beta \frac{(1 - x^2)^{\beta + 1}}{2x \beta}   \exp\left( -\frac{1}{(1-x^2)^\beta}\right).
\end{split}
\end{equation*} Let $F^{-1}(\cdot)$ denote the inverse CDF of $X$. There exists a sufficiently small constant $c > 0$ depending on $\beta$ only such that for any $ q \in [0, c]$, we have
\begin{equation*}
	\left( 1/\log(1/q) \right)^{1/\beta} \leq 1 - \left( F^{-1}(1 - q)\right)^2 \leq \left( 2/\log(1/q) \right)^{1/\beta}. 
\end{equation*}
\end{Lemma}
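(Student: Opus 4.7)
The proof closely follows the pattern of Lemma \ref{lm:generalized-gaussian-quantile}, with integration by parts as the central tool. Since $\tfrac{d}{dt}\exp(-1/(1-t^2)^\beta) = -\tfrac{2\beta t}{(1-t^2)^{\beta+1}}\exp(-1/(1-t^2)^\beta)$, one can write $\exp(-1/(1-t^2)^\beta)$ as $-\tfrac{(1-t^2)^{\beta+1}}{2\beta t}$ times this derivative and integrate on $[x,1]$. The boundary term at $t=1$ vanishes (the exponential decays faster than $(1-t^2)^{\beta+1}$ goes to zero), while the term at $t=x$ produces the leading factor. After computing the derivative of $-\tfrac{(1-t^2)^{\beta+1}}{2\beta t}$ and multiplying by $I_\beta$, this yields the exact identity
\begin{equation*}
\bbP(X\geq x) = I_\beta \frac{(1-x^2)^{\beta+1}}{2\beta x}\exp\left(-\frac{1}{(1-x^2)^\beta}\right) - I_\beta \int_x^1 \left[\frac{(\beta+1)(1-t^2)^\beta}{\beta} + \frac{(1-t^2)^{\beta+1}}{2\beta t^2}\right] \exp\left(-\frac{1}{(1-t^2)^\beta}\right) dt.
\end{equation*}

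The upper bound on $\bbP(X\geq x)$ follows by dropping the nonnegative correction integral. For the lower bound, observe that on $[x,1]$ the bracketed prefactor is monotonically bounded above by its value at $t=x$ (since $(1-t^2)$ decreases and $t^2$ increases), namely $(1-x^2)^\beta\left(\tfrac{\beta+1}{\beta} + \tfrac{1-x^2}{2\beta x^2}\right)$. Pulling this bound out of the integral reduces the remaining factor to exactly $\bbP(X\geq x)/I_\beta$, giving
\begin{equation*}
\bbP(X\geq x) \geq U(x) - (1-x^2)^\beta\left(\frac{\beta+1}{\beta} + \frac{1-x^2}{2\beta x^2}\right)\bbP(X\geq x),
\end{equation*}
where $U(x)$ denotes the leading explicit term. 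Substituting the already-established inequality $\bbP(X\geq x) \leq U(x)$ into the right-hand side produces the claimed lower bound.

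For the quantile bounds, set $y = 1-x^2$ with $x = F^{-1}(1-q)$, so $y \to 0$ as $q \to 0$ by continuity of $F$. For $q$ (hence $y$) sufficiently small depending only on $\beta$, the correction factor $1 - y^\beta(\cdots)$ exceeds $1/2$, and combining both tail bounds yields $C_2(\beta)\, y^{\beta+1}\exp(-1/y^\beta) \leq q \leq C_1(\beta)\, y^{\beta+1}\exp(-1/y^\beta)$. Taking logarithms, each inequality takes the form $\log(1/q) = 1/y^\beta + (\beta+1)\log(1/y) + O(1)$. Since $(\beta+1)\log(1/y) = o(1/y^\beta)$ as $y \to 0$, the lower bound on $q$ forces $\log(1/q) \leq 2/y^\beta$ (hence $y \leq (2/\log(1/q))^{1/\beta}$), while the upper bound on $q$, combined with the nonnegativity of $(\beta+1)\log(1/y)$ absorbing the (possibly negative) $O(1)$ slack, forces $\log(1/q) \geq 1/y^\beta$ (hence $y \geq (1/\log(1/q))^{1/\beta}$).

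The main obstacle is purely bookkeeping: verifying that the threshold $c$ can be chosen as a function of $\beta$ alone, which requires tracking that the additive constants $\log(I_\beta/(2\beta))$ and $\log(1/x)$ appearing when inverting the tail bounds are dominated once $y$ is small enough, and that the regime in which the correction factor exceeds $1/2$ is reached by a $\beta$-dependent choice of $c$. All such checks are routine elementary estimates.
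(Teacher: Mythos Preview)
Your proposal is correct and follows essentially the same approach as the paper: the same integration-by-parts identity, the same mechanism for the upper and lower tail bounds (drop the correction integral; respectively, bound the bracketed prefactor at $t=x$ and feed back the upper bound), and the same strategy for the quantile inversion. The only cosmetic difference is that for the quantile step the paper bounds $q$ directly between $\exp(-2/y^\beta)$ and $\exp(-1/y^\beta)$ using the auxiliary inequalities $I_\beta\,(1-x^2)^{\beta+1}/(2x\beta)\leq 1$ and $\exp(-1/(1-x^2)^\beta)\leq \tfrac{1}{2}I_\beta\,(1-x^2)^{\beta+1}/(2x\beta)$, whereas you take logarithms and appeal to $(\beta+1)\log(1/y)=o(1/y^\beta)$; these are the same observation in different dress.
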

\begin{proof}
	First,
	\begin{equation} \label{ineq:mollifier-tail-upper-bound}
		\begin{split}
			 & \bbP(X \geq x) \\
			& = I_\beta \int_{x}^1  \exp\left( -\frac{1}{(1-t^2)^\beta}\right) dt \\
			&   = \left( I_\beta \frac{(1 - t^2)^{\beta + 1}}{-2t \beta}   \exp\left( -\frac{1}{(1-t^2)^\beta}\right) \right)\Big|_{x}^1 - \int_{x}^1 I_\beta (1 - t^2)^\beta \left( \frac{1 - t^2}{2 \beta t^2} + \frac{\beta + 1}{\beta} \right)   \exp\left( -\frac{1}{(1-t^2)^\beta}\right) dt \\
			& = I_\beta \frac{(1 - x^2)^{\beta + 1}}{2x \beta}   \exp\left( -\frac{1}{(1-x^2)^\beta}\right) - \int_{x}^1  I_\beta  (1 - t^2)^\beta \left( \frac{1 - t^2}{2 \beta t^2} + \frac{\beta + 1}{\beta} \right)   \exp\left( -\frac{1}{(1-t^2)^\beta}\right) dt  \\
			& \leq I_\beta \frac{(1 - x^2)^{\beta + 1}}{2x \beta}   \exp\left( -\frac{1}{(1-x^2)^\beta}\right).
		\end{split}
	\end{equation} Next,
	\begin{equation}\label{ineq:mollifier-tail-lower-bound}
		\begin{split}
			 & \bbP(X \geq x) \\
			\overset{ \eqref{ineq:mollifier-tail-upper-bound} }= & I_\beta \frac{(1 - x^2)^{\beta + 1}}{2x \beta}   \exp\left( -\frac{1}{(1-x^2)^\beta}\right) - \int_{x}^1  I_\beta  (1 - t^2)^\beta \left( \frac{1 - t^2}{2 \beta t^2} + \frac{\beta + 1}{\beta} \right)   \exp\left( -\frac{1}{(1-t^2)^\beta}\right) dt \\
			\geq & I_\beta \frac{(1 - x^2)^{\beta + 1}}{2x \beta}   \exp\left( -\frac{1}{(1-x^2)^\beta}\right) - (1 - x^2)^{\beta} \left( \frac{1 - x^2}{2 \beta x^2} + \frac{\beta + 1}{\beta} \right) \int_{x}^1 I_\beta \exp\left( -\frac{1}{(1-t^2)^\beta}\right) dt \\
			\overset{ \eqref{ineq:mollifier-tail-upper-bound} } \geq & I_\beta \frac{(1 - x^2)^{\beta + 1}}{2x \beta}   \exp\left( -\frac{1}{(1-x^2)^\beta}\right) \left( 1 -  (1 - x^2)^{\beta} \left( \frac{1 - x^2}{2 \beta x^2} + \frac{\beta + 1}{\beta} \right) \right).
		\end{split}
	\end{equation}
	
		When $c$ is a sufficiently small constant depending on $\beta$ only, then for any $q \in [0,c]$, $1 -x = 1-  F^{-1}(1 - q) \leq c'$ for a small constant $c'$ such that $(1 - x^2)^{\beta} \left( \frac{1 - x^2}{2 \beta x^2} + \frac{\beta + 1}{\beta} \right) \leq 1/2$, $I_\beta \frac{(1 - x^2)^{\beta + 1}}{2x \beta} \leq 1$ and $\exp\left( -\frac{1}{(1-x^2)^\beta}\right) \leq \frac{1}{2} I_\beta \frac{(1 - x^2)^{\beta + 1}}{2x \beta} $. Thus
	\begin{equation*}
	\begin{split}
		q &= I_\beta \int_{x}^1  \exp\left( -\frac{1}{(1-t^2)^\beta}\right) dt  \overset{ \eqref{ineq:mollifier-tail-upper-bound} }\leq I_\beta \frac{(1 - x^2)^{\beta + 1}}{2x \beta}   \exp\left( -\frac{1}{(1-x^2)^\beta}\right) \leq \exp\left( -\frac{1}{(1-x^2)^\beta}\right),\\
		q & \overset{ \eqref{ineq:mollifier-tail-lower-bound} } \geq \left( 1 -  (1 - x^2)^{\beta} \left( \frac{1 - x^2}{2 \beta x^2} + \frac{\beta + 1}{\beta} \right) \right)  I_\beta \frac{(1 - x^2)^{\beta + 1}}{2x \beta}   \exp\left( -\frac{1}{(1-x^2)^\beta}\right) \\
		& \geq \frac{1}{2} I_\beta \frac{(1 - x^2)^{\beta + 1}}{2x \beta}   \exp\left( -\frac{1}{(1-x^2)^\beta}\right) \\
		& \geq \exp\left( -\frac{2}{(1-x^2)^\beta}\right).
	\end{split}
	\end{equation*}  Thus,
	\begin{equation*}
		\left( 1/\log(1/q) \right)^{1/\beta} \leq 1 -\left(  F^{-1}(1 - q)\right)^2 \leq \left( 2/\log(1/q) \right)^{1/\beta}.
	\end{equation*}
\end{proof}

\begin{Lemma} \label{lm:bates-quantile} Let $X$ follows Bates distribution with positive interger parameter $k \geq 1$, i.e., $X = \sum_{j=1}^k U_j$ where $U_j \overset{iid}\sim \textnormal{Uniform}[-1/2,1/2]$ and its PDF is $$f(x) = \frac{k}{(k-1)!} \sum_{0 \leq j \leq \lfloor k(x +1/2) \rfloor } (-1)^j { k \choose j } ( k (x + 1/2) - j )^{k-1}$$ by \citep[Section 29.6]{johnson1995continuous}. In particular, when $x \in (-1/k + 1/2, 1/2]$, its density is $f(x) = \frac{k^k}{(k-1)!} (1/2 - x)^{k-1}$. Then for any $x \in (-1/k + 1/2, 1/2]$,
\begin{equation*}
\begin{split}
	\bbP(X \geq x) & = \frac{k^{k-1}}{(k-1)!} ( 1/2 - x )^k.
\end{split}
\end{equation*} Let $F^{-1}(\cdot)$ denote the inverse CDF of $X$, then for any $q \leq 1/(k(k-1)!)$,
\begin{equation} \label{ineq:bates-quantile}
	1/2 - F^{-1}(1 - q)= \left(\frac{q (k-1)!}{k^{k-1}}\right)^{1/k}. 
\end{equation}
\end{Lemma}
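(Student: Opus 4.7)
}

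My plan is to reduce the three claims to a single algebraic identity about the density on the right tail, then obtain the tail probability by direct integration and the quantile by inversion. The statement about the density on $(-1/k+1/2,1/2]$ is actually asserted in the preamble of the lemma, but it is the main computation that makes everything work, so I will treat it as the first step.

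\textbf{Step 1 (Density on the rightmost piece).} For $x\in(-1/k+1/2,1/2)$ we have $k(x+1/2)\in(k-1,k)$, so $\lfloor k(x+1/2)\rfloor=k-1$ and the Johnson--Kotz--Balakrishnan formula reads
\[
f(x)=\frac{k}{(k-1)!}\sum_{j=0}^{k-1}(-1)^{j}\binom{k}{j}\bigl(k(x+1/2)-j\bigr)^{k-1}.
\]
Writing $y:=k(x+1/2)$, the key identity I will invoke is that the $k$-th forward difference of a polynomial of degree $k-1$ vanishes, i.e.\ $\sum_{j=0}^{k}(-1)^{j}\binom{k}{j}(y-j)^{k-1}=0$. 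Isolating the $j=k$ term gives
\[
\sum_{j=0}^{k-1}(-1)^{j}\binom{k}{j}(y-j)^{k-1}=-(-1)^{k}(y-k)^{k-1}=(k-y)^{k-1},
\]
where the last equality absorbs the sign $(-1)^{k+1}(-1)^{k-1}=1$. Substituting $k-y=k(1/2-x)$ and $(k-y)^{k-1}=k^{k-1}(1/2-x)^{k-1}$ yields $f(x)=\frac{k^{k}}{(k-1)!}(1/2-x)^{k-1}$.

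\textbf{Step 2 (Tail probability).} For $x\in(-1/k+1/2,1/2]$, integrating the density from Step 1 gives
\[
\mathbb{P}(X\geq x)=\int_{x}^{1/2}\frac{k^{k}}{(k-1)!}(1/2-t)^{k-1}\,dt=\frac{k^{k-1}}{(k-1)!}(1/2-x)^{k},
\]
which is the second assertion.

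\textbf{Step 3 (Quantile inversion and range check).} Provisionally inverting $q=\frac{k^{k-1}}{(k-1)!}(1/2-F^{-1}(1-q))^{k}$ gives $1/2-F^{-1}(1-q)=\bigl(\tfrac{q(k-1)!}{k^{k-1}}\bigr)^{1/k}$. This manipulation is legitimate only if $F^{-1}(1-q)\in(-1/k+1/2,1/2]$, i.e.\ $1/2-F^{-1}(1-q)<1/k$, which is equivalent to $\bigl(\tfrac{q(k-1)!}{k^{k-1}}\bigr)^{1/k}<1/k$, i.e.\ $q<\tfrac{1}{k(k-1)!}$. This is precisely the hypothesis imposed in \eqref{ineq:bates-quantile}, so the inversion is valid and the third claim follows.

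\textbf{Anticipated obstacle.} There is no deep obstacle; the only nontrivial step is the combinatorial identity in Step 1, and the careful sign tracking $(-1)^{k+1}(-1)^{k-1}=1$ that makes the simplification work independently of the parity of $k$. The boundary case $x=1/2$ is handled by continuity, and the range restriction on $q$ in Step 3 must be matched exactly to the range of validity of the density formula from Step 1, which is why the lemma's hypothesis takes the stated form.
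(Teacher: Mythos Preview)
Your proof is correct. Steps 2 and 3 match the paper exactly, including the range check that ties the hypothesis $q<1/(k(k-1)!)$ to the validity of the density formula.

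The only difference is in Step 1. The paper computes the density on the \emph{leftmost} piece $[-1/2,-1/2+1/k)$, where $\lfloor k(x+1/2)\rfloor=0$ and the sum collapses to the single $j=0$ term, giving $f(x)=\frac{k^{k}}{(k-1)!}(x+1/2)^{k-1}$ immediately; it then invokes the symmetry of $X$ about $0$ to transfer this to the rightmost piece. You instead work directly on the rightmost piece, where the sum runs over all $j\le k-1$, and close it via the vanishing of the $k$-th forward difference of a degree-$(k-1)$ polynomial. Both routes are short; the paper's avoids the combinatorial identity at the cost of an appeal to symmetry, while yours is self-contained but requires the finite-difference step and the sign bookkeeping you flagged.
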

\begin{proof}
	First, when $x \in [-1/2, -1/2 + 1/k)$, it is easy to check $f(x) = \frac{k^k}{(k-1)!} (x - (-1/2))^{k-1}$. Then by symmetry, we have for $x \in (-1/k + 1/2, 1/2]$, its density is $f(x) = \frac{k^k}{(k-1)!} (1/2 - x)^{k-1}$. In addition,
	\begin{equation}\label{ineq:bates-tail}
		\begin{split}
			\bbP(X \geq x) = \int_x^{1/2} \frac{k^k}{(k-1)!} (1/2 - t)^{k-1} dt = \frac{k^{k-1}}{(k-1)!} ( 1/2 - x )^k.
		\end{split}
	\end{equation} Finally, when $q \leq 1/(k(k-1)!)$, we have $F^{-1}(1 -q) \in (-1/k + 1/2, 1/2]$, then \eqref{ineq:bates-quantile} is obtained by inverting \eqref{ineq:bates-tail}.
\end{proof}

\begin{Lemma} \label{lm:t-epsilon-bound}
	For any $\epsilon_{\max} < 0.99$, positive integer $n$ and $\alpha \in (0,1)$ such that $n/\log(1/\alpha)$ is sufficiently large, then for all $\epsilon \in [0, \epsilon_{\max}]$,
	\begin{equation*}
		1/t_{\epsilon} \asymp  \frac{1}{\sqrt{\log(1/\epsilon)}} + \frac{1}{\sqrt{\log( n/(\log(1/\alpha)) )}} ,
	\end{equation*} where $t_{\epsilon} = \Phi^{-1}\left( 1- \epsilon - \sqrt{ \log (2/\alpha)/(2n) }  \right)$.
\end{Lemma}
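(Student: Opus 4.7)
Set $q_\epsilon := \epsilon + \sqrt{\log(2/\alpha)/(2n)}$, so that by definition $t_\epsilon = \Phi^{-1}(1-q_\epsilon)$. The plan is to (i) estimate $t_\epsilon$ in terms of $\log(1/q_\epsilon)$ via the Gaussian tail asymptotics already proved in Lemma~\ref{lm:normal-quantile}, and then (ii) translate $\log(1/q_\epsilon)$ into the two targeted terms $\log(1/\epsilon)$ and $\log(n/\log(1/\alpha))$.

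For (i), since $n/\log(1/\alpha)$ is assumed large and $\epsilon_{\max}<0.99$, the quantity $q_\epsilon$ is bounded away from $1$ uniformly in $\epsilon\in[0,\epsilon_{\max}]$, so $t_\epsilon$ is bounded below by an absolute constant. Splitting the range of $\epsilon$, I would first treat the easy regime where $\epsilon$ is itself of constant order comparable to $\epsilon_{\max}$: here $t_\epsilon$, $1/\sqrt{\log(1/\epsilon)}$, and $1/\sqrt{\log(n/\log(1/\alpha))}$ are all of constant order, so both sides of the $\asymp$ are bounded above and below by positive constants and there is nothing to do. In the remaining regime $\epsilon \le c_0$ for some small constant $c_0$ (chosen small enough that $q_\epsilon$ is below the threshold $\Phi(-2)$ needed for Lemma~\ref{lm:normal-quantile} to apply, which is possible because $n/\log(1/\alpha)$ is large), the lemma yields
\begin{equation*}
\tfrac{1}{2}\sqrt{\log(1/q_\epsilon)} \;\le\; t_\epsilon \;\le\; \sqrt{2\log(1/q_\epsilon)},
\end{equation*}
so $1/t_\epsilon \asymp 1/\sqrt{\log(1/q_\epsilon)}$.

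For (ii), observe that $q_\epsilon \asymp \max\bigl(\epsilon,\sqrt{\log(1/\alpha)/n}\bigr)$ (with constants depending only on the factor $\sqrt{2}\log 2$ inside the square root), so
\begin{equation*}
\log(1/q_\epsilon) \;\asymp\; \min\!\Bigl(\log(1/\epsilon),\; \tfrac{1}{2}\log\!\bigl(n/\log(1/\alpha)\bigr)\Bigr).
\end{equation*}
Combining this with the elementary identity $1/\sqrt{\min(a,b)} = \max(1/\sqrt{a},1/\sqrt{b}) \asymp 1/\sqrt{a}+1/\sqrt{b}$ (valid for positive $a,b$), we obtain
\begin{equation*}
\frac{1}{t_\epsilon} \;\asymp\; \frac{1}{\sqrt{\log(1/q_\epsilon)}} \;\asymp\; \frac{1}{\sqrt{\log(1/\epsilon)}} + \frac{1}{\sqrt{\log\bigl(n/\log(1/\alpha)\bigr)}},
\end{equation*}
uniformly over $\epsilon \in [0,c_0]$, with the convention that the first term on the right is $0$ when $\epsilon = 0$.

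The only non-routine point is the bookkeeping for the limiting cases: when $\epsilon = 0$, the first summand on the right vanishes while $q_0 = \sqrt{\log(2/\alpha)/(2n)}$ gives $\log(1/q_0) \asymp \log(n/\log(1/\alpha))$, which is consistent. I expect the main obstacle to be just confirming that the constants absorbed into $\asymp$ depend only on $\alpha$ (through the factor $\log(2/\alpha)$) and on $\epsilon_{\max}$, and handling the boundary where $\epsilon$ and $\sqrt{\log(1/\alpha)/n}$ are of the same order so that the $\min$ switches branches — but in both cases the two terms are already comparable, so the equivalence holds with the same constants.
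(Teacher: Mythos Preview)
Your proposal is correct and follows essentially the same route as the paper's proof: a case split into the constant-$\epsilon$ regime (where both sides are of constant order) and the small-$\epsilon$ regime, in which Lemma~\ref{lm:normal-quantile} gives $t_\epsilon \asymp \sqrt{\log(1/q_\epsilon)}$ and the remaining step is the elementary conversion $\log(1/q_\epsilon)\asymp \log(1/\epsilon)\wedge \log(n/\log(1/\alpha))$ followed by $1/\sqrt{\min(a,b)}\asymp 1/\sqrt{a}+1/\sqrt{b}$. The paper's argument is terser but identical in substance; your extra discussion of the $\epsilon=0$ boundary and the branch-switching point is harmless bookkeeping.
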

\begin{proof} We divide the proof into two cases. First, when $\epsilon \in [0.01, \epsilon_{\max}]$, $1/t_{\epsilon}$ is of constant level, so the result follows. 
	At the same time, when $\epsilon \in [0,  0.01]$, we have $t_{\epsilon} \geq 2$ when $n/(\log(1/\alpha))$ is sufficiently large, by Lemma \ref{lm:normal-quantile}, we have
	\begin{equation*}
	\begin{split}
		t_{\epsilon} & \asymp  \sqrt{ \log\left( \frac{1}{\epsilon + \sqrt{ \log (2/\alpha)/(2n) }}  \right)  } \\
		& \asymp \sqrt{ \log\left( \frac{1}{\epsilon}  \right)} \wedge \sqrt{ \log\left(\frac{n}{ \log \left( \frac{1}{\alpha} \right)} \right) }.
	\end{split}
	\end{equation*} Thus, $1/t_{\epsilon} \asymp \frac{1}{\sqrt{\log(1/\epsilon)}} + \frac{1}{\sqrt{\log( n/(\log(1/\alpha)) )}} $.
\end{proof}

\end{sloppypar}

\end{document}